\renewcommand{\abovecaptionskip}{0pt}
\renewcommand{\belowcaptionskip}{6pt}
\renewcommand{\@makecaption}[2]{
\vspace{\abovecaptionskip}%
\sbox{\@tempboxa}{#1 #2}%
\global\@minipagefalse \hbox to \hsize {{\scshape \hfil #1 #2\hfil}}
\vspace{\belowcaptionskip}}
\renewcommand{\LT@makecaption}[3]{%
  \LT@mcol\LT@cols c{\hbox to\z@{\hss\parbox[t]\LTcapwidth{%
    \sbox\@tempboxa{#1{{\normalsize \scshape#2 }}{\normalsize #3}}%
    \ifdim\wd\@tempboxa>\hsize
      #1{#2: }#3%
    \else
      \hbox to\hsize{\hfil\box\@tempboxa\hfil}%
    \fi
    \endgraf\vskip\belowcaptionskip}%
  \hss}}}
\newcommand{\mb}{\mathbb}
\newcommand{\mf}{\mathfrak}
\newcommand{\ms}{\mathsf}
\newcommand{\mr}{\mathrm}
\newcommand{\reg}{\mr{reg}}
\renewcommand{\ge}{\geqslant}
\renewcommand{\le}{\leqslant}
\newcommand{\al}{\alpha}
\newcommand{\be}{\beta}
\newcommand{\ga}{\gamma}
\newcommand{\de}{\delta}
\newcommand{\la}{\lambda}
\newcommand{\vf}{\varphi}
\newcommand{\De}{\Delta}
\newcommand{\si}{\sigma}
\newcommand{\Ker}{\operatorname{Ker}}
\newcommand{\rk}{\operatorname{rk}}
\newcommand{\doubleprec}{\mathrel{\mbox{$\prec$\hspace{-0.6em}$\prec$}}}
\DeclareMathOperator{\otimesZ}{\otimes\hspace{1pt}\rule[-3pt]{0pt}{0pt}_{\mathbb{Z}}}
\DeclareMathOperator{\Ad}{Ad} \DeclareMathOperator{\Supp}{Supp}
\DeclareMathOperator{\hgt}{ht}
\newtheorem{theorem}{Theorem}
\newtheorem{proposition}{Proposition}
\newtheorem{lemma}{Lemma}
\newtheorem{corollary}{Corollary}
\newtheorem*{question*}{Question}
\theoremstyle{definition}
\newtheorem{dfn}{Definition}
\newtheorem*{dfn*}{Definition}
\theoremstyle{remark}
\newtheorem{remark}{Remark}
\begin{document}

\renewcommand{\proofname}{Proof}
\renewcommand{\abstractname}{Abstract}
\renewcommand{\refname}{References}
\renewcommand{\figurename}{Figure}
\renewcommand{\tablename}{Table}

\title[On solvable spherical subgroups]{On solvable spherical subgroups\\of semisimple algebraic groups}

\author{Roman Avdeev}

\thanks{Partially supported by Russian Foundation for Basic Research, grant no. 09-01-00648}

\address{Chair of Higher Algebra, Department of Mechanics and Mathematics,
Moscow State University, 1, Leninskie Gory, Moscow, 119992, Russia}

\email{suselr@yandex.ru}

%\date{\today}

\subjclass[2010]{20G07, 14M27, 14M17}

\keywords{Algebraic group, homogeneous space, spherical subgroup,
solvable subgroup}

\begin{abstract}
We develop a structure theory of connected solvable spherical
subgroups in semisimple algebraic groups. Based on this theory, we
obtain an explicit classification of all such subgroups up to
conjugation.
\end{abstract}

\maketitle

\sloppy

%-------------------------------------------------------------------------
\section{Introduction}
\label{introduction}
%-------------------------------------------------------------------------

\subsection{}\label{intro}

Let $G$ be a connected semisimple complex algebraic group. A closed
subgroup $H \subset G$ (resp. a homogeneous space $G/H$) is said to
be \emph{spherical} if one of the following three equivalent
conditions is satisfied:

(1) a Borel subgroup $B \subset G$ has an open orbit in $G/H$;

(2) for every irreducible $G$-variety $X$ containing $G/H$ as an
open orbit the number of $G$-orbits in $X$ is finite;

(3) for every irreducible finite-dimensional $G$-module $V$ and
every character $\chi$ of $H$ the dimension of the subspace $\{v \in
V \mid hv = \chi(h)v \; \forall h \in H\} \subset V$ is at most one.

There are other characterizations of spherical subgroups, but the
three mentioned are most often used while studying these subgroups.

Spherical homogeneous spaces have been intensively studied during
last three decades. However, the problem of classification of these
spaces or, equivalently, the problem of classification of spherical
subgroups in semisimple algebraic groups still remains of
importance. Let us give a short historical reference on this
question. The first considerable result in this direction was
obtained by Kr\"amer in~1979~\cite{Kr}. He classified all reductive
spherical subgroups in simple groups. Then Mikityuk in
1986~\cite{Mi} and, independently, Brion in~1987~\cite{Br}
classified all reductive spherical subgroups in arbitrary semisimple
groups (see also~\cite{Yak} for a more accurate formulation). The
next step towards a classification of spherical homogeneous spaces
was performed by Luna in his preprint~\cite{Lu93} of~1993 where he
considered solvable spherical subgroups in semisimple groups. In
this preprint, under some restrictions, all such subgroups are
described in the following sense: to each subgroup one assigns a set
of combinatorial data that uniquely determines this subgroup, and
then one classifies all sets that may appear in that way. In~2001
Luna created a theory of spherical systems and, using this theory,
described (in the same sense) all spherical subgroups in semisimple
groups of type~$\ms A$~\cite{Lu01}. During the following several
years Luna's approach was successfully applied by Bravi and Pezzini
for several other types of semisimple groups including all classical
groups (for details see~\cite{BP} and references therein). At last,
in~2009 a new approach to the problem was suggested by Cupit-Foutou
who completed the proof of the so-called Luna conjecture and thereby
obtained a description of all spherical subgroups in arbitrary
semisimple groups~\cite{CF}. Thus, by this moment there is a
description in combinatorial terms of all spherical subgroups in
semisimple groups. However this description has the following
disadvantage: it does not provide a simple way of constructing a
spherical subgroup corresponding to a given set of invariants that
uniquely determines this subgroup, even in the case of solvable
spherical subgroups. In other words, the existing description is
\emph{implicit}. In this connection the problem of obtaining an
\emph{explicit} classification of all spherical subgroups in
semisimple groups still remains of interest.

The present paper contains a new approach to classification of
connected solvable spherical subgroups in semisimple algebraic
groups. This approach is completely different from Luna's approach
of~1993~\cite{Lu93} and provides an explicit classification. We note
that in this paper the above-mentioned results of Luna and the
others are not used.

\subsection{}

Throughout the paper the ground field is the field $\mb C$ of
complex numbers. All topological terms relate to the Zarisky
topology. All groups are assumed to be algebraic and their subgroups
closed. The tangent algebras of groups denoted by capital Latin
letters are denoted by the corresponding small German letters.
Weights of tori are identified with their differentials.

Until the end of the paper we fix the following notation:

$G$ is an arbitrary connected semisimple algebraic group;

$B \subset G$ is a fixed Borel subgroup of~$G$;

$T \subset B$ is a fixed maximal torus of~$G$;

$U \subset B$ is the maximal unipotent subgroup of $G$ contained
in~$B$;

$N_G(T)$ is the normalizer of $T$ in~$G$;

$W = N_G(T)/T$ is the Weyl group of $G$ with respect to~$T$;

$\mf X(T)$ is the character lattice (weight lattice) of~$T$;

$Q = \mf X(T)\otimesZ \mb Q$ is the rational vector space generated
by~$\mf X(T)$;

$(\cdot\,, \cdot)$ is a fixed inner product on~$Q$ invariant with
respect to~$W$;

$\De \subset \mf X(T)$ is the root system of $G$ with respect
to~$T$;

$\De_+ \subset \De$ is the subset of positive roots with respect
to~$B$;

$\Pi \subset \De_+$ is the set of simple roots;

$r_\al \in W$ is the simple reflection corresponding to a root~$\al
\in \Pi$;

$\overline w \in N_G(T)$ is a fixed representative of an element~$w
\in W$;

$\mf g_\al \subset \mf g$ is the root subspace corresponding to a
root~$\al \in \De$;

$e_\al \in \mf g_\al$ is a fixed non-zero element.

Let $H \subset B$ be a connected solvable subgroup and $N \subset U$
its unipotent radical. We say that $H$ is \emph{standardly embedded
in} $B$ (with respect to $T$) if the subgroup $S = H \cap T \subset
T$ is a maximal torus in~$H$. Clearly, in this situation we have $H
= S \rightthreetimes N$. It is well known that every connected
solvable subgroup in $G$ is conjugate to a subgroup that is
standardly embedded in~$B$.

\subsection{}

We now discuss the structure of this paper and its main ideas.

In \S\,\ref{main_theorem} we prove a convenient criterion of
sphericity for a connected solvable subgroup in terms of its tangent
algebra (Theorem~\ref{solvable_spherical}). This criterion serves as
a basis of the whole paper. Then, using this criterion, we prove
Theorem~\ref{S_and_Psi}, which may be regarded as a first
approximation to a classification of connected solvable spherical
subgroups. Theorem~\ref{S_and_Psi} claims that a connected solvable
spherical subgroup $H$ standardly embedded in $B$ is uniquely
determined by its maximal torus $S = H \cap T$ and the set $\Psi =
{\{\al \in \De_+ \mid \mf g_\al \not\subset \mf h\} \subset \De_+}$.

In \S\,\ref{active_root_theory} we investigate what kind of set the
set~$\Psi$ may be. For roots in $\Psi$ we introduce the term `active
roots'. Having studied properties of a single active root in
relation to the others we list all positive roots that may be
elements of $\Psi$ depending on the root system $\De$
(Theorem~\ref{active_roots}). As a result of the subsequent
investigation of active roots, to each connected solvable spherical
subgroup~$H$ standardly embedded in~$B$ we assign a set of
combinatorial data $\Upsilon(H) = (S,\mr M, \pi, \sim)$, where $S =
H \cap T$ is a maximal torus in~$H$, $\mr M \subset \Psi$ is the set
of so-called maximal active roots, $\pi \colon \mr M \to \Pi$ is a
map, $\sim$ is an equivalence relation on $\mr M$. Then we determine
a series of conditions that are fulfilled by~$\Upsilon(H)$. The
section is ended by the uniqueness theorem
(Theorem~\ref{uniqueness_theorem}): every connected solvable
spherical subgroup $H$ standardly embedded in $B$ is uniquely
determined by its set of combinatorial data~$\Upsilon(H)$.

In \S\,\ref{section_existence} we prove the existence theorem
(Theorem~\ref{existence_theorem}): for every set of combinatorial
data $(S, \mr M, \pi, \sim)$ satisfying the conditions listed in the
uniqueness theorem, there exists a connected solvable spherical
subgroup $H$ standardly embedded in $B$ with this set of
combinatorial data. The proof of the existence theorem contains an
algorithm that allows one to construct a subgroup $H$ corresponding
to a set $(S, \mr M, \pi, \sim)$.

In \S\,\ref{up_to_conjugacy} we investigate when two connected
solvable spherical subgroups standardly embedded in $B$ are
conjugate in~$G$. For this purpose we introduce the notion of an
elementary transformation. An elementary transformation is a
transformation of the form $H_1 \mapsto H_2$, where $H_1, H_2$ are
connected solvable spherical subgroups standardly embedded in $B$
and $H_2 = \si_\al H_1 \si_\al^{-1}$ for some representative
$\si_\al \in N_G(T)$ of the simple reflection~$r_\al$. The answer to
the question under consideration is given by
Theorem~\ref{elementary_transformations}: two connected solvable
spherical subgroups standardly embedded in $B$ are conjugate in $G$
if and only if there is a sequence of elementary transformations
taking one of these subgroups to the other.
Theorems~\ref{uniqueness_theorem}, \ref{existence_theorem},
and~\ref{elementary_transformations} already give a complete
classification of connected solvable spherical subgroups in
semisimple groups. Next, in the context of the general theory we
consider in more detail an important particular case of connected
solvable spherical subgroups, namely, the case of subgroups having
finite index in their normalizer. Compared with the general case,
the classification of such subgroups is reformulated in a simpler
form.

In \S\,\ref{simplification} we show that every conjugacy class of
connected solvable spherical subgroups contains a subgroup $H$
standardly embedded in $B$ such that the set $\Upsilon(H)$ satisfies
stronger conditions than those appearing in the uniqueness theorem
(Theorem~\ref{simplified}). We call such sets $\Upsilon(H)$
`reduced'. Then we prove that, for every two connected solvable
spherical subgroups standardly embedded in $B$ and conjugate in $G$
such that their sets of combinatorial data are reduced, there is a
sequence of elementary transformations taking one of these subgroups
to the other and such that the set of combinatorial data of every
intermediate subgroup is reduced
(Theorem~\ref{simplified_conjugate}).

At last, \S\,\ref{examples} contains some applications of the theory
developed in this paper. Namely, in this section for all simple
groups $G$ of rank at most $4$ we list, up to conjugation, all
connected solvable spherical subgroups having finite index in their
normalizer. In order to simplify this procedure we essentially use
the results of~\S\,\ref{simplification}.

The main results of this paper were announced at the workshop
`Algeb\-raic groups' held on April 18-24, 2010 in Oberwolfach,
Germany (see~\cite{Avd}).

\subsection{Some notation and conventions.}~

$e$ is the identity element of any group;

$|X|$ is the cardinality of a finite set~$X$;

$\langle A \rangle$ is the linear span in~$Q$ of a subset $A \subset
\mf X(T)$;

$V^*$ is the space of linear functions on a vector space~$V$;

$Z_L(K)$ is the centralizer of a subgroup $K$ in a group~$L$;

$N_L(K)$ is the normalizer of a subgroup $K$ in a group~$L$;

$L^0$ is the connected component of the identity of a group~$L$;

$\mf X(L)$ is the group of characters (in additive notation) of a
group~$L$;

$\rk L$ is the rank of a reductive group $L$, that is, the dimension
of a maximal torus in~$L$;

$\Sigma(\widetilde \Pi)$ is the Dynkin diagram of a
subset~$\widetilde \Pi \subset \Pi$.

For every root $\al = \sum\limits_{\ga \in \Pi}k_\ga \ga \in \De_+$,
we define its \emph{support} $\Supp \al = \{\ga \mid k_\ga
> 0\}$ and \emph{height} $\hgt \al = \sum\limits_{\ga \in \Pi}k_\ga$.
If $\al \in \De_+$, then we put $\De(\al) = \De \cap \langle \Supp
\al \rangle$ and $\De_+(\al) = \De_+ \cap \langle \Supp\al \rangle$.
The set $\De(\al)$ is an indecomposable root system whose set of
simple roots is $\Supp \al$. The set of positive roots of $\De(\al)$
coincides with~$\De_+(\al)$.

Let $L$ be a group and let $L_1, L_2$ be subgroups of it. We write
$L = L_1 \rightthreetimes L_2$ if $L$ is a semidirect product of
$L_1, L_2$, that is, $L = L_1L_2$, $L_1 \cap L_2 = \{e\}$, and $L_2$
is a normal subgroup in~$L$.

By abuse of language, we identify roots in $\Pi$ and the
corresponding nodes of the Dynkin diagram of~$\Pi$.

By saying that two nodes of a Dynkin diagram are joined by an edge,
we mean that the edge may be multiple.

For connected Dynkin diagrams, the numeration of simple roots is the
same as in~\cite{VO}.

%-------------------------------------------------------------------------
\section{Criterion of sphericity and some applications}
\label{main_theorem}
%-------------------------------------------------------------------------

\subsection{}

Suppose that a connected solvable subgroup $H \subset G$ standardly
embedded in $B$ is fixed. Let $S = H \cap T$ and $N = H \cap U$ be a
maximal torus and the unipotent radical of $H$, respectively. We
denote by $\tau \colon \mf X(T) \to \mf X(S)$ the character
restriction map from $T$ to~$S$. Let $\Phi = \tau(\Delta_+) \subset
\mf X(S)$ be the weight system of the natural action of $S$ on $\mf
u$. We have $\mf u = \bigoplus \limits_{\la \in \Phi} \mf u_\la$,
where $\mf u_\la \subset \mf u$ is the weight subspace of weight
$\la$ with respect to~$S$. Let $\mf n = \bigoplus \limits_{\la \in
\Phi} \mf n_\la$ be the decomposition of the space $\mf n$ into a
direct sum of weight subspaces with respect to~$S$. At that, $\mf
n_\la \subset \mf u_\la$ for all $\la \in \Phi$ and some of the
subspaces $\mf n_\la$ may be zero. For every $\la \in \Phi$ we
denote by $c_\la$ the codimension of $\mf n_\la$ in~$\mf u_\la$.

The following theorem provides a convenient criterion of sphericity
for connected solvable subgroups.

\begin{theorem}\label{solvable_spherical}
Let $H \subset G$ be a connected solvable subgroup standardly
embedded in~$B$. Then the following conditions are equivalent:

\textup{(1)} $H$ is spherical in~$G$;

\textup{(2)} $c_\la \le 1$ for every $\la \in \Phi$, and the weights
$\la$ with $c_\la = 1$ are linearly independent in~$\mf X(S)$.
\end{theorem}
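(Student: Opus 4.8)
The plan is to reduce sphericity to an infinitesimal open-orbit condition and then to a linear-algebra computation governed by the $S$-weights of $\mf u/\mf n$. Recall that $H$ is spherical if and only if $B$ has an open orbit in $G/H$, and that the $B$-orbit through a point $gH$ is open exactly when its tangent space fills $T_{gH}(G/H)$; translating back to the base point, this means $\Ad(g)\mf b + \mf h = \mf g$. Hence $H$ is spherical if and only if $\Ad(g)\mf b + \mf h = \mf g$ for some, equivalently for generic, $g \in G$, since this is a Zariski-open condition on $g$ and $G$ is irreducible. The Borel subalgebras $\Ad(g)\mf b$ run over the flag variety $G/B$, and a generic one lies in the open Bruhat cell; so I may take $\Ad(g)\mf b = \Ad(u)\mf b^-$ with $u \in U$ generic, where $\mf b^- = \mf t \oplus \mf u^-$ is the opposite Borel subalgebra. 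Thus the task becomes to determine when $\Ad(u)\mf b^- + \mf h = \mf g$ for generic $u \in U$.

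Next I would make this concrete. Let $\mr{pr} \colon \mf g \to \mf u$ be the projection along $\mf b^-$, which is $S$-equivariant because $\mf b^-$ and $\mf u$ are $S$-stable. Conjugating the condition by $u^{-1}$ turns it into $\mf b^- + \Ad(u^{-1})\mf h = \mf g$, which, since $\mf g = \mf b^- \oplus \mf u$, holds iff $\mr{pr}(\Ad(u^{-1})\mf h) = \mf u$. Writing $\mf h = \mf s \oplus \mf n$ and using $\Ad(u^{-1})\mf n \subset \mf u$, and renaming $u^{-1}$ as $u$ (generic iff $u$ is), I obtain that $H$ is spherical if and only if, for generic $u \in U$,
\[
\mf u \;=\; \Ad(u)\mf n \;+\; \mr{pr}\bigl(\Ad(u)\mf s\bigr).
\]
Since $\dim \Ad(u)\mf n = \dim \mf n$, the quotient $\mf u/\Ad(u)\mf n$ has dimension $\sum_{\la \in \Phi} c_\la$, and sphericity is equivalent to surjectivity, for generic $u$, of the linear map $\mathcal A_u \colon \mf s \to \mf u/\Ad(u)\mf n$, $X \mapsto \mr{pr}(\Ad(u)X)$. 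In particular $\dim S \ge \sum_\la c_\la$ is necessary.

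The heart of the proof is to compute the generic rank of $\mathcal A_u$ and match it with condition~(2). I would extract the leading behaviour of $\mathcal A_u$ with respect to the height filtration $\mf u^{(k)} = \bigoplus_{\hgt \al \ge k} \mf g_\al$. As $\Ad(u)$ acts as the identity on the associated graded, $\Ad(u)\mf n$ and $\mf n$ have the same symbol, and for $X \in \mf s$ and generic $u = \exp\bigl(\sum_{\be > 0} t_\be e_\be\bigr)$ the lowest-height component of $\mr{pr}(\Ad(u)X)$ is $-\sum_{\al \in \Pi} t_\al\,\al(X)\,e_\al$, using $[e_\al, X] = -\al(X)e_\al$. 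Thus, to leading order modulo $\Ad(u)\mf n$, each weight $\la \in \Phi$ arising as a restriction $\tau(\al)$ of a simple root contributes only through the single functional $X \mapsto \la(X) = \al(X)|_{\mf s}$ on $\mf s$. Consequently $\mf s$ feeds at most a one-dimensional direction into each weight space of $\mf u/\mf n$, which is why $c_\la \ge 2$ obstructs surjectivity and why surjectivity onto the spaces with $c_\la = 1$ forces the corresponding functionals $\la \in \mf X(S)$ to be linearly independent; conversely, when $c_\la \le 1$ for all $\la$ and these weights are independent, the leading term alone makes $\mathcal A_u$ surjective for generic $u$.

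I expect the main obstacle to be making this leading-term analysis rigorous. The difficulty is that conjugation by the generic $u \in U$ destroys $S$-equivariance ($\Ad(u)\mf n$ is not $S$-stable), so one cannot simply split $\mathcal A_u$ weight by weight. To control this I would combine the height filtration with a one-parameter degeneration $u \mapsto \ga(t)\,u\,\ga(t)^{-1}$, where $\ga$ is the coweight with $\langle \al, \ga\rangle = \hgt \al$ for all $\al \in \Pi$, so as to confine the obstruction to the lowest-height (simple-root) contributions and to show that higher roots and non-simple restrictions cannot raise the generic rank beyond what condition~(2) allows. Verifying that this confinement is exact in both directions — that every weight with $c_\la = 1$ is genuinely hit and that no cancellation among higher-height terms spoils the independence count — is the technical core of the argument.
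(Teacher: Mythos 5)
Your opening reduction is fine: sphericity is equivalent to $\Ad(u)\mf b^- + \mf h = \mf g$ for generic $u \in U$, hence to the surjectivity of $\mathcal A_u \colon \mf s \to \mf u/\Ad(u)\mf n$, $X \mapsto \mr{pr}(\Ad(u)X)$. But everything after that is a plan rather than a proof, and the plan breaks at exactly the point you call the technical core. The weight-by-weight counting that is supposed to give both implications lives on $\mf u/\mf n$, which is $S$-graded; your map lands in $\mf u/\Ad(u)\mf n$, which is not $S$-stable, and $\mathcal A_u$ itself does not respect $S$-weights: expanding $e^{\ad Y}X$ with $Y=\sum_\be t_\be e_\be$, the $\mf g_\be$-component of $\mathcal A_u(X)$ is a sum over all decompositions $\be=\ga_1+\dots+\ga_k$ of terms proportional to $t_{\ga_1}\cdots t_{\ga_k}\,\ga_k(X)$, so it involves the functionals $\tau(\ga_k)$ for many roots $\ga_k$, not just $\tau(\be)$. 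Consequently the assertion that ``$\mf s$ feeds at most a one-dimensional direction into each weight space of $\mf u/\mf n$'' is not yet a statement about $\mathcal A_u$; turning it into one requires comparing the generic rank of $\mathcal A_u$ with the rank of the $S$-orbit map on $\mf u/\mf n$, and that comparison is the entire difficulty. Moreover, the specific tool you propose cannot produce it: since every decomposition $\be=\ga_1+\dots+\ga_k$ satisfies $\sum_i\hgt\ga_i=\hgt\be$, replacing $u$ by $\ga(s)u\ga(s)^{-1}$ rescales the whole $\mf g_\be$-component of $\mr{pr}(\Ad(u)X)$ by the single factor $s^{\hgt\be}$ and does not separate the first-order term $-t_\be\,\be(X)$ from the higher commutator corrections landing in the same root space. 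So the degeneration confines nothing, and neither implication of the theorem is actually established.

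For contrast, the paper sidesteps all of this by quoting two reductions: by \cite[Proposition~I.1,~3)]{Br}, sphericity of $H$ is equivalent to $S$ having an open orbit on $U/N$ by conjugation, and by \cite[Lemma~1.4]{Mon} this is equivalent to an open $S$-orbit on the linear space $\mf u/\mf n$. At that point the action is an honest linear torus action, the quotient splits into weight spaces $\mf p_\la$ of dimensions $c_\la$, and condition (2) falls out immediately (ratios of coordinates within one $\mf p_\la$, or monomials $y_1^{p_1}\cdots y_k^{p_k}$ attached to a dependence among the weights, are nonconstant invariant rational functions obstructing the open orbit). If you want to keep your direct transversality computation, you must build the bridge between $\mathcal A_u$ and the $S$-equivariant picture on $\mf u/\mf n$ yourself; that bridge is precisely what the two cited results supply, and nothing in your sketch substitutes for it.
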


\begin{proof}
According to~\cite[Proposition~I.1,~3)]{Br} the sphericity of $H$ is
equivalent to the condition that $S$ has an open orbit in $U/N$
under the action $(s,uN) \mapsto sus^{-1}N$.
By~\cite[Lemma~1.4]{Mon} this condition is equivalent to the
existence of an open orbit under the natural action of $S$ on $\mf
u/\mf n$. It remains to prove that $S$ has an open orbit in $\mf
u/\mf n$ if and only if condition~(2) is fulfilled.

For each $\la \in \Phi$ with $c_\la > 0$ choose a subspace $\mf
p_\la\subset\mf u_\la$ such that $\mf u_\la = \mf n_\la \oplus \mf
p_\la$. Put $\mf p = \bigoplus \limits_{\la \in \Phi :\; c_\la>0}\mf
p_\la$ so that $\mf u = \mf n \oplus \mf p$. Then there is an
$S$-equivariant isomorphism $\mf u/\mf n \simeq \mf p$. Let us show
that condition~(2) is equivalent to the existence of an open
$S$-orbit in $\mf p$. Indeed, assume that condition~(2) is
satisfied. Choose a non-zero element in each subspace $\mf p_\la$
with $c_\la=1$. Then all chosen elements form a basis in~$\mf p$.
Clearly, the open $S$-orbit in $\mf p$ consists of elements such
that all their coordinates with respect to this basis are non-zero.
Now assume that condition~(2) does not hold. Choose a basis in each
subspace~$\mf p_\la$. The union of all these bases is a basis
in~$\mf p$. If $c_\la\ge2$ for some $\la\in\Phi$, then for every two
different basis elements in $\mf p_\lambda$ the ratio of the
corresponding coordinate functions is a non-constant $S$-invariant
rational function on $\mf p$, whence there is no open $S$-orbit
in~$\mf p$. Now assume that $c_\la\le1$ for all $\la\in\Phi$ but
there are elements $\la_1,\ldots,\la_k\in\Phi$ such that
$c_{\la_1}=\ldots=c_{\la_k}=1$ and $p_1\la_1+\ldots+p_k\la_k=0$ for
some non-zero tuple $(p_1,\ldots,p_k)\in\mb Z^k$. Let
$y_1,\ldots,y_k$ be the coordinate functions corresponding to the
basis elements of subspaces $\mf p_{\la_1},\ldots,\mf p_{\la_k}$,
respectively. Then it is easy to see that the non-constant rational
function $y_1^{p_1}\cdot\ldots\cdot y_k^{p_k}$ on $\mf p$ is
$S$-invariant, therefore $\mf p$ contains no open $S$-orbit.
\end{proof}

\subsection{} \label{simple_applications}

In this subsection we deduce several consequences from
Theorem~\ref{solvable_spherical}. These consequences will play a
crucial role in the subsequent exposition.

First of all, we recall the following well-known lemma from linear
algebra.

\begin{lemma}\label{vectors_in_euclid_space}
Suppose that vectors $v_1,\ldots,v_n$ of a finite-dimensional Euclid
space $V$ lie in the same half-space, and the angles between them
are pairwise non-acute. Then these vectors are linearly independent.
\end{lemma}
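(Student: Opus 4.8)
The plan is to argue by contradiction, assuming a nontrivial linear relation $c_1 v_1 + \ldots + c_n v_n = 0$ and deriving a contradiction from the two hypotheses. Before anything else I would restate the hypotheses as inner-product inequalities: the condition that the angles are pairwise non-acute means $(v_i, v_j) \le 0$ for all $i \ne j$, while the condition that the vectors lie in a common half-space means that there exists a vector $\xi \in V$ with $(\xi, v_i) > 0$ for every $i$. The whole proof then rests on a single device, namely splitting the supposed relation according to the signs of its coefficients.

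Concretely, given a nontrivial relation I would put $P = \{i \mid c_i > 0\}$ and $N = \{i \mid c_i < 0\}$ and form the vector $w = \sum_{i \in P} c_i v_i = \sum_{j \in N} (-c_j) v_j$, the two expressions being equal because the full sum vanishes. I would then compute $(w,w)$ by taking the first expression for one factor and the second for the other, obtaining $(w,w) = \sum_{i \in P,\, j \in N} c_i(-c_j)(v_i, v_j)$. Each summand is a product of the two positive scalars $c_i$ and $-c_j$ with the non-positive quantity $(v_i, v_j)$, so every term is $\le 0$; hence $(w,w) \le 0$ and therefore $w = 0$.

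To finish, I would pair each of the two defining sums of $w$ with the functional $\xi$. From $w = 0$ we get $\sum_{i \in P} c_i (\xi, v_i) = 0$; but if $P$ were nonempty this would be a sum of strictly positive terms, a contradiction, so $P = \emptyset$. By the symmetric argument applied to $\sum_{j \in N}(-c_j) v_j$ we get $N = \emptyset$ as well. Thus all coefficients $c_i$ vanish, contradicting the assumed nontriviality, and the vectors are linearly independent.

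I do not expect a serious obstacle here; the only point requiring care—more bookkeeping than difficulty—is the sign accounting in the splitting step, where one must verify that every cross-term in $(w,w)$ really does carry the correct sign. It is worth noting how the two hypotheses play complementary roles: the non-acuteness of the angles is exactly what forces $(w,w) \le 0$, whereas the half-space condition is exactly what excludes a one-sided relation among vectors with coefficients of a single sign. Keeping these two roles clearly separated is what makes the argument transparent.
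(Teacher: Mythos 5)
Your proof is correct: the splitting of a putative relation by the signs of its coefficients, the computation $(w,w)=\sum c_i(-c_j)(v_i,v_j)\le 0$ forcing $w=0$, and the pairing with the half-space functional $\xi$ to kill each side is exactly the standard argument for this classical fact. The paper itself states the lemma without proof, citing it as well known from linear algebra, so there is nothing to compare against; the only point worth flagging is that you (correctly) read ``lie in the same half-space'' as the \emph{open} condition $(\xi,v_i)>0$, which is what the statement requires and what holds in the paper's applications to sets of positive roots.
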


Let $H \subset G$ be a connected solvable spherical subgroup
standardly embedded in~$B$. We put $S = H \cap T$ and $N = H \cap U$
so that $H = S \rightthreetimes N$. It follows from the sphericity
of $H$ that condition~(2) of Theorem~\ref{solvable_spherical} holds.
We denote all weights $\la \in \Phi$ with $c_\la = 1$ by $\vf_1,
\ldots, \vf_K$. These weights are linearly independent in $\mf
X(S)$, in particular, each of them is non-zero. For every $i = 1,
\ldots, K$ we denote by $\Psi_i$ the set of roots $\al \in \De_+$
such that $\tau(\al) = \vf_i$ and $\mf g_\al \not\subset \mf n$. We
put $\mf u_i = \bigoplus \limits_{\al \in \Psi_i}\mf g_\al$.
Evidently, $\mf u_i \subset \mf u_{\vf_i}$ for all $i = 1, \ldots,
K$. Next, for every $i = 1, \ldots, K$ the subspace $\mf n\cap\mf
u_i \subset \mf u_i$ is the kernel of a linear function $\xi_i \in
\mf u_i^*$, which is determined up to proportionality. Clearly, if
$\al \in \Psi_i$ for some $i \in \{1, \ldots, K\}$, then the
restriction of $\xi_i$ to $\mf g_\al$ is non-zero. We also put $\Psi
= \Psi(H) = \Psi_1 \cup \ldots \cup \Psi_K$. Note that $\Psi = \{\al
\in \De_+ \mid \mf g_\al \not\subset \mf n\}$.

\begin{lemma}\label{three_roots}
Suppose that $\al,\be \in \Psi$ and $\ga = \be - \al \in \De_+$.
Then $\ga \notin \Psi$.
\end{lemma}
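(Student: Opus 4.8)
The plan is to argue by contradiction: I would assume that $\ga = \be - \al \in \Psi$, apply the restriction homomorphism $\tau \colon \mf X(T) \to \mf X(S)$, and derive a forbidden linear relation among the weights $\vf_1, \ldots, \vf_K$. The whole point is that $\tau$ is additive while membership in $\Psi$ forces the $\tau$-image into a linearly independent set, so additivity and independence cannot coexist.

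The first step is to record that every root of $\Psi$ is sent by $\tau$ into $\{\vf_1, \ldots, \vf_K\}$. Indeed, by the sphericity of $H$ and Theorem~\ref{solvable_spherical} we have $c_\la \le 1$ for every $\la \in \Phi$. If $\tau(\de) = \la$ with $c_\la = 0$ for some positive root $\de$, then $\mf u_\la = \mf n_\la \subset \mf n$, hence $\mf g_\de \subset \mf u_\la \subset \mf n$ and $\de \notin \Psi$. Therefore any $\de \in \Psi$ satisfies $c_{\tau(\de)} = 1$, that is $\tau(\de) = \vf_i$ for some $i$; this is precisely the decomposition $\Psi = \Psi_1 \cup \ldots \cup \Psi_K$ already recorded in the text.

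Now, since $\al, \be, \ga \in \Psi$, I can pick indices $a, b, c$ with $\tau(\al) = \vf_a$, $\tau(\be) = \vf_b$, $\tau(\ga) = \vf_c$. As $\tau$ is a homomorphism and $\ga = \be - \al$, this yields $\vf_b = \vf_a + \vf_c$. I would then run through the possible coincidences among $a, b, c$: if they are pairwise distinct, then $\vf_b - \vf_a - \vf_c = 0$ is a nontrivial relation; if $a = b$ or $b = c$, one obtains $\vf_c = 0$ or $\vf_a = 0$; and if $a = c$, one gets $\vf_b = 2\vf_a$, again a nontrivial relation (degenerating to $\vf_a = 0$ when also $a = b$). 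Every case contradicts the linear independence and non-vanishing of $\vf_1, \ldots, \vf_K$ guaranteed by Theorem~\ref{solvable_spherical}. Hence the assumption fails and $\ga \notin \Psi$.

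The argument is short, and the only step that genuinely needs care is the opening reduction: that membership in $\Psi$ pins the $\tau$-image down to the independent weights $\vf_i$. Once this is established, the conclusion is forced purely by the additivity of $\tau$ together with linear independence, so I do not expect any real obstacle beyond bookkeeping of the index coincidences.
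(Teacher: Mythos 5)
Your proposal is correct and follows essentially the same route as the paper: apply $\tau$ to the relation $\ga = \be - \al$, note that membership in $\Psi$ forces each image to be one of the nonzero, linearly independent weights $\vf_i$, and derive a forbidden linear relation. The paper organizes the case split by whether $\tau(\al) = \tau(\be)$ rather than by coincidences of indices, but the content is identical.
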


\begin{proof}
We have $\tau(\ga) = \tau(\be) - \tau(\al)$. If $\tau(\al) =
\tau(\be)$, then $\tau(\ga) = 0$, which is impossible for $\ga \in
\Psi$. If $\tau(\al) \ne \tau(\be)$, then the weights $\tau(\al)$,
$\tau(\be)$ are linearly independent and therefore both are
different from~$\tau(\ga)$. We have obtained that the weights
$\tau(\al)$, $\tau(\be)$, $\tau(\ga)$ are pairwise different and
linearly dependent, which is also impossible for $\ga \in \Psi$.
\end{proof}

\begin{proposition}\label{crucial}
Suppose that $1 \le i,j \le K$ \textup{(}not necessarily $i \ne
j$\textup{)} and roots $\al \in \Psi_i$, $\be \in \Psi_j$ are
different. Assume that $\ga = \be - \al \in \De_+$. Then $\Psi_i +
\ga \subset \Psi_j$. In particular, $|\Psi_i| \le |\Psi_j|$.
\end{proposition}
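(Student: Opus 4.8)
The plan is to show that translation by $\ga$ carries $\Psi_i$ into $\Psi_j$, realised at the level of root spaces by the operator $\operatorname{ad}e_\ga$. The key preliminary remark is that, since $\al,\be\in\Psi$ and $\ga=\be-\al\in\De_+$, Lemma~\ref{three_roots} yields $\ga\notin\Psi$, so that $\mf g_\ga\subset\mf n$; we use the fixed nonzero vector $e_\ga\in\mf g_\ga$. Recall that $\mf n$ is a subalgebra of $\mf u$ and is $S$-stable, so that $\mf n_\la=\mf n\cap\mf u_\la$ for every weight~$\la$; by construction $\mf n\cap\mf u_i=\Ker\xi_i$ is a hyperplane in $\mf u_i$ (as $\xi_i$ is nonzero on $\mf g_\al$), while $c_{\vf_j}=1$ says that $\mf n_{\vf_j}$ is a hyperplane in $\mf u_{\vf_j}$. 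Finally, since $\tau(\ga)=\tau(\be)-\tau(\al)=\vf_j-\vf_i$, the operator $A=(\operatorname{ad}e_\ga)|_{\mf u_i}$ raises the $S$-weight by $\vf_j-\vf_i$ and hence maps $\mf u_i$ into $\mf u_{\vf_j}$.

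First I would check that $A$ descends to a linear map $\overline A\colon\mf u_i/(\mf n\cap\mf u_i)\to\mf u_{\vf_j}/\mf n_{\vf_j}$ between one-dimensional spaces. Indeed, for $x\in\mf n\cap\mf u_i$ both $e_\ga$ and $x$ lie in the subalgebra $\mf n$, so $[e_\ga,x]\in\mf n$; as this bracket also lies in $\mf u_{\vf_j}$, it lies in $\mf n\cap\mf u_{\vf_j}=\mf n_{\vf_j}$. Therefore $A(\mf n\cap\mf u_i)\subset\mf n_{\vf_j}$ and $\overline A$ is well defined.

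Next I would show $\overline A\ne0$ by testing it on the given root~$\al$. Since $\al$, $\ga$, and $\al+\ga=\be$ are all roots, the bracket $[e_\ga,e_\al]$ is a nonzero vector of $\mf g_\be$; because $\be\in\Psi_j$ we have $\mf g_\be\not\subset\mf n$, and as $\mf g_\be$ is one-dimensional and contained in $\mf u_{\vf_j}$ this forces $[e_\ga,e_\al]\notin\mf n_{\vf_j}$. On the other hand $\al\in\Psi_i$ gives $e_\al\notin\mf n$, so the class of $e_\al$ in $\mf u_i/(\mf n\cap\mf u_i)$ is nonzero. Thus $\overline A$ sends a nonzero vector to a nonzero vector, and being a map of one-dimensional spaces it is injective.

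Finally I would run the same computation at an arbitrary $\al'\in\Psi_i$. Then $e_{\al'}\notin\mf n$, so its class in $\mf u_i/(\mf n\cap\mf u_i)$ is nonzero, whence $A(e_{\al'})=[e_\ga,e_{\al'}]\notin\mf n_{\vf_j}$; in particular this bracket is nonzero, which forces $\al'+\ga$ to be a root, necessarily positive. The nonzero vector $[e_\ga,e_{\al'}]$ then lies in $\mf g_{\al'+\ga}\subset\mf u_{\vf_j}$ and avoids $\mf n_{\vf_j}$, so $\mf g_{\al'+\ga}\not\subset\mf n$, i.e.\ $\al'+\ga\in\Psi_j$. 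This establishes $\Psi_i+\ga\subset\Psi_j$, and the injectivity of the map $\al'\mapsto\al'+\ga$ gives $|\Psi_i|\le|\Psi_j|$. The step requiring the most care — the real obstacle — is the passage from the single instance $\al\mapsto\be$ to all of $\Psi_i$: it is precisely the codimension-one structure encoded by the hyperplanes $\Ker\xi_i$ and $\mf n_{\vf_j}$ (that is, $c_{\vf_i}=c_{\vf_j}=1$) that upgrades one nonzero value of $\overline A$ into nonvanishing on every nonzero class, and hence the desired conclusion for every $\al'$.
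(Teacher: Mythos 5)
Your proof is correct and rests on the same mechanism as the paper's: $\operatorname{ad}e_\ga$ preserves $\mf n$ (since $\ga\notin\Psi$ by Lemma~\ref{three_roots}), sends $e_\al$ to a nonzero multiple of $e_\be\notin\mf n$, and the codimension-one conditions then force $[e_\ga,e_{\al'}]\notin\mf n$ for every $\al'\in\Psi_i$. The paper packages this as a proof by contradiction using the one-dimensional subspace $(\mf g_\al\oplus\mf g_{\al'})\cap\mf n$ rather than your induced map of one-dimensional quotients, but the content is identical.
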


\begin{proof}
It follows from Lemma~\ref{three_roots} that $\ga \notin \Psi$ and
$\mf g_\ga \subset \mf n$. Assume that $\al' + \ga \notin \Psi_j$
for some element $\al' \in \Psi_i$. Consider the one-dimensional
subspace $(\mf g_\al \oplus \mf g_{\al'}) \cap \mf n$ and choose a
non-zero element $x = pe_\al + p'e_{\al'}$ in it, where $p, p' \in
\mb C$. Note that $p \ne 0$, $p' \ne 0$, and $[x, e_\ga] \in \mf n$.
Fix $q \ne 0$ such that $[e_\al, e_\ga] = qe_\be$. Then $[x, e_\ga]
= pqe_\be + p'[e_{\al'}, e_\ga]$. If $\al' + \ga\in\De$, then the
conditions $\tau(\al' + \ga) = \vf_j$ and $\al' + \ga \notin \Psi_j$
imply $[e_{\al'}, e_\ga] \in \mf g_{\al'+\ga}\subset \mf n$, whence
$e_\be \in \mf n$. If $\al' + \ga \notin \De$, then $[e_{\al'},
e_\ga]=0$, and again $e_\be \in \mf n$. Hence we have obtained that
$\mf g_\be \subset \mf n$, which contradicts the condition $\be \in
\Psi_j$.
\end{proof}

\begin{corollary}\label{non-sharp_angles}
For each $j = 1, \ldots, K$ the angles between the roots in $\Psi_j$
are pairwise non-acute, and these roots are linearly independent.
\end{corollary}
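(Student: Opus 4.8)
The plan is to establish the non-acuteness of the angles first, and then to obtain linear independence as an immediate consequence of Lemma~\ref{vectors_in_euclid_space}.

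To show that $(\al,\be) \le 0$ for any two distinct roots $\al, \be \in \Psi_j$, I would argue by contradiction, assuming $(\al,\be) > 0$. By the standard property of root systems, two roots forming an acute angle have a difference that is again a root; since $\al$ and $\be$ are distinct positive roots, exactly one of $\be - \al$ and $\al - \be$ lies in $\De_+$, and by symmetry I may assume $\ga = \be - \al \in \De_+$. I then apply Proposition~\ref{crucial} with $i = j$, taking the two roots $\al, \be \in \Psi_j$: it yields $\Psi_j + \ga \subset \Psi_j$. The key observation is that translation by the nonzero vector $\ga$ is injective, so this inclusion of the finite set $\Psi_j$ into itself is forced to be an equality, whence $\Psi_j + k\ga = \Psi_j$ for every $k \ge 0$. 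Choosing a root of maximal height in $\Psi_j$ and adding $\ga$, which strictly raises the height, produces an element of $\Psi_j$ of larger height, a contradiction. Hence $(\al, \be) \le 0$.

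For the linear independence, I would note that all roots in $\Psi_j$ are positive and therefore lie in a common open half-space of $Q$. Together with the non-acuteness just established, this lets Lemma~\ref{vectors_in_euclid_space} apply directly and yield the linear independence of the roots in $\Psi_j$.

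The main obstacle is the reduction to Proposition~\ref{crucial}: one must recognize that an acute angle forces a positive-root difference $\ga$, and that feeding this into Proposition~\ref{crucial} with the single index $j$ turns it into a self-translation of the finite set $\Psi_j$, which is impossible because $\ga$ has positive height. Once this mechanism is identified, the remaining steps are routine.
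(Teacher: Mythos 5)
Your proof is correct and follows essentially the same route as the paper: assume an acute angle, get $\ga=\be-\al\in\De_+$, apply Proposition~\ref{crucial} with $i=j$ to obtain the impossible inclusion $\Psi_j+\ga\subset\Psi_j$, and then invoke Lemma~\ref{vectors_in_euclid_space}. The only difference is that you spell out why $\Psi_j+\ga\subset\Psi_j$ is absurd (the height argument), which the paper leaves implicit.
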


\begin{proof}
For $|\Psi_j| = 1$ there is nothing to prove. For $|\Psi_j| \ge 2$,
suppose that two different roots $\al, \be \in \Psi_j$ satisfy
$(\al,\be)>0$. Then the vector $\gamma = \be - \al$ is a root.
Without loss of generality we may assume that $\ga \in \De_+$. Then
by Proposition~\ref{crucial} we have $\Psi_j + \gamma \subset
\Psi_j$, which is false. Therefore for any two different roots $\al,
\be \in \Psi_j$ we have $(\al, \be) \le 0$. Now, the linear
independence of all roots in $\Psi_j$ follows from
Lemma~\ref{vectors_in_euclid_space}.
\end{proof}

Proposition~\ref{crucial} enables one to introduce a partial order
on the set $\widetilde \Psi = \{\Psi_1, \ldots, \Psi_K\}$ as
follows. For $i \ne j$ we write $\Psi_i \doubleprec \Psi_j$ if
$\Psi_i + \ga \subset\Psi_j$ for some root $\ga\in\De_+$. We write
$\Psi_i \prec \Psi_j$ if $i = j$ or there is a chain $\Psi_i =
\Psi_{k_1}, \Psi_{k_2}, \ldots, \Psi_{k_{m-1}}, \Psi_{k_m} = \Psi_j$
such that $\Psi_{k_p} \doubleprec \Psi_{k_{p+1}}$ for all $p = 1,
\ldots, m - 1$. In particular, $\Psi_i \prec \Psi_j$ if $\Psi_i
\doubleprec \Psi_j$. Clearly, the relation $\prec$ is transitive.
Further, to each set $\Psi_i$ we assign the number $\rho(\Psi_i) =
\sum \limits_{\al \in \Psi_i} \hgt \al$. Then for $\Psi_i
\doubleprec \Psi_j$ we have $\rho(\Psi_i) < \rho(\Psi_j)$. Hence for
$i \ne j$ the relations $\Psi_i \prec \Psi_j$ and $\Psi_j \prec
\Psi_i$ cannot hold simultaneously. Thus the relation $\prec$ is
indeed a partial order on~$\widetilde \Psi$.

For $i = 1, \ldots, K$ we say that a root $\al \in \Psi_i$ is
\emph{maximal} if the set $\Psi_i$ is maximal in $\widetilde \Psi$
with respect to the partial order~$\prec$.

\begin{lemma} \label{Psi_maximal_elements}
Let $\Psi_{i_1}, \ldots, \Psi_{i_m}$ be all maximal elements of the
partially ordered set $\widetilde \Psi$. Then the angles between the
roots in the set $\Psi_{i_1} \cup \ldots \cup \Psi_{i_m}$
\textup{(}that is, the set of all maximal roots\textup{)} are
pairwise non-acute, and these roots are linearly independent.
\end{lemma}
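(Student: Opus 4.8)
The plan is to treat the two assertions in turn, reducing the angle condition to a statement about the partial order $\prec$ and then obtaining linear independence from Lemma~\ref{vectors_in_euclid_space}.

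First I would show that the angles are pairwise non-acute. Take two distinct roots $\al, \be$ in $\Psi_{i_1} \cup \ldots \cup \Psi_{i_m}$. If they lie in the same maximal set $\Psi_{i_p}$, then non-acuteness is immediate from Corollary~\ref{non-sharp_angles}. So suppose $\al \in \Psi_{i_p}$ and $\be \in \Psi_{i_q}$ with $p \ne q$, and assume for contradiction that $(\al, \be) > 0$. Since $\al \ne \be$ are roots with positive inner product, their difference is a root; exchanging the roles of $\al$ and $\be$ if necessary, I may assume $\ga = \be - \al \in \De_+$. Then Proposition~\ref{crucial} gives $\Psi_{i_p} + \ga \subset \Psi_{i_q}$, i.e.\ $\Psi_{i_p} \doubleprec \Psi_{i_q}$ and hence $\Psi_{i_p} \prec \Psi_{i_q}$. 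As $i_p \ne i_q$, this contradicts the maximality of $\Psi_{i_p}$ in $\widetilde\Psi$. Therefore $(\al, \be) \le 0$ in all cases.

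It then remains to establish linear independence, and here I would invoke Lemma~\ref{vectors_in_euclid_space}. The roots under consideration have pairwise non-acute angles by the previous step, so it suffices to check that they all lie in a common half-space. But each of them lies in $\De_+$, and every positive root has strictly positive inner product with any regular element $v$ in the interior of the dominant Weyl chamber; thus all the roots lie in the open half-space $\{x \mid (v, x) > 0\}$. Lemma~\ref{vectors_in_euclid_space} then yields their linear independence.

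The main obstacle is the cross-set angle estimate, i.e.\ the case of $\al, \be$ lying in different maximal sets. The crux there is that a positive inner product forces, via Proposition~\ref{crucial}, one maximal set to sit strictly below another in the order $\prec$, which is incompatible with both being maximal; the key is to recognize that $\be - \al$ (or $\al - \be$) being a positive root is exactly the relation $\doubleprec$. Once this is in place, the linear independence is a routine application of the half-space lemma.
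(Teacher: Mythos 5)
Your proposal is correct and follows essentially the same route as the paper: reduce to the cross-set case, use Proposition~\ref{crucial} to turn a positive inner product into a relation $\Psi_{i_p} \doubleprec \Psi_{i_q}$ contradicting maximality, and then apply Lemma~\ref{vectors_in_euclid_space} together with Corollary~\ref{non-sharp_angles}. The only difference is that you make explicit the half-space verification (all roots being positive), which the paper leaves implicit.
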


\begin{proof}
In view of Lemma~\ref{vectors_in_euclid_space} and
Corollary~\ref{non-sharp_angles} it suffices to show that for $p \ne
q$ the angle between any two roots $\al \in \Psi_{i_p}$ and $\be \in
\Psi_{i_q}$ is non-acute. Assume the converse. Then $\ga = \be -
\al$ is a root. Without loss of generality it may be assumed that
$\ga \in \De_+$. By Proposition~\ref{crucial} we get $\Psi_{i_p} +
\ga \subset \Psi_{i_q}$, whence $\Psi_{i_p} \prec \Psi_{i_q}$. The
latter relation contradicts the maximality of the set $\Psi_{i_p}$
in~$\widetilde \Psi$.
\end{proof}

\begin{proposition}\label{linear_functions_1}
Suppose that $1 \le i, j \le K$, $i \ne j$, $\Psi_i \doubleprec
\Psi_j$ and $\Psi_i + \ga \subset\Psi_j$ for a root $\ga \in \De_+$.
Then, up to proportionality, the linear function $\xi_i \in \mf
u_i^*$ is uniquely determined by the linear function~$\xi_j \in \mf
u_j^*$. More precisely, there is a constant $c_{ij} \ne 0$ such that
$\xi_i(x) = c_{ij} \xi_j([x, e_\ga])$ for all $x \in \mf u_i$.
\end{proposition}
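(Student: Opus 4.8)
The plan is to recognize $\xi_i$ as essentially the pullback of $\xi_j$ along the operator $\operatorname{ad} e_\ga$ restricted to $\mf u_i$, and then to fix the scalar by a codimension-one argument. First I would record how $\operatorname{ad} e_\ga$ acts: since $\Psi_i + \ga \subset \Psi_j$, for every $\al \in \Psi_i$ the sum $\al + \ga$ is a root lying in $\Psi_j$, so $[\mf g_\al, e_\ga] = \mf g_{\al + \ga} \subset \mf u_j$ and the map $\mf g_\al \to \mf g_{\al+\ga}$ is a linear isomorphism. Hence $\operatorname{ad} e_\ga$ carries $\mf u_i$ into $\mf u_j$, and I may define $\eta \in \mf u_i^*$ by $\eta(x) = \xi_j([x, e_\ga])$. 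The proposition reduces to showing that $\eta$ is proportional to $\xi_i$ with a nonzero coefficient.

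Next I would locate $\ga$ using the earlier lemmas. Writing $\ga = \be - \al$ with $\al \in \Psi_i$ and $\be = \al + \ga \in \Psi_j$, the roots $\al$ and $\be$ are distinct because their images $\vf_i \ne \vf_j$ under $\tau$ differ; thus Lemma~\ref{three_roots} yields $\ga \notin \Psi$, i.e. $\mf g_\ga \subset \mf n$ and $e_\ga \in \mf n$. Since $\mf n$ is a Lie subalgebra containing $e_\ga$, for any $x \in \Ker \xi_i = \mf n \cap \mf u_i$ the bracket $[x, e_\ga]$ lies in $\mf n$ and, by the previous paragraph, also in $\mf u_j$; therefore $[x, e_\ga] \in \mf n \cap \mf u_j = \Ker \xi_j$, which gives $\Ker \xi_i \subset \Ker \eta$. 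As $\xi_i \ne 0$ has a hyperplane kernel in $\mf u_i$, this inclusion forces $\eta = c\,\xi_i$ for some scalar $c$.

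The remaining point, and the only place where anything could go wrong, is showing $c \ne 0$, i.e. $\eta \not\equiv 0$. For this I would take any $\al \in \Psi_i$ and a nonzero $x \in \mf g_\al$; then $[x, e_\ga]$ is a nonzero vector of $\mf g_{\al+\ga}$, and since $\al + \ga \in \Psi_j$ the restriction of $\xi_j$ to $\mf g_{\al+\ga}$ is nonzero (this is the general fact, recorded just before Lemma~\ref{three_roots}, that $\xi_j$ is nonzero on each root space indexed by $\Psi_j$). Hence $\eta(x) = \xi_j([x, e_\ga]) \ne 0$, so $c \ne 0$. Setting $c_{ij} = c^{-1}$ then gives $\xi_i(x) = c_{ij}\,\xi_j([x, e_\ga])$ for all $x \in \mf u_i$, and the proportionality constant is clearly nonzero and unique up to the indeterminacy already present in $\xi_i$ and $\xi_j$.

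I expect everything except the nonvanishing of $\eta$ to be immediate from the structure of root subspaces and the fact that $\mf n$ is closed under the bracket; the delicate bookkeeping is merely in keeping straight which functional is a multiple of which, and in verifying that $\xi_j$ does not vanish on the target root spaces $\mf g_{\al+\ga}$, which is exactly what guarantees that $\xi_i$ is genuinely recovered from $\xi_j$ rather than killed.
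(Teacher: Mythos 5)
Your argument is correct and is essentially the paper's own proof: both define the auxiliary functional $x \mapsto \xi_j([x,e_\ga])$ on $\mf u_i$, use Lemma~\ref{three_roots} to place $e_\ga$ in $\mf n$ so that $\mf n \cap \mf u_i$ is carried into $\mf n \cap \mf u_j$, and conclude proportionality with nonzero constant from the nonvanishing of $\xi_j$ on each root space $\mf g_{\al+\ga}$, $\al \in \Psi_i$. The only cosmetic difference is the direction in which you write the proportionality before inverting the scalar.
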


\begin{proof}
Taking into account Lemma~\ref{three_roots}, we obtain $\ga \notin
\Psi$, whence $\mf g_\ga \subset \mf n$. From the condition $\Psi_i
+ \ga \subset\Psi_j$ it follows that the linear map $l\colon \mf u_i
\to \mf u_j$, $x \mapsto [x, e_\ga]$, is injective. Consider the
linear function $\xi'_i \in \mf u_i^*$ such that $\xi'_i(x) =
\xi_j(l(x))$ for $x \in \mf u_i$. As $\xi_j(e_\al) \ne 0$ for every
$\al \in \Psi_j$, we have $\xi'_i \ne 0$. Since $l(\mf n \cap \mf
u_i)\subset \mf n \cap \mf u_j$, then $\xi'_i(x) = 0$ for every $x
\in \mf n \cap \mf u_i$. From this it immediately follows that
$\xi_i = c_{ij}\xi'_i$ for some $c_{ij} \ne 0$, that is, $\xi_i(x) =
c_{ij} \xi_j([x, e_\ga])$ for all $x \in \mf u_i$.
\end{proof}

\begin{theorem}\label{S_and_Psi}
Up to conjugation by elements of\, $T$, a connected solvable
spherical subgroup $H \subset G$ standardly embedded in $B$ is
uniquely determined by its maximal torus $S \subset T$ and the set
$\Psi \subset \De_+$.
\end{theorem}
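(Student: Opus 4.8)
The plan is to reduce the statement to the unipotent radicals and then to the linear functionals~$\xi_i$. Since $T$ is abelian and $S \subset T$, conjugation by any $t \in T$ fixes $S$ pointwise; so it suffices to show that two connected solvable spherical subgroups $H = S \rightthreetimes N$ and $H' = S \rightthreetimes N'$ standardly embedded in $B$, sharing the same torus $S$ and the same set $\Psi$, satisfy $\Ad(t)\mf n = \mf n'$ for some $t \in T$. This gives $tNt^{-1} = N'$ and hence $tHt^{-1} = H'$.

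First I would check that the pair $(S,\Psi)$ already pins down almost all of~$\mf n$. The map $\tau$ is recovered from~$S$, and for $\la \in \Phi$ one has $c_\la = 0$ exactly when no root $\al \in \De_+$ with $\tau(\al) = \la$ lies in~$\Psi$, in which case $\mf n_\la = \mf u_\la$; the remaining weights are precisely the nonzero values $\vf_i = \tau(\al)$, $\al \in \Psi$, and $\Psi_i = \Psi \cap \tau^{-1}(\vf_i)$. For these, $\mf n_{\vf_i}$ is the sum of $\bigoplus_{\al:\,\tau(\al)=\vf_i,\,\al\notin\Psi}\mf g_\al$ (which lies in $\mf n$ and is determined by $(S,\Psi)$) and the hyperplane $\mf n \cap \mf u_i = \Ker\xi_i$ of $\mf u_i$. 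Thus the only information not fixed by $(S,\Psi)$ consists of the functionals $\xi_i \in \mf u_i^*$, each up to a scalar; denote the analogous functionals for~$H'$ by~$\xi'_i$. Conjugation by $t \in T$ scales $e_\al$ by $\al(t)$ and so replaces $\xi_i$ by a functional proportional to $\sum_{\al\in\Psi_i}\al(t)^{-1}\xi_i(e_\al)\,e_\al^*$. Since the $T$-stable subspaces carrying the $c_\la = 0$ weights coincide for the two subalgebras, $\Ad(t)\mf n$ and $\mf n'$ agree on the weight $\vf_i$ if and only if $\al(t)\,\xi'_i(e_\al)/\xi_i(e_\al)$ is constant in $\al \in \Psi_i$, i.e. $(\al-\be)(t)$ equals a prescribed nonzero constant for all $\al,\be \in \Psi_i$.

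Next I would impose these conditions for the maximal sets $\Psi_{i_1},\ldots,\Psi_{i_m}$ only. By Lemma~\ref{Psi_maximal_elements} the roots of $\Psi_{i_1}\cup\ldots\cup\Psi_{i_m}$ are linearly independent, whence so are the differences $\al - \be_p$ for a chosen base root $\be_p \in \Psi_{i_p}$. A homomorphism from $T$ to a torus given by linearly independent characters is surjective, so some $t \in T$ realizes all the prescribed values simultaneously; after replacing $H$ by $tHt^{-1}$ we may assume $\Ker\xi_{i_p} = \Ker\xi'_{i_p}$ for every maximal~$\Psi_{i_p}$. I expect this to be the crux of the argument, and it is exactly the linear independence of the maximal roots that removes any compatibility obstruction among the conditions.

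Finally I would propagate the agreement down the poset $(\widetilde\Psi, \prec)$. Every $\Psi_i$ lies below some maximal $\Psi_{i_p}$ through a chain of relations $\doubleprec$, and this chain together with the roots $\ga$ realizing $\Psi_i + \ga \subset \Psi_j$ is determined by $\Psi$ alone, hence common to $H$ and~$H'$. Applying Proposition~\ref{linear_functions_1} to each subalgebra separately expresses $\xi_i$ (up to a scalar) as $x \mapsto \xi_j([x,e_\ga])$ whenever $\Psi_i \doubleprec \Psi_j$ with $\Psi_i + \ga \subset \Psi_j$, and likewise for $\xi'_i,\xi'_j$; therefore $\Ker\xi_j = \Ker\xi'_j$ forces $\Ker\xi_i = \Ker\xi'_i$. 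Running this along the chains from the maximal elements downward yields $\Ker\xi_i = \Ker\xi'_i$ for all~$i$, which together with the weights of type $c_\la = 0$ gives $\Ad(t)\mf n = \mf n'$ and hence $tHt^{-1} = H'$.
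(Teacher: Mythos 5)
Your proposal is correct and follows essentially the same route as the paper: the pair $(S,\Psi)$ determines everything except the functionals $\xi_i$ up to scalar, Proposition~\ref{linear_functions_1} propagates the $\xi_i$ down the poset from the maximal elements, and Lemma~\ref{Psi_maximal_elements} (linear independence of the maximal roots) is what lets a single $t \in T$ normalize all the maximal $\xi_i$ simultaneously. The only difference is presentational: you conjugate $H$ directly onto $H'$, whereas the paper normalizes each subgroup to a prescribed canonical form of the maximal $\xi_i$.
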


\begin{proof}
The set of weights $\{\vf_1, \ldots, \vf_K\}$ is uniquely determined
as the image of the set $\Psi$ under the map~$\tau$. For every $i =
1, \ldots, K$ the set $\Psi_i$ is uniquely determined as the set
$\{\al \in \Psi \mid \tau(\al) = \vf_i\}$. Further, by
Proposition~\ref{linear_functions_1} from the condition $\Psi_i
\prec \Psi_j$ it follows that, up to proportionality, the linear
function $\xi_i$ is uniquely determined by the linear function
$\xi_j$, therefore, up to proportionality, the whole set of linear
functions $\xi_1, \ldots ,\xi_K$ is uniquely determined by the
linear functions $\xi_j$ corresponding to the maximal elements
$\Psi_j$ of~$\widetilde\Psi$.

Conjugation by an element $t\in T$ takes the algebra $\mf h$ to an
isomorphic one and acts on each space $\mf g_\al$, $\al \in \De_+$,
as the multiplication by~$\al(t)$. By
Lemma~\ref{Psi_maximal_elements} all maximal roots in the set $\Psi$
are linearly independent. Therefore, under an appropriate choice of
$t\in T$, all linear functions $\xi_i$ corresponding to maximal
elements $\Psi_i$ of $\widetilde\Psi$ can be simultaneously reduced
to a prescribed form. For example, we may require each $\xi_i$ to be
the sum of all coordinates in the basis $\{e_\al \mid \al \in
\Psi_i\}$. The latter is possible because $\left. \xi_i \right|_{\mf
g_\al} \ne 0$ for all $\al \in \Psi_i$.
\end{proof}

%-------------------------------------------------------------------------
\section{Active root theory}
\label{active_root_theory}
%-------------------------------------------------------------------------

As we have seen in \S\,\ref{simple_applications} (see
Theorem~\ref{S_and_Psi}), up to conjugation by elements of $T$, a
connected solvable spherical subgroup $H \subset G$ standardly
embedded in $B$ is uniquely determined by its maximal torus $S
\subset T$ and the set $\Psi \subset \De_+$. This section is devoted
to study of roots contained in $\Psi$ (in
\S\,\ref{active_roots_subsection} these roots will be called
`active'), as well as the set $\Psi$ on the whole.

During this section we suppose a connected solvable spherical
subgroup $H = S \rightthreetimes N \subset G$ standardly embedded in
$B$ to be fixed. (Here $S = H \cap T$, $N = H \cap U$.) Also, we
preserve all notation introduced in \S\,\ref{main_theorem}.

\subsection{} \label{active_roots_subsection}

In this subsection we introduce the notion of an active root,
establish basic properties of active roots and find out which
positive roots may be active in dependence on the root system~$\De$.

\begin{dfn}
A root $\al \in \De_+$ is called \emph{active} if $\mf g_\al \not
\subset \mf n$.
\end{dfn}

Evidently, a root $\al$ is active if and only if $\al \in \Psi$.

\begin{lemma}\label{sum_of_two_roots}
Let $\al$ be an active root and suppose that $\al=\be+\ga$, where
$\be, \ga \in \De_+$. Then exactly one of the two roots $\be, \ga$
is active.
\end{lemma}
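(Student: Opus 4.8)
The statement splits into two claims, which I would treat separately by short, elementary arguments: (i) at least one of $\be, \ga$ is active, and (ii) at most one of them is. Combining the two yields that exactly one is active.

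For claim~(i) I would argue by contradiction, using that $\mf n$ is a Lie subalgebra of $\mf u$ (being the tangent algebra of the subgroup $N$). Suppose neither $\be$ nor $\ga$ is active; then $\mf g_\be \subset \mf n$ and $\mf g_\ga \subset \mf n$. Since $\be + \ga = \al \in \De$, the standard commutation relation for a semisimple Lie algebra gives $[\mf g_\be, \mf g_\ga] = \mf g_{\be+\ga} = \mf g_\al$, and the right-hand side is nonzero. As $\mf n$ is closed under the bracket, it follows that $\mf g_\al = [\mf g_\be, \mf g_\ga] \subset \mf n$, contradicting the activeness of $\al$. Hence at least one of $\be, \ga$ is active.

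For claim~(ii) I would invoke Lemma~\ref{three_roots}. Suppose, for contradiction, that both $\be$ and $\ga$ are active, so $\be, \ga \in \Psi$; recall also that $\al \in \Psi$ by hypothesis. From $\al = \be + \ga$ we have $\ga = \al - \be \in \De_+$. Applying Lemma~\ref{three_roots} with its two active roots taken to be $\be$ and $\al$ (so that their difference is precisely $\ga$), we conclude $\ga \notin \Psi$, which contradicts the activeness of $\ga$. Thus at most one of $\be, \ga$ is active.

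The only nonroutine ingredient is the fact that $[\mf g_\be, \mf g_\ga] = \mf g_{\be+\ga}$ is nonzero whenever $\be$, $\ga$, and $\be+\ga$ are all roots; this is a standard property of the root-space structure of a semisimple Lie algebra, and it is exactly the point at which the hypothesis $\al = \be + \ga \in \De$ is used. I do not anticipate any genuine obstacle: the lemma follows at once by combining the two arguments above.
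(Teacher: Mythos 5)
Your proposal is correct and follows essentially the same route as the paper: the paper likewise derives claim~(i) from $\mf g_\al=[\mf g_\be,\mf g_\ga]\subset\mf n$ when neither root is active, and claim~(ii) by applying Lemma~\ref{three_roots} to the pair $\be,\al$ with difference $\ga$. No gaps.
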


\begin{proof}
If neither of the roots $\be,\ga$ is active, then $\mf g_\be,\mf
g_\ga\subset\mf n$, whence $\mf g_\al=[\mf g_\be,\mf g_\ga]\subset
\mf n$, which is false. Therefore at least one of the two roots
$\be, \ga$ is active. By Lemma~\ref{three_roots} these two roots
cannot be active simultaneously.
\end{proof}

\begin{dfn}
We say that an active root $\be$ is \emph{subordinate} to an active
root $\al$ if $\al = \be + \ga$ for some $\ga \in \De_+$.
\end{dfn}

\begin{dfn}
An active root $\al$ is called \emph{maximal} if it is not
subordinate to any other active root.
\end{dfn}

We note that the notion of maximality of an active root introduced
in this definition coincides with the notion of maximality
considered in \S\,\ref{simple_applications}. In particular, if $\al$
is a maximal active root, then every active root $\be$ with
$\tau(\al) = \tau(\be)$ is also maximal.

\begin{dfn}
If $\al$ is an active root, then the set consisting of $\al$ and all
roots subordinate to $\al$ is called a \emph{family of active roots
generated by the active root $\al$}. We denote this set by~$F(\al)$.
\end{dfn}

For each root $\al \in \De_+$ let $s(\al)$ denote the number of
representations of $\al$ as a sum of two positive roots. Then by
Lemma~\ref{sum_of_two_roots} for an active root $\al$ the number of
its subordinates equals $s(\al)$, that is, $s(\al) = |F(\al)| - 1$.

\begin{lemma}\label{subordinates_diff_weights}
Let $\al$ be an active root. Then:

\textup{(a)} if $\be \in F(\al) \backslash \{\al\}$, then $\tau(\al)
\ne \tau(\be)$;

\textup{(b)} if $\be, \ga \in F(\al) \backslash \{\al\}$ and $\be
\ne \ga$, then $\tau(\be) \ne \tau(\ga)$.
\end{lemma}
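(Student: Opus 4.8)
The plan is to prove both statements by analysing the weight map $\tau$ on the family $F(\al)$, using the linear independence of the active roots with the same image under $\tau$ together with Lemma~\ref{three_roots} and Corollary~\ref{non-sharp_angles}.

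For part~(a), suppose $\be \in F(\al) \setminus \{\al\}$ and assume for contradiction that $\tau(\al) = \tau(\be)$. Since $\be$ is subordinate to $\al$, we have $\al = \be + \ga$ for some $\ga \in \De_+$. As both $\al$ and $\be$ are active with $\tau(\al) = \tau(\be)$, they lie in the same set $\Psi_i$. But then $\al - \be = \ga \in \De_+$, and by Lemma~\ref{three_roots} (applied with the roles of $\al, \be$ as the two active roots in $\Psi$ whose difference is $\ga$) we would get $\ga \notin \Psi$; this alone is consistent. The real obstruction is that two \emph{distinct} roots in the same $\Psi_i$ must, by Corollary~\ref{non-sharp_angles}, have non-acute angle, so $(\al,\be) \le 0$; yet $\al = \be + \ga$ with $\ga \in \De_+$ forces $(\al,\be) = (\be,\be) + (\ga,\be)$, and I expect a positivity argument to show this cannot be $\le 0$ unless $\be = \ga$-related in a degenerate way. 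Cleaner: if $\tau(\al) = \tau(\be)$ then $\tau(\ga) = \tau(\al) - \tau(\be) = 0$, so the weight $\tau(\ga)$ of the active-or-inactive root $\ga$ vanishes; but $\ga \in \De_+$ always has $\tau(\ga) \ne 0$ because, by the remark following Corollary~\ref{non-sharp_angles} on linear independence of the $\vf_i$ (equivalently, because $S$ acts with an open orbit so no positive root restricts trivially), a positive root cannot restrict to the zero character on~$S$. This contradiction establishes~(a).

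For part~(b), suppose $\be, \ga \in F(\al) \setminus \{\al\}$ are distinct and assume $\tau(\be) = \tau(\ga)$. Each of $\be, \ga$ is active and subordinate to $\al$, so there exist $\de, \ve \in \De_+$ with $\al = \be + \de = \ga + \ve$. From $\tau(\be) = \tau(\ga)$ and $\al = \be + \de = \ga + \ve$ we get $\tau(\de) = \tau(\ve)$. Now $\be - \ga = \ve - \de$, and $\tau(\be - \ga) = 0$. Since $\be, \ga$ are active with equal weight, they belong to the same $\Psi_i$; as they are distinct, Lemma~\ref{three_roots} forbids $\be - \ga$ (or $\ga - \be$) from being a root that also lies in $\Psi$, while Corollary~\ref{non-sharp_angles} gives $(\be,\ga) \le 0$. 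The main obstacle is ruling out that $\be - \ga$ is an inactive root: I plan to argue that $\be - \ga \in \De_+$ (after reordering) would give, via Proposition~\ref{crucial} applied to the pair $\ga, \be \in \Psi_i$, the containment $\Psi_i + (\be - \ga) \subset \Psi_i$, which is impossible since it would force $|\Psi_i|$ to strictly increase. The remaining possibility, that $\be - \ga$ is not a root at all, is excluded because $(\be,\ga) \le 0$ implies $\be - \ga$ \emph{is} a root whenever $\be \ne \ga$ and they are positive-length vectors spanning a rank-two subsystem; more directly, $\tau(\be) = \tau(\ga)$ with $\be,\ga$ distinct active roots contradicts the linear independence of roots within $\Psi_i$ guaranteed by Corollary~\ref{non-sharp_angles} once we confirm $\be,\ga \in \Psi_i$. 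This yields the required contradiction, completing~(b).

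I expect the genuinely delicate point to be handling the case distinction in~(b) cleanly: depending on the structure of $\al = \be + \de = \ga + \ve$, the difference $\be - \ga$ might be zero, a positive root, a negative root, or not a root, and each must be dispatched. The cases where $\be - \ga$ is a root reduce to Lemma~\ref{three_roots} and Proposition~\ref{crucial} as above, while the non-root case reduces to the linear-independence statement of Corollary~\ref{non-sharp_angles}. Throughout, the crucial facts I rely on are that active roots with a common $\tau$-image form a linearly independent set with pairwise non-acute angles, and that subtracting roots cannot stay inside $\Psi$.
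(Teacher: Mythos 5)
Both halves of your argument have genuine gaps, and in each case the missing step is the one the paper supplies by a direct application of Proposition~\ref{crucial}.

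In part~(a) you offer two routes and neither closes. The angle route needs $\al=\be+\ga$ with $\ga\in\De_+$ to force $(\al,\be)>0$, but this fails outside the simply-laced case: in $\ms B_2$ with $\al_2$ short, take $\al=\al_1+\al_2$ and $\be=\al_2$; then $\al-\be=\al_1$ is a positive root yet $(\al,\be)=0$, so the non-acuteness from Corollary~\ref{non-sharp_angles} yields no contradiction --- and this configuration genuinely occurs, since $\al_2$ can be the subordinate of $\al_1+\al_2$. Your ``cleaner'' route asserts that no positive root restricts to the zero character of $S$; that is false in general (the zero weight is allowed in $\Phi$ provided $\mf u_0\subset\mf n$; e.g.\ $H=U\subset\SL_2$ is spherical with $S=\{e\}$ and every positive root restricting to zero), and what you actually need, $\tau(\ga)\ne 0$ for this particular $\ga$, is equivalent to the statement being proved, so the argument is circular. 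The correct step is the paper's one-liner: $\al,\be\in\Psi_i$ and $\ga=\al-\be\in\De_+$ give $\Psi_i+\ga\subset\Psi_i$ by Proposition~\ref{crucial}, which is impossible because adding $\ga$ strictly increases the total height of the finite set $\Psi_i$.

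In part~(b) the case $\be-\ga\notin\De$ is not handled, and it is the main case (e.g.\ $\be=\al_1$, $\ga=\al_3$ inside $F(\al_1+\al_2+\al_3)$ in type $\ms A_3$). The contradiction you claim there --- that $\tau(\be)=\tau(\ga)$ ``contradicts the linear independence of roots within $\Psi_i$'' --- is vacuous: two distinct positive roots are always linearly independent, and $|\Psi_i|\ge 2$ is a perfectly legitimate situation (it is the whole point of the sets $\Psi_i$). Note also that $(\be,\ga)\le 0$ does not imply $\be-\ga\in\De$; it is $(\be,\ga)>0$ that would. The paper's proof instead writes $\al=\be+\be'=\ga+\ga'$, applies Proposition~\ref{crucial} to the pairs $(\be,\al)$ and $(\ga,\al)$ to place $\be+\ga'$ and $\ga+\be'$ in the same class $\Psi_i$ as $\al$, and then contradicts Corollary~\ref{non-sharp_angles} via the linear dependence $2\al=(\be+\ga')+(\ga+\be')$ among the three distinct roots $\al$, $\be+\ga'$, $\ga+\be'$ of $\Psi_i$. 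That three-root dependence argument is the idea your proposal is missing.
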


\begin{proof}
(a) Put $\ga = \al - \be \in \De_+$. Assume that $\tau(\al) =
\tau(\be)$. Then $\al, \be \in \Psi_i$ for some $i \in \{1, \ldots,
K\}$. By Proposition~\ref{crucial} we have $\Psi_i + \ga \subset
\Psi_i$, which is false. Hence $\tau(\al) \ne \tau(\be)$.

(b) Suppose that $\al = \be + \be' = \ga + \ga'$, where $\be',\ga'
\in \De_+$ and $\be' \ne \ga'$. Fix $i \in \{1, \ldots, K\}$ such
that $\al \in \Psi_i$. Assume that $\tau(\be) = \tau(\ga)$. Then
$\tau(\be') = \tau(\ga')$. Further, Proposition~\ref{crucial} yields
$\be + \ga',\ga + \be' \in \Psi_i$, whence $\tau(\be + \ga') =
\tau(\ga + \be') = \tau(\al)$. Note that in view of the condition
$\be \ne \ga$ the roots $\al, \be + \ga', \ga + \be'$ are different.
By Corollary~\ref{non-sharp_angles} these three roots are linearly
independent. On the other hand, there is a linear dependence $2\al =
(\be + \ga') + (\ga + \be')$, a contradiction. Thus $\tau(\be) \ne
\tau(\ga)$.
\end{proof}

\begin{corollary} \label{family_lin_ind}
If $\al$ is an active root, then all roots in $F(\al)$ are linearly
independent.
\end{corollary}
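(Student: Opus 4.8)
The plan is to reduce the asserted linear independence of the roots in $F(\al)$ to the linear independence of their images under the restriction map $\tau$, for which all the needed ingredients are already available. First I would observe that every root in $F(\al)$ is active, hence lies in one of the sets $\Psi_1, \ldots, \Psi_K$, so its $\tau$-image is one of the weights $\vf_1, \ldots, \vf_K$. By Lemma~\ref{subordinates_diff_weights} these images are pairwise distinct: part~(a) separates $\al$ from each of its subordinates, and part~(b) separates any two distinct subordinates. Thus $\tau$ restricted to $F(\al)$ is injective, and the set $\{\tau(\be) \mid \be \in F(\al)\}$ is a subset of $\{\vf_1, \ldots, \vf_K\}$.

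Next I would invoke the fact, established in \S\,\ref{simple_applications}, that the weights $\vf_1, \ldots, \vf_K$ are linearly independent in $\mf X(S)$. Since $\{\tau(\be) \mid \be \in F(\al)\}$ is a subset of this independent set, these images are themselves linearly independent. Finally, because $\tau$ is a linear map, a linear relation $\sum_{\be \in F(\al)} c_\be \be = 0$ among the roots would yield, after applying $\tau$, the relation $\sum_{\be \in F(\al)} c_\be \tau(\be) = 0$ among their images; the independence of the images then forces all $c_\be = 0$. Hence the roots in $F(\al)$ are linearly independent.

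There is no genuine obstacle here: the corollary is essentially a repackaging of Lemma~\ref{subordinates_diff_weights} together with the linear independence of the $\vf_i$. The only point requiring a moment's care is the passage from independence of the images to independence of the roots, which rests solely on the linearity of $\tau$ and needs no information about its kernel.
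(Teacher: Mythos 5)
Your proposal is correct and follows essentially the same route as the paper: Lemma~\ref{subordinates_diff_weights} gives that the weights $\tau(\be)$, $\be \in F(\al)$, are pairwise distinct, they are linearly independent because they form a subset of $\{\vf_1,\ldots,\vf_K\}$ (Theorem~\ref{solvable_spherical}), and linear independence of the images under the linear map $\tau$ forces linear independence of the roots themselves. The paper's proof is just a terser version of exactly this argument.
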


\begin{proof}
From Lemma~\ref{subordinates_diff_weights} it follows that all
weights $\tau(\be)$, where $\be \in F(\al)$, are different. By
Theorem~\ref{solvable_spherical} these weights are linearly
independent. Hence, all roots in $F(\al)$ are also linearly
independent.
\end{proof}

\begin{lemma} \label{s_of_alpha}
Suppose that $\al \in \De_+$. Then:

\textup{(a)} if $\De(\al)$ is a root system of type $\ms A$, $\ms
D$, or $\ms E$, then $s(\al) = \hgt \al - 1$;

\textup{(b)} in the general case, $s(\al) \ge |\Supp \al| - 1$;
\end{lemma}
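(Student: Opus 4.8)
The plan is to prove both parts by induction on the height $\hgt\al$, working entirely inside the indecomposable subsystem $\De(\al)$ (so that $\Supp\al$ is its full set of simple roots); this is legitimate because any representation $\al=\be+\ga$ with $\be,\ga\in\De_+$ automatically has $\Supp\be,\Supp\ga\subset\Supp\al$, so $s(\al)$ computed in $\De$ and in $\De(\al)$ coincide. Recall that $s(\al)$ counts the \emph{unordered} pairs $\{\be,\ga\}$ of positive roots with $\be+\ga=\al$, and that $s(\al)=0$ for a simple root. Throughout I use the standard root-string facts that $\be-\al_i\in\De_+$ whenever $\be\in\De_+$, $\be\ne\al_i$ and $(\be,\al_i)>0$, and that for $\hgt\al\ge2$ there is a simple root $\al_i\in\Supp\al$ with $\al-\al_i\in\De_+$.

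For part (a) the system $\De(\al)$ is simply laced, so for any two roots that are neither equal nor opposite the inner product lies in $\{-1,0,1\}$ after the usual normalization $(\mu,\mu)=2$. The base case $\hgt\al=1$ is clear, so let $\hgt\al\ge2$. I first pick a simple root $\al_i$ with $(\al,\al_i)>0$, hence $(\al,\al_i)=1$ (such a root exists since $(\al,\al)>0$ and the coefficients of $\al$ are non-negative), and set $\al'=\al-\al_i\in\De_+$, a root of height $\hgt\al-1$. The key point is a bijection between the representations of $\al$ other than $\{\al_i,\al'\}$ and all representations of $\al'$. Given a representation $\al=\be+\ga$ with $\be,\ga\ne\al_i$, the identity $(\be,\al_i)+(\ga,\al_i)=(\al,\al_i)=1$ together with the constraint $(\cdot,\al_i)\in\{-1,0,1\}$ forces exactly one summand, say $\be$, to satisfy $(\be,\al_i)=1$ and the other $(\ga,\al_i)=0$; then $\be-\al_i\in\De_+$ and $\al'=(\be-\al_i)+\ga$ is a representation of $\al'$. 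Conversely, for a representation $\al'=\de+\eta$ neither summand can equal $\al_i$ (since $\al-2\al_i$ is not a root); from $(\de,\al_i)+(\eta,\al_i)=(\al',\al_i)=-1$ exactly one summand pairs to $-1$ with $\al_i$, and adjoining $\al_i$ to it recovers a representation of $\al$. These two assignments are mutually inverse, so $s(\al)=s(\al')+1$, and induction yields $s(\al)=\hgt\al-1$.

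For part (b) I again induct on $\hgt\al$, the base case being trivial, and I isolate two statements about passing from a root $\al'\in\De_+$ to $\al=\al'+\al_i$ (with $\al_i$ simple and $\al\in\De_+$, so that $(\al',\al_i)<0$). The monotonicity statement is $s(\al)\ge s(\al')$: for each representation $\al'=\de+\eta$ the relation $(\de,\al_i)+(\eta,\al_i)=(\al',\al_i)<0$ guarantees that at least one summand pairs negatively with $\al_i$ and can therefore absorb $\al_i$ into a positive root, which defines a map from representations of $\al'$ to representations of $\al$. The gain statement is that $s(\al)\ge s(\al')+1$ whenever $\al_i\notin\Supp\al'$: here $\{\al_i,\al'\}$ is a genuine representation of $\al$ that cannot lie in the image of the previous map, since every image pair contains a summand of the form $\zeta+\al_i$ with $\zeta\in\De_+$, whereas $\al'-\al_i\notin\De_+$ when $\al_i\notin\Supp\al'$. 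Choosing, for a given $\al$ with $\hgt\al\ge2$, any simple $\al_i$ with $\al'=\al-\al_i\in\De_+$, we have $|\Supp\al|=|\Supp\al'|+1$ if $\al_i\notin\Supp\al'$ and $|\Supp\al|=|\Supp\al'|$ otherwise; in both cases the two statements combine with the inductive bound $s(\al')\ge|\Supp\al'|-1$ to give $s(\al)\ge|\Supp\al|-1$. (For types $\ms A$, $\ms D$, $\ms E$ part (b) is in any case immediate from part (a), as $\hgt\al\ge|\Supp\al|$.)

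The main obstacle is the injectivity of the map used in the monotonicity statement, i.e.\ that distinct representations of $\al'$ yield distinct representations of $\al$. The ambiguity occurs only when both summands $\de,\eta$ pair negatively with $\al_i$, so that $\al_i$ could be absorbed into either; I would resolve it by a definite rule, for instance always adjoining $\al_i$ to the negatively-pairing summand of larger height (breaking a remaining tie by a fixed total order on roots). A collision of two preimages then forces the modified summands to satisfy two opposite height inequalities whose sum reads $-2\ge0$, a contradiction, so the map is injective. Verifying that this rule behaves correctly across the remaining sign configurations, and that the various roots $\be-\al_i$ and $\de+\al_i$ produced along the way are indeed positive roots (which rests on the root-string facts recalled above), is the one place where genuine care is needed; the rest is direct bookkeeping with heights and inner products.
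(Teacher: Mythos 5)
Your part~(a) is correct and is essentially the paper's own argument: the correspondence you set up between representations of $\al'=\al-\al_i$ and the representations of $\al$ other than $\{\al_i,\al'\}$ is exactly what the paper obtains by applying $r_{\al_i}$ to both summands, since in a simply-laced system the reflection adds or subtracts $\al_i$ from precisely one of the two summands.

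Part~(b), however, has a genuine gap. The parenthetical claim that $\al=\al'+\al_i\in\De_+$ forces $(\al',\al_i)<0$ is false outside the simply-laced case, and with it your monotonicity statement $s(\al)\ge s(\al')$ fails. Concretely, take $\De$ of type $\ms C_3$ with positive roots $\ve_i\pm\ve_j$ ($i<j$) and $2\ve_i$, and simple roots $\al_1=\ve_1-\ve_2$, $\al_2=\ve_2-\ve_3$, $\al_3=2\ve_3$. For $\al=2\ve_1$ the only simple root that can be subtracted is $\al_1$, giving $\al'=\ve_1+\ve_2$ with $(\al',\al_1)=0$. One checks $s(\ve_1+\ve_2)=3$ (the pairs $\{\ve_1-\ve_2,\,2\ve_2\}$, $\{\ve_1-\ve_3,\,\ve_2+\ve_3\}$, $\{\ve_1+\ve_3,\,\ve_2-\ve_3\}$) while $s(2\ve_1)=2$ (only $\{\ve_1-\ve_2,\,\ve_1+\ve_2\}$ and $\{\ve_1-\ve_3,\,\ve_1+\ve_3\}$), so $s(\al)<s(\al')$ and no injective map of the kind you describe can exist. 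Indeed, in the last two representations of $\al'$ exactly one summand pairs negatively with $\al_1$, and absorbing $\al_1$ into it sends both to the same pair $\{\ve_1-\ve_3,\,\ve_1+\ve_3\}$; the collision does not come from your tie-breaking rule, so no refinement of that rule can repair it. (In other configurations the map is not even defined: in $\ms B_3$ with $\al'=\ve_1$, $\al_i=\ve_3$ and the representation $\ve_1=(\ve_1-\ve_2)+\ve_2$, neither summand pairs negatively with $\al_i$, although $\ve_1+\ve_3$ is a root.) The final inequality $s(2\ve_1)\ge|\Supp(2\ve_1)|-1=2$ happens to hold, but your derivation of it passes through a false intermediate inequality.

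The paper circumvents this by choosing the simple root $\al_0$ with $(\al,\al_0)>0$, so that $\ga=r_{\al_0}(\al)$ has strictly smaller height, and by transporting representations $\ga=\ga_1+\ga_2$ with $\ga_1,\ga_2\ne\al_0$ to representations of $\al$ by applying $r_{\al_0}$ to \emph{both} summands; since $r_{\al_0}$ permutes $\De_+\backslash\{\al_0\}$, injectivity is automatic, and the dichotomy $\al_0\in\Supp\ga$ versus $\al_0\notin\Supp\ga$ plays the role of your dichotomy on $\al_i\in\Supp\al'$. Your part~(b) would need to be reworked along these lines.
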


\begin{proof}
Without loss of generality we may assume that $\De = \De(\al)$.

Let us prove~(a). Since the root system $\De$ is of type $\ms A$,
$\ms D$, or $\ms E$, it follows that all roots have the same length,
therefore:

(1) a sum of two roots is a root if and only if the angle between
them equals $2\pi/3$;

(2) a difference of two roots is a root if and only if the angle
between them equals $\pi/3$;

(3) for every $\be \in \De$ and $\be_0 \in \Pi$ the root
$r_{\be_0}(\be)$ equals either of $\be - \be_0$, $\be$, $\be +
\be_0$.

Further we use induction on~$\hgt \al$. For $\hgt \al = 1$ the
assertion is true. Assume that $\hgt \al = k$ and the assertion is
true for all roots $\al' \in \De_+$ with $\hgt \al' < k$. Consider
an arbitrary simple root $\al_0$ such that $\be = \al - \al_0 \in
\De_+$. Then the angle between $\al_0$ and~$\be$ is~$2\pi/3$, whence
$\al = r_{\al_0}(\be)$. We have $\hgt \be = \hgt \al - 1$, therefore
$s(\be) = \hgt \al - 2$ by the induction hypothesis. Suppose that
$\be = \be_1 + \be_2$, where $\be_1, \be_2 \in \De_+$. Note that
neither of the sets $\Supp \be_1$, $\Supp \be_2$ coincides with
$\{\al_i\}$. Indeed, otherwise one of the roots $\be_1, \be_2$ would
coincide with $\al_i$, which is impossible since $\be - \al_i$ is
not a root. Hence $r_{\al_0}(\be_1), r_{\al_0}(\be_2) \in \De_+
\backslash \{\al_0\}$ and $\al = r_{\al_0}(\be_1) +
r_{\al_0}(\be_2)$ is a representation of $\al$ as a sum of two
positive roots. Conversely, if $\al = \al_1 + \al_2$, where $\al_1,
\al_2 \in \De_+ \backslash \{\al_0\}$, then ${\Supp \al_1 \ne
\{\al_0\}}$ and ${\Supp \al_2 \ne \{\al_0\}}$. Hence
$r_{\al_0}(\al_1), r_{\al_0}(\al_2) \in \De_+$ and $\be =
{r_{\al_0}(\al_1) + r_{\al_0}(\al_2)}$ is a representation of $\be$
as a sum of two positive roots. Thus we have established a
one-two-one correspondence between representations of $\be$ as a sum
of two positive roots and representations of $\al$ as a sum of two
positive roots different from~$\al_0$. Taking into account the
representation $\al = \al_0 + \be$, we obtain $s(\al) = s(\be) + 1 =
\hgt \al - 1$.

We now prove~(b). Again we use induction on~$\hgt \al$. For $\hgt
\al = 1$ the assertion is true. Assume that $\hgt \al = k$ and the
assertion is proved for all roots $\al' \in \De_+$ with $\hgt \al' <
k$. In view of Lemma~\ref{vectors_in_euclid_space} there is a simple
root $\al_0$ such that $(\al, \al_0) > 0$. Then $\be = \al - \al_0
\in \De_+$. Put $\ga = r_{\al_0}(\al)$. Since $(\al, \al_0)
> 0$, we have $\hgt \ga < \hgt \al$, therefore the root $\ga$ satisfies
the induction hypothesis. Namely, $s(\ga) \ge |\Supp \al| - 2$ for
$\al_0 \notin \Supp \ga$ and $s(\ga) \ge |\Supp \al| - 1$ for $\al_0
\in \Supp \ga$. In any case, the number of representations of the
form $\ga = \ga_1 + \ga_2$, where $\ga_1, \ga_2 \in \De_+
\backslash\{\al_0\}$, is at least $|\Supp \al| - 2$. For every such
a representation we have $r_{\al_0}(\ga_1), r_{\al_0}(\ga_2) \in
\De_+ \backslash \{\al_0\}$, therefore $\al = r_{\al_0}(\ga_1) +
r_{\al_0}(\ga_2)$ is a representation of $\al$ as a sum of two
positive roots. Taking into account the representation $\al = \be +
\al_0$, we obtain $s(\al) \ge |\Supp \al| - 1$.
\end{proof}

\begin{lemma}\label{number_of_subordinates}
Let $\al$ be an active root. Then:

\textup{(a)} $|F(\al)| = |\Supp \al|$;

\textup{(b)} the weights $\tau(\be)$, where $\be \in \Supp \al$, are
linearly independent;

\textup{(c)} $\langle F(\al) \rangle = \langle \Supp \al \rangle$;

\textup{(d)} if $\be \in \De_+$, $\Supp \be \subset \Supp \al$, and
$\al - \be \notin \De_+$, then the root $\be$ is not active.
\end{lemma}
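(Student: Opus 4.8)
The plan is to establish the four parts in the order (a), (c), (b), (d), each feeding the next, and to phrase everything as a dimension count inside the subspace $\langle \Supp \al \rangle$, whose dimension equals $|\Supp \al|$ because $\Supp \al$ is a linearly independent subset of $\Pi$.

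For (a) I would combine two opposite inequalities. By Lemma~\ref{sum_of_two_roots} each representation $\al = \be + \ga$ with $\be, \ga \in \De_+$ contributes exactly one active summand, and a subordinate determines its representation, so $|F(\al)| = s(\al) + 1$; Lemma~\ref{s_of_alpha}(b) then yields $|F(\al)| \ge |\Supp \al|$. For the reverse bound, note that every $\de \in F(\al)$ is either $\al$ or a subordinate $\de = \al - \ga$ with $\ga \in \De_+$, so its coefficients are bounded by those of $\al$ and hence $\Supp \de \subset \Supp \al$; thus $F(\al) \subset \langle \Supp \al \rangle$. Since the roots of $F(\al)$ are linearly independent by Corollary~\ref{family_lin_ind}, this forces $|F(\al)| \le \dim \langle \Supp \al \rangle = |\Supp \al|$, and (a) follows.

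Parts (c) and (b) are then bookkeeping. By (a) and Corollary~\ref{family_lin_ind}, $F(\al)$ is a set of $|\Supp \al| = \dim \langle \Supp \al \rangle$ independent vectors inside $\langle \Supp \al \rangle$, hence a basis of it, which is exactly (c). Moreover the proof of Corollary~\ref{family_lin_ind} in fact shows that the weights $\{\tau(\de) : \de \in F(\al)\}$ are themselves linearly independent: they are pairwise distinct by Lemma~\ref{subordinates_diff_weights}, each equals some $\vf_i$ with $c_{\vf_i} = 1$, and those weights are independent by Theorem~\ref{solvable_spherical}. Consequently $\tau$ sends the basis $F(\al)$ of $\langle \Supp \al \rangle$ to an independent set, so $\tau$ is injective on $\langle \Supp \al \rangle$; applying this to the other basis $\Supp \al$ of the same space gives (b).

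The substantive part is (d), and the key observation I would isolate is that $F(\al)$ is precisely the set of \emph{all} active roots supported in $\Supp \al$. To see this, let $\be^{(1)}, \ldots, \be^{(m)}$ be distinct active roots with support in $\Supp \al$; they lie in $\langle \Supp \al \rangle$, and since $\tau$ is injective there, their images $\tau(\be^{(k)})$ are distinct. Being active, each $\tau(\be^{(k)})$ is one of the independent weights $\vf_i$, so the $\tau(\be^{(k)})$ are independent and hence the $\be^{(k)}$ are too; therefore $m \le |\Supp \al| = |F(\al)|$. Since $F(\al)$ already supplies $|\Supp \al|$ such active roots, it exhausts them. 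Thus any active $\be \ne \al$ with $\Supp \be \subset \Supp \al$ lies in $F(\al)$, i.e.\ $\al - \be \in \De_+$; contrapositively, $\Supp \be \subset \Supp \al$ together with $\al - \be \notin \De_+$ forces $\be$ to be inactive. The hard part is exactly this counting step, and I would flag the degenerate case $\be = \al$ (where $\al - \be = 0 \notin \De_+$ yet $\al$ is active), which must be understood as excluded from the hypotheses of (d).
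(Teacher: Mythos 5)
Your proposal is correct and follows essentially the same route as the paper: the lower bound $|F(\al)|\ge|\Supp\al|$ from Lemma~\ref{s_of_alpha}(b), the upper bound from the sphericity criterion forcing the (pairwise distinct) weights of roots in $F(\al)$ to be linearly independent inside the at most $|\Supp\al|$-dimensional space $\tau(\langle\Supp\al\rangle)$, and part (d) as the observation that one more active root supported in $\Supp\al$ would overshoot this dimension count. Your two small repackagings --- deriving the upper bound via Corollary~\ref{family_lin_ind} instead of directly via the weights, and phrasing (d) as ``$F(\al)$ exhausts the active roots supported in $\Supp\al$'' (which is really Corollary~\ref{subfamilies}(a)) --- are cosmetic, and your remark that $\be=\al$ must be tacitly excluded in (d) is a fair catch of an implicit convention in the paper.
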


\begin{proof}
By Lemma~\ref{s_of_alpha}(b) we have $|F(\al)| \ge |\Supp \al|$. In
view of Lemma~\ref{subordinates_diff_weights} the weights
$\tau(\ga)$, where $\ga \in F(\al)$, are different and, by
Theorem~\ref{solvable_spherical}, linearly independent. But these
weights lie in the subspace $\tau(\langle \Supp \al \rangle) \subset
\mf X(S) \otimesZ \mb Q$ of dimension at most $|\Supp \al|$,
therefore $|F(\al)| \le |\Supp \al|$. Hence we get~(a),~(b),
and~(c).

Let us prove~(d). Suppose that $\be \in \De_+$, $\Supp \be \subset
\Supp \al$, and $\al - \be \notin \De_+$. From~(b) it follows that
$\tau(\be) \ne \tau(\ga)$ for all $\ga \in F(\al)$. If $\be$ were an
active root, by Theorem~\ref{solvable_spherical} all weights in the
set $\{\tau(\be)\} \cup \{\tau(\ga) \mid \ga \in F(\al)\}$ would be
linearly independent, which is impossible in view of~(c). Hence the
root $\be$ is not active.
\end{proof}

\begin{corollary}\label{subfamilies}
Let $\al$ be an active root. Then:

\textup{(a)} if $\be \in \Psi$ and $\Supp \be \subset \Supp \al$,
then $\be \in F(\al)$;

\textup{(b)} if $\be\in F(\al)\backslash\{\al\}$, then
$F(\be)\subset F(\al)$;

\textup{(c)} if $\al$ is maximal, then $\Supp \al \backslash \Supp
\be \ne \varnothing$ for every maximal active root $\be \ne \al$.
\end{corollary}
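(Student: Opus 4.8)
The plan is to reduce all three parts to Lemma~\ref{number_of_subordinates}, using repeatedly the elementary observation that family membership forces an inclusion of supports: whenever $\be \in F(\al)$ one has $\Supp \be \subset \Supp \al$, since either $\be = \al$ or $\be$ is subordinate to $\al$, in which case $\al = \be + \ga$ with $\ga \in \De_+$ and comparison of coefficients in the basis $\Pi$ gives the inclusion.

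First I would prove~(a), from which the other two parts follow formally. If $\be = \al$ the claim is trivial, so assume $\be \ne \al$ and suppose, for contradiction, that $\be \notin F(\al)$. Then $\be$ is not subordinate to $\al$, i.e.\ $\al - \be \notin \De_+$. Since $\be$ is active and $\Supp \be \subset \Supp \al$, Lemma~\ref{number_of_subordinates}(d) forces $\be$ to be non-active, a contradiction. Hence $\al - \be \in \De_+$, so $\be$ is subordinate to $\al$ and $\be \in F(\al)$.

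Part~(b) then follows at once: given $\be \in F(\al) \backslash \{\al\}$ and $\de \in F(\be)$, the root $\de$ is active and, by the support observation applied to $\de \in F(\be)$ and to $\be \in F(\al)$, satisfies $\Supp \de \subset \Supp \be \subset \Supp \al$; applying~(a) with $\de$ in place of $\be$ yields $\de \in F(\al)$, whence $F(\be) \subset F(\al)$. For~(c), let $\be \ne \al$ be a maximal active root and suppose $\Supp \al \backslash \Supp \be = \varnothing$, i.e.\ $\Supp \al \subset \Supp \be$. Applying~(a) with the roles of $\al$ and $\be$ interchanged gives $\al \in F(\be)$; since $\al \ne \be$, the root $\al$ is subordinate to the active root $\be$, contradicting the maximality of $\al$.

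I do not expect a genuine obstacle: all three assertions are short consequences of Lemma~\ref{number_of_subordinates} and the definitions of subordinate and maximal active root. The only point requiring slight care is the degenerate case $\be = \al$ in~(a), which must be isolated because Lemma~\ref{number_of_subordinates}(d) is invoked precisely under the hypothesis $\al - \be \notin \De_+$; once this case is set aside, the remaining steps are purely formal manipulations of supports.
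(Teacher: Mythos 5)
Your proposal is correct and follows the same route as the paper: part~(a) is exactly the contrapositive application of Lemma~\ref{number_of_subordinates}(d), and parts~(b) and~(c) are the formal consequences of~(a) that the paper leaves to the reader. The details you supply (the support inclusion for subordinate roots, the case $\be=\al$, and the contradiction with maximality in~(c)) are precisely what the paper's ``obviously'' is meant to cover.
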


\begin{proof}
In the hypothesis of~(a), by Lemma~\ref{number_of_subordinates}(d)
we get $\al - \be \in \De_+$, whence $\be \in F(\al)$. Obviously,
assertions~(b) and~(c) follow from~(a).
\end{proof}

\begin{corollary}\label{property_of_ordering}
Suppose that $\Psi_i \prec \Psi_j$ for some $i,j \in \{1, \ldots,
K\}$, $i \ne j$. Then $\Psi_i \doubleprec \Psi_j$.
\end{corollary}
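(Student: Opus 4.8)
The plan is to prove that the relation $\doubleprec$ is already transitive; this yields the corollary immediately, since the only content of $\Psi_i \prec \Psi_j$ for $i \ne j$ is the existence of a $\doubleprec$-chain, and transitivity collapses any such chain to a single step. I would not separate the two-step case but instead process an arbitrary chain directly, the key being to promote the telescoping difference of the chain's roots to a single positive root via the family machinery of \S\,\ref{active_roots_subsection} rather than by any root-system computation.

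Concretely, suppose $\Psi_i \prec \Psi_j$ with $i \ne j$, witnessed by a chain $\Psi_i = \Psi_{k_1} \doubleprec \Psi_{k_2} \doubleprec \ldots \doubleprec \Psi_{k_m} = \Psi_j$. By the definition of $\doubleprec$, for each $p$ there is a root $\ga_p \in \De_+$ with $\Psi_{k_p} + \ga_p \subset \Psi_{k_{p+1}}$. I would fix any $\al \in \Psi_i$, set $\al_0 = \al$ and $\al_p = \al_{p-1} + \ga_p$, and note that $\al_p \in \Psi_{k_{p+1}}$, so every $\al_p$ is an active root. The aim is to show that $\be := \al_{m-1} \in \Psi_j$ satisfies $\be - \al \in \De_+$. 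Once this is in hand, Proposition~\ref{crucial}, applied to the distinct roots $\al \in \Psi_i$ and $\be \in \Psi_j$ with $\be - \al \in \De_+$, gives $\Psi_i + (\be - \al) \subset \Psi_j$, which is exactly $\Psi_i \doubleprec \Psi_j$.

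To produce $\be - \al \in \De_+$ I would argue through families of active roots. For each $p$ the equality $\al_p = \al_{p-1} + \ga_p$ with $\ga_p \in \De_+$ says precisely that the active root $\al_{p-1}$ is subordinate to the active root $\al_p$, i.e. $\al_{p-1} \in F(\al_p) \setminus \{\al_p\}$. By Corollary~\ref{subfamilies}(b) this forces $F(\al_{p-1}) \subset F(\al_p)$, and chaining these inclusions gives $F(\al_0) \subset F(\al_1) \subset \ldots \subset F(\al_{m-1}) = F(\be)$. Hence $\al = \al_0 \in F(\be)$; since the difference $\be - \al = \ga_1 + \ldots + \ga_{m-1}$ is a nonzero sum of positive roots, we have $\al \ne \be$, so $\al \in F(\be) \setminus \{\be\}$. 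By the definition of subordination this means $\be = \al + \ga$ for some $\ga \in \De_+$, i.e. $\be - \al \in \De_+$, as required.

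I expect no serious obstacle here: the geometric fact that a telescoping sum of the (inactive) roots $\ga_p$ is again a single positive root is delivered for free by the family inclusions, which is what keeps the proof short. The only points demanding care are the bookkeeping that each intermediate $\al_p$ genuinely lies in the prescribed $\Psi_{k_{p+1}}$ (so that it is active and the corollary applies) and the verification that $\al \ne \be$, which is needed to legitimately invoke Proposition~\ref{crucial}.
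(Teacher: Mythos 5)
Your proof is correct and follows essentially the same route as the paper: both arguments use the family inclusions $F(\al_{p-1}) \subset F(\al_p)$ from Corollary~\ref{subfamilies}(b) to conclude that the telescoped difference $\be - \al$ is a single positive root, and then invoke Proposition~\ref{crucial}. The only cosmetic difference is that the paper establishes transitivity of $\doubleprec$ for one composition step (with the general chain following implicitly), whereas you process the whole chain at once.
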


\begin{proof}
It suffices to prove that for any $p, q, r$ such that $\Psi_p
\doubleprec \Psi_q$, $\Psi_q \doubleprec \Psi_r$ we have $\Psi_p
\doubleprec \Psi_r$. By definition of the partial order on
$\widetilde \Psi$ there are roots $\ga_{pq},\ga_{qr} \in \De_+$ such
that $\Psi_p + \ga_{pq} \subset \Psi_q$ and $\Psi_q + \ga_{qr}
\subset \Psi_r$. Consider an arbitrary root $\al \in \Psi_p$. Then
$\al + \ga_{pq} \in \Psi$, $\al + \ga_{pq} + \ga_{qr} \in \Psi$,
$\al \in F(\al + \ga_{pq})$, and $\al + \ga_{pq} \in F(\al +
\ga_{pq} + \ga_{qr})$. By Corollary~\ref{subfamilies} we obtain $\al
\in F(\al + \ga_{pq} + \ga_{qr})$. Therefore $\ga_{pq} + \ga_{qr}
\in \De_+$, whence by Proposition~\ref{crucial} we have $\Psi_p +
(\ga_{pq} + \ga_{qr}) \subset \Psi_r$, that is, $\Psi_p \doubleprec
\Psi_r$.
\end{proof}

\begin{proposition}\label{associated_simple_root}
For every active root $\al$ there exists a unique simple root
$\pi(\al) \in \Supp\al$ with the following property: if $\al = \al_1
+ \al_2$ for some roots $\al_1, \al_2 \in \De_+$, then $\al_1$
\textup{(}resp.~$\alpha_2$\textup{)} is active if and only if
$\pi(\al) \notin \Supp \al_1$ \textup{(}resp. $\pi(\al) \notin \Supp
\al_2$\textup{)}.
\end{proposition}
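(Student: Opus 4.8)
The plan is to translate the defining property of $\pi(\al)$ into a condition on supports, and then to locate the special simple root by a dimension count inside $\langle \Supp\al\rangle$.

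First I would reduce the property to a membership condition. Write $A=F(\al)\setminus\{\al\}$ for the set of proper active subordinates of $\al$. By Lemma~\ref{sum_of_two_roots}, in any decomposition $\al=\al_1+\al_2$ into positive roots exactly one summand is active, that active summand then belongs to $A$, and by Lemma~\ref{three_roots} the other summand is non-active; moreover $\Supp\al_1\cup\Supp\al_2=\Supp\al$. Consequently a simple root $\pi\in\Supp\al$ satisfies the property required in the proposition if and only if $\pi\notin\Supp\be$ for every $\be\in A$: indeed, if this holds then in each decomposition the active part avoids $\pi$ while the non-active part necessarily contains it, and conversely the decompositions $\al=\be+(\al-\be)$ with $\be\in A$ force the condition. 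Thus the simple roots with the desired property are precisely the elements of $\Supp\al\setminus\bigcup_{\be\in A}\Supp\be$, and it remains to prove that this set is a singleton.

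Next I would evaluate the union. Ordering $A$ by setting $\mu\preceq\nu$ whenever $\mu\in F(\nu)$ (a partial order by Corollary~\ref{subfamilies}(b) together with comparison of heights), let $\be_1,\dots,\be_r$ be its maximal elements. Every $\mu\in A$ lies in some $F(\be_i)$, while $F(\be_i)\subset A$ by Corollary~\ref{subfamilies}(b); hence $A=\bigcup_i F(\be_i)$, and since $\Supp\mu\subset\Supp\be_i$ for $\mu\in F(\be_i)$ we obtain $\bigcup_{\be\in A}\Supp\be=\bigcup_i\Supp\be_i$. Now $F(\al)$ is linearly independent (Corollary~\ref{family_lin_ind}) and $|F(\al)|=|\Supp\al|$ (Lemma~\ref{number_of_subordinates}(a)), so $A$ is linearly independent with $|A|=|\Supp\al|-1$, whence $\dim\langle A\rangle=|\Supp\al|-1$. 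On the other hand $\langle A\rangle=\sum_i\langle F(\be_i)\rangle=\sum_i\langle\Supp\be_i\rangle=\langle\bigcup_i\Supp\be_i\rangle$, the middle equality being Lemma~\ref{number_of_subordinates}(c) applied to each active root $\be_i$. Since $\bigcup_i\Supp\be_i$ is a set of simple roots it is linearly independent, so its cardinality equals $\dim\langle\bigcup_i\Supp\be_i\rangle=|\Supp\al|-1$.

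Finally, each $\be_i$ is subordinate to $\al$, so $\bigcup_i\Supp\be_i\subset\Supp\al$; combined with the count just obtained this shows $\Supp\al\setminus\bigcup_{\be\in A}\Supp\be$ has exactly one element, which is the desired $\pi(\al)$, unique by the reformulation of the first step. (The case $|\Supp\al|=1$ is automatic, as then $A=\varnothing$ and $\Supp\al$ is itself a singleton.) I expect the only genuinely delicate point to be the passage $\bigcup_{\be\in A}\Supp\be=\bigcup_i\Supp\be_i$ feeding into Lemma~\ref{number_of_subordinates}(c): one cannot control the support of a single subordinate, and the whole idea is to regroup $A$ into the complete families $F(\be_i)$, whose spans are pinned down by their supports. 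Everything else is bookkeeping with the already established lemmas.
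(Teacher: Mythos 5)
Your proof is correct, and it is organized differently from the one in the paper. You first observe that a simple root $\ga \in \Supp\al$ has the required property if and only if $\ga \notin \Supp\be$ for every $\be \in F(\al)\setminus\{\al\}$ (using Lemma~\ref{sum_of_two_roots} and the fact that $\Supp\al_1 \cup \Supp\al_2 = \Supp\al$ in any decomposition), and then prove that $\Supp\al \setminus \bigcup_{\be \in F(\al)\setminus\{\al\}}\Supp\be$ is a singleton by a single dimension count: $F(\al)\setminus\{\al\}$ is linearly independent of cardinality $|\Supp\al|-1$, and its span equals $\langle\bigcup_i \Supp\be_i\rangle$ after regrouping into the families $F(\be_i)$ of the maximal subordinates and invoking Lemma~\ref{number_of_subordinates}(c). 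The paper instead proves existence by induction on $\hgt\al$ (assigning to each $\ga\in\Supp\al$ a minimal-height subordinate through $\ga$ and using the inductive identification $\ga = \pi(\ga')$ to get the contradiction $|F(\al)| \ge |\Supp\al|+1$), and then proves uniqueness by essentially the same dimension count you use, but only for two candidate roots. Your route avoids the induction entirely, handles existence and uniqueness in one stroke, and yields the explicit description $\{\pi(\al)\} = \Supp\al\setminus\bigcup_{\be\in F(\al)\setminus\{\al\}}\Supp\be$ as a byproduct; the cost is that you must lean on part (c) of Lemma~\ref{number_of_subordinates} (to convert $\langle F(\be_i)\rangle$ into $\langle\Supp\be_i\rangle$ for each maximal subordinate), where the paper's induction uses only part (a). All the lemmas you cite precede the proposition in the paper, so there is no circularity; the one point worth stating explicitly is the inclusion $F(\be_i)\subset F(\al)\setminus\{\al\}$, which follows from Corollary~\ref{subfamilies}(b) together with a height comparison, exactly as you indicate.
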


\begin{proof}[Proof \textup{is by induction on $\hgt \al$.}]
If $\hgt \al = 1$, then $\al \in \Pi$ and we may put $\pi(\al) =
\al$. Now suppose that $\hgt \al = k$ and the assertion is proved
for all active roots of height at most~$k - 1$. Assume that the
required root $\pi(\al)$ does not exist. To each simple root $\ga
\in \Supp \al$ we assign an active root $\ga' \in F(\al) \backslash
\{\al\}$ such that $\ga \in \Supp \ga'$ and $\hgt \ga'$ is minimal.
Then $\ga = \pi(\ga')$ in view of the choice of $\ga'$ and the
induction hypothesis. Since the root $\pi(\ga')$ is unique, we
obtain that for different roots $\ga_1, \ga_2 \in \Supp \al$ the
corresponding roots $\ga'_1, \ga'_2 \in F(\al)$ are also different.
Thus $|F(\al)| \ge |\Supp \al| + 1$, which contradicts
Lemma~\ref{number_of_subordinates}(a). Hence there exists a root
with required properties. If there is another such simple root
$\pi'(\al) \ne \pi(\al)$, then the set $F(\al) \backslash \{\al\}$,
which is linearly independent by Corollary~\ref{family_lin_ind} and
consists of $|\Supp\al| - 1$ elements, is contained in the subspace
$\langle (\Supp\al) \backslash \{\pi(\al), \pi'(\al)\} \rangle$ of
dimension $|\Supp\al| - 2$, a contradiction. Thus, the root
$\pi(\al)$ is uniquely determined.
\end{proof}

\begin{corollary} \label{determ_family}
For every active root $\al$ the family $F(\al)$ is uniquely
determined by~$\pi(\al)$.
\end{corollary}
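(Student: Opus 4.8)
The plan is to turn the definition of $F(\al)$, which a priori depends on the subgroup $H$ through the notion of activeness, into an explicit root-theoretic formula involving only $\al$ and the simple root $\pi(\al)$. This is precisely what ``determined by $\pi(\al)$'' should mean here, the active root $\al$ itself being regarded as fixed throughout.

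First I would unwind the definitions. By construction $F(\al)$ consists of $\al$ together with all active roots subordinate to $\al$, and by definition a root $\be$ is subordinate to $\al$ exactly when $\be$ is active and $\al - \be \in \De_+$. Hence determining $F(\al)$ amounts to deciding, for each of the finitely many roots $\be \in \De_+$ with $\al - \be \in \De_+$, whether $\be$ is active.

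The decisive step is to invoke Proposition~\ref{associated_simple_root}. Applied to the decomposition $\al = \be + (\al - \be)$, it asserts that $\be$ is active if and only if $\pi(\al) \notin \Supp \be$. Feeding this equivalence into the description of the previous paragraph yields
$$F(\al) = \{\al\} \cup \{\be \in \De_+ \mid \al - \be \in \De_+, \ \pi(\al) \notin \Supp \be\}.$$
The right-hand side mentions only the fixed root $\al$ and the simple root $\pi(\al)$, so $F(\al)$ is indeed recovered from $\pi(\al)$, as claimed.

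Since the statement is an immediate consequence of Proposition~\ref{associated_simple_root}, I do not expect any genuine obstacle. The only point deserving attention is the scope of that proposition: one must check that every subordinate active root $\be$ fits a decomposition $\al = \be + \ga$ with $\ga \in \De_+$ to which the proposition applies, which holds by the very definition of subordination.
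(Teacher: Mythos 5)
Your proof is correct and is essentially the paper's intended argument: the corollary follows immediately from Proposition~\ref{associated_simple_root}, which shows that a root $\be$ with $\al - \be \in \De_+$ is active precisely when $\pi(\al) \notin \Supp \be$. The explicit formula you derive is exactly the one the paper itself uses later (in \S\,\ref{existence_first}) to reconstruct $F(\al)$ from the pair $(\al, \pi(\al))$.
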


\begin{corollary}\label{one_to_one}
If $\al$ is an active root, then the map $\pi \colon F(\al) \to
\Supp \al$ is a bijection.
\end{corollary}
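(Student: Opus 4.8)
The plan is to exploit the cardinality equality $|F(\al)| = |\Supp\al|$ from Lemma~\ref{number_of_subordinates}(a), which reduces the claim to proving that $\pi$ restricted to $F(\al)$ is surjective. First I would check that $\pi$ indeed maps $F(\al)$ into $\Supp\al$: every $\be \in F(\al)$ is an active root, so $\pi(\be)$ is defined and lies in $\Supp\be \subseteq \Supp\al$, the inclusion holding because $\be$ equals $\al$ or is subordinate to it. Since $F(\al)$ and $\Supp\al$ are finite sets of the same cardinality, a surjection between them is automatically a bijection, so it remains only to establish surjectivity.

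To prove surjectivity, fix a simple root $\de \in \Supp\al$ and look for a preimage. I would single out a minimal-height element $\be$ of the set $\{\theta \in F(\al) \mid \de \in \Supp\theta\}$, which is nonempty because it contains $\al$, and then argue that $\pi(\be) = \de$. If $\hgt\be = 1$ this is immediate, since then $\be = \de$. Otherwise I would take an arbitrary decomposition $\be = \be_1 + \be_2$ into positive roots; by Lemma~\ref{sum_of_two_roots} exactly one summand, say $\be_1$, is active, and then $\be_1 \in F(\be)\setminus\{\be\} \subseteq F(\al)$ by Corollary~\ref{subfamilies}(b), with $\hgt\be_1 < \hgt\be$. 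The minimality of $\be$ forces $\de \notin \Supp\be_1$, and since $\de \in \Supp\be \subseteq \Supp\be_1 \cup \Supp\be_2$ this yields $\de \in \Supp\be_2$.

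Running this over all decompositions of $\be$ shows that a summand is active if and only if its support does not contain $\de$ (the backward implication uses once more that exactly one summand is active). By the uniqueness clause of Proposition~\ref{associated_simple_root}, the simple root with this defining property is precisely $\pi(\be)$, whence $\pi(\be) = \de$, and $\de$ acquires the desired preimage. I expect the main obstacle to be exactly this verification that the minimal-height element carries the correct value of $\pi$; the key point making it work is that the active summand of any decomposition of $\be$ is a strictly lower subordinate, so minimality keeps $\de$ out of it, matching precisely the characterization of $\pi(\be)$ in Proposition~\ref{associated_simple_root}. A pleasant feature is that no separate induction on height is needed, since the minimality of $\be$ already supplies the inductive content.
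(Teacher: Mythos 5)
Your proof is correct and follows essentially the same route as the paper's: reduce to surjectivity via the cardinality equality $|F(\al)|=|\Supp\al|$ from Lemma~\ref{number_of_subordinates}(a), and for each $\de\in\Supp\al$ exhibit a preimage as a minimal-height element of $F(\al)$ whose support contains $\de$. The only difference is that you spell out explicitly the verification (via Lemma~\ref{sum_of_two_roots} and Corollary~\ref{subfamilies}(b)) that this minimal element satisfies the defining property of Proposition~\ref{associated_simple_root}, a step the paper leaves implicit.
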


\begin{proof}
To each simple root $\be \in \Supp \al$ we assign a root $\rho(\be)
\in F(\al)$ of minimal height such that $\be \in \Supp \rho(\be)$.
(If there are several such roots, we choose any of them.) Then by
Proposition~\ref{associated_simple_root} applied to $\rho(\be)$ we
obtain $\be = \pi(\rho(\be))$, whence $\pi$ is surjective. Since
$|F(\al)|=|\Supp \al|$ (see Lemma~\ref{number_of_subordinates}(a)),
it follows that $\pi$ is a bijection.
\end{proof}

\begin{dfn}
If $\al$ is an active root, then the root $\pi(\al) \in \Pi$
appearing in Proposition~\ref{associated_simple_root} is called the
\emph{simple root associated with the active root~$\al$}.
\end{dfn}

\begin{theorem}\label{active_roots}
Suppose that $\al$ is an active root and $\pi(\al)$ is the simple
root associated with~it. Then the pair $(\al,\pi(\al))$ is contained
in Table~\textup{\ref{table_active_roots}}.
\end{theorem}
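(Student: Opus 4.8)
The plan is to turn the statement into a purely combinatorial classification inside a single indecomposable root system, driven by one numerical identity. The engine is the equality $s(\al) = |\Supp \al| - 1$: by Lemma~\ref{sum_of_two_roots} (and the observation recorded after the definition of a family) we have $s(\al) = |F(\al)| - 1$, while Lemma~\ref{number_of_subordinates}(a) gives $|F(\al)| = |\Supp \al|$. Since Lemma~\ref{s_of_alpha}(b) provides the a priori bound $s(\al) \ge |\Supp \al| - 1$, an active root is exactly a positive root whose number of representations as a sum of two positive roots is the smallest it can possibly be. This is a strong restriction, and the theorem amounts to listing all positive roots that meet it.

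First I would reduce to the case $\Supp \al = \Pi$. The value $s(\al)$ only involves decompositions of $\al$ into positive roots, which necessarily have support inside $\Supp \al$; likewise the family $F(\al)$ lives inside $\langle \Supp \al \rangle$ by Corollary~\ref{subfamilies} and Lemma~\ref{number_of_subordinates}(d), and $\pi(\al) \in \Supp \al$. Hence one may replace $\De$ by the indecomposable system $\De(\al)$ and assume $\al$ has full support, so the task becomes: for each connected Dynkin diagram, find all full-support positive roots $\al$ with $s(\al) = \rk - 1$, and then the admissible $\pi(\al)$. For the simply-laced types $\ms A$, $\ms D$, $\ms E$ this first part is immediate: by Lemma~\ref{s_of_alpha}(a) the condition $s(\al) = \rk - 1$ is equivalent to $\hgt \al = \rk$, and together with full support this forces every coefficient of $\al$ to be $1$. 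So I only need to single out the root (unique when it exists) all of whose simple-root coefficients equal $1$; for $\ms A_n$ this is the sum of all simple roots, and for $\ms D_n$, $\ms E_6$, $\ms E_7$, $\ms E_8$ it is a short finite verification.

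The multiply-laced types $\ms B$, $\ms C$, $\ms F_4$, $\ms G_2$ are the genuine work, since there the height formula fails and $s(\al)$ must be counted by hand. Here I would run through the finitely many full-support positive roots of each system and retain precisely those with $s(\al) = \rk - 1$; for example in $\ms G_2$ the highest root $3\al_1 + 2\al_2$ has $s = 2$ and is discarded, while $\al_1 + \al_2$, $2\al_1 + \al_2$, $3\al_1 + \al_2$ survive. This bookkeeping, above all for $\ms B_n$, $\ms C_n$ and $\ms F_4$, is the main obstacle: the difficulty is organizational rather than conceptual, namely enumerating the candidate roots without omission or repetition and computing each $s(\al)$ correctly.

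Finally, for each surviving root $\al$ I would extract the admissible associated simple roots from Proposition~\ref{associated_simple_root}. The decisive observation is that $\pi(\al)$ must occur in $\al$ with coefficient $1$: if its coefficient were at least $2$ one could produce a decomposition $\al = \al_1 + \al_2$ with $\pi(\al) \in \Supp \al_1 \cap \Supp \al_2$, so that neither summand would be active, contradicting Lemma~\ref{sum_of_two_roots}. Conversely, the defining property in Proposition~\ref{associated_simple_root}, together with the bijectivity of $\pi$ on $F(\al)$ from Corollary~\ref{one_to_one}, determines exactly which coefficient-one simple roots can serve as $\pi(\al)$. Matching the resulting list of pairs $(\al, \pi(\al))$ against the entries of Table~\ref{table_active_roots} then completes the proof.
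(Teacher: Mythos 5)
Your proposal is correct and follows essentially the same route as the paper: the identity $s(\al) = |F(\al)| - 1 = |\Supp\al| - 1$ combined with Lemma~\ref{s_of_alpha}(a) handles types $\ms A$, $\ms D$, $\ms E$ by forcing all coefficients to equal $1$, the multiply-laced types are settled by the same finite enumeration of full-support roots with minimal $s(\al)$, and your exclusion of the coefficient-$\ge 2$ simple roots from being $\pi(\al)$ via a decomposition containing that root in both supports is just a repackaging of the paper's explicit decompositions $\al = \be + (\al - \be)$ with $\be = \sum_{\ga \in \Supp\al}\ga$ (and $\al = \al_1 + (2\al_1+\al_2)$ for row~6). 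No gap to report.
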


\begin{table}[h]

\caption{}\label{table_active_roots}

\begin{center}

\begin{tabular}{|c|c|c|c|}

\hline

No. & Type of $\De(\al)$ & $\al$ & $\pi(\al)$ \\

\hline

1 & any of rank $n$ & $\al_1 + \al_2 + \ldots + \al_n$ &
$\al_1, \al_2, \ldots, \al_n$\\

\hline

2 & $\ms B_n$ & $\al_1 + \al_2 + \ldots + \al_{n-1} + 2\al_n$ &
$\al_1, \al_2, \ldots, \al_{n-1}$\\

\hline

3 & $\ms C_n$ & $2\al_1 + 2\al_2 + \ldots + 2\al_{n-1} + \al_n$
& $\al_n$\\

\hline

4 & $\ms F_4$ & $2\al_1 + 2\al_2 + \al_3 + \al_4$ &
$\al_3, \al_4$\\

\hline

5 & $\ms G_2$ & $2\al_1 + \al_2$ & $\al_2$\\

\hline

6 & $\ms G_2$ & $3\al_1 + \al_2$ & $\al_2$\\

\hline
\end{tabular}

\end{center}

\end{table}

This theorem being proved below, we now explain the notation in
Table~\ref{table_active_roots}. In the column `$\al$' the expression
of $\al$ as the sum of simple roots in $\Supp \al$ is given. At
that, $j$th simple root in the diagram $\Sigma(\Supp\al)$ is denoted
by~$\al_j$. In the column `$\pi(\al)$' all possibilities for
$\pi(\al)$ for a given active root $\al$ are listed.

\begin{proof}[Proof of Theorem~\textup{\ref{active_roots}}]
If $\De(\al)$ is a root system of type $\ms A$, $\ms D$, or $\ms E$,
then, by Lemmas~\ref{s_of_alpha} and~\ref{number_of_subordinates},
$\al$ equals the sum of all simple roots in its support.

If $\De(\al)$ is of type $\ms B, \ms C, \ms F, \ms G$, then by
Lemma~\ref{number_of_subordinates}(a) we obtain
$s(\al)=|\Supp(\al)|-1$. Using case-by-case considerations, it is
not hard to find out that this equality holds for exactly two roots
with complete support\footnote{For $\Delta$ indecomposable, a root
$\al \in \De_+$ has \textit{complete support} (with respect to
$\Pi$) if $\Supp \al = \Pi$.} in root systems $\ms B_n, \ms C_n, \ms
F_4$ and exactly three roots with complete support in root
system~$\ms G_2$. All these roots are contained in
Table~\ref{table_active_roots}. For each of rows 2--5 of this table,
regard the root $\be = \sum \limits_{\ga \in \Supp \al}\ga \in
\De_+$. We have $\al - \be \in \De_+$, therefore $\al = \be + (\al -
\be)$ is a representation of $\al$ as a sum of two positive roots.
Hence $\pi(\al) \notin \Supp (\al - \be)$. For the root $\al$ in
row~6 of Table~\ref{table_active_roots} there is the representation
$\al = \al_1 + (2\al_1 + \al_2)$ as a sum of two positive roots,
whence $\pi(\al) \ne \al_1$. Thus, for every root $\al$ in rows~2--6
of Table~\ref{table_active_roots} we have obtained a subset of the
set $\Supp \al$ that does not contain~$\pi(\al)$. In each case, all
remaining possibilities for $\pi(\al)$ are listed in the
column~`$\pi(\al)$'.
\end{proof}

\begin{remark}
From the existence theorem proved in~\S\,\ref{section_existence}
below it follows that all the possibilities listed in
Table~\ref{table_active_roots} are actually realized.
\end{remark}

In order to formulate some consequences of
Theorem~\ref{active_roots}, we need to introduce the following
notion.

\begin{dfn}
Let $\al$ be an active root. A simple root $\al'\in\Supp \al$ is
called \emph{terminal with respect to $\Supp\al$} if in the diagram
$\Sigma(\Supp \al)$ the node $\al'$ is joined by an edge with
exactly one other node.
\end{dfn}

Simple case-by-case considerations of all possibilities in
Table~\ref{table_active_roots} yield the following three statements.

\begin{corollary}\label{supp&active}
If $\al$ is an active root, $|\Supp \al| \ge 2$, and $\al' \in \Supp
\al \cap F(\al)$, then the root $\al'$ is terminal with respect
to~$\Supp \al$.
\end{corollary}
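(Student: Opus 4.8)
The plan is to prove Corollary~\ref{supp&active} by a direct case-by-case inspection of Table~\ref{table_active_roots}, since the hypothesis $|\Supp\al| \ge 2$ means that $\al$ falls into exactly one of the rows~1--6 of that table (up to the type of $\De(\al)$). The key observation is that the statement asserts something about those simple roots $\al'$ that happen to lie in $F(\al)$, i.e.\ that are themselves active and simultaneously simple. By Corollary~\ref{one_to_one} the map $\pi \colon F(\al) \to \Supp\al$ is a bijection, so a simple root $\al' \in \Supp\al$ lies in $F(\al)$ precisely when $\al'$ is in the image of $\pi$ restricted to the simple roots of $F(\al)$; in fact a simple root $\al'$ is active and lies in $F(\al)$ exactly when $\al' = \pi(\al')$, which always holds for a simple root. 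So the real content is: for which simple roots $\ga \in \Supp\al$ is $\ga$ itself an element of $F(\al)$?

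First I would reformulate the membership $\al' \in \Supp\al \cap F(\al)$ in usable terms. A simple root $\al'$ lies in $F(\al)$ if and only if $\al'$ is active and subordinate to $\al$, i.e.\ $\al - \al' \in \De_+$ (or $\al' = \al$, but that forces $|\Supp\al| = 1$, excluded by hypothesis). By Lemma~\ref{number_of_subordinates}(d), a simple root $\al' \in \Supp\al$ with $\al - \al' \notin \De_+$ is \emph{not} active; conversely, if $\al - \al' \in \De_+$, then $\al'$ is subordinate to $\al$ and hence active, so $\al' \in F(\al)$. Thus the simple roots in $\Supp\al \cap F(\al)$ are exactly those $\al' \in \Supp\al$ for which $\al - \al'$ is again a positive root. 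This reduces the corollary to the following combinatorial claim about the tables: whenever $\al' \in \Supp\al$ satisfies $\al - \al' \in \De_+$, the node $\al'$ is terminal in the diagram $\Sigma(\Supp\al)$.

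Next I would verify this claim row by row in Table~\ref{table_active_roots}. For each of the six possibilities for $\al$, one writes $\al = \sum_{\ga \in \Supp\al} k_\ga \ga$ with the coefficients read off from the table, and checks for each simple $\al' \in \Supp\al$ whether $\al - \al'$ is a positive root; the standard criterion is that this holds iff the coefficient of $\al'$ remains nonnegative and the result lies in $\De_+$, which for these small rank systems is immediate from the shape of the highest (or relevant) root. For instance, in row~1 the root $\al$ is the sum of all simple roots of an $\ms A_n$, $\ms D_n$, or $\ms E_n$ diagram, and $\al - \al'$ is a root precisely for the simple roots $\al'$ sitting at the ends of the diagram, which are exactly the terminal nodes; for the multiply-laced rows 2--6 one likewise finds that the admissible $\al'$ are terminal nodes of $\Sigma(\Supp\al)$. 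In every case the coefficient structure forces $\al - \al' \in \De_+$ only at terminal nodes, which establishes the corollary.

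The main obstacle will be organizing the case analysis so that it is genuinely exhaustive without becoming tedious: one must confirm, for each diagram type occurring in the table, precisely which simple roots $\al'$ yield $\al - \al' \in \De_+$, and then match these against the terminal nodes of $\Sigma(\Supp\al)$. The potentially delicate rows are the non-simply-laced ones (rows 2--5, types $\ms B_n, \ms C_n, \ms F_4$) and especially row~6 in type $\ms G_2$, where $\al = 3\al_1 + \al_2$: here one must check carefully that subtracting a simple root lands inside $\De_+$. Since these are low-rank or explicitly parametrized systems, the verification is finite and routine, and no conceptual difficulty beyond bookkeeping is expected.
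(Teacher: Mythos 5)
Your proposal is correct and follows essentially the same route as the paper, which disposes of this corollary (together with Corollaries~\ref{extreme_root} and~\ref{neighbour}) by exactly the kind of case-by-case inspection of Table~\ref{table_active_roots} that you describe. One small caution: your claim that $\al-\al'\in\De_+$ conversely forces $\al'$ to be active (hence in $F(\al)$) is false --- by Lemma~\ref{sum_of_two_roots} exactly one of $\al'$, $\al-\al'$ is active, and it may well be $\al-\al'$ (e.g.\ when $\pi(\al)=\al'$ is a terminal node) --- but this slip is harmless, since you only need the correct implication $\al'\in F(\al)\Rightarrow\al-\al'\in\De_+$ and you verify the terminality of $\al'$ for the larger set of all simple $\al'$ with $\al-\al'\in\De_+$.
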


\begin{corollary}\label{extreme_root}
If $\al$ is an active root and a simple root $\al' \in \Supp \al$ is
terminal with respect to $\Supp \al$, then either $\al' = \pi(\al)$
or $\al' \in F(\al)$.
\end{corollary}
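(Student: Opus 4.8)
The plan is to reduce the statement to a direct inspection of Table~\ref{table_active_roots}, which by Theorem~\ref{active_roots} exhausts all possibilities for the pair $(\al,\pi(\al))$. First I would record a convenient criterion for a simple root to lie in $F(\al)$. If $\al'\in\Supp\al$ satisfies $\al-\al'\in\De_+$, then $\al=\al'+(\al-\al')$ is a sum of two positive roots, so by Lemma~\ref{sum_of_two_roots} exactly one summand is active, and by Proposition~\ref{associated_simple_root} the summand $\al'$ is active precisely when $\pi(\al)\notin\Supp\al'=\{\al'\}$, that is, when $\al'\ne\pi(\al)$; in that case $\al'$ is subordinate to $\al$ and hence $\al'\in F(\al)$. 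Therefore, for a terminal simple root $\al'$ with $\al'\ne\pi(\al)$, the assertion $\al'\in F(\al)$ follows once we know $\al-\al'\in\De_+$. Thus the whole corollary reduces to the single claim: \emph{for every terminal simple root $\al'\in\Supp\al$ with $\al'\ne\pi(\al)$ one has $\al-\al'\in\De_+$.}

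Next I would dispose of row~1 of the table by a uniform argument. Here $\al=\sum_{\ga\in\Supp\al}\ga$ has all coefficients equal to~$1$, so $\al-\al'=\sum_{\ga\in\Supp\al\backslash\{\al'\}}\ga$. Since $\al'$ is terminal, it is a leaf of the tree $\Sigma(\Supp\al)$, so $\Supp\al\backslash\{\al'\}$ is again connected and is the Dynkin diagram of an irreducible root system. It then suffices to know that the sum of all simple roots of an irreducible root system is a root. In the simply-laced case this holds because a tree on $V$ nodes has $V-1$ edges, whence $(\al-\al',\al-\al')=2$, and every element of the root lattice of norm~$2$ is a root; the remaining (multiply-laced) types, including the two infinite families $\ms B_n$ and $\ms C_n$, are checked directly. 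This gives $\al-\al'\in\De$, and positivity is immediate.

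Finally I would treat rows~2--6 by explicit computation in the standard coordinate models, using the simple-root numbering of~\cite{VO}. In each row the terminal nodes of $\Sigma(\Supp\al)$ and the admissible values of $\pi(\al)$ are read off at once, and for each terminal node $\al'\ne\pi(\al)$ one verifies that $\al-\al'$ is a positive root. For instance, in the $\ms C_n$ case of row~3 the only terminal node other than $\pi(\al)=\al_n$ is $\al_1$, and $\al-\al_1=e_1+e_2\in\De_+$; in the $\ms B_n$ case of row~2 one has $\al-\al_n=e_1\in\De_+$ at the short end and $\al-\al_1\in\De_+$ at the long end whenever $\pi(\al)\ne\al_1$; the $\ms F_4$ and $\ms G_2$ cases are entirely analogous. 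I expect the main obstacle to be twofold: ensuring the row~1 argument genuinely covers \emph{every} irreducible type and that leaf removal never breaks connectedness, and fixing the length conventions in rows~2--6 correctly, which is where I would be most careful about the $\ms F_4$ numbering, since the displayed root $\al=2\al_1+2\al_2+\al_3+\al_4$ must be made to be an actual root under the chosen labelling before the coordinate checks are meaningful.
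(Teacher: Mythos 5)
Your proposal is correct and follows essentially the same route as the paper, which disposes of this corollary (together with Corollaries~\ref{supp&active} and~\ref{neighbour}) by "simple case-by-case considerations of all possibilities in Table~\ref{table_active_roots}". Your reduction — observing via Lemma~\ref{sum_of_two_roots} and Proposition~\ref{associated_simple_root} that it suffices to check $\al-\al'\in\De_+$ for each terminal $\al'\ne\pi(\al)$, and then verifying this row by row — is exactly the intended check, carried out with more explicit detail than the paper provides.
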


\begin{corollary}\label{neighbour}
Let $\al, \al'$ be active roots such that $\al' \in F(\al)$. Suppose
that the simple root $\pi(\al)$ is terminal with respect to $\Supp
\al$ and in the diagram $\Sigma(\Supp \al)$ the node $\pi(\al')$ is
joined by an edge with the node~$\pi(\al)$. Then $\Supp \al =
\{\pi(\al)\} \cup \Supp \al'$.
\end{corollary}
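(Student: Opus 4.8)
The plan is to reduce the claim to the single reverse inclusion $\Supp\al\setminus\{\pi(\al)\}\subseteq\Supp\al'$ and then to verify that inclusion by inspecting the finitely many possibilities for the pair $(\al,\pi(\al))$ listed in Table~\ref{table_active_roots}.

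First I would settle the preliminaries and one inclusion. Since $\pi(\al')$ is joined to $\pi(\al)$ by an edge, the two simple roots are distinct; hence $\al'\ne\al$ and $\ga:=\al-\al'\in\De_+$. Applying Proposition~\ref{associated_simple_root} to the decomposition $\al=\al'+\ga$ and using that $\al'$ is active, I obtain $\pi(\al)\notin\Supp\al'$, so $\Supp\al'\subseteq\Supp\al\setminus\{\pi(\al)\}$. Because $\pi(\al)$ is terminal in $\Sigma(\Supp\al)$ with unique neighbour $\pi(\al')$, deleting the leaf $\pi(\al)$ leaves the diagram $\Sigma(\Supp\al\setminus\{\pi(\al)\})$ connected, and $\Sigma(\Supp\al')$ is a connected subdiagram of it containing $\pi(\al')$; everything thus comes down to showing this subdiagram is not proper. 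It is convenient to record that, by Corollary~\ref{one_to_one} applied to $\al$ and to $\al'$, the map $\pi$ restricts to bijections $F(\al)\setminus\{\al\}\to\Supp\al\setminus\{\pi(\al)\}$ and $F(\al')\to\Supp\al'$; since $F(\al')\subseteq F(\al)\setminus\{\al\}$ by Corollary~\ref{subfamilies}(b), the desired equality $\Supp\al'=\Supp\al\setminus\{\pi(\al)\}$ is equivalent to $F(\al')=F(\al)\setminus\{\al\}$, that is, to the statement that every proper subordinate of $\al$ is already a subordinate of $\al'$.

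Next I would run the case analysis prescribed by Theorem~\ref{active_roots}. The pair $(\al,\pi(\al))$ is one of the rows of Table~\ref{table_active_roots} in which $\pi(\al)$ is a terminal node; concretely this leaves a root of type $\ms A$, $\ms D$, or $\ms E$ with all coefficients equal to~$1$ (row~1), the two roots of type $\ms B_n$ (rows~1 and~2), the root of type $\ms C_n$ (row~3, where $\pi(\al)=\al_n$ is automatically terminal), the root of type $\ms F_4$ (row~4, where among $\{\al_3,\al_4\}$ only $\al_4$ is terminal), and the two roots of type $\ms G_2$ (rows~5 and~6). In each case the neighbour of the leaf $\pi(\al)$ is determined, and I would list all $\al'\in F(\al)$ whose associated simple root $\pi(\al')$ equals that neighbour, then check that every such $\al'$ has the full support $\Supp\al\setminus\{\pi(\al)\}$. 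That at least one such $\al'$ exists follows by applying Theorem~\ref{active_roots} to $\al'$ on its own support; for instance in type $\ms A_n$ with $\pi(\al)=\al_1$ the neighbour is $\al_2$ and $\al'=\al_2+\dots+\al_n$, while both $\ms G_2$ rows force $\al'=\al_1$.

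The crux is to rule out a \emph{smaller} admissible $\al'$: some $\al'\in F(\al)$ with $\pi(\al')$ adjacent to $\pi(\al)$ but $\Supp\al'$ a proper connected subdiagram of $\Supp\al\setminus\{\pi(\al)\}$. The governing constraint is that $\al-\al'$ must be a positive root, since $\al'\in F(\al)$. For each candidate produced by the case analysis I would compute $\al-\al'$ from the explicit coefficients of $\al$ in Table~\ref{table_active_roots} and check that it fails to be a root whenever $\Supp\al'$ is proper --- either because $\Supp(\al-\al')$ becomes disconnected (the leaf $\pi(\al)$ being cut off from the omitted nodes), or, in the laced cases, because the doubled coefficients of $\al$ leave $\Supp(\al-\al')$ connected while the resulting vector has the wrong length to be a root. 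I expect the laced cases $\ms B_n$, $\ms C_n$, $\ms F_4$, together with the care needed at the branch node of types $\ms D_n$ and $\ms E_n$ when deciding connectedness, to be the only non-routine parts; types $\ms A_n$ and $\ms G_2$ are immediate.
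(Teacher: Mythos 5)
Your proposal is correct and follows essentially the same route as the paper, which disposes of this corollary (together with Corollaries~\ref{supp&active} and~\ref{extreme_root}) by the one-line remark that it follows from case-by-case inspection of Table~\ref{table_active_roots}; your reduction via Proposition~\ref{associated_simple_root} and Corollary~\ref{one_to_one} to checking that no proper subordinate of $\al$ can carry the neighbour of $\pi(\al)$ in its support is exactly the verification the paper leaves implicit. Just make sure your case list also covers row~1 of the table for $\De(\al)$ of type $\ms B$, $\ms C$, $\ms F_4$, $\ms G_2$ (the sum of all simple roots occurs in every type, not only $\ms A$, $\ms D$, $\ms E$); the argument there is uniform, since all coefficients being equal to $1$ forces the supports of the two summands in any decomposition of $\al$ to be disjoint, so the only admissible support containing the neighbour of the leaf $\pi(\al)$ is the full complement of $\{\pi(\al)\}$.
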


\subsection{}

In this subsection we investigate how the supports of two different
active roots may intersect. The main results of the subsection are
Propositions~\ref{intersection_diff_weights}
and~\ref{intersection_equal_weights}.

\begin{lemma} \label{two_families_diff_weights}
Let $\al, \be$ be different maximal active roots such that
$\tau(\al) \ne \tau(\be)$. Suppose that $\al' \in F(\al)$, $\be' \in
F(\be)$, and $\al' \ne \be'$. Then $\tau(\al') \ne \tau(\be')$.
\end{lemma}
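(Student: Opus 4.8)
The plan is to argue by contradiction: assume $\tau(\al') = \tau(\be')$ and derive a contradiction, splitting according to whether $\al'$ and $\be'$ coincide with the maximal generators $\al$ and $\be$ of their families or are proper subordinates. Throughout I write $\vf_i = \tau(\al') = \tau(\be')$, so that $\al', \be' \in \Psi_i$, and I denote by $\Psi_a, \Psi_b$ the sets in $\widetilde\Psi$ containing $\al$ and $\be$; both are maximal in $\widetilde\Psi$ because $\al$ and $\be$ are maximal active roots, so every root in $\Psi_a$ or $\Psi_b$ is itself a maximal active root.

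The easy cases are those in which at least one of $\al', \be'$ equals its generator. If $\al' = \al$ and $\be' = \be$, the hypothesis $\tau(\al) \ne \tau(\be)$ is contradicted outright. If $\al' = \al$ but $\be' \ne \be$, then $\al' = \al$ is a maximal active root and $\be'$ has the same $\tau$-value, so by the remark following the definition of a maximal active root $\be'$ is also maximal; but $\be' \in F(\be) \backslash \{\be\}$ is a proper subordinate of $\be$ and hence not maximal, a contradiction. The case $\al' \ne \al$, $\be' = \be$ is symmetric.

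The main case, and the principal obstacle, is $\al' \ne \al$ and $\be' \ne \be$, where both $\al'$ and $\be'$ are proper subordinates. Here I would set $\ga_\al = \al - \al' \in \De_+$ and $\ga_\be = \be - \be' \in \De_+$. Applying Proposition~\ref{crucial} to the pairs $\al' \in \Psi_i$, $\al \in \Psi_a$ with difference $\ga_\al$, and to $\be' \in \Psi_i$, $\be \in \Psi_b$ with difference $\ga_\be$, yields $\Psi_i + \ga_\al \subset \Psi_a$ and $\Psi_i + \ga_\be \subset \Psi_b$. Feeding $\be'$ into the first inclusion and $\al'$ into the second produces two new active roots $\mu = \be' + \ga_\al \in \Psi_a$ and $\nu = \al' + \ga_\be \in \Psi_b$, and a direct computation gives $\mu + \nu = \al + \be$.

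It remains to extract the contradiction. Since $\Psi_a, \Psi_b$ are maximal, the four roots $\al, \be, \mu, \nu$ are all maximal active roots. They are moreover pairwise distinct: $\al \ne \be$ by hypothesis; $\mu \ne \al$ and $\nu \ne \be$ follow from $\al' \ne \be'$ (for instance $\mu = \al$ would force $\be' = \al'$); and the remaining pairs are distinguished by their $\tau$-values, as $\tau(\mu) = \vf_a$, $\tau(\nu) = \vf_b$, and $\vf_a \ne \vf_b$ because $\tau(\al) \ne \tau(\be)$. Then $\al + \be - \mu - \nu = 0$ is a nontrivial linear relation among four distinct maximal active roots, contradicting their linear independence established in Lemma~\ref{Psi_maximal_elements}. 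This completes the main case, and with it the proof. The crux is the observation that a common $\tau$-value for the two subordinates forces, via Proposition~\ref{crucial}, the auxiliary maximal roots $\mu, \nu$ with $\mu + \nu = \al + \be$; checking their pairwise distinctness is then routine bookkeeping.
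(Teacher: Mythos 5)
Your proposal is correct and follows essentially the same route as the paper: reduce to the case where both $\al'$ and $\be'$ are proper subordinates, use Proposition~\ref{crucial} to produce the active roots $\be'+(\al-\al')$ and $\al'+(\be-\be')$, and contradict the linear independence of maximal roots from Lemma~\ref{Psi_maximal_elements} via the relation $\al+\be=(\be'+\al'')+(\al'+\be'')$. The only cosmetic difference is in the intermediate case (exactly one of $\al',\be'$ equal to its generator), where you invoke the remark that maximality is determined by the $\tau$-value while the paper again applies Proposition~\ref{crucial}; both are valid.
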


\begin{proof}
If $\al' = \al$ and $\be' = \be$, then there is nothing to prove.
Hence without loss of generality we may assume that $\al'\ne\al$ and
$\al = \al' + \al''$ for some $\al'' \in \De_+$. Assume that
$\tau(\al') = \tau(\be')$. If $\be' = \be$, then by
Proposition~\ref{crucial} we obtain that $\be' + \al'' = \be +
\al''$ is an active root, which contradicts the maximality of~$\be$.
Further we assume that $\be' \ne \be$ and $\be = \be' + \be''$ for
some $\be'' \in \De_+$. Again by Proposition~\ref{crucial} we obtain
that $\al' + \be''$ and $\be' + \al''$ are active roots such that
$\tau(\al' + \be'') = \tau(\be)$ and $\tau(\be' + \al'') =
\tau(\al)$. Hence by Lemma~\ref{Psi_maximal_elements} the angles
between (different) roots $\al, \be' + \al'', \be, \al' + \be''$ are
pairwise non-acute, and these roots are linearly independent. On the
other hand, there is the linear dependence $\al + \be = \hbox{$(\be'
+ \al'')$} + \hbox{$(\al' + \be'')$}$. This contradiction proves
that $\tau(\al') \ne \tau(\be')$.
\end{proof}

\begin{lemma}\label{two_families_diff_weights_2}
Let $\al, \be$ be different maximal active roots such that
$\tau(\al) \ne \tau(\be)$. Then neither of the simple roots
$\pi(\al), \pi(\be)$ lies in the set $\Supp \al \cap \Supp \be$.
\end{lemma}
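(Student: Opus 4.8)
The plan is to argue by contradiction: suppose some simple root $\de \in \Supp \al \cap \Supp \be$ equals $\pi(\al)$ (the case $\de = \pi(\be)$ being symmetric). The key tool is Lemma~\ref{two_families_diff_weights}, which says that under the hypotheses $\tau(\al') \ne \tau(\be')$ whenever $\al' \in F(\al)$, $\be' \in F(\be)$, and $\al' \ne \be'$. The idea is to use the bijections $\pi \colon F(\al) \to \Supp \al$ and $\pi \colon F(\be) \to \Supp \be$ from Corollary~\ref{one_to_one} to locate roots in the two families sitting over the common simple root $\de$, and then to derive a weight coincidence that Lemma~\ref{two_families_diff_weights} forbids.

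First I would invoke Corollary~\ref{one_to_one} for the family $F(\be)$: since $\de \in \Supp \be$, there is a unique root $\be' \in F(\be)$ with $\pi(\be') = \de$. Next, using the assumption $\de = \pi(\al)$, I want to produce a root $\al' \in F(\al)$ that interacts with $\be'$ through $\de$. The natural candidate is to look at how $\de = \pi(\al)$ governs subordination inside $F(\al)$ via Proposition~\ref{associated_simple_root}: because $\pi(\al) = \de$, any decomposition $\al = \al_1 + \al_2$ has its active summand characterized by whether $\de$ lies in its support. The plan is to exploit this to compare $\tau$-values: specifically, I would aim to show that the restriction $\tau$ identifies the weight of the root $\al$ with a combination forcing $\tau(\al') = \tau(\be')$ for a suitable pair, contradicting Lemma~\ref{two_families_diff_weights}.

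More concretely, I expect the cleanest route to run through the linear-algebra content already assembled. By Lemma~\ref{number_of_subordinates}(b) the weights $\tau(\ga)$ for $\ga \in \Supp \al$ are linearly independent, and likewise for $\Supp \be$; moreover $\tau$ restricted to each family is injective by Lemma~\ref{subordinates_diff_weights}. Writing $\al = \sum_{\ga \in \Supp \al} k_\ga \ga$ and $\be = \sum_{\ga \in \Supp \be} l_\ga \ga$, the hypothesis $\de = \pi(\al) \in \Supp \al \cap \Supp \be$ should let me subtract off the $\de$-part and match the remaining supports. The goal is to exhibit two distinct roots $\al' \in F(\al)$ and $\be' \in F(\be)$ whose $\tau$-images coincide, which contradicts Lemma~\ref{two_families_diff_weights}; the symmetric argument rules out $\de = \pi(\be)$ as well, and since $\pi(\al), \pi(\be)$ are the only candidates associated with the two families, the conclusion follows.

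The main obstacle I anticipate is the bookkeeping needed to correctly match a root in $F(\al)$ with a root in $F(\be)$ over the shared simple root $\de$ while preserving distinctness: I must ensure the two roots I compare are genuinely different (so that Lemma~\ref{two_families_diff_weights} applies) yet have equal $\tau$-image (to reach the contradiction). This requires careful use of Proposition~\ref{associated_simple_root} to track which summand in a decomposition of $\al$ remains active once $\de = \pi(\al)$ is removed from its support, and a parallel analysis for $\be$. Handling the interplay of the two families through the single common node $\de$ — rather than through general root-system geometry — is the delicate point, but it is exactly the scenario Lemma~\ref{two_families_diff_weights} was designed to control.
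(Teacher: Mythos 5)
Your overall framing (argue by contradiction, reduce to the case $\pi(\al)\in\Supp\al\cap\Supp\be$ by symmetry, and bring in Lemma~\ref{two_families_diff_weights} together with the bijections $\pi\colon F(\al)\to\Supp\al$ and $\pi\colon F(\be)\to\Supp\be$) assembles the same ingredients as the paper, but the engine you propose for the contradiction does not work. You plan to \emph{exhibit} distinct roots $\al'\in F(\al)$, $\be'\in F(\be)$ with $\tau(\al')=\tau(\be')$ and then contradict Lemma~\ref{two_families_diff_weights}. But that lemma holds unconditionally under the standing hypotheses ($\al\ne\be$ maximal, $\tau(\al)\ne\tau(\be)$); it does not depend on where $\pi(\al)$ sits. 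So there genuinely is no such coincidence to be found, and no amount of ``subtracting off the $\de$-part and matching supports'' will produce one: equality of restrictions to $S$ cannot be read off from supports, and the only mechanism in the paper relating cross-family weights (Proposition~\ref{crucial}) runs in the opposite direction, manufacturing new active roots from a given coincidence rather than manufacturing coincidences. Your proposal therefore never identifies a concrete source of contradiction.

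The paper's actual contradiction is a dimension count, which is the step missing from your plan. Set $a=|\Supp\al|$, $b=|\Supp\be|$, $c=|\Supp\al\cap\Supp\be|$. If $\pi(\al)\in\Supp\al\cap\Supp\be$, then the at least $a-c$ roots $\ga\in F(\al)$ with $\pi(\ga)\in\Supp\al\setminus\Supp\be$, together with $\al$ itself and all $b$ roots of $F(\be)$, are pairwise distinct, so $F(\al)\cup F(\be)$ contains at least $a+b-c+1$ roots. Lemmas~\ref{subordinates_diff_weights} and~\ref{two_families_diff_weights} --- used positively, not contradicted --- show that all their $S$-weights are pairwise distinct, hence linearly independent by Theorem~\ref{solvable_spherical}; therefore $\langle F(\al)\cup F(\be)\rangle$ has dimension at least $a+b-c+1$, yet it sits inside $\langle\Supp\al\cup\Supp\be\rangle$, whose dimension is $a+b-c$. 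That is the contradiction. To repair your argument you would need to replace the hunt for an explicit weight coincidence by this counting step.
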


\begin{proof}
It suffices to show that $\pi(\al) \notin \Supp \al \cap \Supp \be$.
Assume the converse. Put $a = |\Supp \al|$, $b = |\Supp \be|$, $c =
|\Supp \al \cap \Supp \be|$. By Lemma~\ref{one_to_one} the set
$\{\ga \in F(\al) \mid \pi(\ga) \in \Supp \al \backslash \Supp
\be\}$ contains at least $a - c$ roots. Clearly, none of these
roots, nor the root~$\al$, is not contained in the set~$F(\be)$.
Hence there are at least $a - c + 1 + b$ pairwise different roots in
the set $F(\al) \cup F(\be)$. By
Lemmas~\ref{subordinates_diff_weights}
and~\ref{two_families_diff_weights}, the $S$-weights of all roots in
$F(\al)\cup F(\be)$ are different and, by
Theorem~\ref{solvable_spherical}, linearly independent. Hence the
dimension of the space $\langle F(\al) \cup F(\be) \rangle$ is at
least $a + b - c + 1$. On the other hand, this space is contained in
the space $\langle \Supp \al \cup \Supp \be \rangle$ of dimension $a
+ b - c$, a contradiction.
\end{proof}

\begin{corollary} \label{max_assoc_int}
Let $\al, \be$ be different active roots such that $\pi(\al) \in
\Supp \al \cap \Supp \be$. Then $\tau(\al) = \tau(\be)$.
\end{corollary}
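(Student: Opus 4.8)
The plan is to prove the statement by contraposition, deducing it directly from Lemma~\ref{two_families_diff_weights_2}. Since $\pi(\al) \in \Supp\al$ holds for every active root, the hypothesis $\pi(\al) \in \Supp\al \cap \Supp\be$ is really the single condition $\pi(\al) \in \Supp\be$, while the desired conclusion $\tau(\al) = \tau(\be)$ is exactly the negation of the separating hypothesis $\tau(\al) \ne \tau(\be)$ that appears in that lemma. So I would assume, towards a contradiction, that $\tau(\al) \ne \tau(\be)$, and aim to contradict $\pi(\al) \in \Supp\be$.

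Carrying this out requires no new computation. Applying Lemma~\ref{two_families_diff_weights_2} to $\al$ and $\be$ under the assumption $\tau(\al) \ne \tau(\be)$ yields that neither $\pi(\al)$ nor $\pi(\be)$ lies in $\Supp\al \cap \Supp\be$; in particular $\pi(\al) \notin \Supp\al \cap \Supp\be$, which contradicts the hypothesis of the corollary. Hence $\tau(\al) = \tau(\be)$, as claimed. In this sense the corollary is a purely formal consequence: all of the substantive work — the dimension count comparing $\langle F(\al) \cup F(\be) \rangle$ with $\langle \Supp\al \cup \Supp\be \rangle$, together with the cross-family distinctness of $S$-weights supplied by Lemma~\ref{two_families_diff_weights} — has already been discharged inside Lemma~\ref{two_families_diff_weights_2}.

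The one point that genuinely requires care, and which I expect to be the main obstacle, is that Lemma~\ref{two_families_diff_weights_2} is available only when $\al$ and $\be$ are \emph{maximal} active roots, so one must make sure the corollary is invoked in that setting. This restriction is \textbf{essential}, not cosmetic: if $\be$ is a proper subordinate of $\al$, that is $\be \in F(\al) \backslash \{\al\}$, then $\Supp\be \subseteq \Supp\al$, whence $\pi(\be) \in \Supp\be \subseteq \Supp\al$ automatically, so the hypothesis of the corollary holds for the pair $(\be,\al)$ — yet $\tau(\be) \ne \tau(\al)$ by Lemma~\ref{subordinates_diff_weights}(a). Thus the maximality assumption is precisely what rules out the subordinate configurations for which the conclusion would fail, and it is exactly the hypothesis under which the counting argument of Lemma~\ref{two_families_diff_weights_2} is legitimate. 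Consequently the only bookkeeping I would check is that the two roots to which the corollary is applied are maximal; once that is confirmed, the contrapositive argument above closes the proof immediately.
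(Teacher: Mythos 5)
Your argument is exactly the paper's (implicit) one: the corollary carries no separate proof because it is just the contrapositive of Lemma~\ref{two_families_diff_weights_2}, which is precisely how you derive it. Your further observation is a correct and genuine catch rather than mere bookkeeping: as printed, the corollary omits the word \emph{maximal}, and your subordinate-pair example (take $\be$ maximal and $\al \in F(\be)\backslash\{\be\}$, so that $\pi(\al) \in \Supp\al \subset \Supp\be$ automatically while $\tau(\al) \ne \tau(\be)$ by Lemma~\ref{subordinates_diff_weights}(a)) shows the unrestricted statement is false; fortunately the corollary is invoked only once, in Lemma~\ref{eta_superstar}, where both roots are maximal and your contrapositive argument applies verbatim.
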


Below we give a list of some conditions on a pair of two active
roots $\al, \be$. These conditions will be used later when we
formulate Propositions~\ref{intersection_diff_weights}
and~\ref{intersection_equal_weights}.

$(\mr D0)$ $\Supp \al \cap \Supp \be = \varnothing$;

\begin{wrapfigure}[11]{R}{150\unitlength}

\begin{picture}(150,115)
\put(70,75){\circle{4}} \put(65,83){$\gamma_0$}

\put(70,73){\line(0,-1){18}} \put(70,53){\circle{4}}
\put(76,51){$\gamma_1$} \put(70,51){\line(0,-1){11}}
\put(70,36){\circle*{0.5}} \put(70,32){\circle*{0.5}}
\put(70,28){\circle*{0.5}} \put(70,25){\line(0,-1){11}}
\put(70,12){\circle{4}} \put(76,10){$\gamma_r$}

\put(72,76){\line(2,1){16}} \put(68,76){\line(-2,1){16}}

\put(90,85){\circle{4}} \put(88,74){$\beta_1$}
\put(92,86){\line(2,1){10}} \put(106,93){\circle*{0.5}}
\put(110,95){\circle*{0.5}} \put(114,97){\circle*{0.5}}
\put(118,99){\line(2,1){10}} \put(130,105){\circle{4}}
\put(128,94){$\beta_q$}

\put(50,85){\circle{4}} \put(45,74){$\alpha_1$}
\put(48,86){\line(-2,1){10}} \put(34,93){\circle*{0.5}}
\put(30,95){\circle*{0.5}} \put(26,97){\circle*{0.5}}
\put(22,99){\line(-2,1){10}} \put(10,105){\circle{4}}
\put(05,94){$\alpha_p$}

\end{picture}

{

\makeatletter

\renewcommand{\abovecaptionskip}{0pt}
\renewcommand{\belowcaptionskip}{0pt}

\renewcommand{\@makecaption}[2]{
\vspace{\abovecaptionskip}%
\sbox{\@tempboxa}{#1 #2}%
\global\@minipagefalse \hbox to \hsize {{\scshape \hfil #1 #2\hfil}}
\vspace{\belowcaptionskip}}

\makeatother

\caption{}\label{diagram_difficult}

}

\end{wrapfigure}

$(\mr D1)$ $\Supp \al \cap \Supp \be = \{\de\}$, where $\pi(\al) \ne
\de$, $\pi(\be) \ne \de$, and $\de$ is terminal with respect to both
$\Supp \al$ and $\Supp \be$;

$(\mr E1)$ $\Supp \al \cap \Supp \be = \{\de\}$, where $\de =
\pi(\al) = \pi(\be)$, $\al - \de \in \De_+$, $\be - \de \in \De_+$,
and $\de$ is terminal with respect to both $\Supp \al$ and $\Supp
\be$;

$(\mr D2)$ the diagram $\Sigma(\Supp \al \cup \Supp \be)$ has the
form shown on Figure~\ref{diagram_difficult} (for some $p, q, r\ge
1$), $\al = \al_1 + \ldots + \al_p + \ga_0 + \ga_1 + \ldots +
\ga_r$, $\be = \be_1 + \ldots + \be_q + \ga_0 + \ga_1 + \ldots +
\ga_r$, $\pi(\al) \not\in \Supp \al \cap \Supp \be$, and $\pi(\be)
\notin \Supp \al \cap \Supp \be$;

$(\mr E2)$ the diagram $\Sigma(\Supp\al\cup\Supp \be)$ has the form
shown on Figure~\ref{diagram_difficult} (for some $p,q,r\ge 1$),
$\al = \al_1 + \ldots + \al_p + \ga_0 + \ga_1 + \ldots + \ga_r$,
$\be = \be_1 + \ldots + \be_q + \ga_0 + \ga_1 + \ldots + \ga_r$, and
$\pi(\al) = \pi(\be) \in \Supp \al \cap \Supp \be$.

We note that in view of Corollary~\ref{extreme_root} the root $\de$
appearing in~$(\mr D1)$ is active.

\begin{proposition}\label{intersection_diff_weights}
Let $\al,\be$ be different maximal active roots such that $\tau(\al)
\ne \tau(\be)$. Then one of possibilities $(\mr D0)$, $(\mr D1)$, or
$(\mr D2)$ is realized.
\end{proposition}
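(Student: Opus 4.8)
The plan is to analyse the intersection $D=\Supp\al\cap\Supp\be$ and to read off the three cases from its size. First I would record, by Lemma~\ref{two_families_diff_weights_2}, that $\pi(\al)\notin D$ and $\pi(\be)\notin D$. If $D=\varnothing$ we are in case $(\mr D0)$ and there is nothing more to prove, so assume $D\ne\varnothing$. The first structural observation is that $D$ is connected: the diagram $\Sigma(\Supp\al\cup\Supp\be)$ is an induced subdiagram of the Dynkin diagram of $G$, hence a forest, and it is connected since $\Supp\al$ and $\Supp\be$ induce connected subdiagrams (being the simple roots of the indecomposable systems $\De(\al)$, $\De(\be)$) that share a node. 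Thus $\Sigma(\Supp\al\cup\Supp\be)$ is a tree, and $D$, being the intersection of the two subtrees $\Supp\al$ and $\Supp\be$, is connected.

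Next I would count the common roots. Every root of $F(\al)\cap F(\be)$ is supported in $D$, and conversely, by Corollary~\ref{subfamilies}(a), every active root supported in $D$ lies in both families. By Lemmas~\ref{subordinates_diff_weights} and~\ref{two_families_diff_weights}, distinct roots of $F(\al)\cup F(\be)$ have distinct $\tau$-images, which are linearly independent by Theorem~\ref{solvable_spherical}; hence the roots of $F(\al)\cup F(\be)$ are themselves linearly independent and $\dim\langle F(\al)\cup F(\be)\rangle=|F(\al)|+|F(\be)|-|F(\al)\cap F(\be)|$. Since this space lies in $\langle\Supp\al\cup\Supp\be\rangle$, whose dimension is $|\Supp\al|+|\Supp\be|-c$ with $c=|D|$, and since $|F(\al)|=|\Supp\al|$, $|F(\be)|=|\Supp\be|$ by Lemma~\ref{number_of_subordinates}(a), I get $|F(\al)\cap F(\be)|\ge c$. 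The reverse inequality holds because $\pi$ is injective on $F(\al)$ (Corollary~\ref{one_to_one}) and maps the common roots into $D$. Therefore $|F(\al)\cap F(\be)|=c$ and $\pi$ restricts to a bijection of $F(\al)\cap F(\be)$ onto $D$. Using that $D$ is connected and that $F(\al)\cap F(\be)$ is closed under passing to subfamilies (Corollary~\ref{subfamilies}), I would then identify these $c$ common roots as a single family $F(\mu)$, where $\mu$ is a common active root with $\Supp\mu=D$: taking $\mu$ of maximal height and using connectivity of $D$ to show its support exhausts $D$, Corollary~\ref{subfamilies}(a) together with $|F(\mu)|=|\Supp\mu|=c$ gives the claim.

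Now I split on $c$. If $c=1$, then the unique common root is supported in $D=\{\de\}$, hence equals the simple root $\de$; since $\de\in\Supp\al\cap F(\al)$ and $|\Supp\al|\ge2$ (because $\pi(\al)\in\Supp\al\setminus D$), Corollary~\ref{supp&active} shows $\de$ is terminal with respect to $\Supp\al$, and symmetrically with respect to $\Supp\be$, so together with $\pi(\al)\ne\de\ne\pi(\be)$ this is precisely case $(\mr D1)$. If $c\ge2$ I must show that the configuration is the tripod of Figure~\ref{diagram_difficult}, i.e.\ case $(\mr D2)$. Here the nodes of $D$ that are terminal in $\Supp\al$ are exactly the simple common roots (by Corollaries~\ref{supp&active} and~\ref{extreme_root}, using $\pi(\al)\notin D$), and likewise for $\be$. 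Combining this with the facts that $\mu\in F(\al)\cap F(\be)$ has support $D$, that the pairs $(\al,\pi(\al))$ and $(\be,\pi(\be))$ appear in Table~\ref{table_active_roots}, and that $\Sigma(\Supp\al\cup\Supp\be)$ is a tree, I would show that $D$ is a terminal path-segment of each of $\Supp\al$, $\Supp\be$, joined to the complementary parts $\Supp\al\setminus D$ and $\Supp\be\setminus D$ at one and the same node $\ga_0$, with $\Supp\al$ and $\Supp\be$ both of type $\ms A$. Running up the chains from $\mu$ to $\al$ and from $\mu$ to $\be$ inside $F(\al)$ and $F(\be)$ (applying Corollary~\ref{neighbour} repeatedly) then pins down the coefficients and yields the expressions for $\al$ and $\be$ in $(\mr D2)$.

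I expect the main obstacle to be this last step for $c\ge2$: ruling out every non-type-$\ms A$ support and showing that the two attachment points of $\Supp\al\setminus D$ and $\Supp\be\setminus D$ to $D$ coincide in a single node. This is where the explicit classification of Theorem~\ref{active_roots} and the terminal-node corollaries must be played off against the acyclicity of $\Sigma(\Supp\al\cup\Supp\be)$; the preceding reductions (connectedness of $D$, and the common roots forming the family $F(\mu)$ with $\Supp\mu=D$) are designed precisely to make this final case analysis tractable.
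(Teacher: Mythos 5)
Your opening reductions are sound and take a genuinely different route from the paper's: the connectedness of $D=\Supp\al\cap\Supp\be$, the count $|F(\al)\cap F(\be)|=|D|$ via linear independence of the $\tau$-images, and the resulting bijection $\pi\colon F(\al)\cap F(\be)\to D$ are all correct, and they dispose of $(\mr D0)$ and $(\mr D1)$ cleanly (for $c=1$ the unique common root is supported in $\{\de\}$, hence equals $\de$, so Corollary~\ref{supp&active} makes $\de$ terminal in both supports, and Lemma~\ref{two_families_diff_weights_2} supplies $\pi(\al)\ne\de\ne\pi(\be)$). The paper instead argues locally: it fixes a node $\de\in D$ adjacent to $\Supp\al\setminus D$ and splits on whether $\de$ is terminal in one of the two supports (which lands in $(\mr D1)$) or in neither.

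The genuine gap is in your case $c\ge2$, which is where all the content of $(\mr D2)$ lives; two assertions there are left unproved. First, that the $c$ common roots form a single family $F(\mu)$ with $\Supp\mu=D$: ``taking $\mu$ of maximal height and using connectivity of $D$'' is not an argument --- if $\Supp\mu\subsetneq D$, the bijection gives another common root $\nu$ with $\pi(\nu)\in D\setminus\Supp\mu$ whose support is incomparable with $\Supp\mu$, and nothing you quote rules this out. Second, and more seriously, the derivation of the tripod itself (``I would show that $D$ is a terminal path-segment of each support, joined to the complementary parts at one and the same node $\ga_0$'') is precisely the statement to be proved, and you offer no mechanism for it beyond playing Theorem~\ref{active_roots} off against acyclicity; you yourself flag this as the main obstacle. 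The missing idea is a degree count at a node $\de\in D$ adjacent to $\Supp\al\setminus D$: if $\de$ is terminal in neither support it has at least two neighbours in each of $\Supp\al$ and $\Supp\be$ as well as one in $\Supp\al\setminus\Supp\be$, forcing $\de$ to have degree $3$; since a connected Dynkin diagram has at most one node of degree $3$, this single observation pins down the whole configuration of Figure~\ref{diagram_difficult}, including the fact that both complements attach to $D$ at the same node. Without that step (or an equivalent), your plan does not yet yield $(\mr D2)$.
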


\begin{proof}
Without loss of generality we may assume that $\De = \De \cap
\langle \Supp\al \cup \Supp \be \rangle$. Put $I = \Supp \al \cap
\Supp \be$. Assume that possibility $(\mr D0)$ is not realized, that
is, $I \ne \varnothing$. Then in the diagram $\Sigma(\Pi)$ there is
a node $\de \in I$ joined by an edge with a node contained in $\Supp
\al \backslash I$. (The latter set is nonempty in view of the
maximality of $\al$ and Corollary~\ref{subfamilies}(a).) Next we
consider two possible cases.

\textit{Case}~1. The root $\de$ is terminal with respect to $\Supp
\al$ or $\Supp \be$. Then, by Corollary~\ref{extreme_root} and
Lemma~\ref{two_families_diff_weights_2}, $\de$ is an active root. In
view of Corollary~\ref{supp&active} we obtain that $\de$ is terminal
with respect to both $\Supp \al$ and $\Supp \be$, therefore $I =
\{\de\}$ and~$(\mr D1)$ is realized.

\textit{Case}~2. The root $\de$ is terminal with respect to neither
$\Supp \al$ nor $\Supp \be$. Because of the symmetry under the
interchange of $\al$ and $\be$ we may assume that the following
additional condition is satisfied: every node in $I$ joined by an
edge with a node in $(\Supp \al \cup \Supp \be) \backslash I$ is
terminal with respect to neither $\Supp \al$ nor $\Supp \be$. From
this condition it follows that the degree of $\de$ in the diagram
$\Sigma(\Pi)$ is $3$, and the diagram itself has the form shown on
Figure~\ref{diagram_difficult} (for some $p, q, r \ge 1$). Moreover,
$\al = \al_1 + \ldots + \al_p + \ga_0 + \ga_1 + \ldots + \ga_r$,
$\be = \be_1 + \ldots + \be_q + \ga_0 + \ga_1 + \ldots + \ga_r$,
$\de = \ga_0$. By Lemma~\ref{two_families_diff_weights_2} neither of
the roots $\pi(\al), \pi(\be)$ lies in~$I$, therefore the condition
$(\mr D2)$ holds.
\end{proof}

\begin{lemma} \label{coincide_assoc}
Let $\al, \be$ be different active roots such that $\pi(\al) =
\pi(\be)$. Then $\tau(\al) = \tau(\be)$.
\end{lemma}

\begin{proof}
In view of Corollary~\ref{one_to_one} we have $\al \notin F(\be)$
and $\be \notin F(\al)$. Put $\de = \pi(\al) = \pi(\be)$, $a =
|\Supp \al|$, $b = |\Supp \be|$, $c = |\Supp \al \cap \Supp \be|$.
Assume that $\tau(\al) \ne \tau(\be)$. Consider the set $A = F(\al)
\cup \{\be\} \cup \{\ga \in F(\be) \mid \pi(\ga) \in \Supp \be
\backslash \Supp \al\}$. This set contains exactly $a + b - c + 1$
different elements. If $\dim \tau(\langle \Supp \al \cup \Supp \be
\rangle) = a + b - c$, then the $S$-weights of all elements in $A$
are different and therefore linearly independent
(Theorem~\ref{solvable_spherical}). The latter is impossible since
$A \subset \langle \Supp \al \cup \Supp \be \rangle$. Hence, $\dim
\tau(\langle \Supp \al \cup \Supp \be \rangle) \le a + b - c - 1$
and there are at least two pairs of elements in $A$ such that the
$S$-weights inside one pair are the same. Further we consider two
cases.

\emph{Case}~1. There are roots $\al' \in F(\al) \backslash\{\al\}$
and $\be' \in A \cap F(\be) \backslash \{\be\}$ such that $\tau(\al)
= \tau(\be')$ and $\tau(\be) = \tau(\al')$. Fix $i, j \in \{1,
\ldots, K\}$ such that $\al \in \Psi_i$ and $\be \in \Psi_j$. Then
$\Psi_i \doubleprec \Psi_j$ and $\Psi_j \doubleprec \Psi_i$, whence
$\Psi_i = \Psi_j$ and $\tau(\al)= \tau(\be)$, a contradiction.

\emph{Case}~2. There are roots $\al' \in F(\al) \backslash\{\al\}$
and $\be' \in A \cap F(\be) \backslash \{\be\}$ such that
$\tau(\al') = \tau(\be')$. Note that $\al' \ne \be'$. Let $\al'',
\be'' \in \De_+$ be the roots such that $\al = \al' + \al''$ and
$\be = \be' + \be''$. By Proposition~\ref{crucial} we have $\al' +
\be'', \be' + \al'' \in \Psi$, at that, $\tau(\al' + \be'') =
\tau(\be)$, $\tau(\be' + \al'') = \tau(\al)$. Since $\al' \ne \be'$
and $\tau(\al) \ne \tau(\be)$, it follows that the four roots $\al,
\be, \al' + \be'', \be' + \al''$ are pairwise different. By
Corollary~\ref{non-sharp_angles} the angle between the roots $\al' +
\be''$ and $\be$, as well as the angle between the roots $\be' +
\al''$ and $\al$, is non-acute. Further, the angle between $\al$ and
$\be$ is also non-acute since otherwise $\al - \be$ would be a root
and we would have $\al \in F(\be)$ or $\be \in F(\al)$, whence
$\pi(\al) \ne \pi(\be)$ (Corollary~\ref{one_to_one}), which is not
the case. Assume that the angle between either $\al' + \be''$ and
$\al$ or $\be' + \al''$ and $\be$ is acute. Interchanging $\al$ and
$\be$, if necessary, we may assume that the angle between $\al' +
\be''$ and $\al$ is acute. Then $\rho = \al - (\al' + \be'') = \al''
- \be'' \in \De$. Again, in view of the symmetry under the
interchange of $\al$ and~$\be$ we may assume that $\rho \in \De_+$.
Then $\al = (\al' + \be'') + \rho$, whence $\al' + \be'' \in
F(\al)$, $\de = \pi(\al) \notin \Supp (\al' + \be'')$, and $\de
\notin \Supp \be''$. On the other hand, $\be = \be' + \be''$,
therefore $\de = \pi(\be) \in \Supp \be''$ (see
Proposition~\ref{associated_simple_root}). This contradiction shows
that the angle between $\al' + \be''$ and~$\al$, as well as the
angle between $\be' + \al''$ and~$\be$, is non-acute. We now prove
that the angle between $\al' + \be''$ and $\be' + \al''$ is
non-acute. If this is not the case, then $\rho = \al' + \be'' - \be'
- \al'' \in \De$. Again we may assume that $\rho \in \De_+$. By
Proposition~\ref{crucial} we have $\al + \rho = 2(\al' + \be'') -
\be \in \Psi$, at that, $\tau(\al + \rho) = \tau(\be) = \tau(\al' +
\be'')$. The roots $\al + \rho$, $\be$, and $\al' + \be''$ are
pairwise different and linearly dependent, which contradicts
Corollary~\ref{non-sharp_angles}. As a result of the preceding
argument we have obtained that the four roots $\al, \be, \al' +
\be'', \be' + \al''$ are pairwise different, and the angles between
them are pairwise non-acute. Hence by
Lemma~\ref{vectors_in_euclid_space} these roots are linearly
independent. On the other hand, there is the linear dependence $\al
+ \be = (\al' + \be'') + (\be' + \al'') $. This contradiction
completes the proof.
\end{proof}

\begin{corollary}\label{assoc_intersection}
Let $\al, \be$ be different active roots such that $\tau(\al) =
\tau(\be)$ and $\pi(\al) \in \Supp \al \cap \Supp \be$. Then
$\pi(\al) = \pi(\be)$.
\end{corollary}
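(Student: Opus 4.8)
The plan is to exploit the bijectivity of $\pi$ on each family, established in Corollary~\ref{one_to_one}, together with Lemma~\ref{coincide_assoc}, which supplies exactly the converse implication needed here. First I would observe that the hypothesis $\pi(\al) \in \Supp \al \cap \Supp \be$ gives in particular $\pi(\al) \in \Supp \be$. By Corollary~\ref{one_to_one} the map $\pi \colon F(\be) \to \Supp \be$ is a bijection, so there is a unique root $\ga \in F(\be)$ with $\pi(\ga) = \pi(\al)$. The whole argument then reduces to identifying $\ga$ with $\be$, since this would yield $\pi(\be) = \pi(\ga) = \pi(\al)$ at once.

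The core step is to rule out $\ga \ne \be$ by a weight argument. Suppose $\ga \ne \be$. Then $\ga \in F(\be) \backslash \{\be\}$, so by Lemma~\ref{subordinates_diff_weights}(a) we have $\tau(\ga) \ne \tau(\be)$. Combined with the standing hypothesis $\tau(\al) = \tau(\be)$, this forces $\tau(\ga) \ne \tau(\al)$ and, in particular, $\ga \ne \al$. Now $\ga$ and $\al$ are two \emph{different} active roots with the same associated simple root $\pi(\ga) = \pi(\al)$, so Lemma~\ref{coincide_assoc} gives $\tau(\ga) = \tau(\al)$, contradicting $\tau(\ga) \ne \tau(\al)$. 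Hence $\ga = \be$, and therefore $\pi(\be) = \pi(\ga) = \pi(\al)$, as required.

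I expect no serious obstacle in this proof: everything rests on locating $\pi(\al)$ inside $\Supp \be$ through the bijection of Corollary~\ref{one_to_one} and then playing the two available weight statements against each other---Lemma~\ref{subordinates_diff_weights}(a) (within a single family, distinct roots have distinct $S$-weights) versus Lemma~\ref{coincide_assoc} (equal associated simple roots force equal $S$-weights). The only point demanding a little care is verifying that $\ga$ is genuinely distinct from $\al$ before invoking Lemma~\ref{coincide_assoc}; this is precisely where the hypothesis $\tau(\al) = \tau(\be)$ is used, since it is what produces the contradiction $\tau(\ga) \ne \tau(\al)$ in the case $\ga \neq \be$.
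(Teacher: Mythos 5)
Your proof is correct and follows essentially the same route as the paper: locate the root of $F(\be)$ mapping to $\pi(\al)$ via the bijection of Corollary~\ref{one_to_one}, then play Lemma~\ref{coincide_assoc} against Lemma~\ref{subordinates_diff_weights}(a) to force that root to equal $\be$. The only cosmetic difference is that you argue by contradiction where the paper argues directly, and you explicitly check $\ga \ne \al$ before invoking Lemma~\ref{coincide_assoc} --- a small point the paper leaves implicit.
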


\begin{proof}
By Corollary~\ref{one_to_one} there is a root $\be' \in F(\be)$ such
that $\pi(\be') = \pi(\al)$. Then in view of
Lemma~\ref{coincide_assoc} we have $\tau(\be') = \tau(\al) =
\tau(\be)$. By Lemma~\ref{subordinates_diff_weights} we obtain $\be'
= \be$.
\end{proof}

\begin{lemma} \label{two_families_equal_weights}
Let $\al,\be$ be different active roots such that $\tau(\al) =
\tau(\be)$. Suppose that $\al' \in F(\al)$, $\be' \in F(\be)$, $\al'
\ne \be'$, and $\al - \al' \ne \be - \be'$. Then $\tau(\al') \ne
\tau(\be')$.
\end{lemma}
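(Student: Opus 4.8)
The plan is to argue by contradiction, assuming $\tau(\al')=\tau(\be')$, and then to build a collection of roots whose $S$-weights must be pairwise distinct (hence linearly independent by Theorem~\ref{solvable_spherical}) but which lie in a space of too small dimension, or which satisfy an explicit linear dependence incompatible with linear independence. This is the same strategy used in Lemmas~\ref{two_families_diff_weights}, \ref{two_families_diff_weights_2}, and~\ref{coincide_assoc}, so I would try to reduce to one of those or imitate their mechanism.

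First I would set $\al'' = \al - \al'$ and $\be'' = \be - \be'$, both in $\De_+$ (if $\al'=\al$ then $\al''=0$, and similarly for $\be$, so I would first dispose of the boundary cases where $\al'=\al$ or $\be'=\be$ separately). The hypothesis $\al-\al'\ne\be-\be'$ says precisely $\al''\ne\be''$, and the hypothesis $\al'\ne\be'$ rules out the trivial coincidence. Under the assumption $\tau(\al')=\tau(\be')$, since $\tau(\al)=\tau(\be)$ I immediately get $\tau(\al'')=\tau(\be'')$ as well. Now Proposition~\ref{crucial} should apply to the pairs $(\al',\be')$ together with the differences $\al''$, $\be''$: from $\tau(\al')=\tau(\be')$ the roots $\al',\be'$ lie in a common $\Psi_i$, and adding the appropriate positive-root difference I expect to produce the active roots $\al'+\be''$ and $\be'+\al''$ (exactly as in the proof of Lemma~\ref{coincide_assoc}), with $\tau(\al'+\be'')=\tau(\al'+\al'')=\tau(\al)$ and $\tau(\be'+\al'')=\tau(\be)=\tau(\al)$.

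The heart of the argument will then be to show that the roots $\al,\be,\al'+\be'',\be'+\al''$ are pairwise distinct and pairwise non-acute, so that Lemma~\ref{vectors_in_euclid_space} forces them to be linearly independent, contradicting the obvious linear dependence $\al+\be=(\al'+\be'')+(\be'+\al'')$. Distinctness should follow from $\al'\ne\be'$ combined with $\al''\ne\be''$ and $\tau(\al)=\tau(\be)$, after checking a few degenerate collisions by hand. The non-acuteness between $\al'+\be''$ and $\be$, and between $\be'+\al''$ and $\al$, should come directly from Corollary~\ref{non-sharp_angles}, since these pairs share the common $S$-weight $\tau(\al)$ and hence lie in a common $\Psi_j$.

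The main obstacle, as in Lemma~\ref{coincide_assoc}, will be establishing the remaining non-acute angles: between $\al$ and $\be$, between $\al'+\be''$ and $\al$ (resp.\ $\be'+\al''$ and $\be$), and between $\al'+\be''$ and $\be'+\al''$. For each of these I would suppose the angle is acute, so that the corresponding difference $\rho$ is a root; orienting $\rho\in\De_+$ after possibly interchanging $\al$ and $\be$, I would derive that some root is expressed as a sum whose support behaviour under $\pi$ contradicts Proposition~\ref{associated_simple_root}, or that three pairwise-distinct roots with a common $S$-weight become linearly dependent against Corollary~\ref{non-sharp_angles}. I expect the bookkeeping here to mirror Case~2 of Lemma~\ref{coincide_assoc} almost verbatim, the only delicate point being the use of the extra hypothesis $\al-\al'\ne\be-\be'$ (i.e.\ $\al''\ne\be''$) to prevent the two roots $\al'+\be''$ and $\be'+\al''$ from collapsing or from being forced equal to $\al$ or $\be$. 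Once all six angles are shown non-acute, the linear-independence contradiction closes the proof.
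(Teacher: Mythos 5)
Your skeleton is the paper's: dispose of the boundary cases $\al'=\al$ or $\be'=\be$ (the paper does this via Lemma~\ref{subordinates_diff_weights}), write $\al=\al'+\al''$, $\be=\be'+\be''$ with $\al''\ne\be''$, use Proposition~\ref{crucial} to produce the active roots $\al'+\be''$ and $\be'+\al''$, and contradict the dependence $\al+\be=(\al'+\be'')+(\be'+\al'')$. But you have misjudged where the work is, and the detour you plan for your ``main obstacle'' is both unnecessary and unlikely to go through as described. The point you half-notice and then drop is that here, unlike in Lemma~\ref{coincide_assoc}, the hypothesis $\tau(\al)=\tau(\be)$ is \emph{given}, so all four roots $\al$, $\be$, $\al'+\be''$, $\be'+\al''$ have the same $S$-weight and therefore lie in one and the same set $\Psi_j$. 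Corollary~\ref{non-sharp_angles} then asserts outright that the distinct roots among them are linearly independent; no pairwise angle verification is needed at all, and in particular there is nothing to ``mirror'' from Case~2 of Lemma~\ref{coincide_assoc} --- that case analysis exists there precisely because $\tau(\al)\ne\tau(\be)$ is assumed for contradiction, so the four roots are spread over several sets $\Psi_j$, and its individual steps rely on $\pi(\al)=\pi(\be)$ and on $\tau(\al)\ne\tau(\be)$, neither of which is available (or true) here. Likewise your worry about $\al'+\be''$ and $\be'+\al''$ collapsing is harmless: if they coincide, you still have three pairwise distinct roots in a single $\Psi_j$ satisfying $\al+\be=2(\al'+\be'')$, which already contradicts Corollary~\ref{non-sharp_angles}. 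So your plan reaches the right contradiction, but the honest execution is a three-line finish, not a replay of the hardest part of Lemma~\ref{coincide_assoc}.
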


\begin{proof}
If $\al' = \al$ or $\be' = \be$, then the assertion follows from
Lemma~\ref{subordinates_diff_weights}. Further we assume that $\al'
\ne \al$ and $\be' \ne \be$. We have $\al = \al' + \al''$, $\be =
\be' + \be''$ for some $\al'', \be'' \in \De_+$, at that, $\al'' \ne
\be''$ by the hypothesis. Assume that $\tau(\al') = \tau(\be')$.
Then by Proposition~\ref{crucial} we obtain that $\al' + \be''$ and
$\be' + \al''$ are active roots such that $\tau(\al) = \tau(\be) =
\tau(\al' + \be'') = \tau(\be' + \al'')$. Moreover, from the
conditions $\al \ne \be$, $\al' \ne \be'$, and $\al'' \ne \be''$ it
follows that any two roots among $\al, \be, \al' + \be'', \be' +
\al''$ are different except for, possibly, roots $\al' + \be''$ and
$\be' + \al''$. In any case by Corollary~\ref{non-sharp_angles} all
(different) roots in the set $\{\al, \be, \al' + \be'', \be' +
\al''\}$, which consists of three or four elements, are linearly
independent. On the other hand, there is the linear dependence $\al
+ \be = \hbox{$(\be' + \al'')$} + \hbox{$(\al' + \be'')$}$, a
contradiction. Hence $\tau(\al') \ne \tau(\be')$.
\end{proof}

\begin{corollary}\label{two_families_equal_weights_crl}
Let $\al,\be$ be different active roots such that $\tau(\al) =
\tau(\be)$. Suppose that $\al' \in F(\al) \backslash \{\al\}$, $\be'
\in F(\be) \backslash \{\be\}$, $\al' \ne \be'$, and $\tau(\al') =
\tau(\be')$. Then there is a root $\ga \in \De_+$ such that $\al =
\al' + \ga$ and $\be = \be' + \ga$.
\end{corollary}

\begin{lemma}\label{intermediate}
Let $\al, \be$ be different active roots such that $\tau(\al) =
\tau(\be)$. Suppose that $\Supp \al \cap \Supp \be = \{\ga\}$, where
$\ga = \pi(\al) = \pi(\be)$. Then $\al - \ga \in F(\al)$, $\be - \ga
\in F(\be)$.
\end{lemma}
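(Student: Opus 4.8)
The plan is to prove both conclusions simultaneously by a single weight-counting argument: I will show that beyond the obvious coincidence $\tau(\al)=\tau(\be)$ the families $F(\al)$ and $F(\be)$ must share a \emph{second} common $S$-weight, and then pin that coincidence down to the shift by $\ga$.

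First I would record two preliminary facts. Using Corollary~\ref{subfamilies}(a) together with the injectivity of $\pi$ on each family (Corollary~\ref{one_to_one}), I would check that $|\Supp\al|\ge 2$ and $|\Supp\be|\ge 2$: if $\Supp\al=\{\ga\}$ then $\al=\ga$, so $\al\in\Psi$ with $\Supp\al\subset\Supp\be$ gives $\al\in F(\be)$, and then $\pi(\al)=\ga=\pi(\be)$ forces $\al=\be$, a contradiction. The same idea shows $F(\al)\cap F(\be)=\varnothing$: a common element $\de$ would satisfy $\Supp\de\subset\Supp\al\cap\Supp\be=\{\ga\}$, hence $\de=\ga$; but $\ga\in F(\al)$ with $\pi(\ga)=\ga=\pi(\al)$ would force $\ga=\al$, contradicting $|\Supp\al|\ge2$.

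Next comes the heart of the argument. Put $a=|\Supp\al|$ and $b=|\Supp\be|$, so that $\langle\Supp\al\cup\Supp\be\rangle$ has dimension $a+b-1$, while $|F(\al)|=a$ and $|F(\be)|=b$ by Lemma~\ref{number_of_subordinates}(a). Since $\al-\be$ is a nonzero vector of $\langle\Supp\al\cup\Supp\be\rangle$ lying in $\Ker\tau$, the image $\tau(\langle\Supp\al\cup\Supp\be\rangle)$ has dimension at most $a+b-2$. On the other hand, by Lemma~\ref{number_of_subordinates}(c) the two families span $\langle F(\al)\cup F(\be)\rangle=\langle\Supp\al\cup\Supp\be\rangle$, and the distinct values of $\tau$ on $F(\al)\cup F(\be)$ are distinct weights among $\vf_1,\ldots,\vf_K$, hence linearly independent by Theorem~\ref{solvable_spherical}. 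Therefore the number of distinct $\tau$-values on $F(\al)\cup F(\be)$ equals $\dim\tau(\langle\Supp\al\cup\Supp\be\rangle)\le a+b-2$. Since the families are disjoint, $F(\al)\cup F(\be)$ consists of $a+b$ roots, and within each family the weights are pairwise distinct (Lemma~\ref{subordinates_diff_weights}); so each $\tau$-value is attained by at most one root of $F(\al)$ and one of $F(\be)$. Comparing $a+b$ roots with at most $a+b-2$ values, I conclude that there are at least two weights each attained once in $F(\al)$ and once in $F(\be)$, i.e. at least two cross-family coincidences.

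Finally I would identify these coincidences. One of them is the pair $(\al,\be)$ itself. Choosing a second one gives roots $\al'\in F(\al)$, $\be'\in F(\be)$ with $\tau(\al')=\tau(\be')$ and $(\al',\be')\ne(\al,\be)$, whence $\al'\ne\al$, $\be'\ne\be$, and $\al'\ne\be'$ by disjointness. By the contrapositive of Lemma~\ref{two_families_equal_weights} (equivalently, by Corollary~\ref{two_families_equal_weights_crl}) there is a root $\ga'\in\De_+$ with $\al=\al'+\ga'$ and $\be=\be'+\ga'$, and $\ga'=\al-\al'\ne 0$. Because $\Supp\ga'\subset\Supp\al\cap\Supp\be=\{\ga\}$ and the only positive root supported on the single node $\ga$ is $\ga$ itself, I get $\ga'=\ga$, so $\al-\ga=\al'\in F(\al)$ and $\be-\ga=\be'\in F(\be)$, as required. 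The only delicate point is the dimension bookkeeping in the third paragraph: the whole proof hinges on the image of $\tau$ dropping by \emph{two} rather than one, which is supplied by the nonzero kernel vector $\al-\be$ on top of the single shared node $\ga$; once the families are disjoint and carry linearly independent weights, the rest is formal (and, notably, the classification in Table~\ref{table_active_roots} is not needed).
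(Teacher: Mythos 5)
Your proof is correct and follows essentially the same route as the paper's: a weight-counting/dimension argument (using the disjointness of $F(\al)$ and $F(\be)$, Lemmas~\ref{subordinates_diff_weights} and~\ref{number_of_subordinates}, Theorem~\ref{solvable_spherical}, and the kernel vector $\al-\be$) to produce a second cross-family coincidence $\tau(\al')=\tau(\be')$, followed by Corollary~\ref{two_families_equal_weights_crl} and the observation that the resulting common shift has support in $\{\ga\}$. The only differences are cosmetic: you count the coincidences directly where the paper argues by contradiction, and you spell out the preliminary facts ($|\Supp\al|\ge 2$ and $F(\al)\cap F(\be)=\varnothing$) that the paper asserts in one line.
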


\begin{proof}
The hypothesis of the lemma implies that $F(\al)\cap
F(\be)=\varnothing$. Assume that ${\tau(\al') \ne \tau(\be')}$ for
any two roots $\al'\in F(\al)\backslash\{\al\}$, $\be'\in
F(\be)\backslash\{\be\}$. Then in view of
Lemma~\ref{subordinates_diff_weights} the restriction to $S$ of the
roots in $F(\al)\cup F(\be)$ yields exactly $|F(\al)| + |F(\be)|-1$
different weights. By Theorem~\ref{solvable_spherical} these weights
are linearly independent and span a subspace $\Omega\subset \mf
X(S)\otimesZ \mb Q$ of dimension $|F(\al)| + |F(\be)| - 1$, which is
equal to ${|\Supp \al| + |\Supp \be| - 1}$ by
Lemma~\ref{number_of_subordinates}(a). On the other hand, $\Omega$
is contained in the subspace $\Omega'$ spanned by the restrictions
to $S$ of roots in $\Supp \al \cup \Supp \be$. Further, $\Omega'$ is
spanned by $|\Supp \al| + |\Supp \be| - 1$ elements satisfying the
non-trivial relation $\tau(\al)=\tau(\be)$. Hence $\dim \Omega'\le
|\Supp \al| + |\Supp \be| - 2$, a contradiction. Therefore there are
roots $\al'\in F(\al)\backslash\{\al\}$, $\be'\in
F(\be)\backslash\{\be\}$ such that $\tau(\al')=\tau(\be')$. Then by
Corollary~\ref{two_families_equal_weights_crl} there is a root
$\de\in \De_+$ such that $\al=\al'+\de$ and $\be=\be'+\de$. The
latter equalities yield $\Supp \de\subset \Supp \al \cap \Supp \be =
\{\ga\}$. Hence $\de=\ga$, $\al-\ga = \al' \in F(\al)$, $\be - \ga =
\be' \in F(\be)$.
\end{proof}

\begin{proposition}\label{intersection_equal_weights}
Let $\al, \be$ be different active roots such that $\tau(\al) =
\tau(\be)$. Then one of possibilities $(\mr D0)$, $(\mr D1)$, $(\mr
E1)$, $(\mr D2)$, $(\mr E2)$ is realized.
\end{proposition}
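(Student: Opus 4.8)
The plan is to prove Proposition~\ref{intersection_equal_weights} in parallel with the already-established Proposition~\ref{intersection_diff_weights}, reducing the classification to a case analysis on the intersection $I = \Supp\al \cap \Supp\be$ and then distinguishing whether or not $\pi(\al)$ lies in $I$. As in the proof of the companion proposition, I would first reduce to the case $\De = \De \cap \langle \Supp\al \cup \Supp\be\rangle$ and dispose of the trivial case $I = \varnothing$, which is exactly $(\mr D0)$. So assume $I \ne \varnothing$. The crucial dichotomy is supplied by Corollary~\ref{assoc_intersection} and Lemma~\ref{coincide_assoc}: if $\pi(\al) \in I$ then necessarily $\pi(\al) = \pi(\be)$, and symmetrically for $\pi(\be)$; whereas if $\pi(\al) \notin I$ and $\pi(\be) \notin I$ we are in a situation formally identical to the hypothesis feeding into the $(\mr D0)/(\mr D1)/(\mr D2)$ trichotomy.

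\medskip

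More precisely, I would split into two principal cases. \emph{Case A: neither $\pi(\al)$ nor $\pi(\be)$ lies in $I$.} Here I would rerun the argument of Proposition~\ref{intersection_diff_weights} essentially verbatim, since that proof only used the maximality of $\al,\be$ (via Corollary~\ref{subfamilies}(a), to know $\Supp\al \setminus I \ne \varnothing$) and Lemma~\ref{two_families_diff_weights_2} (to place $\pi(\al),\pi(\be)$ outside $I$). The first input must be replaced: without maximality I cannot invoke Corollary~\ref{subfamilies}(a), so I would instead argue directly that $\Supp\al \setminus I \ne \varnothing$ and $\Supp\be \setminus I \ne \varnothing$ (otherwise one support is contained in the other, forcing one root into the other's family by Corollary~\ref{subfamilies}(a) in its original direction, contradicting $\al \ne \be$ with equal $\tau$-values via Lemma~\ref{subordinates_diff_weights}). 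The second input is handed to me for free in this case by the standing hypothesis $\pi(\al),\pi(\be)\notin I$. Tracing through the node-degree analysis then yields either $(\mr D1)$ or $(\mr D2)$.

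\medskip

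\emph{Case B: $\pi(\al) \in I$ (the case $\pi(\be)\in I$ being symmetric).} By Corollary~\ref{assoc_intersection} I get $\pi(\al) = \pi(\be) =: \de \in I$, so I expect to land in $(\mr E1)$ or $(\mr E2)$, whose defining feature is precisely $\pi(\al) = \pi(\be) \in I$. The goal is to pin down the shape of $I$ and of the combined diagram $\Sigma(\Supp\al\cup\Supp\be)$. I would first aim to show $I = \{\de\}$: if $I$ contained a node other than $\de$, I would use Lemma~\ref{number_of_subordinates}(a) together with the $\tau(\al)=\tau(\be)$ relation to produce too many linearly independent weights inside $\langle\Supp\al\cup\Supp\be\rangle$, the same dimension-counting device used in Lemma~\ref{coincide_assoc} and Lemma~\ref{intermediate}. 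Once $I=\{\de\}$, Lemma~\ref{intermediate} gives $\al-\de\in F(\al)$ and $\be-\de\in F(\be)$; combined with the terminality of $\de$ (forced by Corollary~\ref{supp&active} applied to the active root $\de$, noting $\de\in\Supp\al\cap F(\al)$) this delivers $(\mr E1)$. The remaining subcase is $|I|\ge 2$; here I would run the degree-$3$ analysis of Case~2 of Proposition~\ref{intersection_diff_weights}, with $\de=\ga_0$, to obtain the Figure~\ref{diagram_difficult} shape and hence $(\mr E2)$.

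\medskip

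The main obstacle I anticipate is the $|I|\ge 2$ analysis in Case~B: I must rule out configurations where $I$ is larger or more complicated than the single chain $\{\ga_0,\ldots,\ga_r\}$ of Figure~\ref{diagram_difficult}, and simultaneously verify that $\pi(\al)=\pi(\be)$ sits consistently within that chain rather than on one of the branch arms. The linear-algebra bookkeeping — counting elements of $F(\al)\cup F(\be)$ against $\dim\langle\Supp\al\cup\Supp\be\rangle$ while keeping careful track of which weight coincidences the relation $\tau(\al)=\tau(\be)$ permits — is where the delicate work lies, and I would lean heavily on Lemma~\ref{two_families_equal_weights} and Corollary~\ref{two_families_equal_weights_crl} to control exactly which pairs $(\al',\be')$ can share an $S$-weight.
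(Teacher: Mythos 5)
Your overall architecture --- splitting on whether $\pi(\al)$ lies in $I = \Supp\al\cap\Supp\be$ rather than on the terminality of a boundary node of $I$ --- is a legitimate reorganization of the paper's proof, and your Case~A does go through essentially as you describe: the standing hypothesis $\pi(\al),\pi(\be)\notin I$ substitutes for Lemma~\ref{two_families_diff_weights_2}, and the nonemptiness of $\Supp\al\setminus I$ is obtained exactly as in the paper (Corollary~\ref{subfamilies}(a) together with Lemma~\ref{subordinates_diff_weights}(a)). The genuine gaps are in Case~B. First, your justification of terminality in the $(\mr E1)$ subcase rests on the false premise that $\de=\pi(\al)$ is an active root with $\de\in\Supp\al\cap F(\al)$. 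It cannot be: if $\de$ were in $\Psi$, Corollary~\ref{subfamilies}(a) would force $\de\in F(\al)$, and since $\pi$ is injective on $F(\al)$ (Corollary~\ref{one_to_one}) and sends both $\al$ and $\de$ to $\pi(\al)=\de$, this would give $\al=\de$, which is excluded. So Corollary~\ref{supp&active} is unavailable here, and the terminality of $\de$ with respect to $\Supp\al$ and $\Supp\be$ must be proved differently; the paper does it by noting that $\al-\de$ and $\be-\de$ are roots (Lemma~\ref{intermediate}), hence have connected support, which combined with a count of the degree of $\de$ in $\Sigma(\Pi)$ and Theorem~\ref{active_roots} forces $\de$ to be an endpoint of both diagrams.

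Second, in the subcase $|I|\ge 2$ you assume the degree-$3$ star analysis applies, but the node $\de_0\in I$ adjacent to $\Supp\al\setminus I$ need not have degree $3$: it can have one neighbour in $\Supp\al\setminus I$, one in $I$, and none in $\Supp\be\setminus I$, so that it is terminal with respect to $\Supp\be$ but not with respect to $\Supp\al$, and no star-shaped diagram arises. Ruling out this configuration is precisely the paper's Case~2 and is the most delicate step of the whole proof: one takes $\al'\in F(\al)$ and $\be'\in F(\be)$ with $\pi(\al')=\pi(\be')$ equal to the $I$-neighbour of $\de_0$, applies Lemma~\ref{coincide_assoc} and Corollary~\ref{two_families_equal_weights_crl} to get $\al-\al'=\be-\be'$, and derives a contradiction on supports, with Corollary~\ref{neighbour} disposing of the case $\al'=\be'$. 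You name the right tools but leave this step as an acknowledged ``obstacle'' without an argument, and your dichotomy ($|I|=1\Rightarrow(\mr E1)$, $|I|\ge2\Rightarrow(\mr E2)$) is not established without it. A smaller inaccuracy: in $(\mr E2)$ the common associated root $\pi(\al)=\pi(\be)$ is some $\ga_s$ on the chain, not necessarily the branch node $\ga_0$, so the identification $\de=\ga_0$ in your plan is incorrect in general.
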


\begin{proof}
Without loss of generality we may assume that $\De = \De \cap
\langle \Supp\al \cup \Supp \be \rangle$. Put $I = \Supp \al \cap
\Supp \be$. Assume that possibility $(\mr D0)$ is not realized, that
is, $I \ne \varnothing$. Then there is a node $\de \in I$ joined by
an edge with a node contained in $\Supp \al \backslash I$. (The
latter set is nonempty by Corollary~\ref{subfamilies}(a) and
Lemma~\ref{subordinates_diff_weights}(a).) Further we consider three
cases.

\textit{Case}~1. The root $\de$ is terminal with respect to $\Supp
\al$. Then $I = \{\de\}$. By Corollary~\ref{extreme_root}, $\de$ is
either an active root or the root associated with~$\al$. If $\de$ is
an active root, then, by Corollary~\ref{supp&active}, $\de$ is
terminal with respect to $\Supp \be$ and we have~$(\mr D1)$. If $\de
= \pi(\al)$, then by Corollary~\ref{assoc_intersection} and
Lemma~\ref{intermediate} we obtain $\de = \pi(\be)$, $\al - \de \in
F(\al)$ and $\be - \de \in F(\be)$. We now show that $\de$ is
terminal with respect to $\Supp \be$. Regard the degree $d$ of the
node $\de$ in the diagram $\Sigma(\Pi)$. If $d = 2$, then $\de$ is
automatically terminal. If $d = 3$, then $\De(\be)$ is of type~$\ms
A$ and, by Theorem~\ref{active_roots}, $\be$ equals the sum of all
roots in~$\Supp \be$. It follows that the support of the root $\be -
\de$ is disconnected, which is impossible. Therefore $d = 2$, $\de$
is terminal with respect to $\Supp \be$, and possibility~$(\mr E1)$
is realized.

\textit{Case}~2. The root $\de$ is not terminal with respect to
$\Supp \al$ but is terminal with respect to $\Supp \be$. If $I =
\{\de\}$, then we may interchange $\al$ and $\be$ and consider
\textit{Case}~1. Therefore we assume that $I \ne \{\de\}$. Denote by
$\de'$ the node in the diagram $\Sigma(I)$ joined by an edge
with~$\de$. In view of Corollaries~\ref{supp&active},
\ref{extreme_root}, and~\ref{assoc_intersection} we have $\de =
\pi(\be) = \pi(\al)$. Let $\al' \in F(\al)\backslash\{\al\}$ and
$\be' \in F(\be) \backslash \{\be\}$ be such that $\pi(\al') =
\pi(\be') = \de'$. By Lemma~\ref{coincide_assoc} we obtain
$\tau(\al') = \tau(\be')$. If $\al' \ne \be'$, then in view of
Corollary~\ref{two_families_equal_weights_crl} we have $\al - \al' =
\be - \be' \in \De_+$ and, in particular, $\Supp(\al - \al') \subset
I$. Since $\de \notin \Supp \al'$ and $\de' \in \Supp \al'$, the set
$\Supp(\al - \al')$ contains the node of $\Supp \al \backslash I$
joined by an edge with~$\de$, a contradiction. If $\al' = \be'$,
then in view of Corollary~\ref{neighbour} we get $\Supp \be \subset
I$, which is impossible.

\textit{Case}~3. The root $\de$ is terminal with respect to neither
$\Supp \al$ nor~$\Supp \be$. Arguing just as in \textit{Case}~2 of
the proof of Proposition~\ref{intersection_diff_weights} we obtain
that the diagram $\Sigma(\Pi)$ has the form shown on
Figure~\ref{diagram_difficult} (for some $p, q, r \ge 1$), $\al =
\al_1 + \ldots + \al_p + \ga_0 + \ga_1 + \ldots + \ga_r$, $\be =
\be_1 + \ldots + \be_q + \ga_0 + \ga_1 + \ldots + \ga_r$, $\de =
\ga_0$. In this situation, taking into account
Corollary~\ref{assoc_intersection} we obtain that one of
possibilities~$(\mr D2)$ or $(\mr E2)$ is realized.
\end{proof}

\subsection{}

The main goal of this subsection is to prove the following
proposition.

\begin{proposition} \label{covering_of_support}
Let $\al$ be a maximal active root. Then there exists a simple root
${\widetilde \al \in \Supp \al}$ such that $\widetilde \al \notin
\Supp \be$ for every maximal active root $\be \ne \al$.

In other words, the support of a maximal active root is not covered
by the supports of other maximal active roots.
\end{proposition}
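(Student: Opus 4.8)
The plan is to argue by contradiction, assuming that the support of a maximal active root $\al$ is entirely covered by the supports of the other maximal active roots, and then to produce a dimension count that violates the linear independence furnished by Theorem~\ref{solvable_spherical} together with Lemma~\ref{Psi_maximal_elements}. First I would exploit the structural results of the previous subsection: by Propositions~\ref{intersection_diff_weights} and~\ref{intersection_equal_weights}, for each maximal active root $\be \ne \al$ the intersection $\Supp \al \cap \Supp \be$ is very constrained --- either empty, a single terminal node, or the central configuration of Figure~\ref{diagram_difficult}. The key point is that in every one of these cases the simple root $\pi(\al)$ associated with $\al$ does \emph{not} lie in $\Supp \al \cap \Supp \be$: for the diff-weight cases this is Lemma~\ref{two_families_diff_weights_2} (and the explicit form of $(\mr D1)$, $(\mr D2)$), while for the equal-weight cases one checks directly that in $(\mr D1)$, $(\mr E1)$, $(\mr D2)$, $(\mr E2)$ the node $\pi(\al)$ is excluded from the intersection (indeed by Corollary~\ref{max_assoc_int}, if $\pi(\al) \in \Supp \al \cap \Supp \be$ then $\tau(\al) = \tau(\be)$, and then one must rule out the remaining equal-weight possibilities by the same case analysis).

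Having established that $\pi(\al) \notin \Supp \be$ for every maximal $\be \ne \al$, the natural candidate for the desired simple root is $\widetilde \al = \pi(\al)$ itself. The heart of the argument is then to show that $\pi(\al)$ is not covered; so I would suppose $\pi(\al) \in \Supp \be$ for some maximal active root $\be \ne \al$ and derive a contradiction. By Corollary~\ref{one_to_one} the map $\pi \colon F(\be) \to \Supp \be$ is a bijection, so there is a unique $\be' \in F(\be)$ with $\pi(\be') = \pi(\al)$; this $\be'$ is an active root whose associated simple root equals $\pi(\al) = \pi(\be')$. Now Lemma~\ref{coincide_assoc} applies to the pair $\al, \be'$ (they share the associated root $\pi(\al)$) and forces $\tau(\al) = \tau(\be')$, which by Lemma~\ref{subordinates_diff_weights} combined with maximality pins down a tight relationship. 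The plan is to assemble the families $F(\al)$ and $F(\be)$ and count: using Lemmas~\ref{two_families_diff_weights} and~\ref{two_families_equal_weights} (according to whether $\tau(\al) = \tau(\be)$ or not) one controls exactly which weights among $F(\al) \cup F(\be)$ can coincide, and Theorem~\ref{solvable_spherical} then says the distinct weights are linearly independent.

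The main obstacle, and the step requiring the most care, will be the dimension bookkeeping that converts the assumption ``$\pi(\al)$ is covered'' into a strict inequality forcing too many linearly independent weights inside the space $\langle \Supp \al \cup \Supp \be \rangle$. The model for this is the proof of Lemma~\ref{two_families_diff_weights_2}: there, putting $a = |\Supp \al|$, $b = |\Supp \be|$, $c = |\Supp \al \cap \Supp \be|$, one finds at least $a - c + 1 + b$ pairwise distinct roots in $F(\al) \cup F(\be)$ whose $S$-weights are distinct and hence independent, contradicting their confinement to the $(a + b - c)$-dimensional space $\langle \Supp \al \cup \Supp \be \rangle$. I would adapt precisely this count: the extra root produced by $\pi(\al)$ lying in $\Supp \be$ (namely that $\be$ carries a full $F(\al)$-image under the restriction while $\al$ itself is disjoint from $F(\be)$) supplies one weight too many. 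The delicate part is to verify that the roots being counted are genuinely pairwise distinct and that the constraints of $(\mr D1)$, $(\mr E1)$, $(\mr D2)$, $(\mr E2)$ do not secretly allow a coincidence that would rescue the count; I expect this to reduce to Corollaries~\ref{subfamilies} and~\ref{neighbour} together with the explicit shapes in Table~\ref{table_active_roots}, and to be settled by a short case-by-case inspection of the intersection patterns enumerated in the previous subsection.
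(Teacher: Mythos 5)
Your proposal founders on its central structural claim. You assert that in every one of the cases enumerated by Propositions~\ref{intersection_diff_weights} and~\ref{intersection_equal_weights} the associated simple root $\pi(\al)$ is excluded from $\Supp\al\cap\Supp\be$, and you then plan to take $\widetilde\al=\pi(\al)$ and derive a contradiction from the assumption $\pi(\al)\in\Supp\be$. But the exclusion is false precisely in the equal-weight cases: by their very definitions, $(\mr E1)$ and $(\mr E2)$ have $\pi(\al)=\pi(\be)\in\Supp\al\cap\Supp\be$, and these configurations genuinely occur (for instance $\mr M=\{\al_1+\al_2,\;\al_2+\al_3\}$ in type $\ms A_3$ with $\pi(\al_1+\al_2)=\pi(\al_2+\al_3)=\al_2$ and $\al_1+\al_2\sim\al_2+\al_3$; this is row~12 of Table~\ref{table_rank_3}). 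So there is no contradiction to be derived from $\pi(\al)\in\Supp\be$: in that situation the proposition is still true, but it is witnessed by a \emph{different} simple root (in the example, $\al_1$), not by $\pi(\al)$. Your fallback dimension count modeled on Lemma~\ref{two_families_diff_weights_2} cannot rescue this, because that count only overflows when $\tau(\al)\ne\tau(\be)$; once $\pi(\al)\in\Supp\al\cap\Supp\be$ forces $\tau(\al)=\tau(\be)$ (Corollary~\ref{max_assoc_int}), the relation $\tau(\al)=\tau(\be)$ lowers $\dim\tau(\langle\Supp\al\cup\Supp\be\rangle)$ by exactly one and the weight count closes up with no contradiction --- this is exactly why Lemma~\ref{coincide_assoc} is a theorem rather than a vacuous statement.

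What is actually needed, and what the paper does, is to handle the case $\pi(\al)\in\Supp\be$ by exhibiting an alternative uncovered node: one chooses $\be$ with $\Supp\al\cap\Supp\be$ maximal with respect to inclusion and takes $\widetilde\al=\eta(\al,\be)$, the unique node of $\Supp\al\setminus\Supp\be$ adjacent to the intersection (Lemma~\ref{eta}(a)). The substantive work is Lemma~\ref{eta_superstar}, which shows $\eta(\al,\be)$ lies in no other maximal support; its proof is not a two-family dimension count but an argument producing, via Proposition~\ref{crucial}, three auxiliary maximal active roots and a non-trivial linear relation $\al+\be+2\ga=(\al'+\ga'')+(\be'+\ga'')+2(\ga'+\de)$ contradicting Lemma~\ref{Psi_maximal_elements}, together with Lemma~\ref{three_active_roots} to exploit the maximality of $\Supp\al\cap\Supp\be$. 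Your proposal contains no substitute for this step, so the gap is not a matter of bookkeeping care but of a missing idea.
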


Before we prove this proposition, let us prove several auxiliary
lemmas.

\begin{lemma}\label{eta}
Let $\al, \be$ be different active roots such that $\tau(\al) =
\tau(\be)$ and $\pi(\al) = \pi(\be)$. Then:

\textup{(a)} there is a unique node $\eta(\al, \be) \in \Supp \al
\backslash \Supp \be$ of the diagram $\Sigma(\Pi)$ joined by an edge
with a node in $\Supp \al \cap \Supp \be$;

\textup{(b)} if a root $\al' \in F(\al)$ satisfies $\pi(\al') =
\eta(\al, \be)$, then there is a root $\be' \in F(\be)$ such that
$\tau(\al') = \tau(\be')$.
\end{lemma}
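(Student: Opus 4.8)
The plan is to handle the two parts in turn, reducing everything to the explicit description of the pair $(\al,\be)$ provided by Proposition~\ref{intersection_equal_weights}. For part~(a) I first note that $\pi(\al)=\pi(\be)$ lies in $\Supp\al\cap\Supp\be$, so this intersection is non-empty and contains $\pi(\al)$. Consequently, among the five possibilities of Proposition~\ref{intersection_equal_weights}, the three possibilities $(\mr D0)$, $(\mr D1)$, $(\mr D2)$ are excluded, since each of them forces either $\Supp\al\cap\Supp\be=\varnothing$ or $\pi(\al)\notin\Supp\al\cap\Supp\be$. Hence $(\mr E1)$ or $(\mr E2)$ holds, and in either case $\eta(\al,\be)$ can be read off the diagram: in $(\mr E1)$ it is the unique neighbour of the terminal node $\de$ inside $\Supp\al$, and in $(\mr E2)$, consulting Figure~\ref{diagram_difficult}, it is $\al_1$, the only element of $\Supp\al\setminus\Supp\be=\{\al_1,\ldots,\al_p\}$ joined to the intersection $\{\ga_0,\ldots,\ga_r\}$. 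Uniqueness is immediate from the shape of the diagrams.

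For part~(b) I would let $\al'$ be the unique member of $F(\al)$ with $\pi(\al')=\eta(\al,\be)$, which exists by Corollary~\ref{one_to_one}; since $\eta(\al,\be)\ne\pi(\al)$ (as the former lies in $\Supp\al\setminus\Supp\be$ and the latter in $\Supp\al\cap\Supp\be$) we have $\al'\ne\al$, and hence $\ga:=\al-\al'\in\De_+$. The key claim is that $\be-\ga$ is an active root lying in $F(\be)$. Once this is known, the weight equality comes for free: $\tau(\be-\ga)=\tau(\be)-\tau(\ga)=\tau(\al)-\tau(\ga)=\tau(\al')$, using $\tau(\al)=\tau(\be)$, so I may take $\be'=\be-\ga$. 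Thus the whole of~(b) reduces to pinning down $\ga$ and checking that $\be-\ga$ is active, which I treat separately in the two cases.

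In case $(\mr E1)$ the node $\pi(\al')=\eta(\al,\be)$ is joined to the terminal node $\pi(\al)=\de$, so Corollary~\ref{neighbour} gives $\Supp\al=\{\de\}\cup\Supp\al'$, whence $\Supp\al'=\Supp\al\setminus\{\de\}$ (as $\al'\ne\al$). Now $\al-\de$ is itself active with the same support $\Supp\al\setminus\{\de\}$, because $\de=\pi(\al)$ has coefficient $1$ in $\al$ (as one reads off Table~\ref{table_active_roots}); since two active roots with the same support coincide by Corollary~\ref{subfamilies}(a), I obtain $\al'=\al-\de$, that is, $\ga=\de$. The coefficient of $\de=\pi(\be)$ in $\be$ is likewise $1$ and $\be-\de\in\De_+$ by the definition of $(\mr E1)$, so Proposition~\ref{associated_simple_root} shows $\be-\de$ is active. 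In case $(\mr E2)$ the branch node $\ga_0$ has degree $3$, which forces the connected diagram $\Sigma(\Supp\al\cup\Supp\be)$ to be of type $\ms D$ or $\ms E$, hence simply-laced, so that each arm is of type $\ms A$. A direct computation in the type-$\ms A$ support of $\al$ then identifies $\al'=\al_1+\ldots+\al_p$ and $\ga=\ga_0+\ldots+\ga_r$, so that $\be-\ga=\be_1+\ldots+\be_q$; this is a root whose support omits $\pi(\be)=\pi(\al)\in\{\ga_0,\ldots,\ga_r\}$, hence active by Proposition~\ref{associated_simple_root}. In both cases $\be-\ga\in F(\be)$, as required.

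The main obstacle I anticipate is the explicit determination of $\ga$ together with the activeness check for $\be-\ga$. In $(\mr E2)$ the crucial first move is to recognise that the degree-$3$ branch node makes the ambient diagram simply-laced, after which the arms may be manipulated as type-$\ms A$ chains; without this the prefix/suffix bookkeeping that yields $\al'=\al_1+\ldots+\al_p$ would be unavailable. The analogous step in $(\mr E1)$—deducing $\al'=\al-\de$ from Corollary~\ref{neighbour} and the coefficient-one property of $\pi(\al)$—is more routine but is still the technical heart of that case.
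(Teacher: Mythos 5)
Your proof is correct and follows essentially the same route as the paper: both reduce via Proposition~\ref{intersection_equal_weights} to the two cases $(\mr E1)$ and $(\mr E2)$ and then identify $\al'$ and $\be'$ explicitly ($\al'=\al-\de$, $\be'=\be-\de$ in the first case and $\al'=\al_1+\ldots+\al_p$, $\be'=\be_1+\ldots+\be_q$ in the second). The only cosmetic difference is in case $(\mr E1)$, where the paper gets $\al-\de\in F(\al)$ and $\be-\de\in F(\be)$ directly from Lemma~\ref{intermediate}, whereas you re-derive these facts from Corollary~\ref{neighbour} together with the coefficient-one property of $\pi(\al)$ read off Table~\ref{table_active_roots}.
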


\begin{proof}
It follows from Proposition~\ref{intersection_equal_weights} that
for $\al, \be$ exactly one of possibilities $(\mr E1)$ or $(\mr E2)$
is realized. It is easy to see that in both cases assertion~(a)
holds. To prove~(b), we consider both possibilities separately.

\textit{Case}~1. Possibility $(\mr E1)$ is realized. In view of
Lemma~\ref{intermediate}, $\al$ belongs to either row~1 or row~2 of
Table~\ref{table_active_roots}. Denote by $\de$ the unique simple
root contained in $\Supp \al \cap \Supp \be$. Then the root $\al' =
\al - \de$ is a desired one.

\textit{Case}~2. Possibility $(\mr E2)$ is realized. Then the
diagram $\Sigma(\Supp \al \cup \Supp \be)$ has the form shown on
Figure~\ref{diagram_difficult} (for some $p, q, r \ge 1$), $\al =
\al_1 + \ldots + \al_p + \ga_0 + \ga_1 + \ldots + \ga_r$, $\be =
\be_1 + \ldots + \be_q + \ga_0 + \ga_1 + \ldots + \ga_r$, and
$\pi(\al) = \pi(\be) = \ga_s$, where $0 \le s \le r$. Then,
evidently, $\al' = \al_1 + \ldots + \al_p$, $\be' = \be_1 + \ldots +
\be_q$.
\end{proof}

\begin{lemma}\label{three_active_roots}
Let $\al, \be, \ga$ be pairwise different roots such that $\tau(\al)
= \tau(\be) = \tau(\ga)$ and $\pi(\al) = \pi(\be) = \pi(\ga)$. Then
either $\Supp \al \cap \Supp \be \subset \Supp \al \cap \Supp \ga$
or $\Supp \al \cap \Supp \ga \subset \Supp \al \cap \Supp \be$.
\end{lemma}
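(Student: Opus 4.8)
Write $\de = \pi(\al) = \pi(\be) = \pi(\ga)$. Since $\pi(\mu) \in \Supp\mu$ for every active root $\mu$ (Proposition~\ref{associated_simple_root}), the node $\de$ lies in $\Supp\al$, $\Supp\be$ and $\Supp\ga$, hence in all three pairwise intersections. The plan is to apply Proposition~\ref{intersection_equal_weights} to each of the three pairs and then read the assertion off the resulting diagram combinatorics. For each pair the hypotheses of Proposition~\ref{intersection_equal_weights} are satisfied; moreover in each of $(\mr D0)$, $(\mr D1)$, $(\mr D2)$ the intersection does not contain $\pi(\al)$ (it is empty in $(\mr D0)$, a single node different from $\pi(\al)$ in $(\mr D1)$, and it excludes $\pi(\al)$ in $(\mr D2)$). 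As $\de = \pi(\al)$ does lie in every such intersection, these three possibilities are ruled out, so every pair realizes $(\mr E1)$ or $(\mr E2)$.

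First I would dispose of the degenerate situations. If the pair $(\al, \be)$ realizes $(\mr E1)$, then $\Supp\al \cap \Supp\be = \{\de\} \subset \Supp\al \cap \Supp\ga$ and the claim holds at once; symmetrically if $(\al, \ga)$ realizes $(\mr E1)$. Likewise, if $\Supp\al \subset \Supp\be$ or $\Supp\al \subset \Supp\ga$, then one of the two intersections equals $\Supp\al$ and contains the other. Hence I may assume that both $(\al, \be)$ and $(\al, \ga)$ realize $(\mr E2)$ and that the two excluded legs $\Supp\al \setminus \Supp\be$ and $\Supp\al \setminus \Supp\ga$ are nonempty.

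Under $(\mr E2)$ the diagram $\Sigma(\Supp\al \cup \Supp\be)$ is the tripod of Figure~\ref{diagram_difficult}, and $\Supp\al$ uses only two of the three edges meeting at the central node; thus $\Supp\al$ is a path. On this path $\Supp\al \cap \Supp\be$ is the terminal segment running from the central node of the tripod to the far end of the shared stem; it contains $\de$, and the excluded leg $\Supp\al \setminus \Supp\be$ is the complementary terminal segment, which lies on a single side of $\de$. The same description applies to the pair $(\al, \ga)$. If the two excluded legs lie on the same side of $\de$, they are nested, and hence so are their complements $\Supp\al \cap \Supp\be$ and $\Supp\al \cap \Supp\ga$; this yields the assertion.

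The only remaining case, and the main obstacle, is that the two excluded legs lie on opposite sides of $\de$, so that $\de$ is an interior node of the path $\Supp\al$. I would rule this out through the shape of $\Sigma(\Supp\be \cup \Supp\ga)$. In this configuration $\Supp\be \cup \Supp\ga$ consists of the entire path $\Supp\al$ together with the extra branch of $\be$ attached at the central node $c_\be$ of the $(\al, \be)$-tripod and the extra branch of $\ga$ attached at the central node $c_\ga$ of the $(\al, \ga)$-tripod. Since all legs of both tripods are nonempty, $c_\be$ and $c_\ga$ are interior nodes of $\Supp\al$, so each has two neighbours along $\Supp\al$ and one further neighbour in its own extra branch; that is, each is a branch node. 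As $\de$ lies between $c_\be$ and $c_\ga$ on the path, either $c_\be \ne c_\ga$, and then $\Sigma(\Supp\be \cup \Supp\ga)$ is a connected Dynkin diagram with two distinct branch nodes, or $c_\be = c_\ga = \de$, and then the node $\de$ has degree at least $4$. Both alternatives are impossible, since $\Sigma(\Supp\be \cup \Supp\ga)$ is a connected subdiagram of the finite-type Dynkin diagram $\Sigma(\Pi)$ and therefore has at most one branch node and maximal vertex degree $3$. This contradiction excludes the opposite-side case and completes the proof.
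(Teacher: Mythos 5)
Your reduction to the case where both pairs $(\al,\be)$ and $(\al,\ga)$ realize $(\mr E2)$, and your observation that same-side stems give nested intersections, are fine and follow the same route as the paper (which disposes of the remaining case by asserting that $(\mr E2)$ cannot hold for both pairs simultaneously). But your treatment of the opposite-side case has a genuine gap: when $c_\be = c_\ga = \de$, you conclude that $\de$ has degree at least $4$ by counting two neighbours along $\Supp\al$ plus one neighbour in each of the two extra branches. This count is wrong if the extra branches of $\be$ and of $\ga$ leave $\de$ through the \emph{same} edge, i.e.\ share their first node; then $\de$ has degree $3$ and no contradiction arises. This is not a vacuous loophole. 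Take $\De$ of type $\ms D_4$ with central node $\al_2$, and set $\al = \al_1+\al_2+\al_3$, $\be = \al_2+\al_3+\al_4$, $\ga = \al_1+\al_2+\al_4$, each with associated simple root $\al_2$. Both pairs $(\al,\be)$ and $(\al,\ga)$ realize $(\mr E2)$ (tripods centred at $\al_2$ with all legs of length~$1$), $\de=\al_2$ has degree exactly $3$, and the two intersections $\{\al_2,\al_3\}$ and $\{\al_1,\al_2\}$ are not nested. So your purely diagram-theoretic argument cannot close this case.

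What excludes this configuration is the weight hypothesis, which your proof never uses beyond invoking Proposition~\ref{intersection_equal_weights}. In the example, $\tau(\al)=\tau(\be)$ forces $\tau(\al_1)=\tau(\al_4)$ and $\tau(\al)=\tau(\ga)$ forces $\tau(\al_3)=\tau(\al_4)$, hence $\tau(\al_1)=\tau(\al_3)$ for the two distinct subordinates $\al_1,\al_3 \in F(\al)$, contradicting Lemma~\ref{subordinates_diff_weights}(b). In general, in the opposite-side case one first shows (e.g.\ by applying Proposition~\ref{intersection_equal_weights} to the pair $(\be,\ga)$) that $\Supp\be\setminus\Supp\al = \Supp\ga\setminus\Supp\al$, and then the relations $\tau(\al)=\tau(\be)=\tau(\ga)$ force two distinct roots of $F(\al)$ (the sums over the two branches of the path $\Supp\al$ at $\de$) to have equal $\tau$-weight, which is impossible. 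You need to add an argument of this kind; the degree count alone does not suffice.
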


\begin{proof}
In view of Proposition~\ref{intersection_equal_weights} for each
pair of roots $\al, \be$ and $\al, \ga$ one of possibilities $(\mr
E1)$ or $(\mr E2)$ is realized. If $(\mr E1)$ is realized for one of
these pairs, then the assertion is true. It remains to observe that
$(\mr E2)$ cannot be realized for both pairs simultaneously.
\end{proof}

\begin{lemma}\label{eta_superstar}
Let $\al$ be a maximal active root. Suppose that $\be \ne \al$ is a
maximal active root such that $\pi(\al) \in \Supp \al \cap \Supp
\be$ and the set $\Supp \al \cap \Supp \be$ is maximal with respect
to inclusion. Then for every maximal active root $\ga \ne \al$ we
have $\eta(\al,\be) \notin \Supp \ga$.
\end{lemma}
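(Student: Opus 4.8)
The plan is to argue by contradiction. First I record that the hypotheses already force more structure: since $\al,\be$ are active roots with $\pi(\al)\in\Supp\al\cap\Supp\be$, Corollary~\ref{max_assoc_int} gives $\tau(\al)=\tau(\be)$, and then Corollary~\ref{assoc_intersection} gives $\pi(\al)=\pi(\be)$. Hence Lemma~\ref{eta} applies to the pair $(\al,\be)$, and $\eta:=\eta(\al,\be)$ is the unique node of $\Supp\al\setminus\Supp\be$ joined by an edge to a node of $\Supp\al\cap\Supp\be$; in particular $\eta\ne\pi(\al)$, since $\pi(\al)=\pi(\be)\in\Supp\be$ while $\eta\notin\Supp\be$. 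Now suppose, towards a contradiction, that some maximal active root $\ga\ne\al$ satisfies $\eta\in\Supp\ga$. As $\eta\notin\Supp\be$, necessarily $\ga\ne\be$, so $\al,\be,\ga$ are pairwise distinct.

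The key reduction is that it suffices to prove $\pi(\al)\in\Supp\ga$. Indeed, granting this we have $\pi(\al)\in\Supp\al\cap\Supp\ga$, so Corollary~\ref{max_assoc_int} yields $\tau(\ga)=\tau(\al)$ and then Corollary~\ref{assoc_intersection} yields $\pi(\ga)=\pi(\al)$. Thus $\al,\be,\ga$ are pairwise distinct active roots sharing one value of $\tau$ and one value of $\pi$, and Lemma~\ref{three_active_roots} applies: the sets $\Supp\al\cap\Supp\be$ and $\Supp\al\cap\Supp\ga$ are comparable by inclusion. Since $\eta$ lies in $\Supp\al\cap\Supp\ga$ but not in $\Supp\al\cap\Supp\be$, we cannot have $\Supp\al\cap\Supp\ga\subset\Supp\al\cap\Supp\be$; hence $\Supp\al\cap\Supp\be\subsetneq\Supp\al\cap\Supp\ga$. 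But $\ga$ is then a maximal active root $\ne\al$ with $\pi(\al)\in\Supp\al\cap\Supp\ga$, so this strict inclusion contradicts the maximality of $\Supp\al\cap\Supp\be$, completing the argument.

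It remains to establish $\pi(\al)\in\Supp\ga$, for which I would bring in auxiliary subordinate roots. Let $\al'\in F(\al)$ be the unique root with $\pi(\al')=\eta$ (Corollary~\ref{one_to_one}); since $\eta\ne\pi(\al)$ we have $\al'\ne\al$. By Lemma~\ref{eta}(b) there is $\be'\in F(\be)$ with $\tau(\be')=\tau(\al')$, and letting $\ga'\in F(\ga)$ be the unique root with $\pi(\ga')=\eta$, Lemma~\ref{coincide_assoc} gives $\tau(\ga')=\tau(\al')$. So $\al',\be',\ga'$ lie in one $\tau$-class $\Psi_i$; by Lemma~\ref{subordinates_diff_weights}(a) this class is strictly below the maximal class of $\al$, whence $\Psi_i$ is not maximal and therefore $\ga'\ne\ga$ (otherwise the maximal root $\ga$ would sit in a non-maximal class). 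To force $\pi(\al)\in\Supp\ga$ I would then apply the intersection dichotomy to the pair $(\al,\ga)$: as $\eta\in\Supp\al\cap\Supp\ga\ne\varnothing$, Propositions~\ref{intersection_diff_weights} and~\ref{intersection_equal_weights} restrict $(\al,\ga)$ to the possibilities $(\mr D1),(\mr D2)$ (if $\tau(\al)\ne\tau(\ga)$) or in addition $(\mr E1),(\mr E2)$ (if $\tau(\al)=\tau(\ga)$), and in each case the node $\eta$, being adjacent to $\Supp\al\cap\Supp\be$ and carrying the subordinate $\al'$ with $\pi(\al')=\eta$, is traced through the diagram using Corollary~\ref{two_families_equal_weights_crl} and Proposition~\ref{associated_simple_root} to locate $\pi(\al)$ inside $\Supp\ga$.

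The hard part will be exactly this last step, the case analysis for $(\al,\ga)$, and especially the branch-diagram possibilities $(\mr D2)$ and $(\mr E2)$ of Figure~\ref{diagram_difficult}, where $\eta$ sits at the tip of a leg far from the branch node: there one must propagate the weight coincidence $\tau(\al')=\tau(\ga')$ along the central chain of the diagram to reach $\pi(\al)$, while excluding the scenario in which $\ga$ sprouts a new branch that avoids $\pi(\al)$. The split between $\al'=\ga'$ (a common subordinate, handled via $F(\al')\subset F(\ga)$ from Corollary~\ref{subfamilies}(b)) and $\al'\ne\ga'$ (handled via Corollary~\ref{two_families_equal_weights_crl}) is where the bookkeeping concentrates; the maximality of $\al$ and of $\Supp\al\cap\Supp\be$, together with Lemma~\ref{three_active_roots}, are the facts that ultimately rule out every configuration in which $\pi(\al)\notin\Supp\ga$.
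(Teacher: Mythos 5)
Your opening reduction is sound and coincides with one branch of the paper's argument: if $\pi(\al)\in\Supp\ga$, then Corollaries~\ref{max_assoc_int} and~\ref{assoc_intersection} give $\tau(\ga)=\tau(\al)$ and $\pi(\ga)=\pi(\al)$, and Lemma~\ref{three_active_roots} together with $\eta\in(\Supp\al\cap\Supp\ga)\setminus\Supp\be$ forces $\Supp\al\cap\Supp\be\subsetneqq\Supp\al\cap\Supp\ga$, contradicting the choice of $\be$. The problem is that everything after that is a promissory note, and the deferred step is the entire substance of the lemma. The claim you still owe is that $\eta\in\Supp\ga$ forces $\pi(\al)\in\Supp\ga$, i.e.\ that the configurations $(\mr D1)$ and $(\mr D2)$ (and $(\mr D0)$ aside) cannot occur for the pair $(\al,\ga)$. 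But in $(\mr D1)$ and $(\mr D2)$ one has $\pi(\al)\notin\Supp\al\cap\Supp\ga$ by definition, so the intersection dichotomy of Propositions~\ref{intersection_diff_weights} and~\ref{intersection_equal_weights} cannot by itself ``locate $\pi(\al)$ inside $\Supp\ga$''; those cases have to be killed outright, and no local tracing inside the diagram $\Sigma(\Supp\al\cup\Supp\ga)$ will do it, because nothing in that diagram is inconsistent — the obstruction is global, coming from the simultaneous presence of $\be$.

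The paper's proof closes exactly this gap with an idea your sketch does not contain. With $\al=\al'+\de$, $\be=\be'+\de$ (via Lemma~\ref{eta}(b) and Corollary~\ref{two_families_equal_weights_crl}), $\ga=\ga'+\ga''$ and $\tau(\ga')=\tau(\al')$, it splits on whether $\ga''=\de$. If $\ga''=\de$ one lands in your ``good'' branch. If $\ga''\ne\de$, Proposition~\ref{crucial} shows that $\al'+\ga''$, $\be'+\ga''$ and $\ga'+\de$ are again \emph{maximal} active roots, and the identity $\al+\be+2\ga=(\al'+\ga'')+(\be'+\ga'')+2(\ga'+\de)$ is a non-trivial linear dependence among maximal active roots, contradicting Lemma~\ref{Psi_maximal_elements}. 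This six-root relation is the missing key step; without it (or a genuine substitute), your proposal does not prove the lemma. A smaller quibble: your justification that $\ga'\ne\ga$ via non-maximality of the class of $\al'$ works, but the paper's shorter route — $\pi(\ga)\ne\eta$ because otherwise $\eta=\pi(\ga)\in\Supp\al\cap\Supp\ga$ would force $\pi(\ga)=\pi(\al)\ne\eta$ — is cleaner and is what you will want anyway.
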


\begin{proof}
In view of Corollaries~\ref{max_assoc_int}
and~\ref{assoc_intersection} we have $\tau(\al) = \tau(\be)$ and
$\pi(\al) = \pi(\be)$. Regard the root $\eta = \eta(\al, \be)$. By
definition, we have $\eta \notin \Supp \be$. Let $\al' \in F(\al)
\backslash \{\al\}$ be a root such that $\pi(\al') = \eta$. Then by
Lemma~\ref{eta}(b) there is a root $\be' \in F(\be) \backslash
\{\be\}$ with $\tau(\al') = \tau(\be')$. In view of
Corollary~\ref{two_families_equal_weights_crl} there is a root $\de
\in \De_+$ such that $\de = \al - \al' = \be - \be'$.

Assume that $\eta$ is contained in the support of a maximal active
root $\ga$ different from $\al$ and $\be$. Since $\eta \in \Supp \al
\cap \Supp \ga$ and $\pi(\al) \ne \eta$, we have $\pi(\ga) \ne
\eta$. Regard the root $\ga' \in F(\ga) \backslash \{\ga\}$ such
that $\pi(\ga') = \eta$ and put $\ga'' = \ga - \ga' \in \De_+$. By
Lemma~\ref{coincide_assoc} we have ${\tau(\ga') = \tau(\al')}$. If
$\ga'' = \de$, then $\tau(\ga) = \tau(\al) = \tau(\be)$, whence
$\pi(\ga) \in \Supp \de \subset \Supp \al \cap \Supp \ga$ and, by
Corollary~\ref{assoc_intersection}, $\pi(\ga) = \pi(\al)$. Since
$\eta \in (\Supp \al \cap \Supp \ga) \backslash \Supp \be$, by
Lemma~\ref{three_active_roots} we obtain that $\Supp \al \cap \Supp
\ga \supsetneqq \Supp \al \cap \Supp \be$, a contradiction with the
choice of~$\be$. Hence $\ga'' \ne \de$. Further, by
Proposition~\ref{crucial}, $\al' + \ga''$, $\be' + \ga''$, and $\ga'
+ \de$ are maximal active roots. In view of
Lemma~\ref{Psi_maximal_elements} all different roots in the set
$\{\al, \be, \ga, {\al' + \ga''}, {\be' + \ga''}, \ga' + \de\}$ are
linearly independent. But there is the relation $\al + \be + 2\ga =
(\al' + \ga'') + (\be' + \ga'') + 2(\ga' + \de)$. Since $\ga$
coincides with none of $\al, \be, \be' + \ga'', \ga' + \de$, the
relation is non-trivial. This contradiction proves the lemma.
\end{proof}

\begin{proof}[Proof of Proposition~\textup{\ref{covering_of_support}}]
If $\pi(\al) \notin \Supp \be$ for every maximal active root $\be
\ne \al$, then one may take $\widetilde \al = \pi(\al)$. Otherwise
$\pi(\al) \in \Supp \be$ for some maximal active root $\be \ne \al$.
Without loss of generality one may assume that the set $\Supp \al
\cap \Supp \be$ is maximal with respect to inclusion. Then by
Lemma~\ref{eta_superstar} one may take $\widetilde \al = \eta(\al,
\be)$.
\end{proof}

\subsection{}\label{last_condition}

In this subsection we indicate a condition relating the torus $S$
with the set~$\Psi$. The main result of the subsection is
Proposition~\ref{condition_T}.

We recall (see Corollary~\ref{one_to_one}) that for every active
root $\al$ the map $\pi \colon F(\al) \to \Supp \al$ is bijective.

\begin{lemma}\label{alpha_minus_beta}
Let $\al, \be$ be different maximal active roots. Put $J = \Supp \al
\backslash \Supp \be$. Then:

\textup{(a)} if $\tau(\al) \ne \tau(\be)$, then for every root $\al'
\in F(\al)$ with $\pi(\al') \in J$ and every root $\be' \in F(\be)$
we have $\tau(\al') \ne \tau(\be')$;

\textup{(b)} if $\tau(\al) = \tau(\be)$ and $\pi(\al) \in J$, then
for every root $\al' \in F(\al)\backslash \{\al\}$ with $\pi(\al')
\in J$ and every root $\be' \in F(\be)$ we have $\tau(\al') \ne
\tau(\be')$;

\textup{(c)} if $\tau(\al) = \tau(\be)$ and $\pi(\al) \in \Supp \al
\cap \Supp \be$, then for every root $\al' \in F(\al)$ with
$\pi(\al') \in J \backslash\{\eta(\al, \be)\}$ and every root $\be'
\in F(\be)$ we have $\tau(\al') \ne \tau(\be')$.
\end{lemma}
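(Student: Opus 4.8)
The plan is to prove all three parts by contradiction: in each case I assume $\tau(\al') = \tau(\be')$ for some admissible pair and extract a contradiction from the hypotheses on $\pi(\al)$ and $\pi(\al')$. The two standing tools are Corollary~\ref{two_families_equal_weights_crl} (which, for active roots of equal $S$-weight, converts an equality $\tau(\al') = \tau(\be')$ into a common difference $\al = \al' + \ga$, $\be = \be' + \ga$ with $\ga \in \De_+$) and Proposition~\ref{associated_simple_root} (which locates $\pi$ in a decomposition: if $\al = \al' + \ga$ with $\al'$ active, then $\pi(\al) \notin \Supp \al'$). Throughout I shall repeatedly use that $\pi(\de) \in \Supp \de$ for every active root $\de$ and that $\pi \colon F(\al) \to \Supp \al$ is injective (Corollary~\ref{one_to_one}).

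Part~(a) is immediate: if $\tau(\al') = \tau(\be')$, then, since $\al, \be$ are different maximal active roots with $\tau(\al) \ne \tau(\be)$, Lemma~\ref{two_families_diff_weights} forces $\al' = \be'$; but then $\pi(\al') \in \Supp \be' \subset \Supp \be$, contradicting $\pi(\al') \in J$. For part~(b), suppose $\tau(\al') = \tau(\be')$. The case $\be' = \be$ is excluded by Lemma~\ref{subordinates_diff_weights}(a) (it would give $\tau(\al') = \tau(\al)$ with $\al' \ne \al$), and the case $\al' = \be'$ is excluded exactly as in part~(a) using $\pi(\al') \in J$. Hence Corollary~\ref{two_families_equal_weights_crl} applies and yields $\al = \al' + \ga$, $\be = \be' + \ga$, so $\Supp \ga \subset \Supp \be$. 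Since $\al'$ is active, Proposition~\ref{associated_simple_root} gives $\pi(\al) \notin \Supp \al'$, whence $\pi(\al) \in \Supp \ga \subset \Supp \be$, contradicting the hypothesis $\pi(\al) \in J$.

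Part~(c) is the substantial one. Again assume $\tau(\al') = \tau(\be')$. First, $\pi(\al') \in J$ and $\pi(\al) \in \Supp \al \cap \Supp \be$ force $\pi(\al') \ne \pi(\al)$, hence $\al' \ne \al$; the cases $\be' = \be$ and $\al' = \be'$ are ruled out as before, so Corollary~\ref{two_families_equal_weights_crl} gives $\al = \al' + \ga$, $\be = \be' + \ga$ with $\Supp \ga \subset I := \Supp \al \cap \Supp \be$ and, by Proposition~\ref{associated_simple_root}, $\pi(\al) \in \Supp \ga$. Since $\tau(\al) = \tau(\be)$ and $\pi(\al) \in I$, Corollary~\ref{assoc_intersection} gives $\pi(\al) = \pi(\be)$ (so that $\eta(\al, \be)$ is indeed defined), and Proposition~\ref{intersection_equal_weights} leaves only possibilities $(\mr E1)$ and $(\mr E2)$. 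In case $(\mr E1)$ we have $I = \{\de\}$ with $\de = \pi(\al)$, so $\Supp \ga \subset \{\de\}$ forces $\ga = \de$ and $\al' = \al - \de$; by the computation in the proof of Lemma~\ref{eta}(b) this root satisfies $\pi(\al') = \eta(\al, \be)$, contradicting $\pi(\al') \ne \eta(\al, \be)$.

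It remains to treat case $(\mr E2)$, which I expect to be the main obstacle. Here $\Supp \al$ is the chain $\al_p - \dots - \al_1 - \ga_0 - \dots - \ga_r$, the root $\al$ is the sum of all its simple roots, $J = \{\al_1, \dots, \al_p\}$, $I = \{\ga_0, \dots, \ga_r\}$, and $\eta(\al, \be) = \al_1$. Since $\Supp \ga \subset I$ contains $\pi(\al) = \ga_s$ and $\al' = \al - \ga$ must have connected support, $\ga$ can only be a terminal segment $\ga_i + \dots + \ga_r$ of the tail (removing an interior block would disconnect $\Supp \al'$); thus $\Supp \al' = J \cup \{\ga_0, \dots, \ga_{i-1}\}$. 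If $i = 0$, then $\al' = \al_1 + \dots + \al_p$, which by the proof of Lemma~\ref{eta}(b) is the member of $F(\al)$ with $\pi(\al') = \eta(\al, \be)$. If $i \ge 1$, then $\al' = (\al_1 + \dots + \al_p) + (\ga_0 + \dots + \ga_{i-1})$ with $\al_1 + \dots + \al_p$ active, so Proposition~\ref{associated_simple_root} gives $\pi(\al') \notin \{\al_1, \dots, \al_p\} = J$, whence $\pi(\al') \in \{\ga_0, \dots, \ga_{i-1}\} \subset I$. In either case $\pi(\al') \in \{\eta(\al, \be)\} \cup I$, which is disjoint from $J \setminus \{\eta(\al, \be)\}$; this contradicts the hypothesis on $\pi(\al')$ and finishes the proof. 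The delicate points are the use of connectedness of $\Supp \al'$ to pin down $\ga$ as a tail segment and the identification of $\al_1 + \dots + \al_p$ as the distinguished active root with $\pi = \eta(\al, \be)$.
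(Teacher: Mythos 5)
Your proof is correct and follows essentially the same route as the paper's: parts (a) and (b) are handled identically, and part (c) likewise proceeds by contradiction via Corollary~\ref{two_families_equal_weights_crl} (equivalently Lemma~\ref{two_families_equal_weights}) together with the $(\mr E1)$/$(\mr E2)$ dichotomy. The only difference is cosmetic and lies in how the final contradiction in (c) is extracted: the paper observes that $\eta(\al,\be)$ must lie in $\Supp(\al-\al') \subset \Supp\al\cap\Supp\be$ via the subordination of $\al'$ to the root mapped to $\eta(\al,\be)$, whereas you pin down $\al-\al'$ as a tail segment of the chain and compute $\pi(\al')$ directly, thereby making explicit the step the paper dismisses as ``easy to see''.
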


\begin{proof}
Assertion~(a) is a direct consequence of
Lemma~\ref{two_families_diff_weights}. Let us prove~(b). Let ${\al'
\in F(\al) \backslash \{\al\}}$ and $\be' \in F(\be)$ be such that
$\pi(\al') \in J$ and $\tau(\al') = \tau(\be')$. Then by
Lemma~\ref{two_families_equal_weights} we obtain $\de = \al - \al' =
\be - \be' \in \De_+$. Hence $\pi(\al) \in \Supp \de \subset \Supp
\al \backslash J$, a contradiction. In the hypothesis of~(c), by
Proposition~\ref{intersection_equal_weights} for $\al, \be$ one of
possibilities $(\mr E1)$ or $(\mr E2)$ is realized. In both cases,
as is easy to see, every root $\al' \in F(\al)$ with $\pi(\al') \in
J \backslash \{\eta(\al, \be)\}$ is subordinate to the root $\al''
\in F(\al)$ such that $\pi(\al'') = \eta(\al, \be)$. Assume that
$\tau(\al') = \tau(\be')$ for some root $\be' \in F(\be)$. By
Lemma~\ref{two_families_equal_weights} we obtain that $\de = \al -
\al' = \be - \be' \in \De_+$. Then we have $\eta(\al, \be) \in \Supp
\de$, which is impossible in view of the condition $\Supp \de
\subset \Supp \al \cap \Supp \be$.
\end{proof}

Let us denote by $\mr M = \mr M(H)$ the set of maximal active roots
of~$H$.

\begin{lemma}\label{estimations}
Let $\mr M' \subset \mr M$ be an arbitrary subset. Put $l =
|\bigcup\limits_{\de \in \mr M'}\Supp \de|$, $k =
|\tau(\bigcup\limits_{\de \in \mr M'}F(\de))|$. Then:

\textup{(a)} $\dim \, \langle \mu - \nu \mid \mu, \nu \in \mr M',
\tau(\mu) = \tau(\nu) \rangle = |\mr M'| - |\tau(\mr M')|$;

\textup{(b)} $l = k + |\mr M'| - |\tau(\mr M')|$.
\end{lemma}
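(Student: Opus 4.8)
The plan is to prove~(a) directly from the linear independence of the maximal active roots, and then to deduce~(b) by combining~(a) with a rank--nullity computation; the genuinely hard part will be a reverse inequality that I intend to obtain by induction on $|\mr M'|$.

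For~(a) I would start from Lemma~\ref{Psi_maximal_elements}, which tells us that the maximal active roots are linearly independent, so in particular $\mr M'$ is a linearly independent subset of~$Q$. Partition $\mr M'$ into the fibres $C_1, \ldots, C_s$ of the map $\mu \mapsto \tau(\mu)$, where $s = |\tau(\mr M')|$. The space $R = \langle \mu - \nu \mid \mu, \nu \in \mr M',\ \tau(\mu) = \tau(\nu) \rangle$ is then the sum of the subspaces $R_i = \langle \mu - \nu \mid \mu, \nu \in C_i \rangle$. Since $\mr M'$ is linearly independent, the subspaces $\langle C_i \rangle$ are in direct sum, hence so are the $R_i$; and $\dim R_i = |C_i| - 1$, because the differences of $|C_i|$ linearly independent vectors span a space of that dimension. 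Summing over $i$ gives $\dim R = \sum_i (|C_i| - 1) = |\mr M'| - |\tau(\mr M')|$, which is~(a).

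For~(b) put $\Phi' = \bigcup_{\de \in \mr M'} F(\de)$ and $\mathcal B = \bigcup_{\de \in \mr M'}\Supp\de$, so that $l = |\mathcal B|$ and $k = |\tau(\Phi')|$. I would first compute two dimensions. By Lemma~\ref{number_of_subordinates}(c) one has $\langle \Phi' \rangle = \sum_{\de}\langle F(\de)\rangle = \sum_\de \langle \Supp\de\rangle = \langle \mathcal B\rangle$, and as the simple roots in $\mathcal B$ are linearly independent this gives $\dim\langle\Phi'\rangle = l$. On the weight side, each element of $\tau(\Phi')$ equals some $\vf_i$, and distinct such weights are linearly independent by Theorem~\ref{solvable_spherical}, so $\dim\langle\tau(\Phi')\rangle = k$. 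Applying rank--nullity to the surjection $\tau \colon \langle\Phi'\rangle \to \langle\tau(\Phi')\rangle$ yields $\dim\Ker(\tau|_{\langle\Phi'\rangle}) = l - k$. Since every generator $\mu - \nu$ of $R$ lies in this kernel, part~(a) gives the inequality $l - k \ge |\mr M'| - |\tau(\mr M')|$.

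It remains to prove the reverse inequality $l - k \le |\mr M'| - |\tau(\mr M')|$, and here I expect the main obstacle. I would induct on $|\mr M'|$, the empty case being trivial, adjoining a single maximal active root $\al$ to a set $\mr M'$. Because $\tau$ is injective on $F(\al)$ (Lemma~\ref{subordinates_diff_weights}), the increment of $l$ equals $|\Supp\al| - |C|$ with $C = \Supp\al \cap \mathcal B$, while the increment of $k$ equals $|\Supp\al| - |A|$ with $A = \{\be \in F(\al) \mid \tau(\be) \in \tau(\Phi')\}$; and $|\mr M'| - |\tau(\mr M')|$ grows by $1$ if $\tau(\al) \in \tau(\mr M')$ and by $0$ otherwise. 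Thus the inductive step reduces to the identity
\[
|A| - |C| = \begin{cases} 1 & \text{if } \tau(\al) \in \tau(\mr M'),\\ 0 & \text{otherwise.}\end{cases}
\]
To establish it I would build an explicit correspondence between the shared simple roots in $C$ and the subordinate roots in $A$: a simple root $\ga \in \Supp\al \cap \Supp\de$ is matched, through the bijection $\pi \colon F(\al) \to \Supp\al$ of Corollary~\ref{one_to_one}, with the root of $F(\al)$ whose weight is forced to recur in $F(\de)$. Which roots of $F(\al)$ can share a weight with a root of some $F(\de)$ is exactly controlled by the classification of support intersections in Propositions~\ref{intersection_diff_weights} and~\ref{intersection_equal_weights}, together with Corollary~\ref{two_families_equal_weights_crl} and Lemma~\ref{two_families_diff_weights}; the surplus of $1$ when $\tau(\al) \in \tau(\mr M')$ comes from the recurrence of the top weight $\tau(\al)$ itself, whereas the distinguished simple root $\widetilde\al$ of Proposition~\ref{covering_of_support}, lying in no other maximal support, guarantees there is no compensating contribution when $\tau(\al) \notin \tau(\mr M')$. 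Checking this identity across the possibilities $(\mr D0)$--$(\mr E2)$ is the principal difficulty; once it is verified, the reverse inequality and hence~(b) follow by induction.
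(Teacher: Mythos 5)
Your part~(a) is correct and is actually cleaner than the paper's: the paper proves (a) inside the induction (using Proposition~\ref{covering_of_support} to see that $\al-\al_0$ leaves the old span), whereas you get it in one stroke from the linear independence of $\mr M'$ (Lemma~\ref{Psi_maximal_elements}) by splitting into $\tau$-fibres. Likewise your rank--nullity computation $\dim\Ker(\tau|_{\langle\Phi'\rangle})=l-k$ is sound and, combined with (a), legitimately yields the inequality $l-k\ge|\mr M'|-|\tau(\mr M')|$; this is in effect the argument of Proposition~\ref{condition_T} run backwards, and it is a genuine economy over the paper, which proves both inequalities by induction.

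The gap is that the counting identity $|A|-|C|=1$ or $0$ to which you reduce the reverse inequality is precisely the content of the paper's inductive step, and you leave it unverified. The easy half, $|A|\ge |C|$ (plus $1$ when $\tau(\al)\in\tau(\widetilde{\mr M}')$), does follow from your matching via Lemma~\ref{coincide_assoc}; but the half you actually need is the upper bound on $|A|$: one must show that \emph{no} root $\al'\in F(\al)$ with $\pi(\al')$ lying over $J=\Supp\al\setminus\bigcup_{\de\in\widetilde{\mr M}'}\Supp\de$ shares its weight with any earlier family, with at most one prescribed exception. In the paper this is Lemma~\ref{alpha_minus_beta} (which you do not invoke), and in the delicate case where $\pi(\al)$ lies in $\Supp\al\cap\Supp\be$ for some earlier $\be$ it further requires choosing $\be$ with $\Supp\al\cap\Supp\be$ maximal and the whole $\eta(\al,\be)$ apparatus (Lemmas~\ref{eta}, \ref{three_active_roots}, \ref{eta_superstar}) to pin the unique extra element of $A$ at $\pi^{-1}(\eta(\al,\be))$ and exclude all others. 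Two details in your sketch signal that this case has not been thought through: the ``surplus of $1$'' is carried by $\al$ itself only when $\pi(\al)\in J$ (otherwise $\al$ is already matched to $\pi(\al)\in C$ and the surplus sits over $\eta(\al,\be)$), and Proposition~\ref{covering_of_support} is not what rules out compensating contributions when $\tau(\al)\notin\tau(\widetilde{\mr M}')$ --- that is Lemma~\ref{two_families_diff_weights} via Lemma~\ref{alpha_minus_beta}(a). Until this case analysis is carried out, the proof of (b) is incomplete.
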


\begin{proof}
Let us prove both assertions simultaneously by induction on~$|\mr
M'|$.

For $|\mr M'| = 1$ we have $|\mr M'| = |\tau(\mr M')|$. Obviously,
assertion~(a) is true. Assertion~(b) is also true in view of
Lemmas~\ref{subordinates_diff_weights}
and~\ref{number_of_subordinates}(a).

Now assume that assertions~(a) and~(b) are true for all proper
subsets of $\mr M'$. Let us prove them for $\mr M'$. Suppose that
$\mr M' = \widetilde {\mr M}' \cup \{\al\}$, where $\al \notin
\widetilde {\mr M}'$. Put $J = (\Supp \al) \backslash
(\bigcup\limits_{\de \in \widetilde {\mr M}'}\Supp \de)$. Put also
$\widetilde l = |\bigcup\limits_{\de \in \widetilde {\mr M}'}\Supp
\de|$, $\widetilde k = |\tau(\bigcup\limits_{\de \in \widetilde {\mr
M}'}F(\de))|$. Clearly, $|\mr M'| = |\widetilde {\mr M}'| + 1$ and
$l = \widetilde l + |J|$. Note the following two properties
of~$\al$:

(1) if $\al', \al'' \in F(\al)$ are different roots, then
$\tau(\al') \ne \tau(\al'')$ (see
Lemma~\ref{subordinates_diff_weights});

(2) if for a root $\al' \in F(\al)$ it turns out that $\pi(\al') \in
\Supp \be$ for some root $\be \in \widetilde {\mr M}'$, then there
is a root $\be' \in F(\be)$ such that $\tau(\al') = \tau(\be')$
(this follows from Lemma~\ref{coincide_assoc}).

Further we consider two cases.

\emph{Case}~1. For every root $\de \in \widetilde {\mr M}'$ we have
$\tau(\al) \ne \tau(\de)$. Then $|\tau(\mr M')| = |\tau(\widetilde
{\mr M}')| + 1$ and the subspace $\langle \mu - \nu \mid \mu, \nu
\in \mr M', \tau(\mu) = \tau(\nu) \rangle$ coincides with the
subspace $\langle \mu - \nu \mid {\mu, \nu \in \widetilde {\mr M}'},
\tau(\mu) = \tau(\nu) \rangle$ whose dimension equals $|\widetilde
{\mr M}'| - |\tau(\widetilde {\mr M}')| = |\mr M'| - |\tau(\mr M')|$
by the induction hypothesis. Thus~(a) is proved. In order to
prove~(b), in view of the induction hypothesis it suffices to check
that $|J| = k - \widetilde k$. By Lemma~\ref{alpha_minus_beta}(a)
for every root $\al' \in F(\al)$ with $\pi(\al') \in J$ and every
root $\be \in \bigcup\limits_{\de \in \widetilde {\mr M}'}F(\de)$ we
have $\tau(\al') \ne \tau(\be)$. Hence, taking into account
properties~(1) and~(2), we get $|J| = k - \widetilde k$.

\emph{Case}~2. There is a root $\al_0 \in \widetilde {\mr M}'$ such
that $\tau(\al) = \tau(\al_0)$. Then we have ${|\tau(\mr M')| =
|\tau(\widetilde {\mr M}')|}$. By
Proposition~\ref{covering_of_support} there is a simple root $\rho
\in \Supp \al$ such that $\rho \in J$, whence $\al - \al_0$ does not
lie in the subspace $\langle \mu - \nu \mid \mu,\nu \in \widetilde
{\mr M}', \tau(\mu) = \tau(\nu) \rangle$. It is easy to see that the
subspace $\langle \mu - \nu \mid \mu,\nu \in \mr M', \tau(\mu) =
\tau(\nu)\rangle$ coincides with the subspace $\langle \mu - \nu
\mid \mu, \nu \in \widetilde {\mr M}', \tau(\mu) = \tau(\nu) \rangle
\oplus \langle \al - \al_0 \rangle$ whose dimension equals
$|\widetilde {\mr M}'| - |\tau(\widetilde {\mr M}')| + 1 = |\mr M'|
- |\tau(\mr M')|$ in view of the induction hypothesis. Assertion~(a)
is proved. In order to prove~(b), in view of the induction
hypothesis it suffices to check that $|J| = k - \widetilde k + 1$.
We consider two subcases.

\emph{Subcase}~2.1. $\pi(\al) \in J$. By
Lemma~\ref{alpha_minus_beta}(a,b) for every root $\al' \in F(\al)$
with ${\pi(\al') \in J \backslash \{\pi(\al)\}}$ and every root $\be
\in \bigcup\limits_{\de \in \widetilde {\mr M}'}F(\de)$ we have
$\tau(\al') \ne \tau(\be)$. Hence in view of properties~(1) and~(2)
we get $|J| = k - \widetilde k + 1$.

\emph{Subcase}~2.2. $\pi(\al) \notin J$. In this situation there is
a maximal active root $\be \ne \al$ such that $\pi(\al) \in \Supp
\be$. Without loss of generality we may assume that the set $\Supp
\al \cap \Supp \be$ is maximal with respect to inclusion. Then by
Lemma~\ref{eta_superstar} we have $\eta(\al,\be) \in J$. Let $\al'
\in F(\al)$ be the root such that $\pi(\al') = \eta(\al,\be)$. From
Lemma~\ref{eta}(b) it follows that there is a root $\be' \in F(\be)$
with $\tau(\al') = \tau(\be')$. Assume that for some root $\al'' \in
F(\al)$ with $\pi(\al'') \in J \backslash \{\eta(\al, \be)\}$ there
are roots $\ga \in \widetilde {\mr M}'$ and $\ga' \in F(\ga)$ such
that $\tau(\al'') = \tau(\ga')$. Clearly, $\al'' \ne \ga'$. Put
$\eta' = \pi(\al'')$. Applying
Lemmas~\ref{two_families_diff_weights}
and~\ref{two_families_equal_weights} we obtain that $\tau(\al) =
\tau(\ga)$ and $\al - \al'' = \ga - \ga' \in \De_+$, whence
$\pi(\al) \in \Supp \al \cap \Supp \ga$. Then by
Lemma~\ref{alpha_minus_beta}(c) we get $\eta' = \eta(\al, \ga)$.
Hence in the diagram $\Sigma(\Pi)$ the node $\eta'$ is joined by an
edge with some node of the set $\Supp \al \cap \Supp \ga$. Further,
by Corollary~\ref{assoc_intersection} we have $\pi(\al) = \pi(\ga)$.
In view of the choice of $\be$ and Lemma~\ref{three_active_roots}
there is the inclusion $\Supp \al \cap \Supp \ga \subset \Supp \al
\cap \Supp \be$. Hence we obtain that in the diagram $\Sigma(\Pi)$
the node $\eta' \in \Supp \al \backslash \Supp \be$ is joined by an
edge with some node of the set $\Supp \al \cap \Supp \be$. Then by
Lemma~\ref{eta}(a) we have $\eta' = \eta (\al, \be)$, which is not
the case. Thus for every root $\al'' \in F(\al)$ with $\pi(\al') \in
J \backslash \{\eta(\al, \be)\}$ and every root $\ga' \in
\bigcup\limits_{\de \in \widetilde {\mr M}'}F(\de)$ we have
$\tau(\al'') \ne \tau(\ga')$. Hence in view of properties~(1)
and~(2) we obtain $|J| = k - \widetilde k + 1$.

Assertion~(b) is proved.
\end{proof}

\begin{proposition}\label{condition_T}
The kernel of the map $\tau \colon \langle \bigcup\limits_{\de \in
\mr M}\Supp \de \rangle \to \mf X(S) \otimesZ \mb Q$ coincides with
the subspace $\langle \mu - \nu \mid \mu, \nu \in \mr M, \tau(\mu) =
\tau(\nu) \rangle$.
\end{proposition}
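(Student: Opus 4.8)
The plan is to prove the two opposite inclusions, the easy one directly and the hard one by a dimension count. Write $D = \langle \bigcup_{\de \in \mr M} \Supp \de \rangle$ for the domain and $E = \langle \mu - \nu \mid \mu, \nu \in \mr M,\ \tau(\mu) = \tau(\nu)\rangle$ for the candidate kernel. The inclusion $E \subseteq \Ker(\tau|_D)$ is immediate: each generator $\mu - \nu$ lies in $D$, since $\Supp\mu, \Supp\nu \subseteq \bigcup_{\de \in \mr M}\Supp\de$, and $\tau(\mu - \nu) = \tau(\mu) - \tau(\nu) = 0$ by the defining condition on the generators. So only the reverse inclusion requires work, and because $E \subseteq \Ker(\tau|_D)$, it suffices to prove the equality of dimensions $\dim \Ker(\tau|_D) = \dim E$.

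For this I would apply rank--nullity to $\tau|_D$, writing $\dim \Ker(\tau|_D) = \dim D - \dim \tau(D)$. The dimension of the domain is $\dim D = l$, where $l = |\bigcup_{\de \in \mr M}\Supp\de|$, because distinct simple roots are linearly independent. To handle $\dim \tau(D)$ I would first rewrite $D$ in terms of the families: by Lemma~\ref{number_of_subordinates}(c) we have $\langle F(\de)\rangle = \langle \Supp\de\rangle$ for each $\de$, hence $D = \langle \bigcup_{\de \in \mr M}F(\de)\rangle$ and therefore $\tau(D) = \langle \tau(\ga) \mid \ga \in \bigcup_{\de\in\mr M}F(\de)\rangle$.

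The key step, and the one I expect to carry the argument, is to show that this last span has dimension \emph{exactly} $k$, where $k = |\tau(\bigcup_{\de\in\mr M}F(\de))|$; that is, that the distinct $\tau$-values occurring on the roots of the families are linearly independent. This is where Theorem~\ref{solvable_spherical} enters. Every root $\ga \in \bigcup_{\de \in \mr M} F(\de)$ is active, so $\tau(\ga) = \vf_i$ for some $i$, and the weights $\vf_1, \ldots, \vf_K$ (those with $c_\la = 1$) are linearly independent. Since a subset of a linearly independent family is again linearly independent, the set of distinct values $\tau(\ga)$ is linearly independent, so $\dim \tau(D) = k$ exactly, not merely $\dim \tau(D) \le k$. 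Consequently $\dim \Ker(\tau|_D) = l - k$.

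It then remains only to match this against Lemma~\ref{estimations} applied to $\mr M' = \mr M$. Part~(b) gives $l = k + |\mr M| - |\tau(\mr M)|$, hence $l - k = |\mr M| - |\tau(\mr M)|$, while part~(a) gives $\dim E = |\mr M| - |\tau(\mr M)|$. Therefore $\dim \Ker(\tau|_D) = l - k = \dim E$, which together with the already established inclusion $E \subseteq \Ker(\tau|_D)$ yields equality, as desired. The genuine combinatorial difficulty has already been absorbed into Lemma~\ref{estimations}; the only real subtlety remaining in this proof is securing the exact value of $\dim \tau(D)$ rather than an upper bound, and for that the linear independence coming from the sphericity criterion is essential.
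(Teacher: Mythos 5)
Your proof is correct and follows essentially the same route as the paper: both arguments reduce to a dimension count combining the obvious inclusion $E \subseteq \Ker(\tau|_D)$ with Lemma~\ref{estimations}(a) and~(b) and the linear independence of the weights $\vf_i$ from Theorem~\ref{solvable_spherical}. The only cosmetic difference is that you compute $\dim\tau(D)=k$ exactly via rank--nullity before invoking the lemma, whereas the paper sandwiches $\dim\Ker(\tau|_R)$ between the two bounds; the ingredients are identical.
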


\begin{proof}
Put $R = \langle \bigcup\limits_{\de \in \mr M}\Supp \de \rangle
\subset Q$. By Theorem~\ref{solvable_spherical} the dimension of
$\tau(R)$ is at least~$K$. Further, in view of the inclusion
$\langle \mu - \nu \mid \mu,\nu \in \mr M, \tau(\mu) =
\tau(\nu)\rangle \subset \Ker \left.\tau\right|_R$
Lemma~\ref{estimations}(a) yields $\dim \Ker \left.\tau\right|_R \ge
|\mr M| - |\tau(\mr M)|$. Applying Lemma~\ref{estimations}(b) we
obtain $\dim \Ker \left.\tau\right|_R = |\mr M| - |\tau(\mr M)|$,
which implies the required result.
\end{proof}

\subsection{}

In this subsection we sum up the results obtained in this section
and prove the uniqueness theorem (see
Theorem~\ref{uniqueness_theorem}).

We recall that in \S\,\ref{last_condition} we introduced the
notation $\mr M = \mr M(H)$ for the set of maximal active roots
of~$H$. We now introduce a relation $\sim$ on $\mr M$ as follows.
For any two roots $\al, \be \in \mr M$ we write $\al \sim \be$ if
and only if $\tau(\al) = \tau(\be)$. Evidently, this relation is an
equivalence relation.

To each connected solvable spherical subgroup $H \subset G$
standardly embedded in $B$ we assign the set of combinatorial data
$\Upsilon(H) = (S, \mr M, \pi, \sim)$. We also put $\Upsilon_0(H) =
(\mr M, \pi, \sim)$. (In both of the sets $\Upsilon(H)$ and
$\Upsilon_0(H)$, $\pi$ is considered as a map from $\mr M$
to~$\Pi$.)

\begin{theorem}[Uniqueness]\label{uniqueness_theorem} Let
$H \subset G$ be a connected solvable spherical subgroup standardly
embedded in~$B$. Then, up to conjugation by elements of $T$, $H$ is
uniquely determined by its set $\Upsilon(H) = (S, \mr M, \pi,
\sim)$, and this set satisfies the following conditions:

$(\mr A)$ $\pi(\al) \in \Supp \al$ for every $\al \in \mr M$, and
the pair $(\al, \pi(\al))$ is contained in
Table~\textup{\ref{table_active_roots}};

$(\mr D)$ if $\al, \be \in \mr M$ and $\al \nsim \be$, then for
$\al, \be$ one of possibilities $(\mr D0)$, $(\mr D1)$, $(\mr D2)$
is realized;

$(\mr E)$ if $\al, \be \in \mr M$ and $\al \sim \be$, then for $\al,
\be$ one of possibilities $(\mr D0)$, $(\mr D1)$, $(\mr E1)$, $(\mr
D2)$, $(\mr E2)$ is realized;

$(\mr C)$ if $\al \in \mr M$, then $\Supp \al \not\subset
\bigcup\limits_{\de \in \mathrm M \backslash \{\al\}}\Supp \de$;

$(\mr T)$ $\left.\Ker \tau \right|_R = \langle \mu - \nu \mid \mu,
\nu \in \mr M, \mu \sim \nu \rangle$, where $R = \langle
\bigcup\limits_{\de \in \mr M} \Supp \de \rangle$.
\end{theorem}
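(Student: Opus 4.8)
The plan is to reduce the uniqueness claim to Theorem~\ref{S_and_Psi}, which already asserts that $H$ is determined up to $T$-conjugation by the pair $(S, \Psi)$. Since $S$ is an explicit part of $\Upsilon(H)$, it suffices to show that the full set $\Psi$ of active roots can be reconstructed from the data $(\mr M, \pi)$. The key identity to establish is
\[
\Psi = \bigcup_{\al \in \mr M} F(\al).
\]
Once this is in hand, the right-hand side is visibly determined by $\Upsilon(H)$: each $\al \in \mr M$ is an honest positive root, and by Corollary~\ref{determ_family} the family $F(\al)$ is completely determined by $\pi(\al)$. Thus $(\mr M, \pi)$ determines $\Psi$, and then $(S, \mr M, \pi, \sim)$ determines $H$ up to $T$-conjugation through Theorem~\ref{S_and_Psi}.

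First I would prove the displayed identity. The inclusion $\supseteq$ is immediate, since every $F(\al)$ consists of active roots. For $\subseteq$, take any active root $\be$. If $\be$ is maximal, then $\be \in \mr M \subset \bigcup_\al F(\al)$. Otherwise $\be$ is subordinate to some active root, so there is an ascending chain $\be = \be_0, \be_1, \ldots$ with $\be_i \in F(\be_{i+1}) \setminus \{\be_{i+1}\}$; this chain strictly increases height and hence terminates at a maximal active root $\al \in \mr M$. Applying Corollary~\ref{subfamilies}(b) repeatedly along the chain gives the nesting $F(\be_0) \subset F(\be_1) \subset \cdots \subset F(\al)$, so $\be \in F(\al)$. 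This transitivity of subordination via the family-nesting of Corollary~\ref{subfamilies}(b) is the only genuinely new (if modest) step of the argument; everything else is assembly of prior results.

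It then remains to verify conditions $(\mr A)$--$(\mr T)$, and each is a direct restatement of an earlier result. Condition $(\mr A)$ combines Proposition~\ref{associated_simple_root} (which gives $\pi(\al) \in \Supp \al$) with Theorem~\ref{active_roots} (which places $(\al, \pi(\al))$ in Table~\ref{table_active_roots}). Condition $(\mr D)$ is precisely Proposition~\ref{intersection_diff_weights}, since $\al \nsim \be$ means $\tau(\al) \ne \tau(\be)$; condition $(\mr E)$ is precisely Proposition~\ref{intersection_equal_weights}, since $\al \sim \be$ means $\tau(\al) = \tau(\be)$. Condition $(\mr C)$ is exactly the statement of Proposition~\ref{covering_of_support} applied to each $\al \in \mr M$, and condition $(\mr T)$ is Proposition~\ref{condition_T} after rewriting the defining relation $\tau(\mu) = \tau(\nu)$ as $\mu \sim \nu$. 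I expect no obstacle beyond keeping the translation between the $\tau$-equality and the relation $\sim$ consistent across $(\mr D)$, $(\mr E)$, and $(\mr T)$; the substantive content of all five conditions has already been carried out in the preceding subsections.
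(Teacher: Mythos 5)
Your proposal is correct and follows essentially the same route as the paper: reduce to Theorem~\ref{S_and_Psi} by observing that $(\mr M,\pi)$ determines $\Psi$ via Corollary~\ref{determ_family}, then verify $(\mr A)$--$(\mr T)$ by citing Theorem~\ref{active_roots} and Propositions~\ref{intersection_diff_weights}, \ref{intersection_equal_weights}, \ref{covering_of_support}, and~\ref{condition_T}. Your explicit verification that $\Psi=\bigcup_{\al\in\mr M}F(\al)$ (via the ascending subordination chain and Corollary~\ref{subfamilies}(b)) is a correct spelling-out of a step the paper leaves implicit.
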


\begin{proof}
In view of Corollary~\ref{determ_family} the set $\Psi$ is uniquely
determined by the pair $(\mr M, \pi)$. Then in view of
Theorem~\ref{S_and_Psi}, up to conjugation by elements of~$T$, $H$
is uniquely determined by the triple $(S, \mr M, \pi)$.

Condition $(\mr A)$ follows from the definition of $\pi(\al)$ and
Theorem~\ref{active_roots}. Conditions~$(\mr D)$ and~$(\mr E)$
follow from Propositions~\ref{intersection_diff_weights}
and~\ref{intersection_equal_weights}, respectively. Condition~$(\mr
C)$ is established in Proposition~\ref{covering_of_support}. At
last, condition~$(\mr T)$ is proved in
Proposition~\ref{condition_T}.
\end{proof}

\begin{remark}
The set of combinatorial data $(S, \mr M, \pi, \sim)$ is redundant
in the sense that the relation $\sim$ is uniquely determined by $S$
and $\mr M$. However, the advantage of this set is that, as we shall
see in~\S\,\ref{section_existence}, the unipotent radical $N$ of $H$
can be constructed using only the subset $(\mr M, \pi, \sim)$ with
no need of~$S$ (see Remark~\ref{remark_unipotent_radical}).
\end{remark}

\begin{remark}
If two connected solvable spherical subgroups $H_1, H_2 \subset G$
standardly embedded in $B$ are conjugate in~$G$, then, generally
speaking, the sets of combinatorial data $(S, \mr M, \pi, \sim)$
corresponding to them are different. Therefore, generally speaking,
the set $(S, \mr M, \pi, \sim)$ is not an invariant of conjugacy
classes of connected solvable spherical subgroups. We shall come
back to this question in~\S\,\ref{up_to_conjugacy}.
\end{remark}

%-------------------------------------------------------------------------
\section{Existence theorem}\label{section_existence}
%-------------------------------------------------------------------------

In this section we show that, given a set of combinatorial data
indicated in Theorem~\ref{uniqueness_theorem}, one can construct a
connected solvable spherical subgroup in $G$ standardly embedded
in~$B$ with this set of combinatorial data. Namely, we prove the
following theorem.

\begin{theorem}[Existence] \label{existence_theorem}
Suppose that a subtorus $S\subset T$, a subset $\mr M \subset
\De_+$, a~map $\pi \colon \mr M \to \Pi$, and an equivalence
relation $\sim$ on $\mr M$ satisfy conditions $(\mr A)$, $(\mr D)$,
$(\mr E)$, $(\mr C)$, and $(\mr T)$. Then there exists a connected
solvable spherical subgroup $H \subset G$ standardly embedded in~$B$
such that $\Upsilon(H) = (S, \mr M, \pi, \sim)$.
\end{theorem}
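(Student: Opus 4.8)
The plan is to reverse the analysis of the previous section: given the combinatorial data $(S, \mr M, \pi, \sim)$, I will first reconstruct the full set $\Psi$ of active roots and then build an explicit subalgebra $\mf n \subset \mf u$ whose normalizing torus data matches $S$, finally verifying that $\mf h = \mf s \oplus \mf n$ is the tangent algebra of a connected solvable spherical subgroup with the prescribed invariants. By Corollary~\ref{determ_family}, each maximal active root $\al \in \mr M$ together with $\pi(\al)$ determines its family $F(\al)$, so I set $\Psi = \bigcup_{\al \in \mr M} F(\al)$. I must then check that this $\Psi$ is genuinely the set of active roots of the subalgebra I construct, i.e.\ that no unintended roots become active and that the maximal elements of $\Psi$ are exactly $\mr M$; condition~$(\mr C)$ guarantees that each $\al \in \mr M$ has a simple root in its support lying in no other support, which prevents collapse of distinct families.

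First I would define, for each $\al \in \mr M$, a linear function $\xi_\al$ on $\mf u_\al := \bigoplus_{\ga \in \Psi,\, \tau(\ga) = \tau(\al)} \mf g_\ga$, chosen (as in the proof of Theorem~\ref{S_and_Psi}) so that $\xi_\al$ restricts nontrivially to every $\mf g_\ga$ with $\tau(\ga) = \tau(\al)$. Then for each non-maximal active root I propagate these functions downward using the rule of Proposition~\ref{linear_functions_1}: if $\Psi_i \doubleprec \Psi_j$ via $\ga$, then $\xi_i(x) = c_{ij}\,\xi_j([x, e_\ga])$. The subalgebra $\mf n$ is defined as the sum of all $\mf g_\ga$ with $\ga \notin \Psi$ together with the kernels of all the $\xi_i$ inside the spaces $\mf u_i$. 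The crucial point here is \emph{consistency}: the definition of $\xi_i$ for a given weight class must be independent of the chain of elementary relations $\doubleprec$ used to reach it, and conditions $(\mr D)$ and $(\mr E)$ are precisely what rule out the incompatible configurations of overlapping supports that would otherwise force contradictory normalizations. This compatibility check, carried out case by case against possibilities $(\mr D0)$--$(\mr E2)$, I expect to be the main obstacle.

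Once $\mf n$ is shown to be a well-defined $S$-stable subspace, I would verify that it is a subalgebra of $\mf u$: I must show $[\mf n, \mf n] \subset \mf n$, which reduces to checking brackets of root vectors against the defining linear functions, again using Lemma~\ref{three_roots} and Proposition~\ref{crucial} to control which sums of roots are active. Here condition~$(\mr T)$ enters: it ensures that $S$ normalizes $\mf n$ compatibly, i.e.\ that the kernel of $\tau$ on $R$ matches the linear dependencies among the weights $\tau(\mu)$ forced by $\mu \sim \nu$, so that the weight system of $S$ acting on $\mf u / \mf n$ has exactly the right linear-independence structure. Then $\mf h = \mf s \oplus \mf n$ is the tangent algebra of a connected solvable subgroup $H = S \rightthreetimes N$ standardly embedded in $B$.

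Finally I would apply the sphericity criterion (Theorem~\ref{solvable_spherical}) to $H$: by construction each nonzero codimension $c_\la$ equals $1$, occurring exactly at the weights $\tau(\al)$ for $\al \in \mr M$, and condition~$(\mr T)$ forces these weights to be linearly independent in $\mf X(S)$, so condition~(2) of that theorem holds and $H$ is spherical. It then remains to confirm that the combinatorial data recovered from this $H$ is the original $(S, \mr M, \pi, \sim)$: the set of active roots is $\Psi$ by construction, its maximal elements are $\mr M$ by $(\mr C)$, the associated simple roots are $\pi$ by Proposition~\ref{associated_simple_root}, and the equivalence $\sim$ is recovered as equality of $\tau$-values. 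The construction of $\mf n$ via the explicit functions $\xi_i$ is the promised algorithm, so I would present the argument constructively so that the procedure for building $H$ is transparent.
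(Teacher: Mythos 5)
Your overall architecture coincides with the paper's: define each family $F(\al)$ combinatorially from the pair $(\al,\pi(\al))$ of Table~\ref{table_active_roots}, set $\Psi=\bigcup_{\al\in\mr M}F(\al)$, propagate linear functions from the maximal classes downwards by the rule $\xi_i(x)=c\,\xi_j([x,e_\de])$, let $\mf n$ be the sum of the inactive root spaces and the kernels of the $\xi_i$, and then check that $\mf n$ is an $S$-stable subalgebra and that $H=S\rightthreetimes N$ is spherical. You also correctly single out the well-definedness of the propagated functionals as a point requiring $(\mr D)$, $(\mr E)$, $(\mr C)$; in the paper this is Proposition~\ref{psi+de} together with Lemma~\ref{linear_functions_2}.

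The gap is in the final sphericity step. You assert that ``by construction each nonzero codimension $c_\la$ equals $1$, occurring exactly at the weights $\tau(\al)$ for $\al\in\mr M$, and condition $(\mr T)$ forces these weights to be linearly independent.'' Neither half of this is right as stated. First, the codimension-one weights are the $\tau(\ga)$ for \emph{all} $\ga\in\Psi$, one for each class of the equivalence relation extended from $\mr M$ to all of $\Psi$, not only for the maximal roots: every subordinate root carries its own propagated functional and hence its own codimension-one weight space. Second, and more seriously, it is not automatic that distinct classes have distinct restrictions to $S$ --- if two classes had equal $\tau$-value, the corresponding $c_\la$ would be $2$ and sphericity would fail --- and condition $(\mr T)$ by itself only identifies $\Ker\tau|_R$. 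To get linear independence of the $K$ weights one must additionally prove the combinatorial upper bound $l\ge K+|\mr M|-m$ (where $l=|\bigcup_{\de\in\mr M}\Supp\de|$ and $m$ is the number of $\sim$-classes in $\mr M$), which matches the lower bound $\dim\tau(R)=l-(|\mr M|-m)$ coming from $(\mr T)$. This is the content of the paper's Lemma~\ref{estimations_2}, an induction over subsets of $\mr M$ whose hardest subcase needs the nodes $\eta(\al,\be)$ of Lemma~\ref{eta_2} and the nesting property of Lemma~\ref{three_active_roots_2}; conditions $(\mr C)$, $(\mr D)$, $(\mr E)$ enter there in an essential way, not merely in the consistency check for the $\xi_i$. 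Without this counting argument your verification of condition~(2) of Theorem~\ref{solvable_spherical} does not go through.
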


In~\S\,\ref{existence_first} we collect some facts that will be
needed in the proof of this theorem. The proof itself is carried out
in~\S\S\,\ref{existence_proof_Psi}-\ref{existence_proof_end}.

\subsection{}\label{existence_first}

Let a pair $(\al, \al_0)$, where $\al \in \De_+$, $\al_0 \in \Supp
\al$, be such that $\al$ is contained in the column `$\al$' of
Table~\ref{table_active_roots} and $\al_0$ is contained in the same
row in column `$\pi(\al)$' of this table. Put
$$
F(\al)=\{\al\} \cup \{\al'\in \De_+ \mid \al - \al' \in \De_+, \al_0
\notin \Supp \al'\}.
$$
Then using simple case-by-case considerations one can establish the
following properties:

\emph{\textup{(1)} if $\be \in F(\al)$, then $\be$ is contained in
Table~\textup{\ref{table_active_roots}};}

\emph{\textup{(2)} if $\be \in F(\al)$ and $\be = \be_1 + \be_2$ for
some roots $\be_1, \be_2 \in \De_+$, then exactly one of the two
roots $\be_1, \be_2$ lies in~$F(\al)$;}

\emph{\textup{(3)} for every $\be \in F(\al)$ we have $|\{\be\} \cup
\{\be' \in F(\al) \mid \be - \be' \in \De_+\}| = |\Supp \be|$; in
particular, $|F(\al)| = |\Supp \al|$;}

\emph{\textup{(4)} all roots in $F(\al)$ are linearly independent
\textup{(}which in view of condition~\textup{(3)} is equivalent to
$\langle F(\al) \rangle = \langle \Supp \al \rangle$\textup{)}.}

\subsection{} \label{existence_proof_Psi}

We proceed to the proof of Theorem~\ref{existence_theorem}. Suppose
that a set of combinatorial data $(S, \mr M, \pi, \sim)$, where
$S\subset T$ is a subtorus, $\mr M \subset \De_+$ is a subset, $\pi
\colon \mr M \to \Pi$ is a map, and $\sim$ is an equivalence
relation on~$\mr M$, satisfies conditions $(\mr A)$, $(\mr D)$,
$(\mr E)$, $(\mr C)$, and $(\mr T)$.

For each pair $(\al, \pi(\al))$, where $\al \in \mr M$, we construct
the set $F(\al)$ as indicated in~\S\,\ref{existence_first} and put
$\Psi = \bigcup\limits_{\al\in \mr M}F(\al)$.

In this subsection we derive basic properties of the set $\Psi$ that
are necessary for the proof of Theorem~\ref{existence_theorem}.

\begin{lemma}\label{beta_subset_alpha}
Let roots $\al \in \mr M$ and $\be \in \Psi$ be such that $\Supp \be
\subset \Supp \al$. Then $\be \in F(\al)$.
\end{lemma}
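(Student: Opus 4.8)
The plan is to unfold the definition of $\Psi = \bigcup_{\ga \in \mr M} F(\ga)$: fix some $\ga \in \mr M$ with $\be \in F(\ga)$. If $\ga = \al$ there is nothing to prove, so I assume $\ga \ne \al$. Since $\Supp \be \subset \Supp \al$ by hypothesis and $\Supp \be \subset \Supp \ga$ (because $\be \in F(\ga)$ forces $\be = \ga$ or $\ga - \be \in \De_+$), we get $\Supp \be \subset I := \Supp \al \cap \Supp \ga$. In particular $\Supp \be \subsetneq \Supp \ga$, so $\be \ne \ga$, and hence $\ga - \be \in \De_+$ together with $\pi(\ga) \notin \Supp \be$ by the construction of $F(\ga)$ in \S\,\ref{existence_first}. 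To conclude $\be \in F(\al)$ it then suffices to verify the two defining conditions $\pi(\al) \notin \Supp \be$ and $\al - \be \in \De_+$.

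The main device is the classification of the pair of distinct roots $\al, \ga \in \mr M$: by conditions $(\mr D)$ and $(\mr E)$ one of the possibilities $(\mr D0)$, $(\mr D1)$, $(\mr D2)$, $(\mr E1)$, $(\mr E2)$ is realized, and I would dispose of them one at a time. Possibility $(\mr D0)$ gives $I = \varnothing$ and is impossible since $\Supp \be \ne \varnothing$. In $(\mr E1)$ we have $I = \{\de\}$ with $\de = \pi(\ga)$, forcing $\be = \de = \pi(\ga)$; but this contradicts $\pi(\ga) \notin \Supp \be$, so $(\mr E1)$ does not occur. In $(\mr D1)$ we have $I = \{\de\}$ with $\de$ terminal with respect to $\Supp \al$ and $\de \ne \pi(\al)$, whence $\Supp \be = \{\de\}$ and $\be = \de$; here $\pi(\al) \ne \de = \be$ gives the first condition, and the case-by-case analysis of Table~\ref{table_active_roots} underlying Corollary~\ref{extreme_root} gives $\de \in F(\al)$, that is $\al - \be \in \De_+$.

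The substantive case is $(\mr D2)$/$(\mr E2)$, which I expect to be the main obstacle. Here $\Sigma(\Supp \al \cup \Supp \ga)$ is the three-armed diagram of Figure~\ref{diagram_difficult}, the intersection $I$ is exactly the ``stem'' $\{\ga_0, \ga_1, \ldots, \ga_r\}$, and, crucially, within $\Supp \al$ (respectively $\Supp \ga$) the branch node $\ga_0$ has degree $2$, so each of $\Sigma(\Supp \al)$ and $\Sigma(\Supp \ga)$ is a simple chain of type $\ms A$ whose highest root is $\al$ (respectively $\ga$). I would invoke the elementary fact that in a type-$\ms A$ chain the roots are precisely the connected subintervals, and the highest root minus a subinterval root is again a root exactly when that subinterval is an end segment of the chain. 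Applying this to $\ga - \be \in \De_+$ with $\Supp \be \subset$ stem forces $\Supp \be$ to be an end segment of $\Supp \ga$ contained in the stem, i.e.\ a tail $\{\ga_i, \ldots, \ga_r\}$. Since $\ga_r$ is simultaneously the far endpoint of the chain $\Supp \al$, the same set is an end segment of $\Supp \al$, so $\al - \be \in \De_+$ as well.

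Finally I would check $\pi(\al) \notin \Supp \be$ in this last case: in $(\mr D2)$ the root $\pi(\al)$ lies off the stem, hence off $\Supp \be$, while in $(\mr E2)$ we have $\pi(\al) = \pi(\ga) \notin \Supp \be$ by the reduction above. Both defining conditions then hold, giving $\be \in F(\al)$ and completing the proof. The delicate point throughout is that the naive implication ``$\Supp \be \subset \Supp \al$ and $\pi(\al) \notin \Supp \be$ $\Rightarrow$ $\al - \be \in \De_+$'' is \emph{false} in general (for instance for an interior simple root of an $\ms A_n$-chain), so the argument genuinely relies on conditions $(\mr D)$/$(\mr E)$ to exclude the offending configurations and on the connectivity of supports of roots to pin $\Supp \be$ down to a shared end segment of the stem.
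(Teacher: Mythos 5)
Your proof is correct and takes essentially the same route as the paper's: fix $\ga \in \mr M$ with $\be \in F(\ga)$, reduce to $\ga \ne \al$, and run through the possibilities $(\mr D1)$, $(\mr E1)$, $(\mr D2)$, $(\mr E2)$ supplied by conditions $(\mr D)$ and $(\mr E)$. The paper compresses the case analysis to ``a direct check in each case shows that the assertion is true''; your write-up simply carries out that check (correctly, including the observation that the branch node in $(\mr D2)$/$(\mr E2)$ forces a simply-laced ambient diagram so that the supports are type-$\ms A$ chains).
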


\begin{proof}
Regard a root $\widetilde \be \in \mr M$ such that $\be \in
F(\widetilde \be)$. If $\widetilde \be = \al$, then there is nothing
to prove, therefore we assume that $\widetilde \be \ne \al$. In view
of conditions~$(\mr D)$ and $(\mr E)$ for the roots $\al, \widetilde
\be$ one of possibilities $(\mr D1)$, $(\mr E1)$, $(\mr D2)$,
or~$(\mr E2)$ is realized. A direct check in each case shows that
the assertion is true.
\end{proof}

\begin{lemma} \label{sum_of_two_roots_2}
Suppose that $\al \in \Psi$ and $\al = \al_1 + \al_2$ for some roots
$\al_1, \al_2 \in \De_+$. Then exactly one of the two roots $\al_1,
\al_2$ lies in~$\Psi$.
\end{lemma}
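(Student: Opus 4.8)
The plan is to prove Lemma~\ref{sum_of_two_roots_2} by combining the two defining properties of the constructed set $\Psi$ with the combinatorial analysis already available for families $F(\al)$. Recall that here $\Psi = \bigcup_{\al \in \mr M} F(\al)$ is built from the data $(S, \mr M, \pi, \sim)$, so unlike in~\S\,\ref{main_theorem} the structure of $\Psi$ must be deduced from the families alone, not from an algebra $\mf h$. Given $\al \in \Psi$ with $\al = \al_1 + \al_2$ for roots $\al_1, \al_2 \in \De_+$, I would first fix a root $\widetilde\al \in \mr M$ with $\al \in F(\widetilde\al)$, so that $\Supp\al \subset \Supp\widetilde\al$ and the pair $(\widetilde\al, \pi(\widetilde\al))$ appears in Table~\ref{table_active_roots}.

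The next step is to reduce the statement to a question about the single family $F(\widetilde\al)$. Since $\Supp\al_1 \cup \Supp\al_2 = \Supp\al \subset \Supp\widetilde\al$, both $\al_1$ and $\al_2$ have support contained in $\Supp\widetilde\al$. By property~(2) of~\S\,\ref{existence_first} applied to $\be = \al \in F(\widetilde\al)$ and the decomposition $\al = \al_1 + \al_2$, exactly one of $\al_1, \al_2$ lies in $F(\widetilde\al)$, hence in $\Psi$. So the heart of the matter is to show that \emph{at most} one of $\al_1, \al_2$ lies in $\Psi$ at all — the root lying outside $F(\widetilde\al)$ must fail to lie in any other family either.

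The main obstacle, then, is ruling out that the ``other'' summand, say $\al_2 \notin F(\widetilde\al)$, belongs to some different family $F(\be)$ with $\be \in \mr M$, $\be \ne \widetilde\al$. I would argue as follows: if $\al_2 \in \Psi$, then since $\Supp\al_2 \subset \Supp\al \subset \Supp\widetilde\al$, Lemma~\ref{beta_subset_alpha} forces $\al_2 \in F(\widetilde\al)$, a direct contradiction. This is exactly where Lemma~\ref{beta_subset_alpha} — the analogue of Corollary~\ref{subfamilies}(a) in the existence setting — does the decisive work, so the bulk of the argument is already discharged by the preceding lemma rather than by a fresh computation. Symmetrically, if $\al_1 \notin F(\widetilde\al)$ were the summand outside the family, the same application of Lemma~\ref{beta_subset_alpha} gives $\al_1 \in F(\widetilde\al)$, again a contradiction.

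Putting these together yields the conclusion. Property~(2) of~\S\,\ref{existence_first} guarantees that at least one (indeed exactly one within $F(\widetilde\al)$) of $\al_1, \al_2$ lies in $\Psi$, while Lemma~\ref{beta_subset_alpha} guarantees that no summand can lie in $\Psi$ without lying in $F(\widetilde\al)$; hence precisely one of $\al_1, \al_2$ lies in $\Psi$, which is the assertion. I expect the only subtlety to be bookkeeping in the degenerate situation where $\widetilde\al$ could be chosen in more than one way (when $\al$ lies in several families), but since the argument above never uses a particular choice beyond $\al \in F(\widetilde\al)$, any valid choice suffices and no genuine case analysis on Table~\ref{table_active_roots} is required at this stage.
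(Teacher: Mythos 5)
Your proof is correct and coincides with the paper's own argument: both fix a root $\widetilde\al \in \mr M$ with $\al \in F(\widetilde\al)$, invoke property~(2) of~\S\,\ref{existence_first} to get that exactly one summand lies in $F(\widetilde\al)$, and then use Lemma~\ref{beta_subset_alpha} to rule out the other summand lying in any family at all. No substantive difference.
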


\begin{proof}
Let a root $\widetilde \al \in \mr M$ (which, possibly, coincides
with $\al$) be such that $\al \in F(\widetilde \al)$. Then by
property~(2) exactly one of the two roots $\al_1, \al_2$ lies
in~$F(\widetilde \al)$. We may assume that $\al_1 \in F(\widetilde
\al)$. If $\al_2 \in \Psi$, then by Lemma~\ref{beta_subset_alpha} we
obtain $\al_2 \in F(\widetilde \al)$, which is not the case.
\end{proof}

We now define the set $F(\al)$ for an arbitrary root $\al \in \Psi$:
$F(\al) = \{\al\} \cup \{\al' \in \Psi \mid \al - \al' \in \De_+\}$.
For roots $\al \in \mr M$ this definition coincides with the one
given above.

\begin{corollary}\label{family_Psi}
Let $\al \in \Psi$ be an arbitrary root. Then:

\textup{(a)} $|F(\al)| = |\Supp \al|$;

\textup{(b)} all roots in $F(\al)$ are linearly independent
\textup{(}which in view of~\textup{(}a\textup{)} is equivalent to
$\langle F(\al) \rangle = \langle\Supp \al \rangle$\textup{)}.
\end{corollary}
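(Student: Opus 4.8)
The plan is to reduce the statement for an arbitrary $\al \in \Psi$ to the case $\al \in \mr M$, which is already covered by properties~(3) and~(4) of~\S\,\ref{existence_first}. Since $\al \in \Psi = \bigcup_{\de \in \mr M} F(\de)$, I first fix a root $\widetilde\al \in \mr M$ with $\al \in F(\widetilde\al)$. From the construction of $F(\widetilde\al)$ it is immediate that $\Supp\al \subset \Supp\widetilde\al$.

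The key step is to show that the set $F(\al)$, defined for $\al \in \Psi$ by $F(\al) = \{\al\} \cup \{\al' \in \Psi \mid \al - \al' \in \De_+\}$, agrees with the subset $\{\al\} \cup \{\al' \in F(\widetilde\al) \mid \al - \al' \in \De_+\}$ of $F(\widetilde\al)$. The inclusion $\supseteq$ holds trivially since $F(\widetilde\al) \subset \Psi$. For the inclusion $\subseteq$, take any $\al' \in F(\al)$ with $\al' \ne \al$; then $\al' \in \Psi$ and $\al - \al' \in \De_+$, so that $\Supp\al' \subset \Supp\al \subset \Supp\widetilde\al$. Lemma~\ref{beta_subset_alpha} now yields $\al' \in F(\widetilde\al)$, which is precisely what is needed. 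This identification is the only nontrivial point: everything hinges on Lemma~\ref{beta_subset_alpha}, which guarantees that any root of $\Psi$ whose support lies inside $\Supp\widetilde\al$ already belongs to $F(\widetilde\al)$.

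Once the identification is established, assertion~(a) follows at once: applying property~(3) to the root $\al \in F(\widetilde\al)$ gives $|F(\al)| = |\{\al\} \cup \{\al' \in F(\widetilde\al) \mid \al - \al' \in \De_+\}| = |\Supp\al|$. Assertion~(b) is then immediate, since $F(\al) \subset F(\widetilde\al)$ and all roots in $F(\widetilde\al)$ are linearly independent by property~(4); consequently the roots in $F(\al)$ are linearly independent as well, and in view of~(a) this is equivalent to $\langle F(\al) \rangle = \langle \Supp\al \rangle$. The main obstacle is thus entirely concentrated in the set-theoretic identification of $F(\al)$ with the trace of $F(\widetilde\al)$ on the roots $\al'$ with $\al - \al' \in \De_+$; the numerical and linear-independence conclusions are then formal consequences of the properties already recorded for roots in~$\mr M$.
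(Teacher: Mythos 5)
Your proof is correct and follows essentially the same route as the paper: both reduce the claim to properties~(3) and~(4) of \S\,\ref{existence_first} by identifying $F(\al)$ with the subset $\{\al\}\cup\{\al'\in F(\widetilde\al)\mid \al-\al'\in\De_+\}$ of a family $F(\widetilde\al)$, $\widetilde\al\in\mr M$, containing~$\al$. The only cosmetic difference is that you justify the inclusion $F(\al)\subseteq F(\widetilde\al)$ by a direct appeal to Lemma~\ref{beta_subset_alpha}, while the paper cites Lemma~\ref{sum_of_two_roots_2} (in combination with property~(2)); since that lemma itself rests on Lemma~\ref{beta_subset_alpha}, the two arguments are equivalent in substance.
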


\begin{proof}
Assertion~(a) follows from condition~(3) and
Lemma~\ref{sum_of_two_roots_2}, assertion~(b) follows from
condition~(4).
\end{proof}

\begin{proposition}\label{associated_simple_root_2}
\textup{(a)} Suppose that $\al \in \Psi$. Then there exists a unique
simple root ${\pi(\al) \in \Supp\al}$ with the following property:
if $\al = \al_1 + \al_2$ for some roots $\al_1, \al_2 \in \De_+$,
then $\alpha_1$ \textup{(}resp.~$\alpha_2$\textup{)} belongs to
$\Psi$ if and only if $\pi(\alpha) \notin \Supp \al_1$
\textup{(}resp. $\pi(\alpha) \notin \Supp \alpha_2$\textup{)}.

\textup{(b)} For every $\al \in \Psi$ the map $\pi: F(\al) \to \Supp
\al$ is a bijection.
\end{proposition}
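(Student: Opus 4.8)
The plan is to prove the two assertions of Proposition~\ref{associated_simple_root_2} in the same order as stated, leaning heavily on the fact that the set $\Psi$ we have just constructed behaves, combinatorially, exactly like the set of active roots from the previous section. The whole point is that Lemma~\ref{sum_of_two_roots_2} (``exactly one summand lies in $\Psi$'') and Corollary~\ref{family_Psi} ($|F(\al)|=|\Supp\al|$, with $F(\al)$ linearly independent) are the precise analogues of Lemma~\ref{sum_of_two_roots} and Lemma~\ref{number_of_subordinates}(a),(c) in the original theory. Since the proof of Proposition~\ref{associated_simple_root} used \emph{only} those two inputs, the same argument should transfer verbatim.

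\emph{Proof of (a).} I would argue by induction on $\hgt\al$, copying the proof of Proposition~\ref{associated_simple_root}. For $\hgt\al=1$ we have $\al\in\Pi\cap\Psi$ and set $\pi(\al)=\al$. For the inductive step, suppose no simple root with the required property exists. To each $\ga\in\Supp\al$ assign a root $\ga'\in F(\al)\setminus\{\al\}$ of minimal height with $\ga\in\Supp\ga'$; by the induction hypothesis and the defining property of $\pi(\ga')$ one gets $\ga=\pi(\ga')$, so distinct $\ga$ yield distinct $\ga'$, forcing $|F(\al)|\ge|\Supp\al|+1$ and contradicting Corollary~\ref{family_Psi}(a). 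For uniqueness, if two such simple roots $\pi(\al)\ne\pi'(\al)$ existed, the set $F(\al)\setminus\{\al\}$ of $|\Supp\al|-1$ linearly independent roots (Corollary~\ref{family_Psi}(b)) would lie in $\langle(\Supp\al)\setminus\{\pi(\al),\pi'(\al)\}\rangle$, a space of dimension $|\Supp\al|-2$, which is impossible.

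\emph{Proof of (b).} Here I would reproduce the argument of Corollary~\ref{one_to_one}: to each $\be\in\Supp\al$ assign $\rho(\be)\in F(\al)$ of minimal height with $\be\in\Supp\rho(\be)$; applying part~(a) to $\rho(\be)$ gives $\be=\pi(\rho(\be))$, so $\pi\colon F(\al)\to\Supp\al$ is surjective, and since $|F(\al)|=|\Supp\al|$ by Corollary~\ref{family_Psi}(a), it is a bijection.

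\emph{The main obstacle}, and the one genuinely new point, is to verify that for roots $\al\in\mr M$ the newly defined $\pi(\al)$ in part~(a) coincides with the $\pi(\al)$ given as input data. This is essential because the statement and the whole paper treat $\pi$ as a single map; I would note that for $\al\in\mr M$ the set $F(\al)$ here is exactly the one from \S\,\ref{existence_first}, whose defining condition (``$\al_0\notin\Supp\al'$'') singles out $\al_0=\pi(\al)$ as precisely the simple root characterized in part~(a) — so by the uniqueness in (a) the two agree. I expect this reconciliation, together with confirming that property~(2) of \S\,\ref{existence_first} and Lemma~\ref{sum_of_two_roots_2} really do give ``exactly one summand in $\Psi$'' uniformly (not just within a single $F(\widetilde\al)$), to be the only step requiring care; the inductive combinatorics is then a transcription of the earlier proofs.
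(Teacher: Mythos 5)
Your proposal is correct and follows exactly the paper's route: the paper proves this proposition by repeating the arguments of Proposition~\ref{associated_simple_root} and Corollary~\ref{one_to_one} with Lemma~\ref{number_of_subordinates}(a) and Corollary~\ref{family_lin_ind} replaced by Corollary~\ref{family_Psi}(a),(b), which is precisely what you do. Your closing remark that the new $\pi(\al)$ agrees with the given one on $\mr M$ is also the observation the paper makes immediately after the proof.
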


\begin{proof}
Assertion~(a) (resp.~(b)) is proved by the same argument that is
used in the proof of Proposition~\ref{associated_simple_root} (resp.
Corollary~\ref{one_to_one}), with replacing reference to
Lemma~\ref{number_of_subordinates}(a) (resp.
Corollary~\ref{family_lin_ind}) by reference to
Corollary~\ref{family_Psi}(a) (resp.~\ref{family_Psi}(b)).
\end{proof}

Thus we have defined the map $\pi$ on the whole set~$\Psi$. We note
that on the set $\mr M$ this map coincides with the given map $\pi
\colon \mr M \to \Pi$.

The next step is to extend the equivalence relation $\sim$ to the
whole set~$\Psi$. Suppose that $\al',\be'\in \Psi \backslash \mr M$.
We write $\al' \sim \be'$ if and only if there are roots $\al, \be
\in \mr M$ and $\de \in \De_+$ such that $\al' \in F(\al)$, $\be'
\in F(\be)$, $\al' + \de = \al$ and $\be' + \de = \be$. Below we
shall prove (see Proposition~\ref{equivalence_relation}) that this
relation is an equivalence relation on the set~$\Psi\backslash \mr
M$. We now note two simple properties of this relation.

\begin{lemma}\label{equivalence}
Suppose that $\al',\be'\in \Psi \backslash \mr M$, $\al'\sim\be'$
and roots $\al,\be\in \mr M$, $\de\in\De_+$ are such that $\al'\in
F(\al)$, $\be'\in F(\be)$, $\al' + \de = \al$, and $\be' + \de =
\be$. Then $\al \sim \be$.
\end{lemma}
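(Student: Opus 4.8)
The plan is to prove the conclusion by contradiction: I would assume $\al \nsim \be$ and derive a contradiction with condition $(\mr D)$, which will force $\al \sim \be$. The entire argument hinges on locating the associated simple roots $\pi(\al)$ and $\pi(\be)$ inside the intersection $\Supp \al \cap \Supp \be$, after which the three admissible configurations $(\mr D0)$, $(\mr D1)$, $(\mr D2)$ for a non-equivalent pair are each excluded one by one.

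First I would record the elementary support facts. Since $\al', \be' \in \Psi \backslash \mr M$ while $\al, \be \in \mr M$, we have $\al' \ne \al$ and $\be' \ne \be$, so $\de \in \De_+$ is a genuine nonzero root. From $\al = \al' + \de$ and $\be = \be' + \de$, in which all coefficients are nonnegative, I get $\Supp \al = \Supp \al' \cup \Supp \de$ and $\Supp \be = \Supp \be' \cup \Supp \de$; in particular $\varnothing \ne \Supp \de \subset \Supp \al \cap \Supp \be$. Next I would pin down the associated simple roots. Applying Proposition~\ref{associated_simple_root_2} to $\al \in \Psi$ and its decomposition $\al = \al' + \de$ with $\al' \in \Psi$, I obtain $\pi(\al) \notin \Supp \al'$; since $\pi(\al) \in \Supp \al = \Supp \al' \cup \Supp \de$, this yields $\pi(\al) \in \Supp \de \subset \Supp \al \cap \Supp \be$. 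The same argument applied to $\be = \be' + \de$ gives $\pi(\be) \in \Supp \de \subset \Supp \al \cap \Supp \be$.

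It then remains to invoke condition $(\mr D)$. If $\al = \be$ there is nothing to prove (as $\sim$ is reflexive), so I assume $\al \ne \be$ and, for contradiction, $\al \nsim \be$. Then one of $(\mr D0)$, $(\mr D1)$, $(\mr D2)$ must hold for the pair $\al, \be$. But $(\mr D0)$ is impossible because $\Supp \al \cap \Supp \be \ne \varnothing$; $(\mr D1)$ is impossible because it requires $\Supp \al \cap \Supp \be$ to consist of a single node distinct from $\pi(\al)$, whereas $\pi(\al)$ itself lies in this intersection; and $(\mr D2)$ is impossible because it demands $\pi(\al) \notin \Supp \al \cap \Supp \be$. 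Each case contradicts the preceding paragraph, and hence $\al \sim \be$.

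The step I expect to carry the weight is the second one, namely extracting $\pi(\al), \pi(\be) \in \Supp \de$ from Proposition~\ref{associated_simple_root_2}; once the associated simple roots are trapped in the intersection, the elimination of $(\mr D0)$--$(\mr D2)$ is pure bookkeeping against the definitions of these possibilities. I do not expect to need the hypothesis $\al' \sim \be'$ itself: the explicitly listed conditions on the witness triple $(\al, \be, \de)$ already supply everything the argument uses, and the relation $\al' \sim \be'$ is present merely to situate the lemma within the construction of the extended relation on $\Psi \backslash \mr M$.
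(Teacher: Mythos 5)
Your proposal is correct and follows essentially the same route as the paper's proof: both arguments trap $\pi(\al)$ and $\pi(\be)$ in $\Supp\de\subset\Supp\al\cap\Supp\be$ (via Proposition~\ref{associated_simple_root_2} applied to the decompositions $\al=\al'+\de$, $\be=\be'+\de$) and then observe that this is incompatible with each of $(\mr D0)$, $(\mr D1)$, $(\mr D2)$, hence with $\al\nsim\be$ by condition~$(\mr D)$. Your version merely spells out the case elimination that the paper leaves implicit.
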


\begin{proof}
Since both roots $\pi(\al),\pi(\be)$ are contained in $\Supp \de$,
they are contained in $\Supp\al \cap \Supp \be$, which is impossible
for $\al \nsim \be$ in view of condition~$(\mr D)$.
\end{proof}

\begin{lemma}\label{two_equiv_roots_one_family}
Suppose that $\al',\be'\in \Psi\backslash \mr M$, $\al'\ne \be'$,
and $\al'\sim\be'$. Then there is exactly one root $\al\in \mr M$
with~$\al'\in F(\al)$.
\end{lemma}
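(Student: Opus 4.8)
The plan is to argue by contradiction, reducing everything to the support-overlap classification $(\mr D0)$--$(\mr E2)$ established in the preceding propositions. Existence of at least one such $\al$ is built into the definition of the relation $\al'\sim\be'$ on $\Psi\setminus\mr M$: the witnessing data already provide roots $\al,\be\in\mr M$ and $\de\in\De_+$ with $\al'\in F(\al)$, $\be'\in F(\be)$, $\al=\al'+\de$ and $\be=\be'+\de$; by Lemma~\ref{equivalence} we have $\al\sim\be$, and since $\al'\ne\be'$ also $\al\ne\be$. First I would record the basic facts attached to this data: by Proposition~\ref{associated_simple_root_2} and Lemma~\ref{sum_of_two_roots_2}, $\de\notin\Psi$ and $\pi(\al)\in\Supp\de$, while $\pi(\al)\notin\Supp\al'$; and, reading condition $(\mr T)$ together with $\al\sim\be$, one gets $\tau(\al)=\tau(\be)$ and hence $\tau(\al')=\tau(\be')$. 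The whole point is then to show that this pinned-down root $\al$ is the only element of $\mr M$ whose family contains $\al'$.

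So suppose, for contradiction, that $\al'\in F(\hat\al)$ for some $\hat\al\in\mr M$ with $\hat\al\ne\al$. Put $\hat\de=\hat\al-\al'\in\De_+$ and $I=\Supp\al\cap\Supp\hat\al$. Then $\varnothing\ne\Supp\al'\subseteq I$, while $\pi(\al)\notin\Supp\al'$ and $\pi(\hat\al)\notin\Supp\al'$. Applying conditions $(\mr D)$ and $(\mr E)$ to the pair $(\al,\hat\al)$, I would first eliminate the two possibilities incompatible with these constraints: $(\mr D0)$ fails because $\Supp\al'$ is nonempty, and $(\mr E1)$ fails because it would force $\Supp\al'\subseteq\{\pi(\al)\}$ with $\pi(\al)\notin\Supp\al'$. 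Thus only $(\mr D1)$, $(\mr D2)$, $(\mr E2)$ survive, and in each of them the shape of the diagram $\Sigma(\Supp\al\cup\Supp\hat\al)$ (a single shared terminal node, or the configuration of Figure~\ref{diagram_difficult} with common chain $\ga_0,\dots,\ga_r$), together with Table~\ref{table_active_roots}, pins down the admissible shapes of $\al'$ and of $\de=\al-\al'$ completely; in particular one locates $\pi(\al)$ relative to $\Supp\hat\al$ (off the common part in $(\mr D1)$ and $(\mr D2)$, equal to a chain node in $(\mr E2)$).

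The contradiction in each surviving configuration is then obtained by confronting the two pieces of structure around $\al$: on one side $\hat\al$ forces the overlap shape just described, and on the other side the partner forces $\de=\al-\al'$ to be \emph{liftable}, namely $\be=\be'+\de\in\mr M$ with $\be'\in F(\be)$ and, by Proposition~\ref{associated_simple_root_2}, $\pi(\be)\in\Supp\de$. I would exploit that $\pi(\al),\pi(\be)\in\Supp\de\subseteq\Supp\al\cap\Supp\be$, so that $(\mr D)/(\mr E)$ applied to $(\al,\be)$ forces one of $(\mr E1)$, $(\mr E2)$ with $\pi(\al)=\pi(\be)$; comparing the resulting local shape at the node $\pi(\al)$ with the one dictated by $\hat\al$ yields the contradiction, after treating separately the degenerate possibility $\be=\hat\al$ (where one instead uses $\tau(\al')=\tau(\be')$ with $\al',\be'\in F(\hat\al)$ distinct, which $(\mr T)$ together with Lemma~\ref{beta_subset_alpha} forbids, since $\hat\al$ is the only maximal root supported inside $\Supp\hat\al$).

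I expect the main obstacle to be exactly the cases $(\mr D2)$ and $(\mr E2)$ of Figure~\ref{diagram_difficult}: there $\al'$ is supported inside the common chain, and one must track the coefficients and the positions of the branch roots $\al_i,\be_j$ and of $\pi(\al),\pi(\be)$ along the chain carefully enough to rule out every way in which $\de$ could be lifted onto a maximal root other than $\al$. This is a finite but delicate diagram check, and it is where Table~\ref{table_active_roots}, Figure~\ref{diagram_difficult}, and the exact (rather than merely one-sided) statement of condition $(\mr T)$ all get used.
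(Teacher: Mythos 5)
Your proposal gets the existence half for free from the definition of $\sim$ on $\Psi\backslash \mr M$, which is fine, but the uniqueness half is left as a plan rather than a proof: all of the decisive work is deferred to a ``finite but delicate diagram check'' in cases $(\mr D1)$, $(\mr D2)$, $(\mr E2)$ that you do not carry out, and it is not clear that the announced comparison of ``local shapes at $\pi(\al)$'' actually closes each of these cases. Moreover, in the sub-case $\be=\hat\al$ you appeal to the fact that distinct roots of $F(\hat\al)$ cannot have equal $\tau$-weights; at this point of the existence proof the subgroup $H$ has not yet been constructed, so this is not among the available facts (it is Lemma~\ref{subordinates_diff_weights}, proved only for an already spherical $H$) and would itself require a separate argument from $(\mr T)$.

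The idea you are missing is that the entire case analysis is unnecessary. Since $\al=\al'+\de$, one has $\Supp\al=\Supp\al'\cup\Supp\de$. If $\al'\in F(\widetilde\al)$ for some $\widetilde\al\in\mr M$ with $\widetilde\al\ne\al$, then $\Supp\al'\subset\Supp\widetilde\al$; and since $\be=\be'+\de$, also $\Supp\de\subset\Supp\be$, where $\be\ne\al$ because $\al'\ne\be'$. Hence $\Supp\al\subset\Supp\widetilde\al\cup\Supp\be\subset\bigcup_{\ga\in\mr M\backslash\{\al\}}\Supp\ga$, contradicting condition $(\mr C)$. This two-line support-covering argument is the paper's whole proof; none of $(\mr A)$, $(\mr D)$, $(\mr E)$, $(\mr T)$, Table~\ref{table_active_roots} or Figure~\ref{diagram_difficult} is needed for this lemma.
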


\begin{proof}
Choose roots $\al, \be \in \mr M$, $\de \in \De_+$ such that $\al'
\in F(\al)$, $\be'\in F(\be)$, $\al' + \de = \al$, and $\be' + \de =
\be$. Then by the hypothesis we have $\al\ne\be$. Assume that there
is a root $\widetilde \al \in \mr M$ such that $\widetilde \al \ne
\al$ and $\al'\in F(\widetilde \al)$. Then we have $\Supp \al'
\subset \Supp \widetilde \al$, $\Supp \de \subset \Supp \be$, whence
$\Supp \al \subset \Supp \widetilde \al \cup \Supp \be$, which
contradicts condition~$(\mr C)$.
\end{proof}

\begin{proposition}\label{equivalence_relation}
The relation $\sim$ is an equivalence relation on~$\Psi\backslash
\mr M$.
\end{proposition}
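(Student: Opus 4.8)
The plan is to verify the three defining properties of an equivalence relation---reflexivity, symmetry, and transitivity---for $\sim$ on $\Psi \backslash \mr M$. Recall that $\al' \sim \be'$ means there exist roots $\al, \be \in \mr M$ and $\de \in \De_+$ with $\al' \in F(\al)$, $\be' \in F(\be)$, $\al' + \de = \al$, and $\be' + \de = \be$. Reflexivity is immediate: given $\al' \in \Psi \backslash \mr M$, choose any $\al \in \mr M$ with $\al' \in F(\al)$ (such $\al$ exists by construction of $\Psi$), and put $\de = \al - \al' \in \De_+$; then the defining condition holds with $\be = \al$, $\be' = \al'$. Symmetry is also built into the definition, since the condition is manifestly symmetric under interchanging the pairs $(\al, \al')$ and $(\be, \be')$.

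\textbf{The main work is transitivity.} Suppose $\al' \sim \be'$ and $\be' \sim \ga'$, with all three roots in $\Psi \backslash \mr M$; I must produce a single root $\de \in \De_+$ and roots $\al, \ga \in \mr M$ witnessing $\al' \sim \ga'$. From $\al' \sim \be'$ I obtain $\al, \be \in \mr M$ and $\de_1 \in \De_+$ with $\al' + \de_1 = \al$, $\be' + \de_1 = \be$; from $\be' \sim \ga'$ I obtain $\be'', \ga \in \mr M$ and $\de_2 \in \De_+$ with $\be' + \de_2 = \be''$, $\ga' + \de_2 = \ga$. If $\al' = \be'$ or $\be' = \ga'$ the conclusion is trivial, so assume these are distinct; then by Lemma~\ref{two_equiv_roots_one_family} the root $\be \in \mr M$ containing $\be'$ in its family is unique, forcing $\be'' = \be$, and hence $\be' + \de_1 = \be' + \de_2 = \be$, which gives $\de_1 = \de_2$. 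Writing $\de = \de_1 = \de_2$, I then have $\al' + \de = \al$ and $\ga' + \de = \ga$ with $\al, \ga \in \mr M$, which is exactly $\al' \sim \ga'$.

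\textbf{The delicate point} is the case analysis hidden in the appeal to uniqueness: one must confirm that when $\al', \be', \ga'$ are not all equal, the equality $\be'' = \be$ really does follow. The clean way is to invoke Lemma~\ref{two_equiv_roots_one_family} directly, since $\be'$ lies in $\Psi \backslash \mr M$ and, being an intermediate term, is distinct from at least one of $\al', \ga'$; the lemma then pins down the unique $\mr M$-family containing $\be'$, so both the first and second relations must use that same family and, crucially, the same $\de$. I expect this uniqueness-of-$\de$ deduction to be the only nonroutine step, and it rests entirely on condition~$(\mr C)$ through Lemma~\ref{two_equiv_roots_one_family}. With reflexivity, symmetry, and transitivity all established, $\sim$ is an equivalence relation on $\Psi \backslash \mr M$, completing the proof.

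\begin{proof}
Reflexivity and symmetry are immediate from the definition. We verify transitivity. Let $\al', \be', \ga' \in \Psi \backslash \mr M$ satisfy $\al' \sim \be'$ and $\be' \sim \ga'$. If $\al' = \be'$ or $\be' = \ga'$, the conclusion is trivial, so assume $\al' \ne \be'$ and $\be' \ne \ga'$. By the definition of $\sim$, there exist roots $\al, \be \in \mr M$ and $\de_1 \in \De_+$ with $\al' \in F(\al)$, $\be' \in F(\be)$, $\al' + \de_1 = \al$, $\be' + \de_1 = \be$, and likewise roots $\widetilde \be, \ga \in \mr M$ and $\de_2 \in \De_+$ with $\be' \in F(\widetilde \be)$, $\ga' \in F(\ga)$, $\be' + \de_2 = \widetilde \be$, $\ga' + \de_2 = \ga$. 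Since $\be' \in \Psi \backslash \mr M$ lies in both $F(\be)$ and $F(\widetilde \be)$, and since $\be' \ne \al'$ implies $\be' \ne \ga'$ or vice versa, Lemma~\ref{two_equiv_roots_one_family} applies: the root of $\mr M$ whose family contains $\be'$ is unique, so $\be = \widetilde \be$. Then $\be' + \de_1 = \be = \be' + \de_2$ gives $\de_1 = \de_2$. Putting $\de = \de_1$, we obtain $\al' \in F(\al)$, $\ga' \in F(\ga)$, $\al' + \de = \al$, and $\ga' + \de = \ga$ with $\al, \ga \in \mr M$, that is, $\al' \sim \ga'$.
\end{proof}
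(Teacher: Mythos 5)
Your proof is correct and follows essentially the same route as the paper: both arguments reduce everything to transitivity and settle it by invoking Lemma~\ref{two_equiv_roots_one_family} to pin down a unique $\mr M$-root (and hence a unique $\de$), the only difference being that you apply the uniqueness at the middle element $\be'$ of the chain $\al'\sim\be'\sim\ga'$, whereas the paper phrases transitivity as $\al'\sim\be'$, $\al'\sim\ga'\Rightarrow\be'\sim\ga'$ and applies it at $\al'$. The parenthetical ``since $\be'\ne\al'$ implies $\be'\ne\ga'$ or vice versa'' is garbled but harmless, as the hypotheses of the lemma ($\be'\sim\al'$ and $\be'\ne\al'$) are already in hand.
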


\begin{proof}
Reflexivity and symmetry of $\sim$ are obvious, therefore it
suffices to prove transitivity. Let $\al',\be',\ga' \in \Psi
\backslash \mr M$ be different roots such that $\al' \sim \be'$,
$\al' \sim \ga'$. Let us prove that $\be' \sim \ga'$. We have $\al =
\al' + \de_1$, $\be = \be' + \de_1$, $\widetilde\al = \al' + \de_2$,
$\ga = \ga' + \de_2$ for some roots $\al, \be, \widetilde \al, \ga
\in \mr M$ and $\de_1, \de_2 \in \De_+$. By
Lemma~\ref{two_equiv_roots_one_family} we obtain $\widetilde \al =
\al$, whence $\de_1 = \de_2$ and $\be'\sim \ga'$.
\end{proof}

\begin{corollary}\label{more_than_one_element}
Let $A \subset \Psi \backslash \mr M$ be an equivalence class
containing more than one element. Then:

\textup{(a)} for every root $\al' \in A$ there is a unique root $\al
\in \mr M$ such that $\al' \in F(\al)$;

\textup{(b)} the root $\de = \al - \al'$ is the same for all roots
$\al' \in A$;

\textup{(c)} $A + \de \subset \mr M$ and all roots in $A + \de$ are
pairwise equivalent.
\end{corollary}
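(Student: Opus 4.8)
The plan is to prove Corollary~\ref{more_than_one_element} by carefully unwinding the definition of the equivalence relation $\sim$ on $\Psi\backslash\mr M$ and leaning on the two preparatory lemmas (Lemmas~\ref{equivalence} and~\ref{two_equiv_roots_one_family}) established just before Proposition~\ref{equivalence_relation}. Let $A\subset\Psi\backslash\mr M$ be an equivalence class with $|A|\ge2$.

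For assertion~(a), I would argue as follows. Since $|A|\ge2$, every root $\al'\in A$ is equivalent to some \emph{other} root $\be'\in A$ with $\al'\ne\be'$. By Lemma~\ref{two_equiv_roots_one_family}, applied to the pair $\al',\be'$, there is exactly one root $\al\in\mr M$ with $\al'\in F(\al)$. This immediately gives both existence and uniqueness of such $\al$ for every element of $A$, which is precisely~(a).

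For assertion~(b), I would start from the definition of $\al'\sim\be'$: there are roots $\al,\be\in\mr M$ and $\de\in\De_+$ with $\al'\in F(\al)$, $\be'\in F(\be)$, $\al'+\de=\al$, and $\be'+\de=\be$. By part~(a) the root $\al$ is the unique element of $\mr M$ whose family contains $\al'$, so $\de=\al-\al'$ is forced. The point to nail down is that this same $\de$ works simultaneously for every pair of elements of $A$, i.e.\ that the difference $\al-\al'$ does not depend on the choice of $\al'\in A$. Here I would fix one reference root $\al'_0\in A$ with its associated $\al_0\in\mr M$ and $\de_0=\al_0-\al'_0$, and for an arbitrary $\al'\in A$ use $\al'_0\sim\al'$ together with the defining condition to produce a common $\de$; the transitivity argument already carried out in Proposition~\ref{equivalence_relation} (where one deduces $\de_1=\de_2$ via Lemma~\ref{two_equiv_roots_one_family}) is exactly the mechanism that pins all these differences to a single value. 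I expect this coherence of $\de$ across the whole class to be the main (though modest) obstacle, since it is the only place requiring more than a one-line invocation of an earlier result.

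For assertion~(c), with $\de$ now fixed by~(b), each $\al'\in A$ satisfies $\al'+\de=\al$ for the corresponding $\al\in\mr M$, so $A+\de\subset\mr M$ by construction. It remains to check that all roots in $A+\de$ are pairwise equivalent under the relation $\sim$ on $\mr M$. For this I would take two elements $\al'+\de,\be'+\de$ of $A+\de$ coming from $\al',\be'\in A$ with $\al'\sim\be'$, and apply Lemma~\ref{equivalence}, which says precisely that $\al'+\de=\al$ and $\be'+\de=\be$ are equivalent in $\mr M$, that is, $\al\sim\be$. Since any two elements of $A$ are equivalent, this yields pairwise equivalence of all elements of $A+\de$, completing the proof.
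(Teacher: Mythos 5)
Your proposal is correct and follows essentially the same route as the paper, whose proof is a one-line reduction: (a) from Lemma~\ref{two_equiv_roots_one_family}, (b) from the $\widetilde\al=\al\Rightarrow\de_1=\de_2$ mechanism inside the proof of Proposition~\ref{equivalence_relation}, and (c) from (b) together with Lemma~\ref{equivalence}. You have merely spelled out the details the paper leaves implicit, and each step is carried out as intended.
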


\begin{proof}
Assertion~(a) follows from Lemma~\ref{two_equiv_roots_one_family},
assertion~(b) from the proof of
Proposition~\ref{equivalence_relation}, assertion~(c) from~(b) and
Lemma~\ref{equivalence}.
\end{proof}

Thus we have an equivalence relation on each of the sets $\mr M$,
$\Psi\backslash \mr M$. We extend it to the whole set $\Psi$ putting
$\al \nsim \be$ for $\al\in \mr M$, $\be\in \Psi\backslash \mr M$ or
$\al\in \Psi\backslash \mr M$, $\be \in \mr M$. Let $\Psi_1, \Psi_2,
\ldots, \Psi_K$ be all equivalence classes of the set $\Psi$ with
respect to relation~$\sim$.

\begin{proposition}\label{psi+de}
\textup{(a)} Suppose that $i, j \in \{1, \ldots, K\}$, $i \ne j$,
roots $\al' \in \Psi_i$ and $\de \in \De_+$ are such that $\al' +
\de \in \Psi_j$. Then $\Psi_i + \de \subset \Psi_j$.

\textup{(b)} Suppose that $i \in \{1, \ldots, K\}$ and roots $\al',
\al'' \in \Psi_i$ are different. Then $\al'' - \al' \notin \De$.

\textup{(c)} Suppose that $i, j \in\{1, \ldots, K\}$, $i \ne j$, and
$|\Psi_i| \ge 2$. Then there is at most one root $\de \in \De_+$
such that $\Psi_i + \de \subset \Psi_j$.
\end{proposition}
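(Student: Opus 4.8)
The plan is to prove the three assertions in the order (b), (c), (a), since (b) is foundational, (c) follows quickly from it, and (a) is by far the hardest. I will use throughout that each class $\Psi_i$ lies entirely in $\mr M$ or entirely in $\Psi\backslash \mr M$, because elements of $\mr M$ and of $\Psi\backslash \mr M$ were declared non-equivalent.

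For (b), I would first treat the case $\Psi_i\subset \mr M$. Given distinct $\al',\al''\in \Psi_i$ we have $\al'\sim\al''$, so by condition $(\mr E)$ one of $(\mr D0)$, $(\mr D1)$, $(\mr E1)$, $(\mr D2)$, $(\mr E2)$ holds. A glance at each configuration shows that $\Supp \al'\backslash \Supp \al''\ne\varnothing$ and $\Supp \al''\backslash \Supp \al'\ne\varnothing$: in $(\mr D0)$ the supports are disjoint; in $(\mr D1)$ and $(\mr E1)$ they meet in a single node that cannot exhaust $\Supp \al'$ (in $(\mr D1)$ that node differs from $\pi(\al')\in\Supp \al'$; in $(\mr E1)$ we have $\al'-\de\in\De_+$, so $\al'$ is not simple), and symmetrically for $\Supp \al''$; in $(\mr D2)$, $(\mr E2)$ the two branches of Figure~\ref{diagram_difficult} furnish private simple roots since $p,q\ge1$. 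Hence $\al''-\al'$ has coefficients of both signs and is not a root. For $\Psi_i\subset \Psi\backslash \mr M$ with $|\Psi_i|\ge2$, Corollary~\ref{more_than_one_element} supplies a single $\de$ with $\Psi_i+\de\subset \mr M$, all of whose elements are pairwise equivalent; since $(\al'+\de)-(\be'+\de)=\al'-\be'$, the claim reduces to the case already handled.

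For (c), the engine is the observation that \emph{each class is linearly independent}. Indeed, for distinct positive roots $\mu,\nu$ with $\mu-\nu\notin\De$ one has $(\mu,\nu)\le0$ (otherwise $\mu-\nu$ would be a root); so by (b) the roots of any single class have pairwise non-acute angles and, being positive roots, lie in a common open half-space, whence they are linearly independent by Lemma~\ref{vectors_in_euclid_space}. Now assume $\de_1\ne\de_2$ both satisfy $\Psi_i+\de_k\subset \Psi_j$. Choosing distinct $\al',\be'\in\Psi_i$ (possible as $|\Psi_i|\ge2$) yields four roots $\al'+\de_1,\be'+\de_1,\al'+\de_2,\be'+\de_2\in\Psi_j$ obeying $(\al'+\de_1)-(\be'+\de_1)-(\al'+\de_2)+(\be'+\de_2)=0$. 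A short inspection of the possible coincidences among these four roots, using $\al'\ne\be'$ and $\de_1\ne\de_2$, shows that this relation remains nontrivial even after merging equal terms, contradicting the linear independence of $\Psi_j$.

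The main obstacle is (a). The weight bookkeeping is painless: granting that the classes are the fibres of $\tau$ on $\Psi$ (the purpose of condition $(\mr T)$, in combination with the family structure of Proposition~\ref{associated_simple_root_2} and Lemma~\ref{equivalence}), one gets $\tau(\be'+\de)=\tau(\al')+\tau(\de)=\tau(\al'+\de)$ for every $\be'\in\Psi_i$, so any $\be'+\de$ that lies in $\Psi$ is automatically $\sim$-equivalent to $\al'+\de$, i.e. lands in $\Psi_j$. The real work is to show $\be'+\de\in\Psi$ at all, that is, that $\be'+\de$ is a root belonging to some family $F(\kappa)$, $\kappa\in\mr M$. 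I would first record the solid starting facts that $\al'\in F(\al'+\de)$ and, by Proposition~\ref{associated_simple_root_2}(a) applied to the decomposition $\al'+\de=\al'+\de$, that $\pi(\al'+\de)\in\Supp \de\backslash\Supp \al'$, so that $\Supp \al'\subsetneq\Supp(\al'+\de)$ and $\de\notin\Psi$. Then, using the family structure and Corollary~\ref{more_than_one_element} to pass to the controlling maximal roots, I would run a case analysis of how $\Supp \al'$, $\Supp \be'$ and $\Supp \de$ sit inside the relevant supports, governed by conditions $(\mr D)$ and $(\mr E)$ together with the explicit shapes in Table~\ref{table_active_roots}: in each configuration these data pin down $\be'+\de$ as an honest root of the appropriate family. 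This last case analysis is the crux and is where the bulk of the verification lies.
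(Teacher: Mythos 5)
Your parts (b) and (c) hold up. For (b), the two cases you distinguish are handled correctly (in the case $\Psi_i\subset\mr M$ condition $(\mr C)$ alone already gives $\Supp\al'\not\subset\Supp\al''$, which kills $\al''-\al'\in\De_+$ without inspecting the five configurations, but your inspection is also valid). Your (c) is in fact a genuinely different and cleaner route than the paper's: the paper chases $\Psi_i$ and $\Psi_j$ up to their controlling classes inside $\mr M$ and invokes part (a) together with Corollary~\ref{more_than_one_element}, whereas you deduce linear independence of each class from (b) plus Lemma~\ref{vectors_in_euclid_space} and then contradict it with the four-term (or, after a coincidence, the $2A=B+C$) relation; this buys independence from (a).

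The genuine gap is in (a), and it is twofold. First, your ``weight bookkeeping'' presumes that the classes $\Psi_1,\dots,\Psi_K$ are the fibres of $\tau$ on $\Psi$. At this stage of the construction that is not available: the classes are defined purely combinatorially from $(\mr M,\pi,\sim)$, and what condition $(\mr T)$ and the definition of $\sim$ give cheaply is only the implication $\mu\sim\nu\Rightarrow\tau(\mu)=\tau(\nu)$ (this is Lemma~\ref{normalizes}). The converse implication is equivalent to the distinctness and linear independence of $\vf_1,\dots,\vf_K$, which is the final conclusion of the whole existence proof (Proposition~\ref{H_is_spherical}, via Lemma~\ref{estimations_2}, both of which come after and depend on the present proposition); so the step ``any $\be'+\de$ lying in $\Psi$ automatically lands in $\Psi_j$'' is circular. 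Second, you explicitly defer ``the real work'' --- showing that $\be'+\de$ is a root of $\Psi$ at all --- to an unexecuted case analysis, and that analysis is essentially the entire content of (a). For the record, the paper's argument is: reduce to $|\Psi_i|\ge2$ and $\Psi_j\not\subset\mr M$ (the case $\Psi_j\subset\mr M$ being Corollary~\ref{more_than_one_element}(c)); show that $\al'$ and $\al''=\al'+\de$ lie in the family of the same maximal root $\al$; set $\de'=\al-\al'$, $\de''=\al-\al''$; show that the controlling root $\be$ of an arbitrary $\be'\in\Psi_i$ satisfies $\be=\be'+\de'$ with $\al\sim\be$, and that $\al,\be$ must be in configuration $(\mr E2)$ because $\Supp\de'$ has at least two elements and lies in $\Supp\al\cap\Supp\be$ while containing $\pi(\al)=\pi(\be)$; then read $\be''=\be-\de''\in F(\be)$ directly off Figure~\ref{diagram_difficult}. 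Crucially, $\al''\sim\be''$ is then obtained not from weights but from the very definition of $\sim$ on $\Psi\backslash\mr M$, by exhibiting the common difference $\de''$. An argument of this kind is what your sketch still owes.
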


\begin{proof}
(a) If $|\Psi_i|=1$ then there is nothing to prove. If
$\Psi_j\subset \mr M$, then the assertion follows from
Corollary~\ref{more_than_one_element}(c). Further we assume that
$|\Psi_i|\ge2$ and $\Psi_j \not\subset \mr M$. Put $\al'' = \al' +
\de$, $\al'' \in \Psi_j$. Let $\al$ be the unique root in $\mr M$
with $\al'\in F(\al)$ (see
Corollary~\ref{more_than_one_element}(a)). Let $\widetilde \al \in
\mr M$ be an arbitrary root such that $\al'' \in F(\widetilde \al)$.
Then $\Supp \al' \subset \Supp \widetilde \al$, whence by
Lemma~\ref{beta_subset_alpha} we obtain $\al' \in F(\widetilde \al)$
and, in view of Corollary~\ref{more_than_one_element}(a),
$\widetilde \al = \al$. Put $\de' = \al - \al' \in \De_+$, $\de'' =
\al - \al'' \in \De_+$. Now, let us take an arbitrary root $\be'\in
\Psi_i$ and show that $\be'' = \be' + \de \in \Psi_j$. Denote by
$\be$ the unique root in $\mr M$ with $\be'\in F(\be)$ (see
Corollary~\ref{more_than_one_element}(a)). From
Corollary~\ref{more_than_one_element}(b,c) it follows that $\be =
\be' + \de'$ and $\al \sim \be$. Further, we have $\de' = \de +
\de''$, whence $|\Supp \de'|\ge 2$. Moreover, $\Supp \de' \subset
\Supp \al \cap \Supp \be$, at that, $\pi(\al),\pi(\be) \in \Supp
\de'$. Therefore for the roots $\al,\be$ possibility $(\mr E2)$ is
realized. Hence the diagram $\Sigma(\Supp \al \cup \Supp \be)$ has
the form shown on Figure~\ref{diagram_difficult} (for some $p, q, r
\ge 1$), $\al = \al_1 + \ldots + \al_p + \ga_0 + \ga_1 + \ldots +
\ga_r$, $\be = \be_1 + \ldots + \be_q + \ga_0 + \ga_1 + \ldots +
\ga_r$, $\pi(\al) = \pi(\be) = \ga_s$ for some $s \in \{0, 1,
\ldots, r\}$. At that, $\de' = \ga_t + \ga_{t+1} + \ldots + \ga_r$,
where $0 \le t \le s$, and $\de'' = \ga_u + \ga_{u+1} + \ldots +
\ga_r$, where $t < u \le s$. It is easy to see that $\be'' = \be -
\de''$ is a root lying in $F(\be)$. From conditions $\be'' + \de'' =
\be$, $\al'' + \de'' = \al$, and $\al \sim \be$ it follows that
$\al'' \sim \be''$ and $\be''\in \Psi_j$, which completes the proof
of~(a).

(b) Assume that $\de_0 = \al'' - \al' \in \De$. Without loss of
generality we may also assume that $\de_0 \in \De_+$. By
Corollary~\ref{more_than_one_element} there are unique roots
$\widetilde \al', \widetilde \al'' \in \mr M$, $\de \in \De_+$ such
that $\al' \in F(\widetilde \al')$, $\al'' \in F(\widetilde \al'')$,
$\al' + \de = \widetilde \al'$, $\al'' + \de = \widetilde \al''$. In
view of Lemma~\ref{beta_subset_alpha} we have $\al' \in F(\widetilde
\al'')$, which contradicts Lemma~\ref{more_than_one_element}(a).

(c) If $\Psi_j \subset \mr M$, then the assertion is true in view of
Corollary~\ref{more_than_one_element}. Further we assume that
$\Psi_j \not\subset \mr M$. Let $\de$ be a root such that $\Psi_i +
\de \subset \Psi_j$. Let us show that $\de$ is uniquely determined
by $i$ and~$j$. By Corollary~\ref{more_than_one_element} there are
uniquely determined indices $k, l \in\{1, \ldots, K\}$ and roots
$\de_i, \de_j \in \De_+$ such that $\Psi_k, \Psi_l \subset \mr M$,
$\Psi_i + \de_i \subset \Psi_k$, and $\Psi_j + \de_j \subset
\Psi_l$. In view of~(a) for every $\al \in \Psi_i$ we have $\al +
\de \in \Psi_j$, $\al + \de_i \in \Psi_k \subset \mr M$, $\al + \de
+ \de_j \in \Psi_l \subset \mr M$, whence by
Lemma~\ref{beta_subset_alpha} we obtain $\al \in F(\al + \de +
\de_j)$. In view of Corollary~\ref{more_than_one_element}(a) we have
$\al + \de_i = \al + \de + \de_j$. Therefore $\de = \de_i - \de_j$
and $\de$ is uniquely determined by~$i, j$.
\end{proof}

\subsection{} \label{existence_proof_u_r}

In this subsection we construct an algebra~$\mf n$, which is going
to be the tangent algebra of the unipotent radical of the desired
solvable spherical subgroup.

We put $\mf u_i = \bigoplus \limits_{\al\in \Psi_i} \mf g_\al$ for
$i = 1, \ldots, K$ and $\mf u_0 = \bigoplus \limits_{\al \notin \Psi
} \mf g_\al$ so that $\mf u = \mf u_0 \oplus \bigoplus\limits_{i =
1}^K \mf u_i$. To each subspace $\mf u_i$, $i = 1, \ldots, K$, we
assign a linear function $\xi_i \colon \mf u_i \to \mb C$ as
follows. First, let $i$ be such that $\Psi_i \subset \mr M$. Then we
may take $\xi_i$ to be an arbitrary linear function such that its
restriction to each root subspace $\mf g_\al$, $\al \in \Psi_i$, is
non-zero. Further, for all $i$ with $\Psi_i \not\subset \mr M$ and
$|\Psi_i|=1$ we take $\xi_i$ to be any non-zero linear function on
the (one-dimensional) space~$\mf u_i$. At last, if $i$ satisfies
$\Psi_i \not\subset \mr M$ and $|\Psi_i| \ge 2$, then we act as
follows. By Corollary~\ref{more_than_one_element} there are a unique
$j \in \{1,\ldots, K\}$ and a unique root $\de \in \De_+$ such that
$\Psi_j \subset \mr M$ and $\Psi_i + \de \subset \Psi_j$. For every
$x \in \mf u_i$ we put $\xi_i(x) = \xi_j([x, e_\de])$. Then $\xi_i$
is a linear function on $\mf u_i$, and its restriction to $\mf
g_\al$ is non-zero for every~$\al \in \Psi_i$.

\begin{lemma}\label{linear_functions_2}
Suppose that $\Psi_i + \de \subset \Psi_j$, where indices $i, j \in
\{1, \ldots, K\}$ are different, and $\de \in \De_+$. Then there is
an element $c_{ij} \in \mb C^\times$ such that $\xi_i(x) = c_{ij}
\xi_j([x, e_\de])$ for all~$x \in \mf u_i$.
\end{lemma}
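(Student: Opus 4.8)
The plan is to argue by cases according to which branch of the construction of the $\xi$'s defines $\xi_i$. Two preliminary observations: since every root $\al+\de$ with $\al\in\Psi_i$ lies in $\Psi_j$, the map $l\colon\mf u_i\to\mf u_j$, $x\mapsto[x,e_\de]$, sends the basis vectors $e_\al$ to nonzero multiples of the distinct vectors $e_{\al+\de}$ and is thus injective; and, as was arranged when the $\xi$'s were defined, $\xi_j$ is nonzero on each $\mf g_\be$ with $\be\in\Psi_j$. Hence $x\mapsto\xi_j([x,e_\de])$ is a nonzero functional on $\mf u_i$, and the claim is that it is proportional to $\xi_i$. I would first rule out $\Psi_i\subset\mr M$: if some $\al\in\Psi_i$ lay in $\mr M$, then from $\al+\de\in\Psi$ we would get $\al+\de\in F(\widetilde\ga)$ for some $\widetilde\ga\in\mr M$, whence $\Supp\al\subset\Supp(\al+\de)\subset\Supp\widetilde\ga$; this contradicts $(\mr C)$ when $\widetilde\ga\ne\al$ and is absurd when $\widetilde\ga=\al$. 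Thus $\Psi_i\not\subset\mr M$. If $|\Psi_i|=1$ then $\mf u_i$ is one-dimensional and the two nonzero functionals are automatically proportional, so I assume $|\Psi_i|\ge2$; then $\xi_i(x)=\xi_{j_0}([x,e_{\de_0}])$ for the unique pair $(j_0,\de_0)$ with $\Psi_{j_0}\subset\mr M$ and $\Psi_i+\de_0\subset\Psi_{j_0}$ (Corollary~\ref{more_than_one_element}, Proposition~\ref{psi+de}(c)).

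As $l$ is injective we have $|\Psi_j|\ge|\Psi_i|\ge2$, so $\Psi_j$ is not of the singleton type. If $\Psi_j\subset\mr M$, the uniqueness of the defining pair of $\Psi_i$ forces $(j,\de)=(j_0,\de_0)$, and $\xi_i(x)=\xi_j([x,e_\de])$ with $c_{ij}=1$. The essential case is $\Psi_j\not\subset\mr M$, where $\xi_j(y)=\xi_{m}([y,e_{\de''}])$ for its own defining pair $(m,\de'')$ with $\Psi_m\subset\mr M$. For $\al\in\Psi_i$ we have $\al+\de+\de''\in\Psi_m\subset\mr M$ and $\Supp\al\subset\Supp(\al+\de+\de'')$, so Lemma~\ref{beta_subset_alpha} gives $\al\in F(\al+\de+\de'')$ and hence $\de+\de''\in\De_+$; uniqueness of the defining pair of $\Psi_i$ then forces $m=j_0$ and $\de+\de''=\de_0$. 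Writing $[e_\de,e_{\de''}]=c'e_{\de_0}$ with $c'\ne0$, the Jacobi identity gives $c'[x,e_{\de_0}]=[[x,e_\de],e_{\de''}]+[e_\de,[x,e_{\de''}]]$; applying $\xi_{j_0}$ reduces the assertion (with $c_{ij}=1/c'$) to the vanishing of the cross term, i.e. to showing that $\al+\de''$ is not a root for any $\al\in\Psi_i$.

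This vanishing is the crux. Choosing a second root $\be\in\Psi_i$ (possible since $|\Psi_i|\ge2$) and putting $\mu=\al+\de_0$, $\mu'=\be+\de_0$, I have $\mu,\mu'\in\Psi_{j_0}$ distinct and equivalent, with $\pi(\mu)\in\Supp\de_0\subset\Supp\mu\cap\Supp\mu'$ and $|\Supp\de_0|\ge2$. By condition~$(\mr E)$ the pair $\mu,\mu'$ realizes one of $(\mr D0)$, $(\mr D1)$, $(\mr E1)$, $(\mr D2)$, $(\mr E2)$; the intersection having at least two elements excludes the first three, and $\pi(\mu)$ lying in the intersection excludes $(\mr D2)$, so $(\mr E2)$ holds. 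Consequently $\Supp\mu$ is a simple path (one arm together with the central chain $\ga_0,\ldots,\ga_r$ of Figure~\ref{diagram_difficult}), and $\de_0,\de''$ are intervals of this path making $\mu-\de_0=\al$ and $\mu-\de''=\al+\de$ roots, so $\de=\de_0-\de''$ is a proper inner segment of the chain. Removing $\Supp\de$ therefore disconnects $\Supp\mu$, and $\al+\de''=\mu-\de$ has disconnected support, hence is not a root. This yields $[x,e_{\de''}]=0$ on $\mf u_i$ and completes the proof. The main obstacle is precisely this last paragraph: identifying that the relevant pair of maximal roots must realize $(\mr E2)$ and then reading off from the path structure that $\mu-\de$ is disconnected; everything before it is bookkeeping with the uniqueness statements already in hand.
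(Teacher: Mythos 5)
Your proof is correct, and its skeleton coincides with the paper's: dispose of the cases $|\Psi_i|=1$ and $\Psi_j\subset\mr M$ via the definitions, then in the remaining case locate the unique class $\Psi_{j_0}\subset\mr M$ with $\Psi_i+\de_0\subset\Psi_{j_0}$, show $\de_0=\de+\de''$, apply the Jacobi identity, and reduce everything to the vanishing of the cross term, i.e.\ to $\al+\de''\notin\De$ for all $\al\in\Psi_i$. Where you genuinely diverge is at that last step, which you rightly call the crux. The paper argues locally with a single $\al$: assuming $\al+\de''\in\De_+$, it notes that $\de\notin\Psi$ forces $\al+\de''\in\Psi$ (Lemma~\ref{sum_of_two_roots_2}), while $\al+\de\in\Psi$ as well; then Proposition~\ref{associated_simple_root_2}(a) applied to the two decompositions $\al+\de_0=(\al+\de)+\de''=(\al+\de'')+\de$ puts $\pi(\al+\de_0)$ outside both $\Supp(\al+\de)$ and $\Supp(\al+\de'')$, whose union is $\Supp(\al+\de_0)$ --- an immediate contradiction. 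You instead bring in a second root $\be\in\Psi_i$, use condition $(\mr E)$ to force possibility $(\mr E2)$ for the pair $\al+\de_0$, $\be+\de_0$ (correctly excluding the other four via $|\Supp\de_0|\ge2$ and $\pi(\mu)\in\Supp\de_0$), and then read off from the path structure of Figure~\ref{diagram_difficult} that $\Supp\de$ is an interior interval, so $\mu-\de=\al+\de''$ has disconnected support. Both arguments are sound; the paper's is shorter, needs no second element of $\Psi_i$, and avoids the classification $(\mr E)$ at this point, whereas yours is closer in spirit to the paper's own proof of Proposition~\ref{psi+de}(a), which extracts the same $(\mr E2)$ picture. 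The only cosmetic remark is that your phrase ``uniqueness of the defining pair of $\Psi_i$'' should be backed by Proposition~\ref{psi+de}(a) (to pass from $\al+(\de+\de'')\in\Psi_m$ for one $\al$ to $\Psi_i+(\de+\de'')\subset\Psi_m$) together with Corollary~\ref{more_than_one_element}; this is exactly what the paper cites, so it is bookkeeping, not a gap.
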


\begin{proof}
If $|\Psi_i| = 1$ or $\Psi_j\subset \mr M$, then the assertion
follows from the definition of~$\xi_i$. Further we assume that
$|\Psi_i| \ge 2$ and $\Psi_j \not \subset \mr M$. From the proof of
Proposition~\ref{psi+de}(c) it follows that, uniquely determined,
there are an index $k\in \{1,\ldots, K\}$ and roots
$\de_i,\de_j\in\De_+$ such that $\Psi_k\subset \mr M$, $\Psi_i +
\de_i \subset \Psi_k$, $\Psi_j + \de_j \subset \Psi_k$, and $\de +
\de_j = \de_i$. Suppose that $x\in \mf u_i$. Then by definition we
have $\xi_i (x) = \xi_k ([x, e_{\de_i}])$, $\xi_j([x, e_\de]) =
\xi_k([[x, e_\de], e_{\de_j}])$. Applying the Jacobi identity we get
$\xi_j([x, e_\de]) = \xi_k([x, [e_\de, e_{\de_j}]]) + \xi_k([[x,
e_{\de_j}], e_\de])$. Since $[e_\de, e_{\de_j}] = ce_{\de_i}$ for
some $c\in \mb C^\times$, we have $\xi_j([x, e_\de]) = c\xi_k([x,
e_{\de_i}]) + \xi_k([[x, e_{\de_j}], e_\de])$. To complete the
proof, it is sufficient to check that $[x, e_{\de_j}] = 0$. To do
this, it is sufficient to prove that $\al + \de_j \notin \De_+$ for
every~$\al \in \Psi_i$. Assume that $\al + \de_j \in \De_+$ for some
$\al \in \Psi_i$. Then for the root $\al + \de_j + \de \in \Psi_k$
we have the representation $\al + \de_j + \de = (\al + \de_j) + \de$
as the sum of two positive roots. Since $\de \notin \Psi$, we have
$\al + \de_j \in \Psi$. Besides, $\al + \de \in \Psi$. Hence $\Supp
(\al + \de_j + \de) = \Supp (\al + \de) \cup \Supp (\al + \de_j)$, a
contradiction with conditions $\pi(\al + \de_j + \de) \notin \Supp
(\al + \de)$ and $\pi(\al + \de_j + \de) \notin \Supp (\al +
\de_j)$, which hold in view of
Proposition~\ref{associated_simple_root_2}(a).
\end{proof}

For every $i = 1, \ldots, K$ we put $\mf n_i = \{x \in \mf u_i \mid
\xi_i(x)=0\}$. Evidently, $\mf n_i = 0$ for $|\Psi_i|=1$. We now
consider the subspace $\mf n =\mf u_0 \oplus
\bigoplus\limits_{i=1}^K\mf n_i \subset \mf u$.

\begin{proposition}\label{unipotent_radical}
The subspace $\mf n$ is a subalgebra of~$\mf u$.
\end{proposition}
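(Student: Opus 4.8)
The plan is to realize $\mf n$ as the common kernel of suitable functionals and then verify the bracket condition one $S$-weight at a time. For each $i=1,\ldots,K$ let $\widehat\xi_i\colon\mf u\to\mb C$ be the extension of $\xi_i$ that vanishes on $\mf u_0$ and on every $\mf u_k$ with $k\ne i$; then clearly $\mf n=\bigcap_{i=1}^K\Ker\widehat\xi_i$. Recall that all roots of a single class $\Psi_i$ have the same restriction to $S$ (as follows from condition $(\mr T)$ and the definition of $\sim$), which I denote $\vf_i$. Thus $\mf u_i$ lies in one $S$-weight subspace of $\mf u$, and $\mf u_0$ is a sum of root subspaces, so $\mf n$ is $S$-stable. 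Consequently every $S$-weight component of an element of $\mf n$ again lies in $\mf n$, and by bilinearity of the bracket it suffices to prove $[x,y]\in\mf n$ for $S$-weight homogeneous $x,y\in\mf n$, of weights $\la$ and $\mu$. Then $[x,y]$ has weight $\la+\mu$, whence $\widehat\xi_k([x,y])=0$ automatically for every $k$ with $\vf_k\ne\la+\mu$; the whole problem reduces to showing $\widehat\xi_j([x,y])=0$ for each $j$ with $\vf_j=\la+\mu$.

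To compute $\widehat\xi_j([x,y])$ I would decompose $x=x_0+\sum_{i\in I_\la}x_{(i)}$ and $y=y_0+\sum_{i'\in I_\mu}y_{(i')}$ according to $\mf u=\mf u_0\oplus\bigoplus_i\mf u_i$, where $I_\la=\{i\mid\vf_i=\la\}$, $I_\mu=\{i\mid\vf_i=\mu\}$, the terms $x_0,y_0\in\mf u_0$ are the $\mf u_0$-parts, and $x_{(i)}\in\mf n_i$, $y_{(i')}\in\mf n_{i'}$ (the kernel membership because $x,y\in\mf n$). Every summand of the expanded bracket has weight $\la+\mu=\vf_j$, so each may a priori contribute to $\mf u_j$, and it suffices to show that $\widehat\xi_j$ annihilates each one. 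The purely unipotent summand $[x_0,y_0]$ is harmless: a bracket of root vectors $e_\ga,e_{\ga'}$ with $\ga,\ga'\notin\Psi$ lands in $\mf g_{\ga+\ga'}$, and by Lemma~\ref{sum_of_two_roots_2} the root $\ga+\ga'$ (when it is a root) cannot lie in $\Psi$, so the $\mf u_j$-component of $[x_0,y_0]$ vanishes.

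The technical heart is the following dichotomy for the remaining summands. Consider $[x_{(i)},y_{(i')}]$ with a genuine $\Psi$-class factor, say $i\in I_\la$, and suppose first that $i\ne j$ (this holds automatically when $\mu\ne0$, since then $\vf_j=\la+\mu\ne\la=\vf_i$). Writing $y_{(i')}=\sum_\be d_\be e_\be$ as a sum of root vectors, I would treat each $[x_{(i)},e_\be]$ separately. If some $\al\in\Psi_i$ satisfies $\al+\be\in\Psi_j$, then Proposition~\ref{psi+de}(a) gives $\Psi_i+\be\subset\Psi_j$, so $[x_{(i)},e_\be]\in\mf u_j$ and Lemma~\ref{linear_functions_2} yields a constant $c\ne0$ with $\widehat\xi_j([x_{(i)},e_\be])=\xi_j([x_{(i)},e_\be])=c^{-1}\xi_i(x_{(i)})=0$, using $x_{(i)}\in\mf n_i=\Ker\xi_i$; if no such $\al$ exists, then again by Proposition~\ref{psi+de}(a) the $\mf u_j$-component of $[x_{(i)},e_\be]$ is zero. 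Summing over $\be$ gives $\widehat\xi_j([x_{(i)},y_{(i')}])=0$. The leftover case $i=j$ forces $\vf_{i'}=0$ by the weight balance, and for $\al\in\Psi_j$, $\be\in\Psi_{i'}$ the sum $\al+\be$ cannot lie in $\Psi_j$, since $(\al+\be)-\al=\be\in\De$ contradicts Proposition~\ref{psi+de}(b); hence the $\mf u_j$-component vanishes outright. The mixed summands $[x_0,y_{(i')}]$ and $[x_{(i)},y_0]$ are handled by the identical dichotomy, invoking the kernel condition on the surviving $\Psi$-class factor ($y_{(i')}\in\Ker\xi_{i'}$, resp. $x_{(i)}\in\Ker\xi_i$) when its index differs from $j$, and Proposition~\ref{psi+de}(b) when it equals $j$ (which again forces the opposite factor to have $S$-weight zero). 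Collecting all summands yields $\widehat\xi_j([x,y])=0$ for every relevant $j$, so $[x,y]\in\mf n$.

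The main obstacle I anticipate is precisely this bookkeeping: one must keep each kernel factor $x_{(i)}$ or $y_{(i')}$ intact so that the condition $\xi_i=0$ can be fed through Lemma~\ref{linear_functions_2}, while simultaneously discarding the ``off-diagonal'' contributions via Proposition~\ref{psi+de}(a) and isolating the weight-zero classes (where $i=j$ becomes possible) via the non-root-difference property of Proposition~\ref{psi+de}(b). The subtle point is that one cannot isolate a single root $\be$ of $y$ and expect $\widehat\xi_j([x,e_\be])=0$ on its own; the vanishing genuinely relies on the full factor $x_{(i)}$ (or $y_{(i')}$) lying in the kernel of the corresponding $\xi$, so the splitting of $x$ and $y$ into $\mf u_0$- and $\mf n_i$-pieces must be retained throughout the computation.
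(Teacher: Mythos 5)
Your proof is correct and takes essentially the same route as the paper's: the decisive ingredients are identical, namely Lemma~\ref{sum_of_two_roots_2} to dispose of brackets in which both or neither factor meets $\Psi$, and the dichotomy on $[\mf n_i,\mf g_\de]$ with $\de\notin\Psi$ resolved by Proposition~\ref{psi+de}(a),(b) together with Lemma~\ref{linear_functions_2}. The paper simply organizes the verification root space by root space instead of through the $S$-weight decomposition and the extended functionals $\widehat\xi_i$, so your version carries more bookkeeping but no new ideas.
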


\begin{proof}
Recall that for every root $\al \in \Psi$ and every representation
$\al = \be + \ga$, where $\be, \ga \in \De_+$, exactly one of the
two roots $\be,\ga$ lies in~$\Psi$ (see
Lemma~\ref{sum_of_two_roots_2}). In view of this fact the proof
reduces to verifying the condition $[\mf n_i, \mf g_\de] \subset \mf
n$ for all $i = 1, \ldots, K$ and $\de \notin \Psi$. Let us do that.
If $\al + \de \notin \Psi$ for all $\al \in \Psi_i$, then the
inclusion $[\mf n_i, \mf g_\de] \subset \mf n$ holds automatically.
If $\al + \de \in \Psi_j$ for some $\al \in \Psi_i$ and some $j \in
\{1, \ldots, K\}$ (at that, $i \ne j$ in view of
Proposition~\ref{psi+de}(b)), then by Proposition~\ref{psi+de}(a) we
obtain $\Psi_i + \de \subset \Psi_j$. From the definition of the
subspaces $\mf n_i, \mf n_j$ and Lemma~\ref{linear_functions_2} we
have $[\mf n_i, \mf g_\de]\subset \mf n_j \subset \mf n$.
\end{proof}

\begin{lemma}\label{normalizes}
The torus $S$ normalizes the subalgebra~$\mf n$.
\end{lemma}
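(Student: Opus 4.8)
The plan is to show that $\mf n$ is stable under the adjoint action of $S$, i.e. that $\Ad(s)\mf n = \mf n$ for every $s \in S$; since $S$ is connected, this is exactly what "$S$ normalizes $\mf n$" means. Because $S \subset T$ is a subtorus, it acts diagonally on $\mf u$, acting on each root subspace $\mf g_\al$ by the scalar $\tau(\al)(s)$. The summand $\mf u_0 = \bigoplus_{\al \notin \Psi}\mf g_\al$ is therefore automatically $S$-stable, being a direct sum of root subspaces, so the whole problem reduces to showing that each $\mf n_i = \Ker \xi_i \subset \mf u_i$ is $S$-stable.

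The key step is the observation that all roots in a single class $\Psi_i$ restrict to one and the same character of $S$, so that $S$ acts on the whole of $\mf u_i$ by a single scalar $\vf_i(s)$, where $\vf_i \in \mf X(S)$ is this common weight. Granting this, every linear subspace of $\mf u_i$ is preserved by $S$; in particular $\mf n_i = \Ker \xi_i$ is preserved, which settles the reduced problem and hence the lemma.

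To establish that each $\Psi_i$ lies in a single $S$-weight, I would first recall that $\Psi_i$, being an equivalence class of $\sim$, is contained either entirely in $\mr M$ or entirely in $\Psi \backslash \mr M$. In the case $\Psi_i \subset \mr M$, any two roots $\al, \be \in \Psi_i$ satisfy $\al \sim \be$, so $\al - \be$ lies in $\langle \mu - \nu \mid \mu, \nu \in \mr M, \mu \sim \nu\rangle$, which by condition $(\mr T)$ equals $\Ker \tau|_R$; hence $\tau(\al) = \tau(\be)$. In the case $\Psi_i \subset \Psi \backslash \mr M$, given $\al', \be' \in \Psi_i$ I would unwind the definition of $\sim$ to obtain roots $\al, \be \in \mr M$ and $\de \in \De_+$ with $\al' \in F(\al)$, $\be' \in F(\be)$, $\al' + \de = \al$, and $\be' + \de = \be$; Lemma~\ref{equivalence} then gives $\al \sim \be$, whence $\tau(\al) = \tau(\be)$ by the previous case, and subtracting $\tau(\de)$ yields $\tau(\al') = \tau(\be')$.

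The main obstacle is precisely this weight-constancy claim. The relation $\sim$ is handed to us abstractly on $\mr M$ and then extended combinatorially to $\Psi$, so its link to the actual $S$-action (encoded by $\tau$) is not tautological: it must be extracted from condition $(\mr T)$ for the $\mr M$-part and propagated to $\Psi \backslash \mr M$ via Lemma~\ref{equivalence}. Once the constancy of $\tau$ on each $\Psi_i$ is in hand, the remaining assembly of $\mf n = \mf u_0 \oplus \bigoplus_{i=1}^K \mf n_i$ as a direct sum of $S$-stable subspaces is immediate.
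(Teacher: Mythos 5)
Your proposal is correct and follows essentially the same route as the paper's proof: reduce to showing that $\tau$ is constant on each class $\Psi_i$, deduce this from condition $(\mr T)$ when $\Psi_i \subset \mr M$, and propagate it to $\Psi \backslash \mr M$ via the definition of $\sim$ and Lemma~\ref{equivalence}. Your explicit remark that $S$ then acts on each $\mf u_i$ by a single scalar, so that every subspace of $\mf u_i$ (in particular $\mf n_i = \Ker \xi_i$) is preserved, is exactly the point the paper leaves implicit.
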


\begin{proof}
In view of condition~$(\mr T)$, for any two roots $\al,\be\in \mr M$
with $\al \sim \be$ we have ${\tau(\al) = \tau(\be)}$. Next, suppose
that $\al',\be' \in \Psi \backslash \mr M$ and $\al' \sim \be'$.
Then from the definition of the equivalence relation $\sim$ it
follows that there are roots $\al,\be \in \mr M$ and $\de \in \De_+$
such that $\al = \al' + \de$ and $\be = \be' + \de$. Then
$\tau(\al') = \tau(\al) - \tau(\de) = \tau(\be) - \tau(\de) =
\tau(\be')$. Hence for all $i = 1, \ldots, K$ the subspace $\mf u_i$
is $S$-invariant, and so is~$\mf n_i$. This proves the lemma.
\end{proof}

\subsection{}\label{existence_proof_end}

This subsection is the final stage of the proof of
Theorem~\ref{existence_theorem}. We construct the subgroup $H
\subset G$ and prove that it is spherical in~$G$.

We denote by $N$ the unipotent subgroup in $G$ with tangent
algebra~$\mf n$. We put $H = SN$. From Lemma~\ref{normalizes} it
follows that $H$ is a subgroup in $G$, $H = S \rightthreetimes N$
and $H$ is standardly embedded in~$B$. For $i = 1,\ldots, K$ we put
$\vf_i = \tau(\al) \in \mf X(S)$, where $\al \in \Psi_i$ is an
arbitrary root. From the proof of Lemma~\ref{normalizes} it follows
that the weight $\vf_i$ is well defined.

\begin{proposition}\label{H_is_spherical}
The subgroup $H$ is a connected solvable spherical subgroup in $G$
standardly embedded in $B$. At that, $\Upsilon(H) = (S, \mr M, \pi,
\sim)$.
\end{proposition}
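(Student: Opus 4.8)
The plan is to verify the three assertions of Proposition~\ref{H_is_spherical} in turn: that $H$ is a connected solvable spherical subgroup standardly embedded in~$B$, and that $\Upsilon(H) = (S, \mr M, \pi, \sim)$. The first part is essentially already done by the earlier work in~\S\S\,\ref{existence_proof_u_r}-\ref{existence_proof_end}: the subalgebra $\mf n$ was constructed in Proposition~\ref{unipotent_radical}, the torus $S$ normalizes it by Lemma~\ref{normalizes}, and hence $H = S \rightthreetimes N$ is a connected solvable subgroup standardly embedded in~$B$ with $S = H \cap T$ and $N = H \cap U$. The genuinely new content is \emph{sphericity} together with the identification of the combinatorial data.

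For sphericity I would invoke the criterion of Theorem~\ref{solvable_spherical}. With the notation of that theorem, $\Phi = \tau(\De_+)$ and the weight subspaces are $\mf u_\la = \bigoplus_{\tau(\al) = \la} \mf g_\al$. The key point is to compare the weights $\vf_1, \ldots, \vf_K$ attached to the classes $\Psi_1, \ldots, \Psi_K$ with the refinement into the $\mf u_\la$: by construction $\mf n_i$ is the kernel of the single linear function $\xi_i$, whose restriction to each $\mf g_\al$, $\al \in \Psi_i$, is non-zero, so $c_{\vf_i} = 1$ exactly along the directions coming from the $\Psi_i$, while outside $\Psi$ one has $\mf u_0 \subset \mf n$ contributing $c_\la = 0$. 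First I would check that $c_\la \le 1$ for all $\la \in \Phi$, which amounts to showing that two \emph{different} classes $\Psi_i, \Psi_j$ never share the same $S$-weight; this is precisely the statement $\vf_i \ne \vf_j$ for $i \ne j$, and it follows from the definition of $\sim$ together with condition~$(\mr T)$, i.e.\ the torus $S$ was chosen so that its restriction map $\tau$ collapses exactly the differences $\mu - \nu$ with $\mu \sim \nu$. Then I would verify that the weights $\vf_i$ with $c_{\vf_i} = 1$ are linearly independent in~$\mf X(S)$: the linear relations among the $\tau(\al)$, $\al \in \bigcup F(\de)$, are governed by $\Ker \left.\tau\right|_R$, which by~$(\mr T)$ equals $\langle \mu - \nu \mid \mu \sim \nu\rangle$, so after passing to the quotient the images of the distinct classes become independent. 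This gives condition~(2) of Theorem~\ref{solvable_spherical} and hence sphericity.

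The hardest and most delicate step will be the identification $\Upsilon(H) = (S, \mr M, \pi, \sim)$, because here one must show that the \emph{a priori} auxiliary data used in the construction coincide with the \emph{intrinsic} invariants of the resulting subgroup $H$. Concretely, I would show that the set of active roots of $H$ (i.e.\ $\{\al \mid \mf g_\al \not\subset \mf n\}$) is exactly $\Psi = \bigcup_{\al \in \mr M} F(\al)$: the inclusion $\al \in \Psi \Rightarrow \mf g_\al \not\subset \mf n$ holds because $\left.\xi_i\right|_{\mf g_\al} \ne 0$, while the reverse uses $\mf u_0 \subset \mf n$. Next, that the intrinsic families $F(\al)$ (defined via subordination inside $H$) match the constructed $F(\al)$, which follows from Lemma~\ref{sum_of_two_roots_2} and the characterization of $\pi$ in Proposition~\ref{associated_simple_root_2}. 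One then checks that the intrinsic set of \emph{maximal} active roots equals the given~$\mr M$: no root of $\mr M$ is subordinate to another active root (this is where condition~$(\mr C)$, via Lemma~\ref{two_equiv_roots_one_family} and Corollary~\ref{more_than_one_element}, is essential, since it prevents a support from being absorbed), and conversely every maximal active root lies in~$\mr M$. Finally the intrinsic map $\pi$ agrees with the given one by Proposition~\ref{associated_simple_root_2}, and the intrinsic equivalence $\al \sim \be \Leftrightarrow \tau(\al) = \tau(\be)$ agrees with the given relation on $\mr M$ again by condition~$(\mr T)$ together with the already-established fact that distinct classes carry distinct $S$-weights. I expect the bookkeeping around maximality and the coincidence of the two versions of $\sim$ to be the main obstacle, since it requires simultaneously ruling out spurious subordination relations and spurious weight coincidences, both of which rest on the combinatorial conditions $(\mr C)$ and $(\mr T)$ behaving exactly as in the uniqueness theorem.
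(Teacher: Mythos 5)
Your overall strategy (reduce everything to condition~(2) of Theorem~\ref{solvable_spherical}, with the group-theoretic part already settled by \S\S\,\ref{existence_proof_u_r}--\ref{existence_proof_end}) is the same as the paper's, but there is a genuine gap at the decisive step. You claim that both $c_\la \le 1$ (i.e.\ $\vf_i \ne \vf_j$ for $i \ne j$) and the linear independence of $\vf_1, \ldots, \vf_K$ ``follow from the definition of $\sim$ together with condition~$(\mr T)$,'' because ``after passing to the quotient the images of the distinct classes become independent.'' Condition~$(\mr T)$ only tells you that $\Ker \left.\tau\right|_R = \langle \mu - \nu \mid \mu, \nu \in \mr M,\ \mu \sim \nu \rangle$; it does \emph{not} by itself tell you that representatives of the $K$ distinct equivalence classes of $\Psi$ remain linearly independent (or even pairwise distinct) modulo that subspace. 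A priori a combination $\sum c_i \al_i$ with $\al_i$ in distinct classes could land in $\langle \mu - \nu \mid \mu \sim \nu \rangle$, and ruling this out is exactly the combinatorial heart of the existence theorem. The paper does it by a dimension count: property~(4) of \S\,\ref{existence_first} gives $\tau(R) = \langle \vf_1, \ldots, \vf_K\rangle$, so $\dim \tau(R) \le K$; Lemma~\ref{estimations_2}(a) together with $(\mr T)$ gives $\dim \tau(R) = l - (|\mr M| - m)$; and Lemma~\ref{estimations_2}(b) gives the reverse inequality $l \ge K + |\mr M| - m$. Equality then forces the $K$ spanning weights to be a basis of $\tau(R)$, which yields both distinctness and independence at once. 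Lemma~\ref{estimations_2}(b) is the nontrivial input — a case-by-case induction resting on conditions $(\mr C)$, $(\mr D)$, $(\mr E)$ via Lemmas~\ref{eta_2}, \ref{three_active_roots_2}, and \ref{weights_simple_roots_2} — and your plan contains no counterpart of it.

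A secondary remark: you describe the identification $\Upsilon(H) = (S, \mr M, \pi, \sim)$ as ``the hardest and most delicate step,'' but the paper treats it as holding by construction, and rightly so: once sphericity and the distinctness of the $\vf_i$ are established, the set of active roots is $\Psi$ because each $\xi_i$ is nonzero on every $\mf g_\al$, $\al \in \Psi_i$, while $\mf u_0 \subset \mf n$; maximality of the roots in $\mr M$ is guaranteed by Lemma~\ref{beta_subset_alpha} together with condition~$(\mr C)$; and the agreement of the two versions of $\pi$ and $\sim$ is Proposition~\ref{associated_simple_root_2} plus the distinctness just mentioned. So your effort is invested in the routine part while the genuinely hard counting argument is asserted rather than proved.
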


Before we prove this proposition, let us prove several auxiliary
lemmas.

We recall that for every root $\al \in \Psi$ the map $\pi \colon
F(\al) \to \Supp \al$ is a bijection (see
Proposition~\ref{associated_simple_root_2}(b)).

\begin{lemma}\label{eta_2}
Let $\al, \be \in \mr M$ be different roots such that $\al \sim \be$
and $\pi(\al) = \pi(\be)$. Then:

\textup{(a)} there is a unique node $\eta(\al, \be) \in \Supp \al
\backslash \Supp \be$ of the diagram $\Sigma(\Pi)$ joined by an edge
with a node in $\Supp \al \cap \Supp \be$;

\textup{(b)} if a root $\al' \in F(\al)$ is such that $\pi(\al') =
\eta(\al, \be)$, then there is a root $\be' \in F(\be)$ with $\al'
\sim \be'$.
\end{lemma}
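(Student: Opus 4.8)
The plan is to reproduce, in the combinatorial setting of \S\ref{existence_proof_Psi}, the proof of Lemma~\ref{eta}: where that proof invoked the weight system and Proposition~\ref{intersection_equal_weights}, I would now invoke condition~$(\mr E)$, the description of the families $F(\al)$ from \S\ref{existence_first}, Proposition~\ref{associated_simple_root_2}, and the definition of the extended relation~$\sim$.

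For~(a), since $\al,\be\in\mr M$ and $\al\sim\be$, condition~$(\mr E)$ tells us that one of $(\mr D0)$, $(\mr D1)$, $(\mr E1)$, $(\mr D2)$, $(\mr E2)$ holds for the pair $\al,\be$. The hypothesis $\pi(\al)=\pi(\be)$ places this common simple root in $\Supp\al\cap\Supp\be$, which excludes $(\mr D0)$, $(\mr D1)$, and $(\mr D2)$, since in each of them $\pi(\al)$ and $\pi(\be)$ avoid the intersection. Thus $(\mr E1)$ or $(\mr E2)$ is realized, and in either case the node $\eta(\al,\be)$ is read off directly: under $(\mr E1)$ it is the unique neighbour in $\Sigma(\Supp\al)$ of the single intersection node $\de_0$ (terminal with respect to $\Supp\al$), which therefore lies in $\Supp\al\backslash\Supp\be$; under $(\mr E2)$ it is the node $\al_1$ of Figure~\ref{diagram_difficult}, the only element of $\Supp\al\backslash\Supp\be$ joined to $\Supp\al\cap\Supp\be=\{\ga_0,\ldots,\ga_r\}$. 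Uniqueness is clear in both cases.

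For~(b) I would produce $\be'$ explicitly in each case and then derive $\al'\sim\be'$ straight from the definition of~$\sim$ on $\Psi\backslash\mr M$. Under $(\mr E1)$, put $\de_0=\pi(\al)=\pi(\be)$; since $\de_0\in\Supp\de_0$, Proposition~\ref{associated_simple_root_2}(a) applied to $\al=(\al-\de_0)+\de_0$ shows $\de_0\notin\Psi$ and hence $\al-\de_0\in\Psi$, so that $\al-\de_0\in F(\al)$ and likewise $\be-\de_0\in F(\be)$. A direct inspection, using that $\pi$ is a bijection on $F(\al)$, then shows $\pi(\al-\de_0)=\eta(\al,\be)$, so the root $\al'$ of the hypothesis equals $\al-\de_0$, and one takes $\be'=\be-\de_0$. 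Under $(\mr E2)$ one reads $\al'=\al_1+\ldots+\al_p$ and $\be'=\be_1+\ldots+\be_q$ off Figure~\ref{diagram_difficult}. In both situations $\al'+\de=\al$ and $\be'+\de=\be$ with one and the same $\de\in\De_+$ (namely $\de=\de_0$, resp.\ $\de=\ga_0+\ldots+\ga_r$), so $\al'\sim\be'$ by the definition of $\sim$, provided $\al',\be'\in\Psi\backslash\mr M$.

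The latter proviso is secured by condition~$(\mr C)$: in both cases $\Supp\al=\Supp\al'\cup(\Supp\al\cap\Supp\be)\subset\Supp\al'\cup\Supp\be$, so if $\al'$ lay in $\mr M$ then $\Supp\al$ would be covered by the supports of the two distinct members $\al',\be$ of $\mr M\backslash\{\al\}$, contradicting~$(\mr C)$; hence $\al'\notin\mr M$, and symmetrically $\be'\notin\mr M$. I expect the only genuinely delicate point to be the two case-dependent verifications — that $\pi$ sends the displayed root to $\eta(\al,\be)$ and that $\be'$ indeed lies in $F(\be)$ — both of which reduce to finite inspection of the rows of Table~\ref{table_active_roots} compatible with $(\mr E1)$ and $(\mr E2)$.
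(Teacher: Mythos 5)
Your proposal is correct and is essentially what the paper intends: its entire proof of this lemma is the single line ``repeats that of Lemma~\ref{eta}'', and your argument is exactly that repetition with the right substitutions (condition~$(\mr E)$ plus the observation that $\pi(\al)=\pi(\be)$ forces $(\mr E1)$ or $(\mr E2)$ in place of Proposition~\ref{intersection_equal_weights}, the explicit roots $\al-\de_0,\be-\de_0$ resp.\ $\al_1+\ldots+\al_p,\be_1+\ldots+\be_q$, and the definition of the extended relation~$\sim$ in place of the weight equality $\tau(\al')=\tau(\be')$). Your extra check via condition~$(\mr C)$ that $\al',\be'\in\Psi\backslash\mr M$, so that the definition of $\sim$ actually applies, is a point the paper leaves implicit and is worth making.
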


\begin{proof}[Proof \textup{repeats that of Lemma~\ref{eta}}]
\end{proof}

\begin{lemma}\label{three_active_roots_2}
Let $\al, \be, \ga \in \mr M$ be pairwise different roots such that
$\al \sim \be = \tau(\ga)$ and $\pi(\al) = \pi(\be) = \pi(\ga)$.
Then either $\Supp \al \cap \Supp \be \subset \Supp \al \cap \Supp
\ga$ or $\Supp \al \cap \Supp \ga \subset \Supp \al \cap \Supp \be$.
\end{lemma}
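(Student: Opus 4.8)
The plan is to mirror the proof of Lemma~\ref{three_active_roots}, with the combinatorial conditions $(\mr A)$, $(\mr C)$, $(\mr E)$ playing the role that Table~\ref{table_active_roots} and Propositions~\ref{intersection_diff_weights},~\ref{intersection_equal_weights} played there. Throughout I put $\de = \pi(\al) = \pi(\be) = \pi(\ga)$, and I read the hypothesis as saying that $\al, \be, \ga$ are pairwise equivalent with respect to the relation $\sim$ on $\mr M$. By condition $(\mr A)$ each associated simple root lies in the support of its root, so $\de \in \Supp\al\cap\Supp\be$ and $\de \in \Supp\al\cap\Supp\ga$; in particular both intersections are nonempty.

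First I would determine, for each of the pairs $(\al,\be)$ and $(\al,\ga)$, which possibility of condition $(\mr E)$ can occur. Nonemptiness of the intersection excludes $(\mr D0)$. Possibility $(\mr D1)$ requires the one-point intersection to avoid $\pi(\al)$ and $\pi(\be)$, and $(\mr D2)$ requires $\pi(\al)\notin\Supp\al\cap\Supp\be$; both are incompatible with $\de=\pi(\al)=\pi(\be)$ lying in $\Supp\al\cap\Supp\be$. Hence for each of the two pairs exactly one of $(\mr E1)$, $(\mr E2)$ is realized. If $(\mr E1)$ holds for $(\al,\be)$, then $\Supp\al\cap\Supp\be=\{\de\}$, and since $\de\in\Supp\al\cap\Supp\ga$ we get $\Supp\al\cap\Supp\be\subset\Supp\al\cap\Supp\ga$; symmetrically, $(\mr E1)$ for $(\al,\ga)$ yields the reverse inclusion. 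In either case the lemma follows at once.

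\emph{The hard part} will be ruling out $(\mr E2)$ for both pairs simultaneously. In that situation each pair realizes the configuration of Figure~\ref{diagram_difficult}: for $(\al,\be)$ there is a branch node (the node $\ga_0$ of that figure) lying in $\Supp\al$, two of whose edges stay inside the path $\Supp\al$ while the third leads into $\Supp\be\backslash\Supp\al$; likewise $(\al,\ga)$ produces such a branch node in $\Supp\al$ with its third edge leading into $\Supp\ga\backslash\Supp\al$. I would then exploit that $\Sigma(\Supp\al\cup\Supp\be\cup\Supp\ga)$ is a subdiagram of the finite-type diagram $\Sigma(\Pi)$, hence itself of finite type: its underlying graph is a forest of Dynkin diagrams, so it has no node of degree exceeding $3$ and at most one node of degree $\ge 3$. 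If the two branch nodes were distinct, each would have degree $\ge 3$ in the full diagram, giving two branch nodes, which is impossible. So the two branch nodes coincide; the degree bound $\le 3$ at this common node then forces the two arms $\Supp\be\backslash\Supp\al$ and $\Supp\ga\backslash\Supp\al$ to leave it through one and the same neighbour, and, there being no further branching, these two arms are nested subpaths. This yields $\Supp\be\subset\Supp\ga$ or $\Supp\ga\subset\Supp\be$, contradicting condition $(\mr C)$. Thus $(\mr E2)$ cannot be realized for both pairs, so one of them realizes $(\mr E1)$, and the desired inclusion follows. The delicate point, and the place I expect to spend the most care, is precisely this finite-type bookkeeping: articulating the single-branch-node and maximal-degree-$3$ constraints cleanly enough to extract the nesting of the arms that collides with condition $(\mr C)$.
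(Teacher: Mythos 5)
Your overall strategy coincides with the paper's: the paper proves this lemma by simply repeating the argument of Lemma~\ref{three_active_roots} with Proposition~\ref{intersection_equal_weights} replaced by condition $(\mr E)$, i.e.\ exactly your reduction ``each pair realizes $(\mr E1)$ or $(\mr E2)$; the $(\mr E1)$ case is immediate; $(\mr E2)$ cannot hold for both pairs.'' Your elimination of $(\mr D0)$, $(\mr D1)$, $(\mr D2)$ and your treatment of the $(\mr E1)$ case are correct, and you supply details for the final step that the paper leaves as a bare ``observation.''

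There is, however, a gap in that final step. Write $v$ for the common branch node. In an $(\mr E2)$ configuration for $(\al,\be)$ the set $\Supp\al$ is a path through $v$, and $\Supp\al\cap\Supp\be$ consists of $v$ together with \emph{one entire arm} of that path; likewise $\Supp\al\cap\Supp\ga$ consists of $v$ together with one arm. Your inference ``the external arms $\Supp\be\setminus\Supp\al$ and $\Supp\ga\setminus\Supp\al$ are nested, hence $\Supp\be\subset\Supp\ga$ or $\Supp\ga\subset\Supp\be$'' is valid only when the two stems occupy the \emph{same} arm of $\Supp\al$. They may occupy opposite arms --- this is possible precisely when $\pi(\al)=\pi(\be)=\pi(\ga)=v$, since $(\mr E2)$ only forces $\pi(\al)$ to lie somewhere in the stem --- and then neither of $\Supp\be$, $\Supp\ga$ contains the other. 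This is exactly the configuration in which the asserted inclusions would fail, so it cannot be left unaddressed. It is ruled out by the tool you already use: in that case $\Supp\al\cap\Supp\be$ and $\Supp\al\cap\Supp\ga$ together exhaust $\Supp\al$, so $\Supp\al\subset\Supp\be\cup\Supp\ga$, contradicting condition $(\mr C)$ applied to $\al$. With this case added your argument is complete.
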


\begin{proof}
This is proved by the same argument as
Lemma~\ref{three_active_roots}, with replacing the reference to
Proposition~\ref{intersection_equal_weights} by the reference to
condition~$(\mr E)$.
\end{proof}

\begin{lemma} \label{weights_simple_roots_2}
Suppose that $\al, \be \in \mr M$, $\al \ne \be$, and $I = \Supp \al
\cap \Supp \be \ne \varnothing$. Let $\de \in I$ be an arbitrary
root and let $\al' \in F(\al)$, $\be' \in F(\be)$ be such that
$\pi(\al') = \pi(\be') = \de$. Then:

\textup{(a)} if either of $(\mr D1)$ or $(\mr D2)$ is realized for
$\al$, $\be$, then $\al' = \be'$;

\textup{(b)} if either of $(\mr E1)$ or $(\mr E2)$ is realized for
$\al$, $\be$, then $\al' \sim \be'$.

In either case, $\al' \sim \be'$.
\end{lemma}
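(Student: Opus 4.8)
The plan is to identify the two roots $\al'$ and $\be'$ explicitly in each of the four admissible configurations and then read off the conclusion. Note first that $\al'$ and $\be'$ are well defined: by Proposition~\ref{associated_simple_root_2}(b) the map $\pi$ restricts to a bijection $F(\al) \to \Supp\al$ (and similarly on $F(\be)$), while $\de \in I \subset \Supp\al \cap \Supp\be$. For part~(a) I would prove the stronger equality $\al' = \be'$, and for part~(b) I would prove $\al' \sim \be'$; the closing statement then follows since $\al' = \be'$ trivially gives $\al' \sim \be'$. Throughout I would use that, whenever $(\mr E1)$ or $(\mr E2)$ is realized for a pair in $\mr M$, condition $(\mr D)$ forces $\al \sim \be$, because these two possibilities are not among those $(\mr D)$ permits for non-equivalent roots.

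Consider first the single-node configurations. If $(\mr D1)$ holds, then $I = \{\de\}$ with $\de$ terminal in $\Supp\al$ and $\de \ne \pi(\al)$; by the combinatorial content of Corollary~\ref{extreme_root} (a direct check against Table~\ref{table_active_roots}) the terminal node $\de$ lies in $F(\al)$, and since $\pi(\de) = \de$ and $\pi$ is injective on $F(\al)$ we get $\al' = \de$, and likewise $\be' = \de$, so $\al' = \be'$. If $(\mr E1)$ holds, then $\de = \pi(\al) = \pi(\be)$, so by injectivity $\al' = \al$ and $\be' = \be$; here $\al \sim \be$, whence $\al' \sim \be'$.

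The tripod configurations $(\mr D2)$ and $(\mr E2)$ are the substantial case. The diagram $\Sigma(\Supp\al \cup \Supp\be)$ of Figure~\ref{diagram_difficult} has a node of degree three, hence no multiple edges, so $\De(\al)$ and $\De(\be)$ are of type $\ms A$ and $\al, \be$ are their highest roots; consequently $F(\al)$ and $F(\be)$ consist of the interval subordinates, i.e. the initial and final segments of the chains $\Supp\al$, $\Supp\be$ that avoid $\pi(\al)$, respectively $\pi(\be)$. Writing $\de = \ga_m$, I would split according to the position of $\ga_m$ relative to $\pi(\al)$ on the chain. When $\ga_m$ lies on the side of the shared leg away from $\pi(\al)$ — which is automatic in $(\mr D2)$, where $\pi(\al)$ sits on the $\al$-leg — the root $\al'$ is the final segment $\ga_m + \ga_{m+1} + \ldots + \ga_r$, whose support is contained in $I \subset \Supp\be$; then Lemma~\ref{beta_subset_alpha} gives $\al' \in F(\be)$, and injectivity of $\pi$ on $F(\be)$ yields $\al' = \be'$. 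When $\ga_m = \pi(\al)$ (possible only in $(\mr E2)$) we get $\al' = \al$, $\be' = \be$ and use $\al \sim \be$. The remaining possibility, $\ga_m$ strictly between $\ga_0$ and $\pi(\al) = \ga_s$, occurs only in $(\mr E2)$: here $\al' = \al_1 + \ldots + \al_p + \ga_0 + \ldots + \ga_m$ and $\be' = \be_1 + \ldots + \be_q + \ga_0 + \ldots + \ga_m$ are initial segments sharing the common complement $\de = \ga_{m+1} + \ldots + \ga_r \in \De_+$, so that $\al' + \de = \al$ and $\be' + \de = \be$; the definition of the relation $\sim$ on $\Psi \backslash \mr M$ then gives $\al' \sim \be'$ immediately.

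The main obstacle I anticipate is precisely this last subcase of $(\mr E2)$, where to invoke the definition of $\sim$ I must first certify that $\al'$ and $\be'$ lie in $\Psi \backslash \mr M$. I would establish this by a support count: since $p, q \ge 1$, the support of $\al'$ contains at least the two nodes $\al_1$ and $\ga_0$, so $\al' \notin \mr M$ — for otherwise $\al$ and $\al'$ would be two elements of $\mr M$ with $\Supp\al' \subsetneq \Supp\al$, and conditions $(\mr D)$, $(\mr E)$ applied to this pair would force $(\mr D1)$ or $(\mr E1)$ (a tripod configuration cannot have one support contained in the other), whence $|\Supp\al'| = 1$, a contradiction; the same applies to $\be'$. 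The remaining work — the type-$\ms A$ reduction and the precise identification of $\al'$ as the asserted interval — is routine case-checking against Table~\ref{table_active_roots}, but it is the computational heart of the argument.
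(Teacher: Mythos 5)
Your proof is correct and takes essentially the same route as the paper's, which simply records that in case $(\mr E1)$ one has $\al'=\al$, $\be'=\be$ and in case $(\mr E2)$ either $\al'=\be'$ or $\al-\al'=\be-\be'$, leaving (a) to ``a direct check''; you have merely made that check explicit, including the worthwhile verification that $\al',\be'\notin\mr M$ before invoking the definition of $\sim$ on $\Psi\backslash\mr M$. One cosmetic point: your last $(\mr E2)$ subcase should be stated as $0\le m<s$ rather than ``strictly between $\ga_0$ and $\ga_s$'', since $\de=\ga_0$ is admissible, but the formula you give already covers it.
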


\begin{proof}
In view of conditions~$(\mr D)$ and $(\mr E)$ for the roots $\al$
and $\be$ one of possibilities $(\mr D1)$, $(\mr D2)$, $(\mr E1)$,
$(\mr E2)$ is realized. Assertion~(a) is obtained by a direct check.
If one of possibilities $(\mr E1)$ or $(\mr E2)$ is realized, then
$\al \sim \be$. In case of $(\mr E1)$ we have $I = \{\de\}$. Then
$\al' = \al$, $\be' = \be$, and $\al' \sim \be'$. At last, in case
of $(\mr E2)$ we have either $\al' = \be'$ or $\al - \al' = \be -
\be'$. In both cases, $\al' \sim \be'$.
\end{proof}

If $\Psi' \subset \Psi$ is an arbitrary nonempty subset, then the
restriction of the equivalence relation $\sim$ from $\Psi$ to
$\Psi'$ is well defined. Therefore we may consider equivalence
classes in~$\Psi'$.

\begin{lemma}\label{estimations_2}
Let $\mr M' \subset \mr M$ be an arbitrary subset. Put $l =
|\bigcup\limits_{\de \in \mr M'}\Supp \de|$, let $k$ be the number
of equivalence classes in the set $\bigcup \limits_{\de \in \mr
M'}F(\de)$, and let $m$ be the number of equivalence classes in the
set~$\mr M'$. Then:

\textup{(a)} $\dim \, \langle \mu - \nu \mid \mu, \nu \in \mr M',
\mu \sim \nu \rangle = |\mr M'| - m$;

\textup{(b)} $l \ge k + |\mr M'| - m$.
\end{lemma}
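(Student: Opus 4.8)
The plan is to prove both assertions simultaneously by induction on $|\mr M'|$, exactly paralleling the proof of Lemma~\ref{estimations}, but now tracking equivalence classes rather than $S$-weights. The base case $|\mr M'| = 1$ is immediate: assertion~(a) reduces to $\dim 0 = 0$, and assertion~(b) becomes $l = k + 1 - 1 = k$, where $k = |F(\al)| = |\Supp \al| = l$ by Corollary~\ref{family_Psi}(a). For the inductive step, I would write $\mr M' = \widetilde{\mr M}' \cup \{\al\}$ with $\al \notin \widetilde{\mr M}'$, set $J = (\Supp \al) \backslash (\bigcup_{\de \in \widetilde{\mr M}'} \Supp \de)$, and abbreviate $\widetilde l$, $\widetilde k$, $\widetilde m$ for the corresponding quantities on $\widetilde{\mr M}'$.

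First I would record two properties of $\al$ that mirror those in the proof of Lemma~\ref{estimations}: that distinct roots in $F(\al)$ are pairwise inequivalent (from Proposition~\ref{associated_simple_root_2}(b) together with the definition of $\sim$ on $\Psi$), and that if $\al' \in F(\al)$ has $\pi(\al') \in \Supp \be$ for some $\be \in \widetilde{\mr M}'$, then $\al'$ is equivalent to a root in $F(\be)$ (this is Lemma~\ref{weights_simple_roots_2}). Then I would split into the two cases used before. In \emph{Case}~1, where $\al \nsim \de$ for all $\de \in \widetilde{\mr M}'$, the equivalence-class count on $\mr M'$ satisfies $m = \widetilde m + 1$, the subspace in~(a) is unchanged from $\widetilde{\mr M}'$, giving~(a) by induction; for~(b) I would show $|J| \ge k - \widetilde k$ using that roots $\al' \in F(\al)$ with $\pi(\al') \in J$ contribute new equivalence classes not appearing in $\bigcup_{\de \in \widetilde{\mr M}'} F(\de)$. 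In \emph{Case}~2, where some $\al_0 \in \widetilde{\mr M}'$ satisfies $\al \sim \al_0$, we have $m = \widetilde m$; here condition~$(\mr C)$ guarantees (via a simple root $\rho \in \Supp \al \cap J$) that $\al - \al_0$ is independent of the previous difference subspace, yielding~(a), and one argues $|J| \ge k - \widetilde k + 1$ for~(b).

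The crucial structural difference from Lemma~\ref{estimations} is that here we work purely combinatorially with the abstract relation $\sim$ and the data $(\mr M, \pi)$, without access to the torus $S$ or the map $\tau$. Consequently the sharp equalities of Lemma~\ref{estimations} are replaced by the inequality in~(b): we can no longer invoke Theorem~\ref{solvable_spherical} to force linear independence of weights, so some roots that were guaranteed distinct weights before may now collapse, and we only get $l \ge k + |\mr M'| - m$. The arguments of Lemmas~\ref{eta_2}, \ref{three_active_roots_2}, and~\ref{weights_simple_roots_2} (the combinatorial analogues of the weight-based lemmas) are precisely what replace the references to Lemma~\ref{alpha_minus_beta} and Lemma~\ref{eta} in the original proof.

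\textbf{The main obstacle} will be \emph{Subcase}~2.2, where $\pi(\al) \notin J$. As in Lemma~\ref{estimations} one selects a maximal active root $\be \ne \al$ with $\pi(\al) \in \Supp \be$ and $\Supp \al \cap \Supp \be$ maximal, then shows $\eta(\al, \be) \in J$ via Lemma~\ref{eta_2}. The delicate point is verifying that roots $\al'' \in F(\al)$ with $\pi(\al'') \in J \backslash \{\eta(\al,\be)\}$ cannot be equivalent to any root in $\bigcup_{\de \in \widetilde{\mr M}'} F(\de)$; this requires combining Lemma~\ref{three_active_roots_2} (the containment of pairwise intersections) with Lemma~\ref{eta_2}(a) (uniqueness of the joining node $\eta$) to derive a contradiction with $\eta'' \ne \eta(\al,\be)$. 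Everything else is bookkeeping, but this uniqueness-of-$\eta$ argument is where the real content lies.
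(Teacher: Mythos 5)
Your overall architecture --- simultaneous induction on $|\mr M'|$, the same two cases and subcases, and the substitution of Lemmas~\ref{eta_2}, \ref{three_active_roots_2}, \ref{weights_simple_roots_2} for the weight-based lemmas --- is exactly the paper's. But your counting in the decisive Subcase~2.2 points the wrong way. Since only the inequality $|J| \ge k - \widetilde k + 1$ is claimed, the argument must produce roots of $F(\al)$ with $\pi$-image in $J$ that \emph{are} equivalent to roots already present in $\bigcup_{\de\in\widetilde{\mr M}'}F(\de)$: every new equivalence class meets $F(\al)$ in a root whose $\pi$-image lies in $J$ (by Lemma~\ref{weights_simple_roots_2} and the bijectivity of $\pi$ on families), so exhibiting one further root with $\pi$-image in $J$ that is equivalent to an old one gives the extra $+1$. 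The step you single out as ``where the real content lies'' --- that roots $\al''$ with $\pi(\al'')\in J\setminus\{\eta(\al,\be)\}$ \emph{cannot} be equivalent to old roots --- would bound the number of new classes from below, hence $|J|$ from above; it is what the \emph{equality} in Lemma~\ref{estimations} requires, and it is neither needed nor used for the present inequality.

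The step that actually carries the weight is the one you dispatch in half a sentence: proving $\eta(\al,\be)\in J$. Lemma~\ref{eta_2}(a) only places $\eta(\al,\be)$ in $\Supp\al\setminus\Supp\be$; membership in $J$ requires that it avoid the supports of \emph{all} roots of $\widetilde{\mr M}'$, and this is where the paper spends its effort: assuming $\pi(\al')=\eta(\al,\be)$ has a counterpart $\ga'\in F(\ga)$ with the same $\pi$-image for some $\ga\in\widetilde{\mr M}'$, it contradicts condition $(\mr C)$ when $\al'=\ga'$, and otherwise uses Lemma~\ref{weights_simple_roots_2}(a,b) together with Lemma~\ref{three_active_roots_2} to contradict the maximality of $\Supp\al\cap\Supp\be$. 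Without this argument your Subcase~2.2 does not close. (A minor over-claim elsewhere: in the base case you assert $k=l$, i.e.\ pairwise inequivalence of the roots of $F(\al)$; only $k\le l$ is needed, and that is immediate from $|F(\al)|=|\Supp\al|$.)
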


\begin{proof}
Let us prove both assertions (a), (b) simultaneously by induction
on~$|\mr M'|$.

For $|\mr M'| = 1$ we have $|\mr M'| = m$ and $\langle \mu - \nu
\mid \mu, \nu \in \mr M', \mu \sim \nu \rangle = \{0\}$, therefore
assertion~(a) is true. Assertion~(b) is also true by property~(3) of
roots in~$\mr M$ (see~\S\,\ref{existence_first}).

Assume that assertions~(a) and~(b) are proved for all proper subsets
of the set~$\mr M'$. Let us prove them for~$\mr M'$. Suppose that
$\mr M' = \widetilde {\mr M}' \cup \{\al\}$, where $\al \notin
\widetilde {\mr M}'$. Put $J = (\Supp \al) \backslash
(\bigcup\limits_{\de \in \widetilde {\mr M}'}\Supp \de)$. Put also
$\widetilde l = |\bigcup\limits_{\de \in \widetilde {\mr M}'}\Supp
\de|$, let $\widetilde k$ be the number of equivalence classes in
the set $\bigcup\limits_{\de \in \widetilde {\mr M}'}F(\de)$, and
let $\widetilde m$ be the number of equivalence classes in the
set~$\widetilde {\mr M}'$. Clearly, $|\mr M'| = |\widetilde {\mr
M}'| + 1$ and $l = \widetilde l + |J|$.

Further we consider two cases.

\emph{Case}~1. For every root $\de \in \widetilde {\mr M}'$ we
have~$\al \nsim \de$. Then $m = \widetilde m + 1$ and the subspace
$\langle \mu - \nu \mid \mu, \nu \in \mr M', \mu \sim \nu \rangle$
coincides with the subspace $\langle \mu - \nu \mid \mu, \nu \in
\widetilde {\mr M}', \mu \sim \nu \rangle$ whose dimension equals
$|\widetilde {\mr M}'| - \widetilde m = |\mr M'| - m$ by the
induction hypothesis. Thus~(a) is proved. In order to prove~(b), in
view of the induction hypothesis it suffices to check that $|J| \ge
k - \widetilde k$. This is the case by
Lemma~\ref{weights_simple_roots_2}.

\emph{Case}~2. There is a root $\al_0 \in \widetilde {\mr M}'$ such
that $\al \sim \al_0$. Then we have $m = \widetilde m$. In view of
condition~$(\mr C)$ there is a simple root $\rho \in \Supp \al$ with
$\rho \in J$, therefore $\al - \al_0$ does not lie in the subspace
$\langle \mu - \nu \mid \mu,\nu \in \widetilde {\mr M}', \mu \sim
\nu \rangle$. It is easy to see that the subspace $\langle \mu - \nu
\mid \mu,\nu \in \mr M', \mu \sim \nu \rangle$ coincides with the
subspace $\langle \mu - \nu \mid \mu, \nu \in \widetilde {\mr M}',
\mu \sim \nu \rangle \oplus \langle \al - \al_0 \rangle$ whose
dimension equals $|\widetilde {\mr M}'| - \widetilde m + 1 = |\mr
M'| - m$ in view of the induction hypothesis. Assertion~(a) is
proved. In order to prove~(b), in view of the induction hypothesis
it suffices to check that $|J| \ge k - \widetilde k + 1$. We
consider two subcases.

\emph{Subcase}~2.1. $\pi(\al) \in J$. The required inequality holds
in view of Lemma~\ref{weights_simple_roots_2} and the condition~$\al
\sim \al_0$.

\emph{Subcase}~2.2. $\pi(\al) \notin J$. In this situation there is
a root $\be \in \widetilde {\mr M}'$ such that ${\pi(\al) \in \Supp
\be}$. Without loss of generality we may assume that the set $\Supp
\al \cap \Supp \be$ is maximal with respect to inclusion. Let
$\eta(\al, \be)$ be the root in Lemma~\ref{eta_2}(a). Regard the
root $\al' \in F(\al)$ with $\pi(\al') = \eta(\al, \be)$ and the
root $\be' \in F(\be)$ with~$\be' \sim \al'$ ($\be'$ exists by
Lemma~\ref{eta_2}(b)). Let us prove that $\eta(\al, \be) \in J$.
Assume the converse. Then there are roots $\ga \in \widetilde {\mr
M}'$ and $\ga' \in F(\ga)$ such that $\pi(\ga') = \pi(\al')$. If
$\al' = \ga'$, then $\Supp \al' \subset \Supp \ga$, ${\Supp (\al -
\al')} = \Supp (\be - \be') \subset \Supp \be$, whence $\Supp \al
\subset \Supp \be \cup \Supp \ga$, which contradicts condition~$(\mr
C)$. Therefore $\al' \ne \ga'$ and, by
Lemma~\ref{weights_simple_roots_2}(a,b), for the roots $\al, \ga$
one of possibilities $(\mr E1)$ or $(\mr E2)$ is realized. In
particular, $\al \sim \ga$ and $\pi(\al) = \pi(\ga)$, whence in view
of Lemma~\ref{three_active_roots_2} we get $\Supp \al \cup \Supp \ga
\supsetneqq \Supp \al \cup \Supp \be$, a contradiction with the
choice of~$\be$. Thus we have $\eta(\al, \be) \in J$. Then the
inequality $|J| \ge k - \widetilde k + 1$ holds in view of
Lemma~\ref{weights_simple_roots_2} and the condition~$\al' \sim
\be'$.

Assertion~(b) is proved.
\end{proof}

\begin{proof}[Proof of Proposition~\textup{\ref{H_is_spherical}}]
Only the sphericity of~$H$ needs to be proved since all the other
conditions are fulfilled by construction. Regard the subspace $R =
\langle \bigcup\limits_{\de \in \mr M}\Supp \de \rangle \subset Q$
and denote by $l$ its dimension. Let $m$ be the number of
equivalence classes in~$\mr M$. In view of condition~(4)
(see~\S\,\ref{existence_first}) the image of $R$ under the map
$\tau$ is spanned by the weights $\vf_1, \ldots, \vf_K$. By
Lemma~\ref{estimations_2}(a) the dimension of this image equals $l -
(|\mr M| - m)$. Hence $K \ge l - (|\mr M| - m)$. On the other hand,
$K \le l - (|\mr M| - m)$ by Lemma~\ref{estimations_2}(b).
Therefore, $K = l - (m - n)$ and all weights $\vf_1,\ldots,\vf_K$
are linearly independent. Moreover, by construction for every $i =
1, \ldots, K$ the codimension of the subspace $\mf n_i$ in the space
$\mf u_i$ equals~$1$. Thus, condition~(2) of
Theorem~\ref{solvable_spherical} is satisfied, hence $H$ is
spherical in~$G$.
\end{proof}

The proof of Theorem~\ref{existence_theorem} is completed.

\begin{remark} \label{remark_unipotent_radical}
As we see from the proof of Theorem~\ref{existence_theorem}, up to
conjugation by elements of~$T$, the unipotent radical $N$ of a
connected solvable spherical subgroup $H$ standardly embedded in $B$
is uniquely recovered from the set $\Upsilon_0(H) = (\mr M, \pi,
\sim)$ satisfying conditions $(\mr A)$, $(\mr D)$, $(\mr E)$, $(\mr
C)$.
\end{remark}

\begin{remark}\label{at_least_one_solvable}
For every set $(\mr M, \pi, \sim)$ satisfying conditions $(\mr A)$,
$(\mr D)$, $(\mr E)$, $(\mr C)$ there is at least one connected
solvable spherical subgroup $H \subset G$ standardly embedded in $B$
such that $\Upsilon_0(H) = (\mr M, \pi, \sim)$. Namely, we may
choose $S$ to be the connected component of the identity of the
subgroup in $T$ defined by vanishing of all characters of the form
$\al - \be$, where $\al, \be \in \mr M$ and $\al \sim \be$.
\end{remark}

%-------------------------------------------------------------------------
\section{Classification of connected solvable spherical subgroups\\ up to
conjugation} \label{up_to_conjugacy}
%-------------------------------------------------------------------------

Theorems~\ref{uniqueness_theorem} and~\ref{existence_theorem}
provide a classification of connected solvable spherical subgroups
of $G$ standardly embedded in~$B$, up to conjugation by elements
of~$T$. The aim of this section is to find out when two connected
solvable spherical subgroups in $G$ standardly embedded in $B$ are
conjugate in $G$ and to reveal a relation between the corresponding
sets of combinatorial data.

\subsection{}

The main result of this subsection is
Proposition~\ref{conjugate_spherical}.

Let $H_1, H_2 \subset G$ be two connected solvable subgroups
standardly embedded in~$B$. For $i = 1,2$ let $N_i$ be the unipotent
radical of~$H_i$ and $S_i \subset T$ its maximal torus so that $H_i
= S_i \rightthreetimes N_i$. We put $Z = Z_G(S_1)$. Being the
centralizer of a torus in~$G$, the group $Z$ is reductive and
connected, and its tangent algebra $\mf z$ has the form $\mf z = \mf
t \oplus \bigoplus \limits_{\al \in \De: \tau(\al) = 0} \mf g_\al$,
where $\tau: \mf X(T) \to \mf X(S_1)$ is the restriction of
characters.

\begin{lemma} \label{conjugate_subgroups}
If $H_2 = gH_1g^{-1}$ for some $g\in G$, then $g \in N_2 \cdot
N_G(T) \cdot Z$. In particular, $H_2 = g_0 H_1 g_0^{-1}$ for some
$g_0 \in N_G(T) \cdot Z$.
\end{lemma}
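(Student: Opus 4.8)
The plan is to peel off the element $g$ in two stages, using conjugacy of maximal tori first inside the solvable group $H_2$ and then inside the reductive group $Z$, so that what remains normalizes $T$.

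First I would note that $gS_1g^{-1}$ is a maximal torus of $H_2 = gH_1g^{-1}$, as is $S_2$. Since in a connected solvable group all maximal tori are conjugate by an element of the unipotent radical, there is $n \in N_2$ with $n(gS_1g^{-1})n^{-1} = S_2$. Setting $g' = ng$, I obtain $g'S_1g'^{-1} = S_2$, and it remains to show that $g' \in N_G(T)\cdot Z$.

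Next I would exploit that $S_2 = g'S_1g'^{-1}\subset T$. This forces $T \subset Z_G(S_2) = g' Z_G(S_1) g'^{-1} = g'Zg'^{-1}$, hence $g'^{-1}Tg' \subset Z$. Both $T$ and $g'^{-1}Tg'$ are maximal tori of $Z$ (a torus of $Z$ containing $T$ is a torus of $G$ containing $T$, hence equals $T$), and since $Z$ is connected and reductive these two maximal tori are conjugate by some $z \in Z$, say $z(g'^{-1}Tg')z^{-1} = T$. Then $w := zg'^{-1}$ normalizes $T$, i.e.\ $w \in N_G(T)$, and from $g' = w^{-1}z$ together with $g = n^{-1}g'$ I conclude $g = n^{-1}w^{-1}z \in N_2\cdot N_G(T)\cdot Z$.

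Finally, for the last assertion I would put $g_0 = g' = ng$; then $g_0 = w^{-1}z \in N_G(T)\cdot Z$, while $g_0H_1g_0^{-1} = n(gH_1g^{-1})n^{-1} = nH_2n^{-1} = H_2$ because $n \in N_2 \subset H_2$. The step I expect to cause the most trouble is the very first one: I must take the conjugating element in the unipotent radical $N_2$ rather than merely in $H_2$, for otherwise a torus factor coming from $S_2$ would sit on the wrong side and obstruct the membership $g \in N_2\cdot N_G(T)\cdot Z$. This is in fact harmless, since $S_2 \subset T \subset N_G(T)$ can absorb such a factor, but the cleanest route is to invoke at the outset the refined conjugacy of maximal tori under $N_2$.
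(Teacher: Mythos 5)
Your proof is correct, and its second half takes a genuinely different route from the paper's. The first stage coincides: the paper also conjugates $gS_1g^{-1}$ to $S_2$ by an element $u$ of the unipotent radical $N_2$ and sets $g_0 = ug$. But from there the paper proceeds via the Bruhat decomposition, writing $g_0 = u_1\si u_2$ with $u_1,u_2 \in U$, $\si \in N_G(T)$, and then, for an arbitrary $s_1 \in S_1$, carries out an explicit manipulation in $B$ ending with the observation that a certain element lies in $U \cap T = \{e\}$; this shows $g_0 s_1 g_0^{-1} = \si s_1 \si^{-1}$ for all $s_1 \in S_1$, i.e.\ $\si^{-1}g_0 \in Z_G(S_1) = Z$. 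You instead note that $S_2 = g'S_1g'^{-1} \subset T$ forces $g'^{-1}Tg'$ to be a second maximal torus of the connected reductive group $Z$, and invoke conjugacy of maximal tori in $Z$ to produce $z \in Z$ with $zg'^{-1} \in N_G(T)$; unwinding gives the same factorization $g \in N_2 \cdot N_G(T)\cdot Z$. Both arguments rest only on standard structure theory (your justification that $g'^{-1}Tg'$ is indeed maximal in $Z$ — a torus of full rank of $G$ sitting inside $Z$ — is the one point worth stating explicitly, and you do). Your version is shorter and more structural, replacing the paper's hands-on computation with a second application of a conjugacy theorem; the paper's version has the minor virtue of exhibiting the $N_G(T)$-factor concretely as the Bruhat cell of $g_0$. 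Your closing remark about why the conjugating element must be taken in $N_2$ rather than all of $H_2$ is apt, though as you say the torus part could in any case be absorbed into $N_G(T)$.
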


\begin{proof}
Evidently, $N_2 = gN_1g^{-1}$. Next, there is an element $u \in N_2$
such that ${S_2 = g_0S_1g_0^{-1}}$ for $g_0 = ug$. In view of the
Bruhat decomposition of $G$ we have $g_0 = u_1 \si u_2$, where
${u_1, u_2 \in U}$, $\si \in N_G(T)$. Regard an arbitrary element
$s_1 \in S_1$ and put $s_2 = g_0s_1g_0^{-1} \in S_2$. Then $u_1^{-1}
s_2 u_1 \si = \si u_2 s_1 u_2^{-1}$, which may be rewritten as $s_2
v_1 \si = \si s_1 v_2$, where $v_1 = {s_2^{-1} u_1^{-1} s_2 u_1 \in
U}$ and $v_2 = s_1^{-1} u_2 s_1 u_2^{-1} \in U$. Hence $\si v_2
\si^{-1} = (\si s_1^{-1} \si^{-1}) s_2 v_1 \in B$. Since $v_2$ is a
unipotent element, then $\si v_2 \si^{-1} \in U$. Therefore $\si v_2
\si^{-1} v_1^{-1} = (\si s_1^{-1} \si^{-1}) s_2 \in {U \cap T =
\{e\}}$, whence $s_2 = \si s_1 \si^{-1}$ and $v_2 = \si v_1
\si^{-1}$. Thus, $s_1 = \si^{-1} g_0 s_1 g_0^{-1} \si$ for every
element $s_1 \in S_1$, hence $\si^{-1} g_0 \in Z$ and $g_0 \in
N_G(T) \cdot Z$.
\end{proof}

\begin{proposition} \label{conjugate_spherical}
If both of the subgroups $H_1, H_2$ are spherical in $G$ and $H_2 =
g H_1 g^{-1}$ for some $g \in G$, then $g \in N_2 \cdot N_G(T) \cdot
N_1$. In particular, $H_2 = \si H_1 \si^{-1}$ for some $\si \in
N_G(T)$.
\end{proposition}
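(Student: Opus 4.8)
The plan is to refine Lemma~\ref{conjugate_subgroups} by feeding in the sphericity of \emph{both} $H_1$ and $H_2$. By that lemma I may write $g = n_2 g_0$ with $n_2 \in N_2$ and $g_0 = \si z$, where $\si \in N_G(T)$ and $z \in Z_1 := Z_G(S_1)$. Then $g_0 H_1 g_0^{-1} = n_2^{-1} H_2 n_2 = H_2$ (as $n_2 \in N_2 \subset H_2$), and since $z$ centralises $S_1$ this gives $S_2 = \si S_1 \si^{-1}$ and $\si Z_1 \si^{-1} = Z_2 := Z_G(S_2)$. Because $g = n_2 g_0$ and $N_2 \cdot N_2 = N_2$, it suffices to prove $g_0 \in N_2 \cdot N_G(T) \cdot N_1$.

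The crucial input is that the zero weight is harmless. Applying Theorem~\ref{solvable_spherical} to $H_1$, the weight $\la = 0$ (if it occurs in $\Phi$) cannot lie among the linearly independent weights with $c_\la = 1$, so $c_0 = 0$ and hence $\mf u \cap \mf z_1 = \bigoplus_{\al \in \De_+,\, \tau(\al) = 0} \mf g_\al \subset \mf n_1$. Since $Z_1$ is a Levi subgroup and $U \cap Z_1$ is its maximal unipotent subgroup, with Lie algebra exactly $\mf u \cap \mf z_1$, this yields $U \cap Z_1 \subset N_1$; applying the same argument to $H_2$ gives $U \cap Z_2 \subset N_2$.

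Next I would exploit the freedom in the factorisation $g_0 = \si z$: replacing $\si$ by $\si \dot v$ and $z$ by $\dot v^{-1} z$ for $\dot v \in N_{Z_1}(T)$ leaves $g_0$ unchanged and moves $\bar\si$ through the left coset $\bar\si W_1$, where $W_1$ is the Weyl group of $Z_1$. I choose $\si$ so that $w := \bar\si$ is the minimal-length representative of this coset; then $w$ maps the positive roots $\De_+ \cap \De_{Z_1}$ of $Z_1$ into $\De_+$, and moreover $(w\al)|_{S_2} = \al|_{S_1} = 0$ since $S_2 = \si S_1 \si^{-1}$, so that $w(\De_+ \cap \De_{Z_1}) \subset \De_+ \cap \De_{Z_2}$. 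Now I Bruhat-decompose $z$ inside the reductive group $Z_1$ as $z = a \dot v b$ with $a, b \in U \cap Z_1$ and $\dot v \in N_{Z_1}(T)$, obtaining
\[
g_0 = \si z = (\si a \si^{-1})(\si \dot v)(b).
\]
Here $b \in U \cap Z_1 \subset N_1$ and $\si \dot v \in N_G(T)$; and $a$ is a product of root group elements $x_\al(c)$ with $\al \in \De_+ \cap \De_{Z_1}$, so $\si a \si^{-1}$ is a product of elements $x_{w\al}(\pm c)$ with $w\al \in \De_+ \cap \De_{Z_2}$, whence $\si a \si^{-1} \in U \cap Z_2 \subset N_2$. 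This gives $g_0 \in N_2 \cdot N_G(T) \cdot N_1$ and therefore $g \in N_2 \cdot N_G(T) \cdot N_1$. The final assertion follows at once: writing $g = m_2 \si' m_1$ with $m_2 \in N_2$, $\si' \in N_G(T)$, $m_1 \in N_1 \subset H_1$, one gets $H_2 = g H_1 g^{-1} = m_2 (\si' H_1 \si'^{-1}) m_2^{-1}$, and conjugating by $m_2^{-1} \in N_2 \subset H_2$ yields $H_2 = \si' H_1 \si'^{-1}$.

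The one delicate point, and the only place requiring genuine care, is the placement of the $N_G(T)$-factor so that the residual unipotent part $\si a \si^{-1}$ lands in $U$ rather than in some opposite unipotent subgroup; this is exactly what the minimal-length coset representative guarantees, via the standard fact that such a representative sends the positive roots of the Levi $Z_1$ to positive roots of $G$. Everything else is Bruhat bookkeeping together with the inclusions $U \cap Z_i \subset N_i$ supplied by sphericity.
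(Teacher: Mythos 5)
Your proof is correct, and while its skeleton matches the paper's, the decisive step is carried out by a genuinely different mechanism. Both arguments start from Lemma~\ref{conjugate_subgroups} and both rest on the observation that sphericity forces the zero weight to have $c_0=0$, whence $U\cap Z_G(S_i)\subset N_i$. The paper then disposes of the factor $z\in Z_G(S_1)$ by a Lie-algebra argument: from $\Ad(z)\mf u_0\subset\Ad(\si_0^{-1})\mf u$ a dimension count shows that $\Ad(z)\mf u_0$ is a $T$-stable maximal unipotent subalgebra of $\mf z$, hence conjugate to $\mf u_0$ by an element of $N_Z(T)$, which places $z$ in $N_G(T)\cdot(T\rightthreetimes U_0)\subset N_G(T)\cdot N_1$ using only the sphericity of $H_1$. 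You instead Bruhat-decompose $z$ inside $Z_G(S_1)$ and split it into one unipotent factor that you conjugate through $\si$ into $U\cap Z_G(S_2)\subset N_2$ --- this is where you need the sphericity of $H_2$ and the minimal-length coset representative, which does apply here because the Weyl group of $Z_G(S_1)$ is a parabolic subgroup of $W$ in the reflection-group sense (a point stabilizer), so a representative of $\overline{\si}\,W(Z_G(S_1))$ sending $\De_+\cap\De(Z_G(S_1))$ into $\De_+$ exists --- and a second unipotent factor lying in $U\cap Z_G(S_1)\subset N_1$. Both routes are valid and yield the same factorization $g\in N_2\cdot N_G(T)\cdot N_1$; the paper's is marginally more economical in that it consumes only the sphericity of $H_1$ and avoids the coset-representative machinery, while yours is more explicit about where each Bruhat factor of $g$ ends up.
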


\begin{proof}
In view of Lemma~\ref{conjugate_subgroups} we may assume that $g = u
\si z$, where $u \in N_2$, $\si_0 \in N_G(T)$, ${z \in Z}$. Regard
the subalgebra $\mf u_0 = \bigoplus \limits_{\al \in \De_+:
\tau(\al) = 0} \mf g_\al$ of the Lie algebra $\mf z$. It is the
tangent algebra of a maximal unipotent subgroup $U_0$ of~$Z$.
Besides, $\mf u_0 \subset \mf h_1$. Since $\Ad(g)\mf u_0 \subset \mf
u$, we have $\Ad(z) \mf u_0 \subset \Ad(\si_0^{-1}) \mf u$. From
this it follows that the projection of the algebra $\Ad(z) \mf u_0$
to the subspace $\mf t \subset \mf z$ is zero. Besides, for every
root $\al \in \De$ the projection of $\Ad(z) \mf u_0$ to one of the
spaces $\mf g_\al$ or $\mf g_{-\al}$ is zero. Counting the
dimensions yields that $\Ad(z)\mf u_0$ is a regular (that is,
normalized by~$T$) subalgebra in $\mf z$ and the subalgebra $\mf t
\oplus \Ad(z) \mf u_0$ is a Borel subalgebra in $\mf z$ containing
the Cartan subalgebra~$\mf t$. Hence there is an element $\si_1 \in
N_Z(T) \subset N_G(T)$ such that $\Ad(\si_1) (\mf t \oplus \Ad(z)
\mf u_0) = \mf t \oplus \mf u_0$ and $\Ad(\si_1z)\mf u_0 = \mf u_0$.
Thus $z \in N_G(T) \cdot N_Z(U_0)$. Since $N_Z(U_0) = T
\rightthreetimes U_0$ and $U_0 \subset N_1$, we have $z \in N_G(T)
\cdot N_1$. From this we finally obtain $g \in N_2 \cdot N_G(T)
\cdot N_1$.
\end{proof}

\subsection{}

In this subsection we introduce the notion of an elementary
transformation and prove the main theorem of this section
(Theorem~\ref{elementary_transformations}). We use the notation
introduced in~\S\,\ref{simple_applications}.

Let $H \subset G$ be a connected solvable spherical subgroup
standardly embedded in~$B$.

\begin{dfn}
An active root $\al$ is called \emph{regular} if the set $\Psi_i$
containing $\al$ consists of one element, that is, $\Psi_i =
\{\al\}$.
\end{dfn}

It is easy to see that an active root $\al$ is regular if and only
if the projection of the subspace $\mf n \subset \mf u$ to $\mf
g_\al$ is zero. It is also clear that the subgroup $H$ is regular
(that is, normalized by~$T$) if and only if all active roots of $H$
are regular.

We denote by $\Psi^{\reg}(H)$ the set of regular active roots
of~$H$. We also put $\Omega(H) = \De_+ \backslash \Psi^{\reg}(H)$.
Clearly, $\al \in \Omega(H)$ if and only if the projection of the
space $\mf n$ to $\mf g_\al$ is non-zero.

\begin{dfn}
Suppose that $\al \in \Psi^{\reg}(H) \cap \Pi$. An \emph{elementary
transformation with center $\al$} (or, simply, an \emph{elementary
transformation}) is a transformation of the form $H \mapsto \si_\al
H \si_\al^{-1}$, where $\si_\al \in N_G(T)$ is a representative of
the reflection~$r_\al$.
\end{dfn}

Since $r_\al(\be) \in \De_+$ for every $\be \in \De_+
\backslash\{\al\}$, the subgroup $\si_\al H \si_\al^{-1}$ is also
standardly embedded in~$B$.

Let $C_0 \subset Q$ be the dominant Weyl chamber, that is, $C_0 =
\{x \in Q \mid (x, \al) \ge 0$ for every $\al \in \Pi$\}. For every
Weyl chamber $C \subset Q$ we denote by $P(C)$ the set of positive
roots with respect to~$C$. Clearly, $P(C_0) = \De_+$.

Now let us study the following question. Suppose we are given a
connected solvable spherical subgroup $H \subset G$ standardly
embedded in~$B$. Let us find all subgroups that are conjugate to $H$
and also standardly embedded in~$B$. Let $H'$ be such a subgroup.
Then by Proposition~\ref{conjugate_spherical} we have $H' = \si H
\si^{-1}$ for some~$\si \in N_G(T)$. Let $w$ be the image of $\si$
in the Weyl group~$W$. We have $w \Omega(H)\subset \De_+ = P(C_0)$,
whence $\Omega(H) \subset P(w^{-1}C_0)$. Conversely, let $C$ be a
Weyl chamber such that $\Omega(H) \subset P(C)$. Denote by $w_C$ the
unique element of $W$ such that $C = w_C^{-1}C_0$. Then, evidently,
the subgroup $H' = \overline w_C H \overline w_C^{-1}$ is standardly
embedded in~$B$.

\begin{lemma}\label{big_cone}
A Weyl chamber $C$ satisfies the condition $\Omega (H) \subset P(C)$
if and only if it is contained in the cone $X(H) = \{x \mid (x, \al)
\ge 0\: \text{for all}\: \al \in \Omega(H)\} \subset Q$.
\end{lemma}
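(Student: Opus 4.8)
The plan is to reduce the statement to a purely formal interchange of quantifiers, once the set $P(C)$ is described by an inequality of the same shape as the one defining $X(H)$. First I would establish the characterization
\[
P(C) = \{\be \in \De \mid (\be, x) \ge 0 \text{ for all } x \in C\}.
\]
To see this, recall that $C = w_C^{-1}C_0$, so by the $W$-invariance of the inner product a root $\be$ lies in $P(C) = w_C^{-1}\De_+$ if and only if $w_C\be \in \De_+$, which I claim holds if and only if $(w_C\be, y) \ge 0$ for all $y \in C_0$, i.e.\ (applying $w_C^{-1}$) if and only if $(\be, x) \ge 0$ for all $x \in C$. The only point needing an argument is the middle equivalence for the dominant chamber: if $\ga \in \De_+$, writing $\ga = \sum_{\al \in \Pi} k_\al \al$ with all $k_\al \ge 0$ and using $(\al, y) \ge 0$ on $C_0$ gives $(\ga, y) \ge 0$; conversely, if a root $\ga$ satisfies $(\ga, y) \ge 0$ for all $y \in C_0$ but $\ga \notin \De_+$, then $-\ga \in \De_+$ forces $(\ga, y) = 0$ throughout $C_0$, and since $C_0$ spans $Q$ this yields $\ga = 0$, a contradiction.

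Given this description, the lemma follows by reading a single double condition in two orders. Indeed, $\Omega(H) \subset P(C)$ means precisely that $(\al, x) \ge 0$ for every $\al \in \Omega(H)$ and every $x \in C$; the very same condition read with the quantifiers reversed says that every $x \in C$ satisfies all the defining inequalities $(x, \al) \ge 0$, $\al \in \Omega(H)$, of the cone $X(H)$, that is, $C \subset X(H)$. Thus both implications are immediate as soon as the characterization of $P(C)$ is available, and no case analysis is required.

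The only genuine (and minor) obstacle is therefore the characterization of $P(C)$, which rests on the elementary observation that a root is positive with respect to $C$ exactly when it is non-negative on all of $C$. I would stress that no finer information about $H$ enters: the argument uses only that $\Omega(H)$ consists of roots and that $X(H)$ is defined by the corresponding non-strict inequalities, so the whole statement is essentially a reformulation of the definitions of $P(C)$ and $X(H)$.
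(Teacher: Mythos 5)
Your proposal is correct and follows essentially the same route as the paper, which proves the lemma in one line by invoking exactly the characterization $\al \in P(C) \iff (\al,x)\ge 0$ for all $x \in C$ and then reading the double condition in the other order. Your write-up merely supplies the (correct) elementary justification of that characterization, which the paper takes as known.
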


\begin{proof}
This follows from the fact that for a root $\al$ the condition $\al
\in P(C)$ is equivalent to the condition $(\al, x) \ge 0$ for all $x
\in C$.
\end{proof}

Let $H$, $H'$, $\si$, $w$ be as above. Then we have $\Omega(H)
\subset P(w^{-1}C_0)$ and, by Lemma~\ref{big_cone}, $w^{-1}C_0
\subset X(H)$. Being the intersection of several half-spaces, the
cone $X(H)$ is convex. Clearly, $X(H)$ is a union of several Weyl
chambers. Therefore there are Weyl chambers $C_1, C_2, \ldots,
C_{n-1}$ contained in $X(H)$ such that in the sequence $C_0, C_1,
\ldots, C_{n-1}, {C_n = w^{-1}C_0}$ any two successive Weyl chambers
have a common facet. For $i = 1, \ldots, n$ denote by $w_i$ the
reflection with respect to the common facet of the chambers
$C_{i-1}$ and $C_i$, $w_i \in W$, $w_i^2 = e$. Then for $i = 1,
\ldots, n$ we have $C_i = w_iw_{i-1}\ldots w_1 C_0$. Further, for
every $i = 1, \ldots, n$ there is a simple reflection $r_i$
satisfying the condition $w_i = (w_{i-1}w_{i-2} \ldots
w_1)r_i(w_{i-1}w_{i-2} \ldots w_1)^{-1}$. Denote by $\al_i$ the
simple root corresponding to $r_i$. We obtain $C_i = r_1 \ldots
r_{i-1} r_i C_0 = (r_i r_{i - 1} \ldots r_1)^{-1} C_0$, $i = 1,
\ldots, n$. Recall that $C_i \subset X(H)$ for all $i = 1, \ldots,
n$, therefore in view of Lemma~\ref{big_cone} the subgroup $H_i =
(\overline r_i \overline r_{i-1} \ldots \overline r_1)H(\overline
r_i \overline r_{i-1} \ldots \overline r_1)^{-1}$ is standardly
embedded in~$B$. Hence for $i = 1, \ldots, n$ we obtain that $H_i =
\overline r_i H_{i-1}\overline r_i^{-1}$ (we put $H_0 = H$), the
root $\al_i$ is active with respect to the group $H_{i-1}$, and the
transformation $H_{i-1} \mapsto H_i$ is an elementary transformation
with center~$\al_i$. Clearly, $H' = tH_nt^{-1}$ for some~$t \in T$.
Then the chain of elementary transformations $H \mapsto H_1 \mapsto
\ldots \mapsto H_{n-1} \mapsto (t\overline
r_{\al_n})H_{n-1}(t\overline r_{\al_n})^{-1} = H'$ transforms $H$
to~$H'$. Thus we have proved the following theorem.

\begin{theorem}\label{elementary_transformations}
Two connected solvable spherical subgroups $H, H' \subset G$
standardly embedded in $B$ are conjugate in $G$ if and only if $H'$
can be obtained from $H$ by applying a suitable sequence of
elementary transformations.
\end{theorem}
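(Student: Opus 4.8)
The plan is to establish the two directions of the equivalence separately, with the nontrivial direction being the proof that conjugacy implies connectedness via a chain of elementary transformations.

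The easy direction is that if $H'$ is obtained from $H$ by a sequence of elementary transformations, then $H'$ is conjugate to $H$ in $G$. This is immediate from the definition: each elementary transformation replaces a subgroup by its conjugate under a representative $\si_\al \in N_G(T)$, and conjugacy is transitive. So the whole content of the theorem lies in the converse.

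For the converse, the strategy is geometric, working with Weyl chambers in $Q$. Suppose $H$ and $H'$ are both standardly embedded in $B$ and conjugate in $G$. First I would invoke Proposition~\ref{conjugate_spherical} to reduce from an arbitrary conjugating element to one lying in $N_G(T)$, say $H' = \si H \si^{-1}$ with image $w \in W$. The key observation is that since both $H$ and $H'$ are standardly embedded, the active roots of $H'$ are positive, which forces $w\,\Omega(H) \subset \De_+$, equivalently $\Omega(H) \subset P(w^{-1}C_0)$. By Lemma~\ref{big_cone} this is the same as saying $w^{-1}C_0 \subset X(H)$, where $X(H)$ is the convex cone cut out by the inequalities $(x,\al)\ge 0$ for $\al\in\Omega(H)$. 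The crucial point is that $X(H)$ is \emph{convex} (being an intersection of half-spaces) and is a union of Weyl chambers containing both $C_0$ and $w^{-1}C_0$.

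Now I would use the standard fact about the Weyl chamber complex: any two chambers inside a convex union of chambers can be joined by a gallery — a sequence of chambers $C_0, C_1, \ldots, C_n = w^{-1}C_0$, all contained in $X(H)$, in which consecutive chambers share a common facet. Convexity of $X(H)$ is exactly what guarantees that such a gallery stays inside $X(H)$. Each wall-crossing corresponds to a reflection, and by rewriting these reflections one expresses each $C_i$ as $(r_i r_{i-1}\cdots r_1)^{-1}C_0$ for simple reflections $r_i$. Since every intermediate chamber lies in $X(H)$, Lemma~\ref{big_cone} ensures each intermediate subgroup $H_i = (\overline r_i\cdots\overline r_1)H(\overline r_i\cdots\overline r_1)^{-1}$ is standardly embedded in $B$, and hence each step $H_{i-1}\mapsto H_i$ is a genuine elementary transformation with center the simple root $\al_i$ (which is automatically active and regular for $H_{i-1}$). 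Absorbing the residual torus element $t$ with $H' = tH_nt^{-1}$ into the final transformation completes the chain.

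The main obstacle — which the exposition above has already quietly handled — is verifying that the gallery can be taken \emph{entirely within} $X(H)$ and that each wall-crossing genuinely yields an elementary transformation rather than some more general conjugation. This rests on two facts working in tandem: the convexity of $X(H)$ (so a minimal gallery between $C_0$ and $w^{-1}C_0$ never exits the cone) and Lemma~\ref{big_cone} (translating chamber containment into standard embeddedness of the corresponding subgroup). Once these are in place, the reduction to simple reflections is the routine observation that adjacent chambers in the complex differ by a reflection in a wall, conjugate to a simple reflection.
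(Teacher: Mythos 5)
Your proposal is correct and follows essentially the same route as the paper: reduction to a conjugator in $N_G(T)$ via Proposition~\ref{conjugate_spherical}, the convex cone $X(H)$ together with Lemma~\ref{big_cone}, a gallery of Weyl chambers inside $X(H)$ joining $C_0$ to $w^{-1}C_0$, and absorption of the residual torus element into the last step. No substantive differences to report.
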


Theorems~\ref{uniqueness_theorem}, \ref{existence_theorem},
and~\ref{elementary_transformations} provide a complete
classification of connected solvable spherical subgroups of $G$ up
to conjugation.

\subsection{}

Now let us find out how the set of combinatorial data of a connected
solvable spherical subgroup is changed under an elementary
transformation. We consider two connected solvable spherical
subgroups $H$ and $H'$ standardly embedded in $B$. Suppose that $H'$
is obtained from $H$ by an elementary transformation with
center~$\al$, where $\al$ is a regular active simple root of $H$
(and also of $H'$). Suppose that $\Upsilon(H) = (S, \mr M, \pi,
\sim)$ and $\Upsilon(H') = (S', \mr M', \pi', \sim')$. We have $H' =
\si_\al H \si_\al^{-1}$ for some representative $\si_\al \in N_G(T)$
of~$r_\al$, whence $S' = \si_\al S \si_\al^{-1}$.

\begin{lemma}\label{M_and_M'}
We have:

\textup{(a)} $\Psi(H') = r_\al(\Psi(H) \backslash \{\al\}) \cup
\{\al\}$;

\textup{(b)} $\pi'(r_\al(\be)) = \pi(\be)$ for $\be \in \Psi(H)
\backslash \{\al\}$;

\textup{(c)} $\mr M' \backslash \{\al\} = r_\al (\mr M \backslash
\{\al\})$;
\end{lemma}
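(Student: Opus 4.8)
The plan is to analyze how conjugation by $\si_\al$ transforms the tangent algebra $\mf h$ of $H$, and to track the effect on active roots, the map $\pi$, and maximality. Since $\al$ is a regular active simple root, we have $\mf g_\al \cap \mf n = 0$ and $\Psi_i = \{\al\}$ for the class containing $\al$. Conjugation by $\si_\al$ acts on root spaces via $\Ad(\si_\al)\mf g_\be = \mf g_{r_\al(\be)}$. The key fact enabling everything is that $r_\al$ permutes $\De_+ \setminus \{\al\}$ (sending $\al$ to $-\al$), so the unipotent radical $\mf n' = \Ad(\si_\al)\mf n$ stays inside $\mf u$ after the transformation, as noted before the statement.

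First I would prove (a). A root $\be \ne \al$ lies in $\Psi(H)$ iff $\mf g_\be \not\subset \mf n$. Applying $\Ad(\si_\al)$, this is equivalent to $\mf g_{r_\al(\be)} \not\subset \mf n'$, i.e.\ $r_\al(\be) \in \Psi(H')$. Thus $r_\al(\Psi(H)\setminus\{\al\}) \subset \Psi(H')$ and conversely. The only subtle point is the root $\al$ itself: under $\si_\al$ the space $\mf g_\al$ maps to $\mf g_{-\al}$, which is not in $\mf u$, so $\al$ cannot arise from $\Psi(H)\setminus\{\al\}$ by this correspondence. I would argue that $\al \in \Psi(H')$ separately. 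Since $\al$ is simple and regular in $H$, the transformed group $H'$ must again have $\al$ as a regular active root — this follows because the situation is symmetric ($H = \si_\al H' \si_\al^{-1}$, using $\si_\al^2 \in T$), so $\al \in \Psi^{\reg}(H')$ as well. This yields exactly $\Psi(H') = r_\al(\Psi(H)\setminus\{\al\}) \cup \{\al\}$.

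Next, for (b), I would use the characterization of $\pi$ from Proposition~\ref{associated_simple_root}: for an active root $\be$, the simple root $\pi(\be)$ is determined by which summands in decompositions $\be = \be_1 + \be_2$ are active. Since $r_\al$ is a linear isomorphism preserving $\De_+ \setminus \{\al\}$ and commuting with taking sums of roots (when those sums stay positive), and since the activity of roots $\ne \al$ is transported by the correspondence in part (a), the combinatorial data defining $\pi$ is transported as well. Concretely, $r_\al(\be) = r_\al(\be_1) + r_\al(\be_2)$ whenever $\be = \be_1 + \be_2$ with all roots distinct from $\al$, and activity is preserved, so $\pi'(r_\al(\be)) = r_\al(\pi(\be))$ \textit{unless} $\pi(\be) = \al$; but I need $\pi'(r_\al(\be)) = \pi(\be)$, so the main technical care is handling the role of $\al$ in supports. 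The natural resolution is that $\pi(\be) \in \Supp \be$ and the associated simple root is itself a simple root whose image under $r_\al$ is again simple; since $\al$ is regular (hence $\al \notin F(\be)$ for any $\be$ with $\be \ne \al$ active by Lemma~\ref{three_roots}), one checks $\pi(\be) \ne \al$, and then the identification $\pi'(r_\al(\be)) = \pi(\be)$ follows.

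The hard part will be (b): verifying that $\pi$ transports correctly requires carefully tracking how $r_\al$ interacts with the associated-simple-root construction, particularly ensuring $\pi(\be)$ is never $\al$ and that the decompositions used to define $\pi$ are faithfully permuted. Finally, (c) follows almost immediately: maximality of an active root is characterized (in \S\,\ref{simple_applications}) via the partial order $\prec$, which is defined in terms of sums $\Psi_i + \ga \subset \Psi_j$ with $\ga \in \De_+$; since $r_\al$ preserves $\De_+ \setminus \{\al\}$ and the subordination relation $\be \in F(\al')$ is defined by $\al' - \be \in \De_+$, the correspondence $\be \mapsto r_\al(\be)$ preserves subordination and hence maximality among roots $\ne \al$. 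Thus $\mr M' \setminus \{\al\} = r_\al(\mr M \setminus \{\al\})$, with the caveat that $\al$ itself, being regular in both groups, occupies a singleton class and may belong to $\mr M$ and $\mr M'$ independently; the statement is correctly phrased modulo $\al$.
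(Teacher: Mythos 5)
Your outline does follow the paper's own route---transport everything through $\Ad(\si_\al)$, characterize $\pi$ via Proposition~\ref{associated_simple_root}, and transport subordination for maximality---and parts (a) and (c) are essentially right as sketched. Part (b), however, is exactly where you announce ``the hard part'' and then stop, and the one concrete justification you do offer there is false. You claim that regularity of $\al$ gives $\al \notin F(\be)$ for every active $\be \ne \al$, citing Lemma~\ref{three_roots}. That lemma only says the \emph{complementary} summand $\be - \al$ is not active; it is entirely consistent with $\al$ being subordinate to $\be$, and a regular active simple root frequently is. For instance, in type $\ms A_2$ take $\mr M = \{\al_1+\al_2\}$ with $\pi(\al_1+\al_2)=\al_2$ and $S=T$: then $F(\al_1+\al_2)=\{\al_1+\al_2,\al_1\}$, the root $\al_1$ is regular, and $\al_1 \in F(\al_1+\al_2)$ (this is row~2 of Table~\ref{table_rank_2}; cf.\ also Lemma~\ref{star_roots}). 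The fact you actually need, namely $\pi(\be) \ne \al$, is true but for a different reason: since $\al \in \Pi$ is active we have $\pi(\al)=\al$, so $\pi(\be)=\al$ would force $\tau(\be)=\tau(\al)$ by Lemma~\ref{coincide_assoc}, contradicting regularity of $\al$. (Note also that even if $\al \notin F(\be)$ held, it would not by itself exclude $\pi(\be)=\al$: the element of $F(\be)$ that $\pi$ sends to $\pi(\be)$ is $\be$ itself, not $\pi(\be)$.)

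Beyond that, the rest of (b) is left as a promissory note, and the intermediate formula $\pi'(r_\al(\be)) = r_\al(\pi(\be))$ is not even well formed, since $r_\al$ of a simple root adjacent to $\al$ is not simple. The argument that actually closes (b) is short but has to be written out: given a decomposition $r_\al(\be)=\be_1+\be_2$ with $\be_1 \in \Psi(H')$, either $\be_1=\al$, in which case $\pi(\be)\notin\Supp\be_1=\{\al\}$ by the preceding paragraph; or $\be_1\ne\al$, and then also $\be_2\ne\al$ (because $\be_2\notin\Psi(H')$ while $\al\in\Psi(H')$ by part (a)), so $\be=r_\al(\be_1)+r_\al(\be_2)$ is a decomposition into positive roots with $r_\al(\be_1)\in\Psi(H)$, whence $\pi(\be)\notin\Supp r_\al(\be_1)$; combining this with $\pi(\be)\ne\al$ and $\Supp\be_1\subset\Supp r_\al(\be_1)\cup\{\al\}$ gives $\pi(\be)\notin\Supp\be_1$, and the uniqueness in Proposition~\ref{associated_simple_root} then forces $\pi'(r_\al(\be))=\pi(\be)$. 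Until this bookkeeping and the correct proof of $\pi(\be)\ne\al$ are supplied, part (b) is not established.
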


\begin{proof}
Assertion~(a) is obvious. Let us prove~(b). Suppose that $\be \in
\Psi(H) \backslash \{\al\}$. First of all, note that $\pi(\be) \ne
\al$, whence $\pi(\be) \in \Supp r_\al(\be)$. Let $r_\al(\be) =
\be_1 + \be_2$ be an arbitrary representation of the root
$r_\al(\be) \in \Psi(H')$ as a sum of two positive roots with $\be_1
\in \Psi(H')$. It suffices to show that $\pi(\be) \notin \Supp
\be_1$. This holds if $\be_1 = \al$. Further we assume that $\be_1
\ne \al$. Besides, we have $\be_2 \ne \al$ since $\be_2 \notin
\Psi(H')$. Therefore, both roots in the right-hand side of the
equality $\be = r_\al(\be_1) + r_\al(\be_2)$ are positive and
${r_\al(\be_1) \in \Psi(H)}$. Hence ${\pi(\be) \notin \Supp
r_\al(\be_1)}$. Moreover, $\pi(\be) \notin \Supp r_\al(\be_1) \cup
\{\al\}$. Since $\Supp \be_1 \subset {\Supp r_\al(\be_1) \cup
\{\al\}}$, we obtain $\pi(\be) \notin \Supp \be_1$ as desired.

Now let us prove~(c). Suppose that $\be \in \mr M \backslash
\{\al\}$. Assume that the root ${r_\al(\be) \in \Psi(H') \backslash
\{\al\}}$ is not a maximal active root with respect to the
subgroup~$H'$. In this case there are roots $\de \in \Psi(H')
\backslash \{\al\}$ and $\ga \in \De_+ \backslash \Psi(H')$ such
that $r_\al(\be) + \ga = \de$. In particular, $\ga \ne \al$, whence
$r_\al(\ga) \in \De_+$. For the active root $r_\al(\de)$ we obtain
the representation $r_\al(\de) = \be + r_\al(\ga)$ as a sum of two
positive roots, which contradicts the maximality of the active
root~$\be$. Thus, $r_\al(\mr M \backslash \{\al\}) \subset \mr M'
\backslash \{\al\}$. Similarly, $r_\al(\mr M' \backslash \{\al\})
\subset \mr M \backslash \{\al\}$.
\end{proof}

As a consequence of the previous considerations and
Lemma~\ref{M_and_M'} we obtain the following proposition.

\begin{proposition}\label{comb_datas}
The sets of combinatorial data $\Upsilon(H)$ and $\Upsilon(H')$ are
related as follows:

\textup{(1)} $S' = \si_\al S \si_\al^{-1}$;

\textup{(2.1)} if $\al \in \Supp \de$ for some root $\de \in r_\al
(\mr M \backslash \{\al\})$, then:

\textup{(a)} $\mr M' = r_\al(\mr M \backslash \{\al\})$;

\textup{(b)} $\pi'(\be) = \pi(r_\al(\be))$ for every $\be \in \mr
M'$;

\textup{(c)} for all $\be,\ga \in \mr M'$ the relation $\be \sim'
\ga$ holds if and only if $r_\al(\be) \sim r_\al(\ga)$;

\textup{(2.2)} if $\al \notin \Supp \de$ for all $\de \in r_\al(\mr
M \backslash \{\al\})$, then:

\textup{(a)} $\mr M' = r_\al (\mr M \backslash \{\al\}) \cup
\{\al\}$;

\textup{(b)} $\pi'(\be) = \pi(r_\al(\be))$ for every $\be \in \mr M'
\backslash \{\al\}$, $\pi'(\al) = \al$;

\textup{(c)} for all $\be,\ga \in \mr M' \backslash \{\al\}$ the
relation $\be \sim' \ga$ holds if and only if $r_\al(\be) \sim
r_\al(\ga)$; for every $\be \in \mr M' \backslash \{\al\}$ we have
$\be \not\sim' \al$.
\end{proposition}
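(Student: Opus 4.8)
The plan is to assemble Proposition~\ref{comb_datas} by combining the general conjugation analysis carried out immediately before the statement with the explicit description of the transformed set $\Psi(H')$ provided by Lemma~\ref{M_and_M'}. Part~(1) is already established in the discussion preceding the proposition, where we observed that $H' = \si_\al H \si_\al^{-1}$ forces $S' = \si_\al S \si_\al^{-1}$, so nothing further is required there. The substantive content lies in deciding, in each of the two cases, exactly which roots constitute $\mr M'$, what the map $\pi'$ does to them, and how the equivalence $\sim'$ behaves. The key structural input is Lemma~\ref{M_and_M'}, which tells us that $\mr M' \backslash \{\al\} = r_\al(\mr M \backslash \{\al\})$ and that $\pi'(r_\al(\be)) = \pi(\be)$ for $\be \in \Psi(H) \backslash \{\al\}$; these two facts give parts~(b) of both cases directly and reduce the remaining work to determining whether $\al$ itself lies in $\mr M'$ and how $\sim'$ restricts.

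First I would dispose of the question of whether $\al \in \mr M'$. The root $\al$ is active for $H'$ (it is the center of the elementary transformation and hence regular active for $H'$ as well), so it lies in $\Psi(H')$; the issue is only its maximality. The dichotomy in the proposition is precisely whether $\al$ is subordinate to some maximal active root of $H'$. If $\al \in \Supp \de$ for some $\de \in r_\al(\mr M \backslash \{\al\}) = \mr M' \backslash \{\al\}$, then by Lemma~\ref{beta_subset_alpha}-type reasoning (or directly from the definition of subordination together with Corollary~\ref{subfamilies}(a)) the root $\al$ belongs to the family $F(\de)$ and is therefore \emph{not} maximal, giving case~(2.1) with $\mr M' = r_\al(\mr M \backslash \{\al\})$. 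Conversely, if $\al \notin \Supp \de$ for every such $\de$, then $\al$ is subordinate to no element of $\mr M' \backslash \{\al\}$, hence $\al$ is itself maximal and must be adjoined, giving case~(2.2) with $\mr M' = r_\al(\mr M \backslash \{\al\}) \cup \{\al\}$ and $\pi'(\al) = \al$ (since $\al$ is simple, Proposition~\ref{associated_simple_root} forces $\pi'(\al) = \al$). This settles the (a) and (b) parts in both cases.

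The remaining task is the behaviour of $\sim'$, i.e. parts~(c). Here I would use the characterization that two maximal active roots are equivalent precisely when they have the same $S'$-weight, namely $\tau'(\mu) = \tau'(\nu)$ where $\tau'$ is restriction from $T$ to $S' = \si_\al S \si_\al^{-1}$. Since conjugation by $\si_\al$ intertwines the $T$-action in the obvious way, one has $\tau'(r_\al(\be)) = \tau'(r_\al(\ga))$ if and only if $\tau(\be) = \tau(\ga)$ for $\be, \ga \in \mr M \backslash \{\al\}$; this yields the equivalence statement relating $\be \sim' \ga$ to $r_\al(\be) \sim r_\al(\ga)$ in both cases. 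In case~(2.2) one must additionally check that $\al$ is equivalent to no other element of $\mr M'$, that is, $\be \not\sim' \al$ for $\be \in \mr M' \backslash \{\al\}$. This follows because $\al \notin \Supp \de$ for all $\de \in \mr M' \backslash \{\al\}$, so in particular $\pi'(\al) = \al \notin \Supp \de$; were $\al \sim' \be$ for some such $\be$, then by condition~$(\mr E)$ one of the possibilities $(\mr E1)$, $(\mr E2)$ would hold for $\al, \be$, each of which forces $\pi'(\al) = \pi'(\be) \in \Supp \al \cap \Supp \be$, contradicting $\pi'(\al) = \al \notin \Supp \be$.

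The main obstacle I anticipate is the weight-tracking computation underlying part~(c): one must verify carefully that the identification $\tau'(r_\al(\be)) = \tau'(r_\al(\ga)) \Leftrightarrow \tau(\be) = \tau(\ga)$ is correct, since the torus itself changes from $S$ to $S' = \si_\al S \si_\al^{-1}$ under the transformation. The cleanest route is to observe that conjugation by $\si_\al$ carries the weight of $\mf g_\be$ with respect to $S$ to the weight of $\mf g_{r_\al(\be)}$ with respect to $S'$, so equality of $S$-weights for $\be, \ga$ is equivalent to equality of $S'$-weights for $r_\al(\be), r_\al(\ga)$; this makes the equivalence statements automatic. Everything else reduces to the case-checks already encapsulated in Lemma~\ref{M_and_M'} and the definitions, so the proposition follows by assembling these observations.
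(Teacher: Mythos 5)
Your route coincides with the one the paper intends (it derives the proposition from Lemma~\ref{M_and_M'} together with the discussion preceding the statement), and most of your write-up is sound: parts (1), (2.1)(a,b), (2.2)(a,b), the determination of whether $\al$ is maximal for $H'$ via Corollary~\ref{subfamilies}(a), and the weight-tracking argument showing $\tau'(r_\al(\be)) = \tau'(r_\al(\ga)) \Leftrightarrow \tau(\be) = \tau(\ga)$ are all correct. However, one step fails as written: the justification of $\be \not\sim' \al$ in case~(2.2)(c). You assert that if $\al \sim' \be$ then ``by condition $(\mr E)$ one of the possibilities $(\mr E1)$, $(\mr E2)$ would hold.'' This misquotes condition $(\mr E)$, which also allows $(\mr D0)$, $(\mr D1)$, $(\mr D2)$; in particular two $\sim'$-equivalent maximal active roots may perfectly well have disjoint supports (possibility $(\mr D0)$ --- see, e.g., row~4 of Table~\ref{table_rank_2}, where $\al_1 \sim \al_2$ with $\Supp\al_1 \cap \Supp\al_2 = \varnothing$). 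In fact, for a simple root $\al \in \mr M'$ the only possibility compatible with conditions $(\mr D)$, $(\mr E)$ for a pair $\al, \be$ with $\be \ne \al$ is $(\mr D0)$, and $(\mr D0)$ is consistent with both $\be \sim' \al$ and $\be \not\sim' \al$; so these conditions alone cannot decide the question, and your intended contradiction does not arise.

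The missing ingredient is the regularity of $\al$ as an active root of $H'$, which is part of the setup of this subsection (and which follows from the fact that $\mf n' = \Ad(\si_\al)\mf n$ has zero projection to $\mf g_\al$, since $\mf n$ does). By definition of a regular active root, the class $\Psi_i$ of $H'$ containing $\al$ is the singleton $\{\al\}$, i.e.\ $\tau'(\be) \ne \tau'(\al)$ for every $\be \in \Psi(H')\backslash\{\al\}$; since $\sim'$ on $\mr M'$ is exactly equality of $S'$-weights, this gives $\be \not\sim' \al$ for every $\be \in \mr M'\backslash\{\al\}$. With this replacement the argument is complete.
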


\subsection{}

Let $H \subset G$ be a connected solvable spherical subgroup
standardly embedded in~$B$. Suppose that $\Upsilon(H) = (S, \mr M,
\pi, \sim)$. In this subsection we find out how one can determine
all regular active simple roots of $H$ given the set $\Upsilon_0(H)
= (\mr M, \pi, \sim)$. The notation used in this subsection is the
same as in~\S\,\ref{simple_applications}.

\begin{proposition}\label{sim_act_reg}
A root $\al \in \Pi$ is a regular active root of\, $H$ in exactly
one of the following two cases:

\textup{(1)} $\al \in \mr M$ and $\be \nsim \al$ for all $\be \in
\mr M \backslash \{\al\}$;

\textup{(2)} $\al \notin \mr M$ and there is a root $\al' \in \mr M$
such that:

\textup{(a)} $\al$ is terminal with respect to~$\Supp \al'$;

\textup{(b)} $\al \ne \pi(\al')$;

\textup{(c)} $\Supp \al' \backslash \{\al\} \not\subset \Supp \be$
for every root $\be \in \mr M \backslash \{\al'\}$.
\end{proposition}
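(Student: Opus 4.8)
The plan is to reduce the whole statement to a count of active roots sharing a given $S$-weight. Recall that, by the definition of a regular active root, $\al$ is a regular active root exactly when the class $\Psi_i$ containing it equals $\{\al\}$, i.e. when $\al$ is active and is the \emph{only} active root $\be$ with $\tau(\be)=\tau(\al)$. I would split according to whether $\al\in\mr M$ or not, which is precisely the dichotomy between~(1) and~(2), so the two cases are automatically mutually exclusive. In the case $\al\in\mr M$ the root $\al$ is a maximal active root, so by the observation following the definition of a maximal active root every active $\be$ with $\tau(\be)=\tau(\al)$ is again maximal; hence $\Psi_i=\{\be\in\mr M\mid\be\sim\al\}$ is just the $\sim$-class of $\al$ inside $\mr M$, and $\Psi_i=\{\al\}$ holds if and only if $\be\nsim\al$ for all $\be\in\mr M\backslash\{\al\}$. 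This is condition~(1), so Case~1 is immediate.

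Assume now $\al\notin\mr M$. If $\al$ is regular active it is active and non-maximal, so $\al\in\Psi\backslash\mr M$ and $\al\in F(\al')$ for at least one $\al'\in\mr M$. For \emph{any} such $\al'$, conditions~(a) and~(b) hold automatically: since $\al$ is simple we have $F(\al)=\{\al\}$ and $\pi(\al)=\al$; then $\al\in\Supp\al'\cap F(\al')$ with $|\Supp\al'|\ge 2$ forces $\al$ to be terminal with respect to $\Supp\al'$ by Corollary~\ref{supp&active}, while injectivity of $\pi$ on $F(\al')$ (Corollary~\ref{one_to_one}) gives $\al=\pi(\al)\ne\pi(\al')$. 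Thus in Case~2 everything reduces to finding a maximal root $\al'$ (with $\al\in F(\al')$) for which the covering condition~(c) holds, and conversely to showing that (a),(b),(c) force regularity.

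For the implication ``(a),(b),(c) $\Rightarrow$ regular'' I would argue by contradiction. From (a),(b) and Corollary~\ref{extreme_root} we get $\al\in F(\al')$, so $\al$ is active; suppose it is not regular, so there is an active $\be\ne\al$ with $\tau(\be)=\tau(\al)$. Since $\al$ is non-maximal, $\be$ is non-maximal too, hence $\be\in F(\be'')$ for some $\be''\in\mr M$. Applying Lemma~\ref{two_families_diff_weights} to $\al'$ and $\be''$ rules out $\tau(\al')\ne\tau(\be'')$, so $\al'\sim\be''$ and (by distinctness of weights within a family) $\al'\ne\be''$. Now Corollary~\ref{two_families_equal_weights_crl} applied to $\al\in F(\al')\backslash\{\al'\}$ and $\be\in F(\be'')\backslash\{\be''\}$ yields a root $\ga\in\De_+$ with $\al'=\al+\ga$ and $\be''=\be+\ga$. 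Then $\Supp\al'=\{\al\}\cup\Supp\ga$ and $\Supp\ga\subset\Supp\be''$, so $\Supp\al'\backslash\{\al\}\subset\Supp\be''$ with $\be''\in\mr M\backslash\{\al'\}$, contradicting~(c).

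The converse, ``regular $\Rightarrow$ there exists $\al'$ with (a),(b),(c)'', is the main obstacle. Here I would choose $\al'$ carefully: the class of $\al$ is $\{\al\}$, and since $\al$ is non-maximal this class is dominated by a maximal class $\Psi_j$ with $\Psi_i\doubleprec\Psi_j$ (Corollary~\ref{property_of_ordering}), so $\al+\ga\in\Psi_j\subset\mr M$ for some $\ga\in\De_+$; set $\al'=\al+\ga$, so $\al\in F(\al')$ by Corollary~\ref{subfamilies}(a). Suppose (c) fails for this $\al'$: some $\be''\in\mr M\backslash\{\al'\}$ has $\Supp\al'\backslash\{\al\}\subset\Supp\be''$. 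Since $\pi(\al')\ne\al$, we get $\pi(\al')\in\Supp\al'\cap\Supp\be''$, whence $\tau(\al')=\tau(\be'')$ and $\pi(\al')=\pi(\be'')$ by Corollaries~\ref{max_assoc_int} and~\ref{assoc_intersection}. Then for $\al',\be''$ one of $(\mr E1),(\mr E2)$ holds (Proposition~\ref{intersection_equal_weights}), and in either configuration $\Supp\al'\not\subset\Supp\be''$, so together with $\Supp\al'\backslash\{\al\}\subset\Supp\be''$ this forces $\Supp\al'\backslash\Supp\be''=\{\al\}$, i.e. $\eta(\al',\be'')=\al$. Finally Lemma~\ref{eta} applied to the root $\al\in F(\al')$ (which satisfies $\pi(\al)=\al=\eta(\al',\be'')$) produces $\be'\in F(\be'')$ with $\tau(\be')=\tau(\al)$; as $\al\notin\Supp\be''\supset\Supp\be'$ we have $\be'\ne\al$, contradicting the regularity of $\al$. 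Hence (c) holds for the chosen $\al'$, completing Case~2. I expect the delicate point throughout to be keeping track of which maximal root sits above $\al$ and verifying that the $(\mr E1)/(\mr E2)$ geometry guarantees $\al\notin\Supp\be''$, which is exactly what makes the $\eta$-argument go through.
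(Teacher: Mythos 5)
Your proof is correct and follows essentially the same route as the paper's: the same dichotomy between cases (1) and (2), the same use of Corollaries~\ref{supp&active}, \ref{one_to_one} and~\ref{extreme_root} to get (a) and (b), and the same two contradiction arguments showing that failure of (c) for a maximal root above $\al$ is equivalent to non-regularity. The only divergences are in which packaged lemma you cite: where the paper redoes the $(\mr E1)$/$(\mr E2)$ case analysis inline to exhibit a second active root of weight $\tau(\al)$, you invoke Lemma~\ref{eta}(b), and where it applies Proposition~\ref{crucial} in one step, you route through the maximal root above $\be$ and Corollary~\ref{two_families_equal_weights_crl} --- both substitutions are valid and encapsulate the same computations.
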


\begin{proof}
Let $\al \in \Pi$ be a regular active root of~$H$. If $\al \in \mr
M$, then, evidently, condition~(1) is fulfilled. Now suppose $\al
\notin \mr M$. Then $\al \in F(\al')$ for some root $\al' \in \mr
M$, at that, $\al \ne \pi(\al')$. In view of
Corollary~\ref{supp&active} the root $\al$ is terminal with respect
to~$\Supp \al'$. Assume that $\Supp \al' \backslash \{\al\} \subset
\Supp \be$ for some root $\be \in \mr M \backslash \{\al'\}$. Then
in view of the condition $\pi(\al') \ne \al$ and
Propositions~\ref{intersection_diff_weights},~\ref{intersection_equal_weights}
we obtain that $\al' \sim \be$ and for $\al'$, $\be$ one of
possibilities $(\mr E1)$ or $(\mr E2)$ is realized. In case $(\mr
E1)$ we have $\Supp \al' = \{\al, \pi(\al')\}$. Since $\al' -
\pi(\al') \in F(\al')$ and $\pi(\al') \notin \Supp(\al' -
\pi(\al'))$, we obtain $\al' - \pi(\al') = \al$. Hence $\tau(\al) =
\tau(\be - \pi(\al'))$ and the root $\al$ is not regular. In case
$(\mr E2)$ the type of the root system $\De_+ \cap \langle \Supp
\al' \cup \Supp \be \rangle$ is $\ms D$ or $\ms E$, whence $\al' -
\al \in \De_+$ and $\be - (\al' - \al) \in F(\be)$. Then $\tau(\al)
= \tau(\be - (\al' - \al))$ and the root $\al$ is not regular. This
contradiction proves that condition~(2) takes place.

Conversely, if condition~(1) holds, then, evidently, $\al$ is a
regular active root. Now assume that condition~(2) holds. By
Corollary~\ref{extreme_root} we obtain $\al \in \Psi$. Assume that
$\al$ is not a regular active root. Then $\tau(\al) = \tau(\ga)$ for
some root $\ga \in \Psi \backslash \{\al\}$. In view of
Proposition~\ref{crucial} we have $\be = \ga + (\al' - \al) \in \mr
M$, whence $\Supp \al' \backslash \{\al\} = \Supp \al' \cap \Supp
\be$, which contradicts condition~(c).
\end{proof}

\begin{remark}
Propositions~\ref{comb_datas} and~\ref{sim_act_reg} together with
Remark~\ref{at_least_one_solvable} allow one to define the notion of
an elementary transformation of a set $(\mr M, \pi, \sim)$
satisfying conditions $(\mr A)$, $(\mr D)$, $(\mr E)$, and~$(\mr
C)$.
\end{remark}

\subsection{} \label{sober_subgroups}

In this subsection we consider an application of the theory
developed above to an important class of connected solvable
spherical subgroups. Namely, we obtain, up to conjugation, a
classification of all connected solvable spherical subgroups in~$G$
having finite index in their normalizer. Following Vinberg
(see~\cite{Vin},~\S\,1.3.4), we use the term
\emph{saturated}\footnote{Another possible term is \emph{sober}.}
for connected spherical subgroups $H \subset G$ with $H = N_G(H)^0$.
Besides, we obtain a classification up to conjugation of all
unipotent subgroups in $G$ that are unipotent radicals of connected
solvable spherical subgroups in~$G$.

\begin{lemma}\label{normalizer}
Let $H \subset G$ be a connected solvable spherical subgroup
standardly embedded in~$B$. Put $N = H \cap U$. Then $N_G(H)^0 =
\widehat S \rightthreetimes N$, where $\widehat S = (N_G(N)\cap
T)^0$.
\end{lemma}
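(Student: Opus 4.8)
The plan is to establish the decomposition $N_G(H)^0 = \widehat S \rightthreetimes N$ by analyzing how the normalizer of $H$ relates to the normalizer of its unipotent radical $N$. First I would observe that since $N$ is the unipotent radical of $H$, it is a characteristic subgroup of $H$; indeed $N$ is the set of unipotent elements in the solvable group $H$, hence $N_G(H) \subseteq N_G(N)$. This gives the key containment $N_G(H)^0 \subseteq N_G(N)^0$. Conversely, I want to identify exactly which elements of $N_G(N)$ normalize all of $H = S \rightthreetimes N$, which amounts to controlling the torus part.

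Next I would argue that every element of $N_G(H)^0$ can be brought into a standard form relative to $T$. Since $N \subset U$ and $H$ is standardly embedded, I would use the structure of $B$ and the fact that $N_G(H)^0$ is a connected solvable group containing $N$ (as $N$ is normal in $H \subseteq N_G(H)^0$). The plan is to show $N_G(H)^0$ is itself standardly embedded after noting that its unipotent radical must be $N$ itself: any larger unipotent radical would force $N$ to be non-normal or contradict that $N$ is already the full unipotent part of $H$. So $N_G(H)^0 = \widehat S \rightthreetimes N$ for some torus $\widehat S \subseteq T$ (up to conjugacy we may assume the maximal torus of $N_G(H)^0$ lies in $T$, using that all maximal tori are conjugate and $T$ is one of them).

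The crucial step is to pin down $\widehat S$ as precisely $(N_G(N) \cap T)^0$. Here I would show that a torus element $t \in T$ normalizes $H$ if and only if it normalizes $N$: the forward direction is immediate from $N_G(H) \subseteq N_G(N)$, and the reverse direction follows because $t$ automatically normalizes $S \subseteq T$ (tori are abelian, so $tSt^{-1} = S$), hence $t$ normalizing $N$ gives $tHt^{-1} = tSt^{-1} \cdot tNt^{-1} = SN = H$. Therefore the set of torus elements normalizing $H$ coincides with $N_G(N) \cap T$, and taking connected components yields $\widehat S = (N_G(N) \cap T)^0$.

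The main obstacle I anticipate is verifying cleanly that the unipotent radical of $N_G(H)^0$ is exactly $N$ and not something larger sitting inside $U$. This requires knowing that no unipotent element outside $N$ normalizes $H$ while preserving the semidirect structure; I would handle this by working in the Lie algebra, showing that any $x \in \mf u$ with $[\,x, \mf h\,] \subseteq \mf h$ and $x \notin \mf n$ would contradict either the weight decomposition of $\mf n$ with respect to $S$ or the sphericity criterion (Theorem~\ref{solvable_spherical}), since adding such an $x$ would enlarge $\mf n$ in a way incompatible with the established weight structure. The abelian nature of $T$ makes the torus part routine, so the entire difficulty concentrates on controlling the unipotent part of the normalizer.
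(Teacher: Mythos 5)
Your final step --- identifying the torus part via the observation that an element $t \in T$ normalizes $H = S \rightthreetimes N$ if and only if it normalizes $N$ --- is correct and gives the inclusion $\widehat S \rightthreetimes N \subset N_G(H)^0$. But the reverse inclusion, which is the real content of the lemma, rests on two claims you do not establish. First, you assert that $N_G(H)^0$ is a connected solvable group with unipotent radical exactly $N$; neither is obvious (the identity component of the normalizer of a solvable subgroup need not be solvable --- for a subtorus it is the centralizer, which is reductive), and in this paper solvability is not an input but a consequence of the computation. Second, your Lie-algebra argument only examines elements $x \in \mf u$, whereas to pin down $N_G(H)^0$ you must bound the full normalizer $\mf n_{\mf g}(\mf h)$, including the negative root spaces and the contribution from $\mf t$; nothing in your sketch rules out a vector in some $\mf g_{-\al}$ normalizing $\mf h$. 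The ``up to conjugacy'' reduction is also not available here, since the lemma asserts an equality of specific subgroups, not of conjugacy classes.

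The paper closes both gaps with a single computation. Decompose $\mf n_{\mf g}(\mf h)$ into $S$-weight spaces (it is $S$-stable because $S \subset H$). Any weight vector $x$ of nonzero weight $\la$ satisfies $\la(s)x = [s,x] \in \mf h$ for all $s \in \mf s$, hence $x \in \mf h$; therefore $\mf n_{\mf g}(\mf h) \subset \mf z + \mf h$, where $\mf z$ is the centralizer of $\mf s$ in $\mf g$. The zero-weight part is then handled by sphericity: Theorem~\ref{solvable_spherical} forces $c_0 = 0$, i.e. $\mf u_0 = \mf z \cap \mf u \subset \mf n$, so $\mf z \cap \mf h = \mf s + \mf u_0$ with $\mf u_0$ a maximal nilpotent subalgebra of the reductive algebra $\mf z$, whose normalizer in $\mf z$ is the Borel subalgebra $\mf t + \mf u_0$. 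This yields $\mf n_{\mf g}(\mf h) \subset \mf t + \mf h$ and hence $\mf n_{\mf g}(\mf h) = \widehat{\mf s} + \mf n$ in one stroke: solvability, the identification of the unipotent radical, and the torus all follow simultaneously. Your instinct to use the $S$-weight structure together with the sphericity criterion is exactly right, but it must be applied to all of $\mf g$, not just to $\mf u$.
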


\begin{proof}
Put $S = H \cap T$. Let $\mf n (\mf g)$ be the normalizer of the
algebra $\mf h$ in~$\mf g$. It suffices to prove that $\mf n(\mf h)
= \widehat {\mf s} + \mf n$. Evidently, $\mf n(\mf h) \subset \mf z
+ \mf h$, where $\mf z$ is the centralizer of the algebra $\mf s$
in~$\mf g$ ($\mf z$ is the tangent algebra of the reductive
group~$Z_G(S)$). Put $\mf u_0 = \mf z \cap \mf u$. Since the
subgroup $H$ is spherical, we have $\mf u_0 \subset \mf h$, whence
$\mf z \cap \mf h = \mf s + \mf u_0$. Clearly, $\mf u_0$ is a
maximal nilpotent subalgebra in~$\mf z$, hence its normalizer in
$\mf z$ coincides with the algebra $\mf t + \mf u_0$. From this it
follows that $\mf n(\mf h) \subset \mf t + \mf h$ and therefore $
\mf n(\mf h) = \widehat {\mf s} + \mf n$.
\end{proof}

\begin{corollary}\label{what_is_sober}
Every saturated solvable spherical subgroup $H \subset G$ standardly
embedded in $B$ has the form $H = S \rightthreetimes N$, where $N =
H \cap U$ and $S = (N_G(N) \cap T)^0$. In particular, the torus $S$
is the connected component of the identity of the subgroup in $T$
defined by vanishing of all characters of the form $\al - \be$,
where $\al,\be$ run over all roots in $\mr M(H)$ with $\al \sim
\be$.
\end{corollary}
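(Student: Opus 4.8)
The plan is to derive the corollary from Lemma~\ref{normalizer} together with an explicit description of $N_G(N)\cap T$ in terms of the sets $\Psi_i$. First I would record the structural part: since $H$ is standardly embedded we automatically have $H = S\rightthreetimes N$ with $S = H\cap T$ and $N = H\cap U$. The torus $S$ normalizes $N$ (because $N\triangleleft H$ and $S\subset H$), so being connected it lies in $(N_G(N)\cap T)^0 = \widehat S$. Conversely, the saturation hypothesis $H = N_G(H)^0$ combined with Lemma~\ref{normalizer} gives $H = \widehat S\rightthreetimes N$, and since $\widehat S\subseteq T$ this forces $\widehat S\subseteq H\cap T = S$. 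Hence $S = \widehat S = (N_G(N)\cap T)^0$, which is the first assertion of the corollary.

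For the ``in particular'' part the key computation is to determine when $t\in T$ normalizes $N$, equivalently when $\Ad(t)$ preserves $\mf n$. Since $\mf n$ is stable under $S$ and contains $\mf g_\al$ for every $\al\notin\Psi$ while meeting each $\mf u_i = \bigoplus_{\al\in\Psi_i}\mf g_\al$ in the hyperplane $\mf n\cap\mf u_i = \Ker\xi_i$ (notation of \S\,\ref{simple_applications}, where $\left.\xi_i\right|_{\mf g_\al}\ne 0$ for all $\al\in\Psi_i$), it decomposes compatibly with the root spaces as $\mf n = \mf u_0\oplus\bigoplus_{i=1}^K(\mf n\cap\mf u_i)$ with $\mf u_0 = \bigoplus_{\al\notin\Psi}\mf g_\al$. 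As $t$ acts on $\mf g_\al$ by the scalar $\al(t)$, it automatically preserves $\mf u_0$ and each $\mf u_i$, so $\Ad(t)\mf n = \mf n$ if and only if $\Ad(t)$ preserves each hyperplane $\mf n\cap\mf u_i$. The latter holds exactly when $\xi_i$ is an eigenvector for the induced action of $t$ on $\mf u_i^*$; because $\xi_i$ has a nonzero component along $\mf g_\al^*$ for every $\al\in\Psi_i$, this is equivalent to $\al(t)$ being constant as $\al$ ranges over $\Psi_i$. Therefore
\[
N_G(N)\cap T = \{t\in T \mid (\al-\be)(t)=1 \text{ for all } \al,\be\in\Psi_i,\ i=1,\dots,K\}.
\]

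It remains to show that this subgroup has the same identity component as the subgroup $S_1 = \{t\in T \mid (\mu-\nu)(t)=1 \text{ for all } \mu,\nu\in\mr M \text{ with } \mu\sim\nu\}$. As both are defined by vanishing of characters, this reduces to the equality of rational spans $\langle \al-\be \mid \al,\be\in\Psi_i\rangle = \langle \mu-\nu \mid \mu,\nu\in\mr M,\ \mu\sim\nu\rangle$ in $Q$. The inclusion $\supseteq$ is immediate, since $\mu,\nu\in\mr M$ with $\mu\sim\nu$ satisfy $\tau(\mu)=\tau(\nu)$ and so lie in a common $\Psi_i$. For $\subseteq$, given $\al,\be\in\Psi_i$ I would pick a maximal $\Psi_j$ with $\Psi_i\prec\Psi_j$; by Corollary~\ref{property_of_ordering} then $\Psi_i\doubleprec\Psi_j$, so $\Psi_i+\ga\subseteq\Psi_j$ for some $\ga\in\De_+$, and $\al+\ga,\be+\ga$ are maximal active roots lying in the single class $\Psi_j$. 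Since $\al-\be=(\al+\ga)-(\be+\ga)$, this exhibits $\al-\be$ as a difference of equivalent roots of $\mr M$. Combining the two parts gives $S=(N_G(N)\cap T)^0 = S_1^0$, the claimed description.

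The step I expect to be the main obstacle is the hyperplane/eigenvector argument in the second paragraph: one must use precisely the nonvanishing $\left.\xi_i\right|_{\mf g_\al}\ne 0$ to conclude that $\Ad(t)$-invariance of $\mf n\cap\mf u_i$ forces all the weights $\al(t)$, $\al\in\Psi_i$, to coincide, rather than merely to be permuted among themselves. Once this characterization of $N_G(N)\cap T$ is in place, the passage from the $\Psi_i$ to $\mr M$ via the partial order $\prec$ is routine.
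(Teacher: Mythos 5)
Your argument is correct and follows the route the paper intends: the first assertion is read off from Lemma~\ref{normalizer} together with the saturation hypothesis exactly as you do, and the ``in particular'' part rests on the computation of $N_G(N)\cap T$ that the paper leaves implicit. Your filling-in of that computation --- the eigenvector argument using $\left.\xi_i\right|_{\mf g_\al}\ne 0$, and the reduction of $\langle\al-\be\mid\al,\be\in\Psi_i\rangle$ to differences of equivalent maximal roots via Corollary~\ref{property_of_ordering} --- is sound and uses precisely the machinery of \S\,\ref{simple_applications}.
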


\begin{corollary}\label{normalizer_is_sober}
For every connected solvable spherical subgroup $H \subset G$ the
subgroup $N_G(H)^0$ is a saturated solvable spherical subgroup
in~$G$.
\end{corollary}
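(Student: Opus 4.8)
The plan is to reduce to the standardly embedded case and then apply Lemma~\ref{normalizer} twice. Every connected solvable subgroup of $G$ is conjugate to one standardly embedded in~$B$, and each of the three properties in question is conjugation-invariant: solvability obviously is, sphericity is preserved because $G/gKg^{-1} \cong G/K$ as $G$-varieties, and saturatedness is preserved because $N_G(gKg^{-1})^0 = g\,N_G(K)^0\,g^{-1}$ for any subgroup $K$ (so if $N_G(K)^0 = K$ then $N_G(gKg^{-1})^0 = gKg^{-1}$). Consequently it suffices to prove the statement for $H$ standardly embedded in~$B$. So I would fix such an $H$, put $S = H\cap T$, $N = H\cap U$, and set $\widehat H = N_G(H)^0$.

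First I would invoke Lemma~\ref{normalizer} to write $\widehat H = \widehat S \rightthreetimes N$, where $\widehat S = (N_G(N)\cap T)^0 \subseteq T$. Since $\widehat S \subseteq T$ and $N \subseteq U$, we have $\widehat H \subseteq B$, so $\widehat H$ is a connected subgroup of the solvable group $B$ and is therefore solvable. Because $T\cap U = \{e\}$, any element $\widehat s n \in T$ with $\widehat s\in\widehat S$, $n\in N$ forces $n\in T\cap U = \{e\}$, whence $\widehat H\cap T = \widehat S$; and since $\widehat H/N \cong \widehat S$ is a torus, $\widehat S$ is a maximal torus of $\widehat H$ with unipotent radical~$N$. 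Thus $\widehat H$ is again standardly embedded in~$B$, with the \emph{same} unipotent radical $N$ as~$H$.

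For sphericity I would use the natural $G$-equivariant surjection $p\colon G/H \to G/\widehat H$ arising from the inclusion $H \subseteq \widehat H$. As $H$ is spherical, some Borel orbit $Bx_0H/H$ is dense in $G/H$; since $p$ is surjective and continuous, its image $p(Bx_0H/H) = Bx_0\widehat H/\widehat H$ is dense in the irreducible variety $G/\widehat H$ and is a single $B$-orbit. An orbit is locally closed, hence open in its closure; being dense, it is therefore open. So $B$ has an open orbit in $G/\widehat H$ and $\widehat H$ is spherical.

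Finally, saturatedness follows by applying Lemma~\ref{normalizer} once more, this time to the connected solvable spherical subgroup $\widehat H$ standardly embedded in~$B$. Its unipotent radical is $\widehat H\cap U = N$, so the lemma yields $N_G(\widehat H)^0 = (N_G(N)\cap T)^0 \rightthreetimes N = \widehat S \rightthreetimes N = \widehat H$; that is, $\widehat H$ is saturated, which completes the proof. The only genuinely delicate point is the sphericity of $\widehat H$, and even this reduces to the elementary fact that the image of a dense orbit under a $G$-equivariant surjection of homogeneous spaces is again a dense---hence open---orbit; every other step is either bookkeeping about the semidirect product or a second application of Lemma~\ref{normalizer}.
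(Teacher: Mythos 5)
Your proposal is correct and follows the route the paper intends: the paper states this corollary without proof as an immediate consequence of Lemma~\ref{normalizer}, and your argument is exactly the natural unwinding of that — write $N_G(H)^0=\widehat S\rightthreetimes N$, observe it is a connected solvable subgroup of $B$ standardly embedded with the same unipotent radical, note it is spherical because it contains the spherical subgroup $H$, and apply Lemma~\ref{normalizer} a second time to get saturatedness. All the details you supply (conjugation-invariance, $\widehat H\cap T=\widehat S$, the dense-orbit argument for sphericity of an overgroup) are sound.
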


\begin{lemma} \label{sober_is_determined_by_ur}
Up to conjugation, every saturated solvable spherical subgroup
in~$G$ is uniquely determined by its unipotent radical.
\end{lemma}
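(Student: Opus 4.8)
The plan is to reduce the statement to the conjugacy of maximal tori inside a single connected group. First I would restate the claim in the form: if two saturated solvable spherical subgroups $H_1, H_2 \subset G$ have conjugate unipotent radicals, then $H_1$ and $H_2$ are themselves conjugate. Writing $N_i$ for the unipotent radical of $H_i$ and assuming $N_2 = gN_1g^{-1}$, I would replace $H_1$ by $gH_1g^{-1}$; since being saturated, solvable, and spherical are all conjugation-invariant, this reduces everything to the case $N_1 = N_2 =: N$, and it remains to show that two saturated solvable spherical subgroups sharing the same unipotent radical $N$ are conjugate.

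Now set $M = N_G(N)^0$. The next step is to observe that $M$ is the right ambient group: since $N$ is connected and normalizes itself, $N \subset M$ and $N$ is normal in $M$; moreover each $H_i$, being connected with normal subgroup $N$, satisfies $H_i \subset N_G(N)$ and hence $H_i \subset M$. Write $H_i = S_i \rightthreetimes N$ with $S_i$ a maximal torus of $H_i$.

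The key step, and the main point where saturation enters, is to show that each $S_i$ is in fact a maximal torus of $M$, not merely of $H_i$. I would argue as follows: let $T_i$ be any maximal torus of $M$ with $S_i \subset T_i$. Because $T_i \subset M = N_G(N)$ it normalizes $N$, and because $T_i$ is abelian and contains $S_i$ it centralizes $S_i$; hence $T_i$ normalizes $H_i = S_iN$. As $T_i$ is connected, this gives $T_i \subset N_G(H_i)^0 = H_i$, where I use that $H_i$ is saturated. But $S_i$ is already a maximal torus of the solvable group $H_i$, so a torus $T_i \subset H_i$ containing $S_i$ must equal $S_i$ on dimension grounds. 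Thus $S_i = T_i$ is a maximal torus of $M$.

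Finally, since $S_1$ and $S_2$ are maximal tori of the connected group $M$, they are conjugate: $S_2 = mS_1m^{-1}$ for some $m \in M$. As $m \in M = N_G(N)^0$ fixes $N$, conjugation by $m$ sends $H_1 = S_1N$ to $(mS_1m^{-1})(mNm^{-1}) = S_2N = H_2$, which completes the proof. I expect no serious obstacle beyond the torus-maximality step; the only things to be careful about are verifying that $N$ and each $H_i$ really lie in $M$ and that saturation is applied to the connected component $N_G(H_i)^0$.
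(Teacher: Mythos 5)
Your proof is correct, and it arrives at the same endgame as the paper --- both arguments reduce the lemma to the conjugacy of maximal tori in the normalizer of $N$ --- but you justify the crucial step by a different route. The paper deduces from Lemma~\ref{normalizer} that the maximal torus of a saturated subgroup $H$ standardly embedded in $B$ equals $(N_G(N)\cap T)^0$ and is therefore a maximal torus of $N_G(N)$; the proof of Lemma~\ref{normalizer} is a Lie-algebra computation that uses the sphericity of $H$ in an essential way (via the inclusion $\mf u_0 \subset \mf h$). You instead show directly that each $S_i$ is a maximal torus of $M = N_G(N)^0$: any torus $T_i \supset S_i$ in $M$ centralizes $S_i$ and normalizes $N$, hence normalizes $H_i$, hence lies in $N_G(H_i)^0 = H_i$ by saturation, and then equals $S_i$ on dimension grounds. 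This is softer and strictly more general --- it never invokes sphericity, so it actually proves that any two connected solvable subgroups that coincide with the identity components of their normalizers and share a unipotent radical are conjugate --- whereas the paper's argument leans on the explicit description of $N_G(H)^0$ that it needs elsewhere anyway. The auxiliary points you flag (that $N$ and each $H_i$ lie in $M$, that maximal tori of the connected group $M$ are conjugate, and that a torus of $H_i$ containing the maximal torus $S_i$ must equal it) are all routine and check out.
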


\begin{proof}
As follows from Lemma~\ref{normalizer}, a maximal torus of a
saturated solvable spherical subgroup $H \subset G$ with unipotent
radical $N$ is a maximal torus in the group~$N_G(N)$. This implies
the assertion of the lemma, since all maximal tori in $N_G(N)$ are
conjugate.
\end{proof}

\begin{corollary}\label{sober_bij_ur}
Let $H \subset G$ be a saturated solvable spherical subgroup and $N$
its unipotent radical. Then the map $H \mapsto N$ is a bijection
between conjugacy classes in $G$ of saturated solvable spherical
subgroups and conjugacy classes in $G$ of unipotent radicals of
connected solvable spherical subgroups.
\end{corollary}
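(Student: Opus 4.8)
The plan is to verify four things: that the assignment $H \mapsto N$ descends to a well-defined map on conjugacy classes, that its target is indeed the set of conjugacy classes of unipotent radicals of connected solvable spherical subgroups, and that the induced map is both injective and surjective. Well-definedness is immediate: if $H_2 = gH_1g^{-1}$, then $N_2 = gN_1g^{-1}$, since conjugation carries the unipotent radical of a group to the unipotent radical of the conjugate group. Moreover, a saturated solvable spherical subgroup is in particular a connected solvable spherical subgroup, so its unipotent radical $N$ is genuinely the unipotent radical of a connected solvable spherical subgroup; hence the map lands in the asserted target.

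For injectivity I would argue as follows. Suppose $H_1, H_2$ are saturated solvable spherical subgroups whose unipotent radicals $N_1, N_2$ are conjugate, say $N_2 = gN_1g^{-1}$. Replacing $H_1$ by $gH_1g^{-1}$ (which is again a saturated solvable spherical subgroup, now with unipotent radical $N_2$), I reduce to the case $N_1 = N_2 = N$. By Lemma~\ref{sober_is_determined_by_ur} a saturated solvable spherical subgroup is determined up to conjugation by its unipotent radical, so $H_1$ and $H_2$ are conjugate. Thus the induced map is injective on conjugacy classes.

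For surjectivity, let $N$ be the unipotent radical of some connected solvable spherical subgroup $H'$; since we work up to conjugation, I may assume $H'$ is standardly embedded in $B$ with $N = H' \cap U$. Set $H = N_G(H')^0$. By Corollary~\ref{normalizer_is_sober} the subgroup $H$ is a saturated solvable spherical subgroup, and by Lemma~\ref{normalizer} we have $H = \widehat S \rightthreetimes N$ with $\widehat S = (N_G(N)\cap T)^0$; in particular the unipotent radical of $H$ equals $N$. Hence the conjugacy class of $N$ lies in the image, and the map is surjective.

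The argument is essentially a bookkeeping assembly of the four preceding results, so no single step presents a genuine difficulty. The point requiring the most care is the reduction step in injectivity: one must observe that conjugating a saturated subgroup again yields a saturated subgroup, with the correspondingly conjugated unipotent radical, so that after passing to a common unipotent radical Lemma~\ref{sober_is_determined_by_ur} applies. Similarly, in surjectivity one should note that $N_G(H')^0$ contains $H'$ and has the \emph{same} unipotent radical $N$, which is exactly what makes the normalizer construction recover the prescribed conjugacy class.
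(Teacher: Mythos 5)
Your proof is correct and follows exactly the paper's route: injectivity from Lemma~\ref{sober_is_determined_by_ur} and surjectivity from Lemma~\ref{normalizer} together with Corollary~\ref{normalizer_is_sober}, with the reduction to a common unipotent radical made explicit. The paper simply states these two references; your version spells out the same bookkeeping.
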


\begin{proof}
The injectivity of this map follows from
Lemma~\ref{sober_is_determined_by_ur}, and the surjectivity follows
from Lemma~\ref{normalizer} and Corollary~\ref{normalizer_is_sober}.
\end{proof}

We denote by $\widetilde \Upsilon_0$ the set of all triples $(\mr M,
\pi, \sim)$, where $\mr M \subset \De_+$ is a subset, $\pi: \mr M
\to \Pi$ is a map, $\sim$ is an equivalence relation on $\mr M$, and
conditions $(\mr A)$, $(\mr D)$, $(\mr E)$, and $(\mr C)$ are
satisfied. We recall (see Remarks~\ref{remark_unipotent_radical}
and~\ref{at_least_one_solvable}) that to each triple $(\mr M, \pi,
\sim) \in \widetilde \Upsilon_0$ there corresponds a unique, up to
conjugation by elements of\,~$T$, subgroup $N = N(\mr M, \pi, \sim)
\subset U$ that is the unipotent radical of a connected solvable
spherical subgroup in $G$ standardly embedded in~$B$.

\begin{proposition}\label{sober}
\textup{(a)} The map $H \mapsto \Upsilon_0(H)$ is a bijection
between the set of saturated solvable spherical subgroups in $G$
standardly embedded in $B$, up to conjugation by elements of\,~$T$,
and the set $\widetilde \Upsilon_0$. Two saturated solvable
spherical subgroups $H_1, H_2 \subset G$ standardly embedded in $B$
are conjugate in $G$ if and only if there is a sequence of
elementary transformations taking the set $\Upsilon_0(H_1)$ to the
set~$\Upsilon_0(H_2)$.

\textup{(b)} The map $(\mr M, \pi, \sim) \mapsto N(\mr M, \pi,
\sim)$ is a bijection between the set $\widetilde \Upsilon_0$ and
the set of unipotent radicals of connected solvable spherical
subgroups in $G$ standardly embedded in~$B$, up to conjugation by
elements of~$T$. Two subgroups ${N(\mr M, \pi, \sim)}$, $N(\mr M',
\pi', \sim')$ are conjugate in $G$ if and only if there is a
sequence of elementary transformations taking the set $(\mr M, \pi,
\sim)$ to the set~$(\mr M', \pi', \sim')$.

In particular, the set of conjugacy classes in $G$ of saturated
solvable spherical subgroups, as well as the set of conjugacy
classes in $G$ of unipotent radicals of connected solvable spherical
subgroups, is finite.
\end{proposition}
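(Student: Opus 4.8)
The plan is to assemble the statement from results already established, handling parts (a) and (b) in turn and reading off finiteness at the end. Throughout I treat $\widetilde\Upsilon_0$ as the target set.

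For the bijection in part (a), I would first note that $H \mapsto \Upsilon_0(H)$ lands in $\widetilde\Upsilon_0$, which is immediate from Theorem~\ref{uniqueness_theorem} since $\Upsilon_0(H) = (\mr M, \pi, \sim)$ satisfies $(\mr A)$, $(\mr D)$, $(\mr E)$, $(\mr C)$. For injectivity the key point is that a saturated subgroup carries no freedom in its torus: by Corollary~\ref{what_is_sober} the maximal torus $S$ of a saturated $H$ is the connected component of the identity of $\{t \in T : \al(t) = \be(t) \text{ for all } \al \sim \be \text{ in } \mr M\}$, hence $S$ is recovered from $\Upsilon_0(H)$; thus $\Upsilon_0(H)$ determines the full datum $\Upsilon(H) = (S, \mr M, \pi, \sim)$, and Theorem~\ref{uniqueness_theorem} then yields that $H$ is determined up to conjugation by $T$. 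For surjectivity, given $(\mr M, \pi, \sim) \in \widetilde\Upsilon_0$ I would invoke Remark~\ref{at_least_one_solvable} to produce a connected solvable spherical subgroup $H = S \rightthreetimes N$ with $\Upsilon_0(H) = (\mr M, \pi, \sim)$ and with $S$ equal to the torus just described. It remains to check that this $H$ is saturated, i.e.\ $S = (N_G(N) \cap T)^0$: the inclusion $S \subseteq (N_G(N)\cap T)^0$ is clear, and for the reverse inclusion one verifies that a torus element of $T$ normalizes $\mf n$ only if it acts by a single scalar on each weight piece $\mf u_i$ (because $\xi_i$ is nonzero on every $\mf g_\al$ with $\al \in \Psi_i$), which forces $\al(t) = \be(t)$ for $\al \sim \be$; this is exactly the computation underlying Lemma~\ref{normalizer} and Corollary~\ref{what_is_sober}.

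For the conjugacy criterion in part (a) I would start from Theorem~\ref{elementary_transformations}, which says two such subgroups are conjugate in $G$ precisely when one is obtained from the other by a sequence of geometric elementary transformations. Since every intermediate subgroup in such a sequence is conjugate to $H$ and standardly embedded in $B$, and since being saturated is a conjugation-invariant property ($N_G(\si H \si^{-1})^0 = \si N_G(H)^0 \si^{-1}$), the entire chain stays within saturated subgroups. It then remains to match each geometric elementary transformation with the purely combinatorial elementary transformation of the datum $(\mr M, \pi, \sim)$: the regular active simple roots that may serve as centers are read off from $(\mr M, \pi, \sim)$ by Proposition~\ref{sim_act_reg}, and the effect on the datum is prescribed by Proposition~\ref{comb_datas}, parts (2.1) and (2.2), where the torus line is dropped since for saturated subgroups $S$ is already forced by the datum. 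This produces a faithful correspondence between chains of geometric and combinatorial elementary transformations, proving the criterion.

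For part (b) the map $(\mr M, \pi, \sim) \mapsto N(\mr M, \pi, \sim)$ is well defined by Remark~\ref{remark_unipotent_radical}, and it is surjective because every unipotent radical equals $N(\Upsilon_0(H))$ for the corresponding $H$. For injectivity I would argue that if $N_1 = t N_2 t^{-1}$ with $t \in T$, then the associated saturated subgroups $H_i = S_i \rightthreetimes N_i$ satisfy $S_i = (N_G(N_i)\cap T)^0$, and since conjugation by $t \in T$ acts trivially on $T$ we get $S_1 = t S_2 t^{-1} = S_2$, whence $H_1 = t H_2 t^{-1}$; part (a) then forces $\Upsilon_0(H_1) = \Upsilon_0(H_2)$. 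The conjugacy criterion in (b) follows from Corollary~\ref{sober_bij_ur}, which identifies $G$-conjugacy of unipotent radicals with $G$-conjugacy of the corresponding saturated subgroups, together with the criterion already proved in (a). Finally, finiteness is immediate, since $\widetilde\Upsilon_0$ is a finite set, being built from a subset $\mr M$ of the finite set $\De_+$, a map $\pi \colon \mr M \to \Pi$, and an equivalence relation on $\mr M$; hence the set of equivalence classes under combinatorial elementary transformations, and so each set of conjugacy classes in question, is finite. The step I expect to be the main obstacle is the faithful matching of geometric and combinatorial elementary transformations in part (a): one must ensure that Propositions~\ref{comb_datas} and~\ref{sim_act_reg} describe a well-defined operation on $\widetilde\Upsilon_0$ corresponding, under the bijection of part (a), exactly to conjugation by a representative of a simple reflection, with no saturated subgroup lost or introduced along the way.
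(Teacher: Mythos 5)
Your proposal is correct and follows essentially the same route as the paper: injectivity via Corollary~\ref{what_is_sober} and Theorem~\ref{uniqueness_theorem}, surjectivity by building $H$ from the canonical torus (Remark~\ref{at_least_one_solvable} / Theorem~\ref{existence_theorem}) and checking saturation, the conjugacy criterion from Theorem~\ref{elementary_transformations}, and part (b) from part (a) together with Corollary~\ref{sober_bij_ur}. The extra detail you supply (the explicit saturation check and the $T$-conjugation argument in (b)) only fleshes out steps the paper treats as immediate.
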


\begin{proof}
(a) The injectivity of this map follows from
Remark~\ref{remark_unipotent_radical},
Corollary~\ref{what_is_sober}, and Theorem~\ref{uniqueness_theorem}.
Let us prove the surjectivity. Suppose that ${(\mr M, \pi, \sim)}
\in \widetilde \Upsilon_0$. Consider the subtorus $S \subset T$ that
is the connected component of the identity of the subgroup in~$T$
defined by vanishing of all characters of the form $\al - \be$,
where $\al,\be$ run over all roots in $\mr M$ with~$\al \sim \be$.
Then $S$ satisfies condition $(\mr T)$ (at that, $S$ is the largest
subtorus in~$T$ satisfying this condition), whence by
Theorem~\ref{existence_theorem}, up to conjugation by elements
of~$T$, there is a unique connected solvable spherical subgroup $H$
standardly embedded in~$B$ with $\Upsilon(H) = (S, \mr M, \pi,
\sim)$. Let $N \subset U$ be the unipotent radical of~$H$. Then,
evidently, $S = N_G(N) \cap T$ and by Corollary~\ref{what_is_sober}
the subgroup $H$ is saturated. The second part of assertion~(a)
follows from the first one and
Theorem~\ref{elementary_transformations}.

(b) Let $H \subset G$ be a saturated solvable spherical subgroup
standardly embedded in~$B$ and suppose that $\Upsilon_0(H) = (\mr M,
\pi, \sim)$. Then, up to conjugation by elements of~$T$, the
subgroup $N = N(\mr M, \pi, \sim)$ is the unipotent radical of~$H$.
Now the desired assertion follows from~(a) and
Corollary~\ref{sober_bij_ur}.
\end{proof}

%-------------------------------------------------------------------------
\section{Simplification of the set of combinatorial data corresponding to
a connected solvable spherical subgroup} \label{simplification}
%-------------------------------------------------------------------------

This section is devoted to problems concerned with `simplification'
of the set of combinatorial data $\Upsilon(H)$ corresponding to a
connected solvable spherical subgroup $H\subset G$ standardly
embedded in~$B$.

\subsection{}

In this subsection we show that every conjugacy class in $G$ of
connected solvable spherical subgroups contains a subgroup $H$
standardly embedded in $B$ such that the set $\Upsilon(H)$ satisfies
stronger conditions than those appearing in
Theorem~\ref{uniqueness_theorem}. Thereby the set $\Upsilon(H)$ is
in a sense simpler than a set of the general form.

Until the end of this subsection, $H$ stands for a connected
solvable spherical subgroup standardly embedded in~$B$.

\begin{dfn}
An active root $\al$ is called \emph{typical} if $\al = \sum
\limits_{\de \in \Supp \al} \de$. An active root $\al$ is called
\emph{non-typical} if $\al$ is not typical.
\end{dfn}

It is easy to see that an active root $\al$ is typical if and only
if it is contained in row~1 of Table~\ref{table_active_roots}.

\begin{lemma}\label{star_roots}
Let $\al$ be a non-typical maximal active root. Then the simple root
$\de \in F(\al) \cap \Supp \al$ marked by an asterisk in
Table~\textup{\ref{table_non-typical_active_roots}} is a regular
active root. \textup{(}The notation in
Table~\textup{\ref{table_non-typical_active_roots}} is the same as
in Table~\textup{\ref{table_active_roots}}.\textup{)}
\end{lemma}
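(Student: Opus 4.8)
The plan is to prove directly that the marked root $\de$ is the \emph{unique} active root with $S$-weight $\tau(\de)$; by the definition of a regular active root (i.e.\ $\Psi_i = \{\de\}$ for the class $\Psi_i \ni \de$) this is exactly what has to be shown. The first step is to read off from Table~\ref{table_non-typical_active_roots} the only feature of the asterisked root that the argument will use: in each of the rows 2--6 the simple root $\de$ is the terminal node of $\Sigma(\Supp\al)$ whose coefficient in $\al$ is at least $2$, so that $\ga := \al - \de$ is again a positive root and, moreover, $\Supp\ga = \Supp\al$. For instance, in type $\ms C_n$ one has $\de = \al_1$ and $\ga = \al_1 + 2\al_2 + \ldots + 2\al_{n-1} + \al_n$, whose support is all of $\Supp\al$; the remaining rows are checked in the same way. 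Note also that $\de \in F(\al)$ is active and that $|\Supp\al| \ge 2$, since $\al$ is non-typical.

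The core of the argument is then uniform. Suppose, for contradiction, that there is an active root $\be \ne \de$ with $\tau(\be) = \tau(\de)$. By Lemma~\ref{subordinates_diff_weights}(a) we have $\tau(\de) \ne \tau(\al)$, so $\de$ and $\al$ lie in distinct classes $\Psi_i \ne \Psi_j$ of $\widetilde\Psi$, while $\be$ lies in the same class $\Psi_i$ as $\de$. Since $\ga = \al - \de \in \De_+$, applying Proposition~\ref{crucial} to $\de \in \Psi_i$, $\al \in \Psi_j$ and $\ga = \al - \de$ gives $\Psi_i + \ga \subset \Psi_j$; in particular $\be + \ga$ is an active root lying in $\Psi_j$, whence $\tau(\be + \ga) = \tau(\al)$. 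Because $\be \ne \de$ we have $\be + \ga \ne \al$, so $\al$ and $\be + \ga$ are two different active roots of equal $S$-weight, and $\Supp(\be + \ga) \supseteq \Supp\ga = \Supp\al$.

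It remains to derive a contradiction from Proposition~\ref{intersection_equal_weights}, applied to the pair $\al, \be + \ga$: one of $(\mr D0)$, $(\mr D1)$, $(\mr E1)$, $(\mr D2)$, $(\mr E2)$ must hold. But $\Supp\al \subseteq \Supp(\be+\ga)$ forces $\Supp\al \cap \Supp(\be+\ga) = \Supp\al$, a set with at least two elements; this excludes $(\mr D0)$ (empty intersection) and $(\mr D1)$, $(\mr E1)$ (one-element intersection), while in $(\mr D2)$ and $(\mr E2)$ the node $\al_1$ of Figure~\ref{diagram_difficult} lies in $\Supp\al \setminus \Supp(\be+\ga)$, contradicting $\Supp\al \subseteq \Supp(\be+\ga)$. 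Hence no such $\be$ exists and $\de$ is regular.

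I expect the only genuinely delicate point to be the bookkeeping in the first step: verifying, row by row in Table~\ref{table_non-typical_active_roots}, that the asterisked $\de$ really does satisfy $\al - \de \in \De_+$ and $\Supp(\al - \de) = \Supp\al$, which is what singles it out among the simple roots of $F(\al) \cap \Supp\al$. Once that property is in hand, the contradiction via Propositions~\ref{crucial} and~\ref{intersection_equal_weights} is entirely type-independent, so the main work is really the tabular verification rather than the geometric argument.
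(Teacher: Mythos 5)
Your proof is correct and follows essentially the same route as the paper: both assume a second active root of the same $S$-weight as $\de$, use Proposition~\ref{crucial} with $\ga = \al - \de$ (where $\al-\de\in\De_+$ and $\Supp\ga = \Supp\al$, checked from the table) to manufacture an active root $\be+\ga \ne \al$ of the same weight as $\al$ whose support contains $\Supp\al$, and derive a contradiction from that containment. The only difference is the closing step: the paper observes that $\be+\ga$ is then a \emph{maximal} active root and invokes Corollary~\ref{subfamilies}(c) (supports of distinct maximal active roots cannot be nested), whereas you run the containment through the case list of Proposition~\ref{intersection_equal_weights}; both are valid, the paper's being slightly more economical.
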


\begin{table}[h]

\caption{}\label{table_non-typical_active_roots}

\begin{center}

\begin{tabular}{|c|c|}

\hline

Type of $\De(\al)$ & $\al$\\

\hline

$\ms B_n$ & $\al_1 + \al_2 + \ldots + \al_{n-1} + 2\al_n^*$\\

\hline

$\ms C_n$ & $2\al_1^* + 2\al_2 + \ldots + 2\al_{n-1} + \al_n$\\

\hline

$\ms F_4$ & $2\al_1^* + 2\al_2 + \al_3 + \al_4$\\

\hline

$\ms G_2$ & $2\al_1^* + \al_2$\\

\hline

$\ms G_2$ & $3\al_1^* + \al_2$\\

\hline
\end{tabular}

\end{center}

\end{table}

\begin{proof}
Assume that the active root $\de$ is not regular. Then there exists
an active root $\de' \ne \de$ such that $\tau(\de') = \tau(\de)$.
Since $\al - \de \in \De_+$, by Proposition~\ref{crucial} we obtain
$\be = \de' + (\al - \de) \in \mr M$. Meanwhile, $\Supp (\al - \de)
= \Supp \al$, therefore $\Supp \al \subset \Supp \be$, which is
impossible in view of maximality of the active roots $\al$
and~$\be$.
\end{proof}

\begin{proposition}\label{get_rid_of_ntyp_roots}
There is an element $w \in W$ such that $\overline w H \overline
w^{-1} \subset B$ and all maximal active roots of the subgroup
$\overline w H \overline w^{-1}$ are typical.
\end{proposition}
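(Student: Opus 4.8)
The plan is to prove the statement by induction, using elementary transformations to strip off non-typical maximal active roots one step at a time. As a complexity measure I will take the total height $\Theta(H)=\sum_{\al\in\mr M(H)}\hgt\al$, a non-negative integer. The goal is then to show: if $H$ has at least one non-typical maximal active root, then some elementary transformation carries $H$ to a connected solvable spherical subgroup $H'$ standardly embedded in $B$ with $\Theta(H')<\Theta(H)$. Since $\Theta$ cannot strictly decrease indefinitely, iterating produces after finitely many steps a subgroup whose maximal active roots are all typical; writing the centres of the transformations used as $\de_1,\dots,\de_k$ and putting $w=r_{\de_k}\cdots r_{\de_1}$ gives the required element of $W$, and each intermediate subgroup is standardly embedded in $B$ because a simple reflection preserves $\De_+\backslash\{\de\}$.

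For the inductive step, I would pick a non-typical maximal active root $\al$ and let $\de\in F(\al)\cap\Supp\al$ be the simple root marked by an asterisk in Table~\ref{table_non-typical_active_roots}. By Lemma~\ref{star_roots} the root $\de$ is regular and active, so $H\mapsto H'=\si_\de H\si_\de^{-1}$ is an elementary transformation with centre $\de$. By Lemma~\ref{M_and_M'}(c) the maximal active roots of $H'$ are exactly the reflections $r_\de(\be)$, $\be\in\mr M(H)$, together with $\de$ itself in case $(2.2)$ of Proposition~\ref{comb_datas} (and $\de$, being simple, is typical). A row-by-row inspection of Table~\ref{table_active_roots} shows that the Cartan integer $\langle\al,\de^\vee\rangle=2(\al,\de)/(\de,\de)$ is at least $1$ and that $r_\de(\al)=\al-\langle\al,\de^\vee\rangle\,\de$ is once more a root of Table~\ref{table_active_roots}, but a strictly simpler one: for $\De(\al)$ of type $\ms B$ or $\ms G_2$ it becomes typical, while for type $\ms C_n$ or $\ms F_4$ it becomes a non-typical root whose support is strictly smaller (of type $\ms C_{n-1}$ in the first case). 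In particular $\hgt r_\de(\al)<\hgt\al$, so the contribution of $\al$ to $\Theta$ drops.

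The main obstacle is the behaviour of the remaining maximal active roots. Since $\hgt r_\de(\be)=\hgt\be-\langle\be,\de^\vee\rangle$, one computes $\Theta(H')-\Theta(H)=-\langle\omega,\de^\vee\rangle$ in case $(2.1)$ and $-\langle\omega,\de^\vee\rangle+1$ in case $(2.2)$, where $\omega=\sum_{\be\in\mr M(H)}\be$. Thus the height gained by those $\be$ with $\langle\be,\de^\vee\rangle<0$ (the roots whose support abuts $\de$) may partly cancel the gain from $\al$, and the crux is to prove that the drop $\langle\al,\de^\vee\rangle$ always dominates, so that $\langle\omega,\de^\vee\rangle\ge1$ (resp.\ $\ge2$ in case $(2.2)$). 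This is where I expect the real work to lie: using that $\de$ is a terminal node of $\Supp\al$ and is regular, together with the admissible intersection patterns $(\mr D0)$, $(\mr D1)$, $(\mr D2)$, $(\mr E1)$, $(\mr E2)$ for the pairs $\al,\be$ furnished by Propositions~\ref{intersection_diff_weights} and~\ref{intersection_equal_weights} and by condition $(\mr C)$, one checks case by case that the bond of $\de$ to its unique neighbour inside $\Supp\al$ already exhausts any multiple edge at $\de$, which bounds the negative contributions $-\langle\be,\de^\vee\rangle$ and yields the required inequality. Carrying out this finite but delicate verification completes the induction.
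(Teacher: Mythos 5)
Your strategy is essentially the one the paper uses: induct on a height-type measure, and at each step apply the elementary transformation centred at the asterisked simple root $\de$ of Lemma~\ref{star_roots}, using Lemma~\ref{M_and_M'} to track $\mr M$. The difference is the choice of measure. You take $\Theta(H)=\sum_{\be\in\mr M(H)}\hgt\be$ over \emph{all} maximal active roots, whereas the paper takes $m(H)=\sum\hgt\be$ over the \emph{non-typical} ones only. This is not a cosmetic difference: with your measure, every maximal active root $\be$ whose support abuts $\de$ from outside $\Supp\al$ gains height under $r_\de$ (and in case (2.2) the root $\de$ itself enters $\mr M'$, costing another $+1$), so you must prove the quantitative inequality $\langle\omega,\de^\vee\rangle\ge1$ (resp.\ $\ge2$), i.e.\ that the drop at $\al$ dominates. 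With the paper's measure these troublesome roots simply do not count, because one checks (via Propositions~\ref{intersection_diff_weights} and~\ref{intersection_equal_weights}) that they are typical both before and after the reflection; the only thing left to verify is that $\al$ itself becomes typical or loses height, which is read off from Table~\ref{table_non-typical_active_roots}.

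The genuine gap is that you never carry out the verification on which your induction rests. You state explicitly that "this is where I expect the real work to lie" and that "carrying out this finite but delicate verification completes the induction" — but that verification \emph{is} the proof. To close it you would need, for each row of Table~\ref{table_non-typical_active_roots}, to bound the number and multiplicity of maximal active roots $\be$ with $(\be,\de)<0$ (using the admissible intersection patterns and condition $(\mr C)$), and to correlate this with whether case (2.1) or (2.2) of Proposition~\ref{comb_datas} occurs — e.g.\ in type $\ms C_n$ one must see that at most one such $\be$ exists, that it forces case (2.1), and that its coefficient at the neighbour of $\de$ is $1$ so that $\langle\be,\de^\vee\rangle=-1$ rather than $-2$. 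I believe the inequality is true in all cases, so your route is salvageable, but as written the decisive step is asserted, not proved. (A small additional inaccuracy: in type $\ms F_4$ the reflected root $r_\de(\al)$ is typical for $\al=2\al_2+\al_3$ and $\al=2\al_2+\al_3+\al_4$, and non-typical with smaller support only in the remaining two cases; similarly in type $\ms C_2$ it is typical. This does not affect the monotonicity claim but shows the case analysis needs care.)
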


\begin{proof}
Put $m = m(H)= \sum \hgt \de$, where $\de$ runs over all non-typical
maximal active roots of~$H$. Let us prove the assertion by induction
on~$m$. For $m = 0$ there is nothing to prove. Assume that the
assertion is proved for all $m < k$, where $k
> 0$, and consider the case $m = k$. Let $\al$ be a non-typical
maximal active root. Consider the indecomposable component $\De^0$
of $\De$ containing $\al$ and denote by $\Pi^0$ its set of simple
roots, $\Pi^0 = \Pi \cap \De^0$. Assume that $\Pi^0 = \{\al_1,
\ldots, \al_n\}$, where $n = |\Pi^0|$. Put also $\De^0_+ = \De_+
\cap \De^0$. Let $\de \in \Supp \al$ be the simple root marked by an
asterisk in Table~\ref{table_non-typical_active_roots}. By
Lemma~\ref{star_roots} the root $\de$ is a regular active root.
Regard the subgroup $H' = \overline r_\de H \overline r_\de^{-1}$
and show that $m(H') < m(H)$. Note that by Lemma~\ref{M_and_M'}(c)
all non-typical maximal active roots of $H'$ are contained in the
set~$r_\de(\mr M(H) \backslash \{\de\})$. Since $r_\de(\be) = \be$
for all roots $\be \in \mr M(H) \backslash \{\al\}$ orthogonal
to~$\de$, it suffices to show that the remaining maximal active
roots of $H$ (including~$\al$) make a positive contribution to the
difference $m(H) - m(H')$. Further we consider all possibilities
for~$\De^0$ and~$\al$.

$1^\circ$. $\De^0$ if of type~$\ms B_n$. From
Theorem~\ref{active_roots} it follows that every non-typical active
root contained in $\De^0_+$ contains the root $\al_n$ in its
support. Hence $\al$ is the unique non-typical maximal active root
contained in~$\De^0_+$. We have $\al = \al_l + \al_{l+1} + \ldots +
\al_{n-1} + 2\al_n$, where $1 \le l \le n - 1$. Then $\de = \al_n$.
Further, $r_\de(\al) = \al_l + \al_{l+1} + \ldots + \al_{n-1} \in
\mr M(H')$ is a typical active root, therefore the contribution of
$\al$ to $m(H) - m(H')$ is $\hgt \al$. If a root $\be \in \mr M(H)
\backslash \{\al\}$ is not orthogonal to $\de$, then by
Propositions~\ref{intersection_diff_weights}
and~\ref{intersection_equal_weights} we obtain that $l = n - 1$,
$\be = \al_p + \al_{p + 1} + \ldots + \al_{n - 1}$, where $1 \le p
\le n - 2$, and $\pi(\be) = \pi(\al) = \al_{n-1}$. In other words,
for the roots $\al$ and $\ga$ possibility $(\mr E1)$ is realized,
whence $2\al_n = \al - \al_{n-1} \in F(\al) \subset \De_+$, which is
not the case. Therefore $m(H) - m(H') = \hgt \al > 0$.

$2^\circ$. $\De^0$ is of type~$\ms C_n$. From
Theorem~\ref{active_roots} it follows that every non-typical active
root contained in $\De^0_+$ contains the root $\al_n$ in its
support. Hence $\al$ is the unique non-typical maximal active root
containted in~$\De^0_+$. We have $\al = 2\al_l + 2\al_{l+1} + \ldots
+ 2\al_{n-1} + \al_n$, where $1 \le l \le n - 1$. Then $\de =
\al_l$. We have $r_{\de}(\al) = 2\al_{l + 1} + \ldots + 2\al_{n - 1}
+ \al_n$, whence $\hgt \al - \hgt(r_\de(\al)) = 2$. If a root $\be
\in \mr M(H) \backslash \{\al\}$ is not orthogonal to~$\de$, then by
Propositions~\ref{intersection_diff_weights}
and~\ref{intersection_equal_weights} we obtain that $\Supp \be
\subset \{\al_1, \ldots, \al_l\}$. It follows that each of the roots
$\be$ and $r_\de(\be)$ is typical. Therefore $m(H) - m(H') = 2
> 0$.

$3^\circ$. $\De^0$ is of type~$\ms F_4$. By
Theorem~\ref{active_roots} four cases are possible. Let us consider
them separately.

\emph{Case}~1. $\al = 2\al_2 + \al_3$. Then $\de = \al_2$. We have
$r_\de(\al) = \al_3$, whence the contribution of $\al$ to $m(H) -
m(H')$ is $\hgt \al = 3$. If a root $\be \in \mr M(H) \backslash
\{\al\}$ is not orthogonal to~$\de$, then by
Propositions~\ref{intersection_diff_weights}
and~\ref{intersection_equal_weights} we obtain that either $\Supp
\be \subset \{\al_1, \al_2\}$ or $\be = \al_3 + \al_4$. In the first
case each of the roots $\be$ and $r_\de(\be)$ is typical. In the
second case for the roots $\al$ and $\be$ possibility $(\mr E1)$ is
realized, whence $2\al_2 = \al - \al_3 \in \Psi \subset \De_+$,
which is not the case. Therefore $m(H) - m(H') = 3
> 0$.

\emph{Case}~2. $\al = 2\al_2 + \al_3 + \al_4$. Then $\de = \al_2$.
Since $r_\de(\al) = \al_3 + \al_4$ is a typical root, the
contribution of $\al$ in $m(H) - m(H')$ is $\hgt \al = 4$. If a root
$\be \in \mr M(H) \backslash \{\al\}$ is not orthogonal to~$\de$,
then by Propositions~\ref{intersection_diff_weights}
and~\ref{intersection_equal_weights} we obtain that $\Supp \be
\subset \{\al_1, \al_2\}$, whence each of the roots $\be$ and
$r_\de(\be)$ is typical. Therefore $m(H) - m(H') = 4 > 0$.

\emph{Case}~3. $\al = 2\al_1 + 2\al_2 + \al_3$. Then $\de = \al_1$.
We have $r_\de(\al) = 2\al_2 + \al_3$, whence $\hgt \al - \hgt
(r_\de(\al)) = 2$. From Propositions~\ref{intersection_diff_weights}
and~\ref{intersection_equal_weights} it follows that all roots in
$\mr M(H) \backslash \{\al\}$ are orthogonal to~$\de$. Therefore
$m(H) - m(H') = 2 > 0$.

\emph{Case}~4. $\al = 2\al_1 + 2\al_2 + \al_3 + \al_4$. Then $\de =
\al_1$. We have $r_\de(\al) = 2\al_2 + \al_3 + \al_4$, whence $\hgt
\al - \hgt (r_\de(\al)) = 2$. Clearly, all roots in $\mr M(H)
\backslash \{\al\}$ are orthogonal to~$\de$. Therefore $m(H) - m(H')
= 2
> 0$.

$4^\circ$. $\De^0$ is of type~$\ms G_2$. By
Theorem~\ref{active_roots} we have $\al = 2\al_1 + \al_2$ or $ \al =
3\al_1 + \al_2$. In both cases, $\de = \al_1$. For $\al = 2\al_1 +
\al_2$ and $\al = 3\al_1 + \al_2$ we have $r_\de(\al) = \al_1 +
\al_2$ and $r_\de(\al) = \al_2$, respectively, therefore the
contribution of $\al$ to $m(H) - m(H')$ is $\hgt \al$. Clearly, all
roots in $\mr M(H) \backslash \{\al\}$ are orthogonal to~$\de$.
Therefore $m(H) - m(H') = \hgt \al \ge 3 > 0$.

Thus we have obtained that $m(H) > m(H')$. By the induction
hypothesis there is an element $w' \in w$ such that $H'' = \overline
w'H' \overline w'^{-1} \subset B$ and all maximal active roots of
the subgroup $H''$ are typical. Then it is easy to see that for $w =
w'r_\de$ and some $t \in T$ we have $\overline wH \overline w^{-1} =
tH''t^{-1}$, which completes the proof.
\end{proof}

\begin{remark}
If all maximal active roots of $H$ are typical, then all active
roots of $H$ are typical.
\end{remark}

\begin{remark}
If all roots in $\mr M(H)$ are typical, then for two different roots
in $\mr M(H)$ the condition~$(\mr E1)$ is equivalent to the
following condition:

$(\mr E1')$ $\Supp \al \cap \Supp \be = \{\ga\}$, where $\ga =
\pi(\al) = \pi(\be)$, and the root $\ga$ is terminal with respect to
both $\Supp \al$ and $\Supp \be$.
\end{remark}

The next step of the simplification of the set of combinatorial data
corresponding to a connected solvable spherical subgroup standardly
embedded in $B$ is the following proposition.

\begin{proposition} \label{simple_intersection}
Suppose that all maximal active roots of $H$ are typical. Then there
is an element $w \in W$ such that $H' = \overline w H \overline
w^{-1} \subset B$, all maximal active roots of $H'$ are typical, and
for any two maximal active roots $\al, \be$ of $H$ one of
possibilities $(\mr D0)$, $(\mr E1')$, or $(\mr E2')$ is realized,
where $(\mr E2')$ is as follows:

$(\mr E2')$ the diagram $\Sigma(\Supp \al \cup \Supp \be)$ has the
form shown on Figure~\ref{diagram_difficult} \textup{(}for some ${p,
q, r \ge 1}$\textup{)}, $\al = \al_1 + \ldots + \al_p + \ga_0 +
\ga_1 + \ldots + \ga_r$, $\be = \be_1 + \ldots + \be_q + \ga_0 +
\ga_1 + \ldots + \ga_r$, and $\pi(\al) = \pi(\be) = \ga_r$.
\end{proposition}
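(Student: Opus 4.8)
The plan is to argue by induction, using elementary transformations to remove the forbidden intersection patterns one pair at a time. Since all maximal active roots of $H$ are assumed typical and, by Lemma~\ref{M_and_M'}(c) together with Table~\ref{table_active_roots}, an elementary transformation carries typical maximal active roots to typical ones, every maximal active root stays typical throughout the argument. Consequently the only possibilities for a pair $\al,\be$ are $(\mr D0)$, $(\mr D1)$, $(\mr E1')$, $(\mr D2)$, and $(\mr E2)$ (the last with $\pi(\al)=\pi(\be)=\ga_s$ for some $0\le s\le r$), and what must be eliminated are exactly $(\mr D1)$, $(\mr D2)$, and the subcase $s<r$ of $(\mr E2)$. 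In the spirit of the proof of Proposition~\ref{get_rid_of_ntyp_roots} I would fix a complexity measure, taking as a first candidate $m(H)=\sum_{\al\in\mr M}\hgt\al$, and induct on it.

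For a pair $\al,\be$ realizing a forbidden pattern the reflection to apply is centered at the \emph{outermost active node of the overlap}: the node $\de$ in case $(\mr D1)$, and the terminal node $\ga_r$ of the common chain in cases $(\mr D2)$ and $(\mr E2)$. Writing $\de$ for this node in all cases, it lies in $\Supp\al\cap\Supp\be$, is terminal with respect to both $\Supp\al$ and $\Supp\be$, and is distinct from $\pi(\al)$; hence by Corollaries~\ref{extreme_root} and~\ref{supp&active} it is an active root, and $\hgt\de=1$, so $\de$ is a simple root. The next point is that $\de$ is \emph{regular}, so that reflecting at it is a genuine elementary transformation. Indeed, put $\ga=\al-\de\in\De_+$; by Lemma~\ref{sum_of_two_roots} the root $\ga$ is not active, and $\de\notin\Supp\ga$, so $\Supp\ga=\Supp\al\backslash\{\de\}$. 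If $\de$ were non-regular there would be an active $\de'\ne\de$ with $\tau(\de')=\tau(\de)$, and then Proposition~\ref{crucial} yields a maximal active root $\mu=\de'+\ga$ with $\mu\sim\al$ and $\Supp\al\backslash\{\de\}\subset\Supp\mu$. Since $\de\in\Supp\be$, this gives $\Supp\al\subset\Supp\be\cup\Supp\mu$ with $\be,\mu\in\mr M\backslash\{\al\}$, contradicting condition $(\mr C)$. Thus $\de$ is a regular active simple root.

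I would then perform the elementary transformation $H\mapsto H'=\si_\de H\si_\de^{-1}$ and read off the effect from Proposition~\ref{comb_datas}. Because $\de$ is terminal in $\Supp\al$ and $\Supp\be$ and these supports are simply laced (the roots being typical), one has $r_\de(\al)=\al-\de$ and $r_\de(\be)=\be-\de$, so the heights of $\al,\be$ drop by one and their overlap shrinks, while $\pi$ and $\sim$ are transported by $r_\de$. This turns $(\mr D1)$ into $(\mr D0)$, turns $(\mr D2)$ into a $(\mr D2)$-pattern with smaller $r$ (into $(\mr D1)$ when $r=1$), and turns an $(\mr E2)$-pattern with parameter $s<r$ into one with the same $s$ and with $r$ decreased by one; iterating the latter reaches $s=r$, that is, $(\mr E2')$. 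Hence each step moves the configuration strictly closer to the admissible list $\{(\mr D0),(\mr E1'),(\mr E2')\}$.

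The main obstacle is to guarantee that the chosen measure strictly decreases at every step, for reflecting at $\de$ can \emph{raise} the height of a maximal active root $\mu\ne\al,\be$ whose support is adjacent to $\de$ without containing it (then $\hgt r_\de(\mu)=\hgt\mu-(\mu,\de^\vee)>\hgt\mu$). The key leverage is that $\de$ is terminal in $\Sigma(\Supp\al\cup\Supp\be)$ and that Dynkin nodes have degree at most three, so $\de$ has very few neighbours outside $\Supp\al\cup\Supp\be$; combined with conditions $(\mr A)$--$(\mr C)$ and the explicit list in Table~\ref{table_active_roots}, this bounds the admissible configurations of such a $\mu$ and hence the possible height increases, which the contribution $-2$ from $\al,\be$ is made to dominate (refining $m(H)$ to a lexicographic measure if a single reflection fails to decrease the plain height sum). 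This is the delicate, case-by-case verification, entirely parallel to the itemized analysis $1^\circ$--$4^\circ$ in the proof of Proposition~\ref{get_rid_of_ntyp_roots}. Once the measure is shown to drop, the induction hypothesis applies to $H'$, and a final conjugation by a suitable $t\in T$ places the resulting subgroup in $B$ with every pair of maximal active roots in $\{(\mr D0),(\mr E1'),(\mr E2')\}$, as required.
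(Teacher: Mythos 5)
Your overall strategy---induct on a complexity measure, locate an active simple root $\de$ in the overlap of a bad pair, show it is regular, and apply the elementary transformation with centre $\de$---is the same as the paper's, and your regularity argument (via Proposition~\ref{crucial} and condition $(\mr C)$, a variant of the paper's Lemma~\ref{regular_simple_root}) is sound, as is your identification of the forbidden patterns. But there is a genuine gap exactly where you flag one. First, your opening claim that ``an elementary transformation carries typical maximal active roots to typical ones'' is false as a general statement: if $\de\notin\Supp\mu$ but $\de$ is joined to a node of $\Supp\mu$ by a double edge with the arrow directed to $\de$, then $r_\de(\mu)=\mu+2\de$, which is non-typical (this is the set $\mr M_{12}(H,\de)$ at the end of \S\,\ref{simplification}; e.g.\ in $\ms B_2$ reflecting $\mr M=\{\al_1,\al_2\}$ at the short root $\al_2$ produces $\al_1+2\al_2$). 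Ruling this out for your particular centres is possible, but it requires the structural analysis of the roots adjacent to $\de$ that you postpone, so typicality cannot be assumed ``throughout the argument'' for free. Second, and more seriously, the strict decrease of $m(H)=\sum\hgt\al$ is left as ``a delicate case-by-case verification,'' with a fallback to an unspecified ``lexicographic refinement'' if the plain height sum fails to drop. Since this is the entire content of the induction step, the proof is not complete: you would still need to show that $\de$ has at most one neighbour outside $\Supp\al\cup\Supp\be$, necessarily across a simple edge at a branch node, that at most one maximal active root contains that neighbour without containing $\de$, and that every maximal active root containing $\de$ has $\de$ terminal in its support.

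It is worth noting how the paper sidesteps this difficulty: instead of the height sum it inducts on $f(H)=\sum f(H,\al,\be)$, where $f(H,\al,\be)=|\Supp\al\cap\Supp\be|$ when the overlap contains an active simple root and $0$ otherwise. With this measure the bookkeeping localizes: Lemma~\ref{orthogonal} shows pairs whose supports are not adjacent to $\de$ contribute the same amount before and after, Lemma~\ref{auxiliary_simple_intersection} pins down the only possible configurations of maximal active roots adjacent to $\de$, and Lemma~\ref{Weyl_transformation} gives $f(H_0,r_\de(\al),r_\de(\be))=f(H,\al,\be)-1$ for every pair containing $\de$ in both supports, together with the preservation of typicality. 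If you want to keep the height-sum measure you must essentially reproduce that case analysis anyway, so the paper's choice of measure is the cleaner route.
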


Before we prove this proposition, let us prove several auxiliary
lemmas.

\begin{lemma} \label{regular_simple_root}
Suppose that a simple root $\de$ is active and is contained in
supports of at least two different maximal active roots. Then $\de$
is a regular active root.
\end{lemma}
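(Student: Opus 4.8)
The plan is to show that if a simple active root $\de$ lies in the supports of two distinct maximal active roots $\al$ and $\be$, then $\de$ cannot coincide in $S$-weight with any other active root, which by definition of regularity (the set $\Psi_i$ containing $\de$ is a singleton) forces $\de$ to be regular. So I would argue by contradiction: suppose there were a second active root $\de' \ne \de$ with $\tau(\de') = \tau(\de)$, and derive an impossibility.

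First I would set up the data. Since $\de$ is a simple active root contained in $\Supp \al$, by Corollary~\ref{supp&active} (applicable because $|\Supp\al|\ge 2$, forced by $\de \in \Supp\al \cap \Supp\be$ with $\al\ne\be$) the root $\de$ is terminal with respect to $\Supp\al$ and lies in $F(\al)$; similarly $\de \in F(\be)$ and is terminal with respect to $\Supp\be$. In particular $\de \ne \pi(\al)$ and $\de \ne \pi(\be)$ — otherwise, e.g., if $\de = \pi(\al)$, then by Corollary~\ref{max_assoc_int} applied to the pair $(\al,\be)$ we would get $\tau(\al)=\tau(\be)$, which is the relevant equal-weight case that I should keep in mind but which does not by itself prevent the argument; the key point is simply that $\de$ being a \emph{subordinate} active root (an element of $F(\al)\setminus\{\al\}$, since $\de\ne\al$ as $|\Supp\al|\ge 2$) means $\al - \de \in \De_+$.

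Now I would exploit the hypothetical $\de'$ with $\tau(\de')=\tau(\de)$. Since $\al - \de \in \De_+$, Proposition~\ref{crucial} applies to the pair $\de, \al$ (both in $\Psi$, with $\al - \de \in \De_+$) and yields that $\de' + (\al - \de)$ is again an active root, in fact with $\Psi_i + (\al-\de) \subset \Psi_j$ where $\de,\de' \in \Psi_i$ and $\al \in \Psi_j$. Set $\be^* = \de' + (\al - \de)$. Then $\tau(\be^*) = \tau(\al)$ and $\Supp \be^* = \Supp\de' \cup \Supp(\al - \de)$. Since $\Supp(\al-\de) = \Supp\al$ (removing the terminal simple root $\de$ from a support leaves its span unchanged only when $\de$ is an endpoint of an $\ms A$-type tail, and more precisely $\Supp(\al-\de)$ still contains $\Supp\al\setminus\{\de\}$, but I must check $\de\in\Supp(\al-\de)$ as well), I get $\Supp\al \subset \Supp\be^*$. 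This is exactly the configuration ruled out by maximality: a maximal active root's support cannot be contained in that of another active root, since then $\al$ would be subordinate to a maximal active root containing $\be^*$ in its family, contradicting Corollary~\ref{subfamilies}(a) together with the maximality of $\al$.

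\textbf{The main obstacle} will be handling the case distinction in the identity $\Supp(\al - \de) = \Supp\al$. Whether $\de$ stays in the support after subtraction depends on the multiplicity of $\de$ in $\al$: consulting Table~\ref{table_active_roots}, a terminal simple root $\de \in F(\al)$ appears with multiplicity $1$ in typical roots but can have higher multiplicity in non-typical ones, so I would verify case by case (using that $\de$ is terminal and lies in $F(\al)$) that $\Supp\al \subset \Supp(\de' + (\al-\de))$ always holds, giving the desired support containment $\Supp\al\subset\Supp\be^*$. An alternative, cleaner route that sidesteps the multiplicity bookkeeping is to apply Proposition~\ref{crucial} symmetrically to \emph{both} $\al$ and $\be$: form $\be^* = \de'+(\al-\de)$ and $\be^{**}=\de'+(\be-\de)$, both active of full weight, and observe that the simultaneous support containments $\Supp\al\subset\Supp\be^*$, $\Supp\be\subset\Supp\be^{**}$ contradict the maximality of $\al$ and $\be$ via Corollary~\ref{subfamilies}(a). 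Either way the contradiction shows no such $\de'$ exists, so $\Psi_i=\{\de\}$ and $\de$ is regular.
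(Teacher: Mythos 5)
There is a genuine gap in the concluding step. Your setup is fine: $\de\in F(\al)\cap F(\be)$, $\al-\de\in\De_+$, and Proposition~\ref{crucial} does produce the maximal active roots $\be^*=\de'+(\al-\de)$ and $\be^{**}=\de'+(\be-\de)$. But the contradiction you draw from them --- the support containment $\Supp\al\subset\Supp\be^*$ --- is false in general. One has $\Supp\be^*=\Supp\de'\cup\Supp(\al-\de)$, so the containment requires $\de\in\Supp\de'\cup\Supp(\al-\de)$. If $\al$ is typical (row~1 of Table~\ref{table_active_roots}, which is the generic case and, after \S\,\ref{simplification}, essentially the only one), the coefficient of $\de$ in $\al$ is $1$, so $\de\notin\Supp(\al-\de)$; and nothing whatsoever forces $\de\in\Supp\de'$ --- the hypothetical root $\de'$ is merely some active root with $\tau(\de')=\tau(\de)$, and it may well have support disjoint from $\{\de\}$. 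So the case-by-case verification you defer to cannot succeed, and the same objection kills the ``alternative cleaner route,'' which rests on the same two containments.

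The fix is to extract a different contradiction from exactly the roots you already built: since $\be^*-\de'=\al-\de$ and $\be^{**}-\de'=\be-\de$, one has the linear relation $\al+\be^{**}=\be+\be^*$ among maximal active roots, with $\al\ne\be$, $\al\ne\be^*$, $\be\ne\be^{**}$, $\be^*\ne\be^{**}$ (all because $\de'\ne\de$ and $\al\ne\be$); this relation remains nontrivial even if, say, $\al=\be^{**}$, and so it contradicts the linear independence of maximal active roots (Lemma~\ref{Psi_maximal_elements}). That is precisely the paper's proof. So you identified the right construction but closed the argument with an invalid step where a one-line linear-dependence argument was needed.
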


\begin{proof}
Let $\al, \be$ be different maximal active roots such that $\de \in
\Supp \al \cap \Supp \be$. Assume that there is an active root $\de'
\ne \de$ with $\tau(\de') = \tau(\de)$. Then by
Proposition~\ref{crucial} we obtain $\al' = \de' + (\al - \de) \in
\mr M$ and $\be' = \de' + (\be - \de) \in \mr M$, at that, $\al' \ne
\be'$. Since $\de' \ne \de$, we have $\al' \ne \al$ and $\be' \ne
\be$. Then the linear dependence $\al + \be' = \be + \al'$
contradicts Lemma~\ref{Psi_maximal_elements} even in the case when
some of the roots $\al, \be, \al', \be'$ coincide.
\end{proof}

\begin{lemma} \label{Weyl_transformation}
Suppose that $\al$ is a typical maximal active root, $|\Supp \al|\ge
2$, and the root system $\De(\al)$ is not of type~$\ms G_2$. Let
$\de \in \Supp \al$ be a regular active root. Then:

\textup{(a)} if in the diagram $\Supp \al$ the node $\de$ is
incident to a double edge with the arrow directed to~$\de$, then
$r_\de(\al) = \al$;

\textup{(b)} in all other cases we have $r_\de(\al) = \al - \de$.
\end{lemma}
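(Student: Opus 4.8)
The plan is to compute $r_\de(\al)$ directly from the reflection formula
$r_\de(\al) = \al - \frac{2(\al,\de)}{(\de,\de)}\,\de$, exploiting that $\al$ is typical. Typicality means $\al = \sum_{\ga \in \Supp \al}\ga$, so $\frac{2(\al,\de)}{(\de,\de)} = \sum_{\ga \in \Supp \al}\frac{2(\ga,\de)}{(\de,\de)}$, and each summand is a Cartan integer read off from the diagram $\Sigma(\Supp\al)$. Thus the whole statement reduces to determining the edges of $\Sigma(\Supp\al)$ incident to~$\de$.

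First I would locate $\de$ in $\Sigma(\Supp\al)$. Since $\de$ is active and $\Supp\de = \{\de\} \subset \Supp\al$, Corollary~\ref{subfamilies}(a) gives $\de \in F(\al)$, so $\de \in \Supp\al \cap F(\al)$; as $|\Supp\al|\ge 2$, Corollary~\ref{supp&active} then shows that $\de$ is terminal with respect to $\Supp\al$. Hence $\de$ has exactly one neighbour $\ga_0$ in $\Sigma(\Supp\al)$, while $(\ga,\de) = 0$ for every other $\ga\in\Supp\al$. Consequently $\frac{2(\al,\de)}{(\de,\de)} = 2 + \frac{2(\ga_0,\de)}{(\de,\de)}$, and the terminality of $\de$ is exactly what forces the coefficient to differ from~$2$ by a single contribution; had $\de$ had two neighbours, the outcome $\al - \de$ in case~(b) would fail.

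The remaining step is a case analysis on the edge joining $\de$ and $\ga_0$, and here the exclusion of type $\ms G_2$ (the only source of a triple edge) is precisely what is needed. If this edge is single, then $\de$ and $\ga_0$ have equal length and $\frac{2(\ga_0,\de)}{(\de,\de)} = -1$, so the coefficient equals $1$ and $r_\de(\al) = \al - \de$. If the edge is double, then the two Cartan integers $\frac{2(\ga_0,\de)}{(\de,\de)}$ and $\frac{2(\ga_0,\de)}{(\ga_0,\ga_0)}$ are $-2$ and $-1$ in some order, and the value $-2$ occurs for $\frac{2(\ga_0,\de)}{(\de,\de)}$ precisely when $\de$ is the shorter root, i.e. exactly when the arrow on the double edge is directed towards~$\de$. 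In that case the coefficient is $2 + (-2) = 0$ and $r_\de(\al) = \al$, which is case~(a); in the opposite orientation $\de$ is long, $\frac{2(\ga_0,\de)}{(\de,\de)} = -1$, the coefficient is $1$, and $r_\de(\al) = \al - \de$, completing case~(b). The only delicate point is the bookkeeping of which Cartan integer equals $-2$ and its matching with the arrow convention, but once $\de$ is known to be terminal and triple edges are ruled out, this is routine root-system arithmetic (and is readily confirmed on the diagrams $\ms A$, $\ms B$, $\ms C$, $\ms D$, $\ms E$, $\ms F$ underlying the entries of Table~\ref{table_active_roots}).
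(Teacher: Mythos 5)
Your proof is correct and follows essentially the same route as the paper: the paper's proof consists of invoking Corollary~\ref{supp&active} to get that $\de$ is terminal with respect to $\Supp\al$ and then declaring the rest "a direct check," which is exactly the Cartan-integer computation you carry out (with the small extra care of first deducing $\de\in F(\al)$ from Corollary~\ref{subfamilies}(a) so that Corollary~\ref{supp&active} applies). Your explicit use of typicality to reduce $\tfrac{2(\al,\de)}{(\de,\de)}$ to $2+\tfrac{2(\ga_0,\de)}{(\de,\de)}$ is a clean, uniform way to perform that check.
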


\begin{proof}
By corollary~\ref{supp&active} the root $\de$ is terminal with
respect to $\Supp \al$. The remaining part of the proof is obtained
by a direct check.
\end{proof}

\begin{lemma} \label{auxiliary_simple_intersection}
Suppose that $\al, \be$ are different maximal active roots and an
active simple root $\de$ is such that $\de \in \Supp \al \cap \Supp
\be$. Let $\al_0$ denote the \textup{(}unique\textup{)} node of the
diagram $\Sigma(\Supp \al)$ joined by an edge with the node~$\de$.
Then for every maximal active root $\ga \ne \al$ with $\al_0 \in
\Supp \ga$ we have $\de \in \Supp \ga$.
\end{lemma}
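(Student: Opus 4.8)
The plan is to argue by contradiction, exploiting that the shared simple root $\de$ is forced to be \emph{regular}. First I would record the structural facts about $\de$. Since $\de$ is active and simple with $\de\in\Supp\al$, Corollary~\ref{subfamilies}(a) gives $\de\in F(\al)$, Corollary~\ref{supp&active} shows that $\de$ is terminal with respect to $\Supp\al$ (so its neighbour $\al_0$ in $\Sigma(\Supp\al)$ is indeed unique), and Corollary~\ref{one_to_one} yields $\pi(\al)\ne\de$. Moreover, as $\de$ lies in the supports of the two distinct maximal active roots $\al,\be$, Lemma~\ref{regular_simple_root} tells us that $\de$ is regular; equivalently, no active root other than $\de$ has $S$-weight $\tau(\de)$. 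This last statement is the contradiction I aim for. So I assume there is a maximal active root $\ga\ne\al$ with $\al_0\in\Supp\ga$ and $\de\notin\Supp\ga$, and (restricting the root system) I may take $\De=\De\cap\langle\Supp\al\cup\Supp\ga\rangle$.

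Next I would apply Propositions~\ref{intersection_diff_weights} and~\ref{intersection_equal_weights} to the pair $(\al,\ga)$: one of $(\mr D0)$, $(\mr D1)$, $(\mr E1)$, $(\mr D2)$, $(\mr E2)$ holds. Since $\al_0\in\Supp\al\cap\Supp\ga$, possibility $(\mr D0)$ is excluded, and I split according to the size of the intersection. The key geometric observation is that $\de$ is a node of $\Supp\al\setminus\Supp\ga$ adjacent to the node $\al_0$ of $\Supp\al\cap\Supp\ga$, while $\de$ is terminal in $\Supp\al$. If $\Supp\al\cap\Supp\ga$ is a single node, then that node is $\al_0$ and it is terminal in $\Supp\al$ (as in $(\mr D1)$ and $(\mr E1)$); together with $\de$ being terminal with neighbour $\al_0$ and $\Supp\al$ connected, this forces $\Supp\al=\{\de,\al_0\}$, and hence $\al_0=\pi(\al)$. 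This kills $(\mr D1)$, which requires $\pi(\al)$ to differ from the intersection node. In the remaining possibility $(\mr E1)$ the condition $\al-\al_0\in\De_+$ forces $\al=\de+\al_0$ (type $\ms A_2$) and $\tau(\al)=\tau(\ga)$.

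The contradiction in the $(\mr E1)$ case comes by setting $\de'=\ga-\al_0$. Since $\al_0=\pi(\ga)$ occurs with coefficient one in $\ga$ (by inspection of Table~\ref{table_active_roots}), $\de'$ is a positive root, and by Proposition~\ref{associated_simple_root} (equivalently Lemma~\ref{sum_of_two_roots}) it is active because $\pi(\ga)=\al_0\notin\Supp\de'$; its weight is $\tau(\de')=\tau(\ga)-\tau(\al_0)=\tau(\al)-\tau(\al_0)=\tau(\de)$, while $\de\notin\Supp\ga\supseteq\Supp\de'$ gives $\de'\ne\de$, contradicting regularity. When $\Supp\al\cap\Supp\ga$ has at least two nodes I am in $(\mr D2)$ or $(\mr E2)$, so $\Sigma(\Supp\al\cup\Supp\ga)$ is the diagram of Figure~\ref{diagram_difficult}; the intersection is the chain $\{\ga_0,\dots,\ga_r\}$ and $\Supp\al\setminus\Supp\ga=\{\al_1,\dots,\al_p\}$. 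The only node of the latter adjacent to the intersection is $\al_1$, so $\de=\al_1$ and $\al_0=\ga_0$; terminality of $\de$ forces $p=1$. Now $(\mr D2)$ would give $\pi(\al)\in\{\al_1\}=\{\de\}$, contradicting $\pi(\al)\ne\de$, while in $(\mr E2)$ I set $\de'=\ga-(\ga_0+\dots+\ga_r)=\be_1+\dots+\be_q$: this is an active root (its support avoids $\pi(\ga)$) with $\tau(\de')=\tau(\ga)-\tau(\ga_0+\dots+\ga_r)=\tau(\al)-\tau(\al-\de)=\tau(\de)$ and $\de\notin\Supp\de'$, again contradicting regularity.

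I expect the main obstacle to be the bookkeeping that pins down the position of $\de$ in each possibility, namely that $\de$ must be the unique node of $\Supp\al$ meeting the intersection, which degenerates $\al$ to rank two in the single-node case and to the $p=1$ configuration of Figure~\ref{diagram_difficult} in the branching case, together with the routine but necessary checks that each auxiliary root $\de'$ is genuinely active and really has weight $\tau(\de)$. Once these are in place, the regularity of $\de$ supplied by Lemma~\ref{regular_simple_root} converts every surviving possibility into a forbidden second active root of weight $\tau(\de)$, completing the contradiction.
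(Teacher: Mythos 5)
Your proof is correct, and the overall skeleton (terminality of $\de$ in $\Supp\al$ via Corollary~\ref{supp&active}, a case split according to whether $\Supp\al\cap\Supp\ga$ is a singleton or larger, and the classification of intersections from Propositions~\ref{intersection_diff_weights} and~\ref{intersection_equal_weights}) matches the paper's. Where you diverge is in the mechanism that produces the final contradiction. The paper observes that in both cases $\Supp\al$ ends up covered: in the singleton case $\Supp\al=\{\de,\al_0\}$ with $\de\in\Supp\be$ and $\al_0\in\Supp\ga$, and in the branching case $\Supp\al\setminus\{\de\}\subset\Supp\ga$ while $\de\in\Supp\be$; either way $\Supp\al\subset\Supp\be\cup\Supp\ga$ contradicts Proposition~\ref{covering_of_support}, uniformly and without having to distinguish the $(\mr D)$ from the $(\mr E)$ possibilities. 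You instead invoke Lemma~\ref{regular_simple_root} to make $\de$ regular, rule out $(\mr D1)$ and $(\mr D2)$ by the observation $\pi(\al)\ne\de$, and in the surviving cases $(\mr E1)$, $(\mr E2)$ exhibit an explicit active root $\de'$ with $\tau(\de')=\tau(\de)$ and $\de'\ne\de$, contradicting regularity. Both routes are legitimate and rest on previously established material (Lemma~\ref{regular_simple_root} precedes this statement in the paper, so there is no circularity); the paper's argument is shorter and more uniform because condition $(\mr C)$ does all the work at once, whereas yours requires the extra bookkeeping of verifying that each auxiliary root $\de'$ is active and has the right weight --- checks you carry out correctly --- but in exchange it makes visible the concrete second root of weight $\tau(\de)$ that would destroy sphericity, which is perhaps more informative about why the configuration is impossible.
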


\begin{proof}
Assume that a maximal active root $\ga \ne \al$ satisfies $\al_0 \in
\Supp \ga$ and $\de \notin \Supp \ga$. In view of
Corollary~\ref{supp&active} the root $\de$ is terminal with respect
to $\Supp \al$. If $\al_0$ is also terminal with respect to $\Supp
\al$, then $|\Supp \al| = 2$ and we have a contradiction with
Proposition~\ref{covering_of_support}. Now assume that $\al_0$ is
not terminal with respect to $\Supp \al$. Then the set $\Supp \al
\cap \Supp \ga$ contains at least two elements. Since $\de \notin
\Supp \ga$, in view of Propositions~\ref{intersection_diff_weights}
and~\ref{intersection_equal_weights} for the roots $\al$ and $\ga$
possibility $(\mr E2)$ is realized, at that, $\Supp \al \cap \Supp
\ga = \Supp \al \backslash \{\de\}$. Then $\Supp \al \subset \Supp
\be \cup \Supp \ga$, and again we obtain a contradiction with
Proposition~\ref{covering_of_support}.
\end{proof}

For any two different maximal active roots $\al, \be$ of $H$ we
introduce a quantity $f(\al, \be) = f(H, \al, \be)$ as follows. If
the set $\Supp \al \cap \Supp \be$ contains an active simple root,
then we put $f(\al, \be) = |\Supp \al \cap \Supp \be|$. Otherwise we
put $f(\al, \be) = 0$.

\begin{lemma}\label{orthogonal}
Suppose that a regular active simple root $\de$ and a maximal active
root $\al$ are such that in the diagram $\Sigma(\Pi)$ the node $\de$
is joined by an edge with none of the nodes in the set $\Supp \al$.
Then for every maximal active root $\be \notin \{\al, \de\}$ we have
$f(H, \al, \be) = f(\overline r_\de H \overline r_\de^{-1},
r_\de(\al), r_\de(\be))$.
\end{lemma}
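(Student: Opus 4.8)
The plan is to reduce the whole statement to an explicit bookkeeping of how the supports of $\al$ and $\be$, together with the active simple roots inside their intersection, behave under the reflection $r_\de$, exploiting that $\de$ is non-adjacent to $\Supp\al$.

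First I would record two consequences of the hypothesis. Since the support of a root is connected in the Dynkin diagram and $\de$ is joined to no node of $\Supp\al$, either $\al=\de$ (a degenerate case in which $r_\de(\al)\notin\De_+$, and which does not occur here) or $\de\notin\Supp\al$; I take the latter. Then $(\ga,\de)=0$ for every $\ga\in\Supp\al$, so $(\al,\de)=0$ and hence $r_\de(\al)=\al$, giving $\Supp r_\de(\al)=\Supp\al$. For $\be$, the identity $r_\de(\be)=\be-\langle\be,\de^\vee\rangle\de$ shows that $r_\de(\be)$ and $\be$ carry the same coefficient on every simple root other than $\de$, so $\Supp r_\de(\be)\setminus\{\de\}=\Supp\be\setminus\{\de\}$. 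Both $r_\de(\al)=\al$ and $r_\de(\be)$ are again maximal active roots of $H'=\overline r_\de H\overline r_\de^{-1}$ by Lemma~\ref{M_and_M'}(c), so $f(H',r_\de(\al),r_\de(\be))$ is well defined.

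The key step is then the support identity $\Supp r_\de(\al)\cap\Supp r_\de(\be)=\Supp\al\cap\Supp\be$. Since $\de\notin\Supp\al=\Supp r_\de(\al)$, every element of either intersection is a simple root distinct from $\de$; and for such roots, membership in $\Supp r_\de(\be)$ is equivalent to membership in $\Supp\be$ by the previous paragraph. This yields the claimed equality of the two intersections, and in particular the equality of their cardinalities. It remains to compare the ``contains an active simple root'' clauses in the definition of $f$: every $\ga\in\Supp\al\cap\Supp\be$ satisfies $\ga\ne\de$ and is non-adjacent to $\de$, hence is fixed by $r_\de$, so by Lemma~\ref{M_and_M'}(a), which gives $\Psi(H')=r_\de(\Psi(H)\setminus\{\de\})\cup\{\de\}$, such a $\ga$ lies in $\Psi(H')$ if and only if it lies in $\Psi(H)$. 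Thus the set of active simple roots inside $\Supp\al\cap\Supp\be$ coincides for $H$ and $H'$, so it is nonempty for one exactly when for the other. Combining this with the equality of cardinalities settles both alternatives in the definition of $f$ at once and gives $f(H,\al,\be)=f(H',r_\de(\al),r_\de(\be))$.

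The main obstacle I anticipate is purely combinatorial rather than conceptual: ensuring that the reflection does not secretly alter the intersection of the supports when $\de\in\Supp\be$, where the coefficient of $\de$ may change and $\de$ may enter or leave $\Supp r_\de(\be)$. The point is that this movement of $\de$ is invisible to $\Supp\al$ precisely because $\de\notin\Supp\al$; once the support identity is pinned down, the equivalence of activity under $r_\de$ for the $r_\de$-fixed simple roots in the intersection is immediate from Lemma~\ref{M_and_M'}.
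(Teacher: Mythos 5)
Your proof is correct and follows essentially the same route as the paper's: observe that $\de\notin\Supp\al$ and is orthogonal to $\Supp\al$, so $r_\de(\al)=\al$ and $\Supp\al\cap\Supp\be=\Supp r_\de(\al)\cap\Supp r_\de(\be)$, and then note that activity of a simple root in this intersection is preserved under the conjugation. Your version merely spells out the two details the paper leaves implicit (the case $\de\in\Supp\be$ in the support identity, and the appeal to Lemma~\ref{M_and_M'}(a) for preservation of activity), which is fine.
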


\begin{proof}
It follows from the hypothesis that the root $\de$ is not contained
in the set $\Supp \al$ and is orthogonal to all roots contained in
it, whence $r_\de(\al) = \al$. Then for every maximal active root
$\be$ we have $\Supp \al \cap \Supp \be = \Supp r_\de(\al) \cap
\Supp r_\de(\be)$. To complete the proof, it remains to observe that
for a root $\de' \in \Supp \al \cap \Supp \be$ the conditions $\de'
\in \Psi(H)$ and $\de' \in \Psi(\overline r_\de H \overline
r_\de^{-1})$ are equivalent.
\end{proof}

\begin{proof}[Proof of
Proposition~\textup{\ref{simple_intersection}}] Put $f(H) = \sum
f(H, \al, \be)$, where the sum is taken over all (unordered) pairs
of different maximal active roots of~$H$. Let us prove the assertion
by induction on~$f(H)$. If $f(H) = 0$, then for every two different
roots $\al, \be \in \mr M(H)$ the set $\Supp \al \cap \Supp \be$
contains no active roots, which in view of
Propositions~\ref{intersection_diff_weights}
and~\ref{intersection_equal_weights} means that for $\al, \be$ one
of possibilities $(\mr D0)$, $(\mr E1')$, or $(\mr E2')$ is
realized. Assume that for some $k>0$ the assertion is proved for all
subgroups $H$ with $f(H) < k$ and consider the case $f(H) = k$. Let
$\de$ be an active simple root contained in supports of at least two
different maximal active roots of~$H$. Then by
Lemma~\ref{regular_simple_root} the root $\de$ is regular. Regard
the subgroup $H_0 = \overline r_\de H \overline r_\de^{-1}$. Let us
prove that $f(H_0) < f(H)$ and all roots in the set $\mr M(H_0)$ are
typical. Let $\De^0$ denote the indecomposable component of the root
system $\De$ containing~$\de$. Clearly, $\De^0$ is not of type~$\ms
G_2$. In view of Lemma~\ref{M_and_M'}(c) it is easy to see that a
non-zero contribution to $f(H_0)$ can only be made by pairs of roots
in $\mr M(H_0)$ that are contained in $r_\de(\mr M(H))$. Note the
following: if for a root $\al \in \mr M(H)$ the node $\de$ of the
diagram $\Sigma(\De^0)$ is joined by an edge with none of nodes in
$\Supp \al$, then $r_\de(\al) = \al$, which implies that the root
$r_\de(\al)$ is typical. Further, if different roots $\al,\be \in
\mr M(H)$ are such that $\de \in \Supp \al \cap \Supp \be$, then in
view of Propositions~\ref{intersection_diff_weights}
and~\ref{intersection_equal_weights} for $\al, \be$ one of
possibilities $(\mr D1)$, $(\mr D2)$, or $(\mr E2)$ is realized. (At
that, in the latter case possibility $(\mr E2')$ is not realized.)
In view of Lemma~\ref{Weyl_transformation} and the condition that in
the diagram $\Sigma(\Pi)$ the node $\de$ is incident to at most one
double edge, in all the cases we obtain $f(H_0, r_\de(\al),
r_\de(\be)) = f(H, \al ,\be) - 1$. Moreover, each of the roots
$r_\de(\al)$ and $r_\de(\be)$ is typical. In view of
Lemma~\ref{auxiliary_simple_intersection}, the further consideration
is divided into two cases.

\emph{Case}~1. For every root $\al \in \mr M(H)$ with $\de \notin
\Supp \al$ in the diagram $\Sigma(\Pi)$ the node $\de$ is joined by
an edge with none of nodes in~$\Supp \al$. Then for every such root
$\al$ by Lemma~\ref{orthogonal} we obtain that for every root $\be
\in \mr M(H) \backslash \{\al\}$ the equality $f(H, \al, \be) =
f(H_0, r_\de(\al), r_\de(\be))$ holds.

\emph{Case}~2. The diagram $\Sigma(\Pi \cap \De^0)$ has the form
shown on Figure~\ref{diagram_difficult} (for some $p, q, r \ge 1$),
$\de = \ga_{r'}$, where $0 \le r' \le r - 1$, and the set $\mr M(H)$
contains the roots $\al = \al_1 + \ldots + \al_{p'} + \ga_0 + \ga_1
+ \ldots + \ga_{r'}$, $\be = \be_1 + \ldots + \be_{q'} + \ga_0 +
\ga_1 + \ldots + \ga_{r'}$, $\ga = \ga_{r'+1} + \ga_{r'+2} + \ldots
+ \ga_{r''}$ for some $p', q', r''$, where $1 \le p' \le p$, $1 \le
q' \le q$, $r'+1 \le r'' \le r$. In view of
Propositions~\ref{intersection_diff_weights},~\ref{intersection_equal_weights},
and~\ref{covering_of_support} none of the maximal active roots of
$H$ different from $\ga$ contains the root $\ga_{r' + 1}$ in its
support. From this and Lemma~\ref{auxiliary_simple_intersection} it
follows that for every root $\al' \in \mr M(H)$ different from $\al,
\be, \ga$ the root $\de$ is orthogonal to all roots in $\Supp \al'$.
Then by Lemma~\ref{orthogonal} we obtain that for every root $\be'
\in \mr M(H) \backslash \{\al'\}$ the equality $f(H, \al', \be') =
f(H_0, r_\de(\al'), r_\de(\be'))$ holds. Further, it is not hard to
see that $f(H, \al, \ga) = f(H_0, r_\de(\al), r_\de(\ga)) = 0$,
$f(H, \be, \ga) = f(H_0, r_\de(\be), r_\de(\ga)) = 0$, and each of
roots $r_\de(\al)$, $r_\de(\be)$, $r_\de(\ga)$ is typical.

In both cases we have obtained that $f(H_0) < f(H)$ and all roots in
$\mr M(H_0)$ are typical. Then by the induction hypothesis there is
an element $w' \in W$ such that the subgroup $H' = \overline w' H_0
\overline w'^{-1}$ is standardly embedded in~$B$, all roots in $\mr
M(H')$ are typical, and for any two different roots $\al, \be \in
\mr M(H')$ one of possibilities $(\mr D0)$, $(\mr E1')$, or $(\mr
E2')$ is realized. Then $w = w'r_\de$ is the desired element.
\end{proof}

\begin{dfn}
The set $\Upsilon(H) = (S, \mr M, \pi, \sim)$, as well as its subset
$\Upsilon_0(H) = (\mr M, \pi, \sim)$, is called \emph{reduced} if
the following conditions are fulfilled:

$(\mr A')$ $\al = \sum\limits_{\de \in \Supp \al} \de$ for every
$\al \in \mr M$, and $\pi(\al) \in \Supp \al$;

$(\mr D')$ if $\al,\be \in \mr M$ and $\al \nsim \be$, then $\Supp
\al \cap \Supp \be = \varnothing$;

$(\mr E')$ if $\al, \be \in \mr M$ and $\al \sim \be$, then for the
roots $\al, \be$ one of possibilities $(\mr D0)$, $(\mr E1')$, or
$(\mr E2')$ is realized.
\end{dfn}

Propositions~\ref{get_rid_of_ntyp_roots}
and~\ref{simple_intersection} imply the following theorem.

\begin{theorem}\label{simplified}
Every connected solvable spherical subgroup $H\subset G$ is
conjugate to a subgroup $H'$ standardly embedded in $B$ such that
the set $\Upsilon(H')$ is reduced.
\end{theorem}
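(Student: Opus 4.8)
The plan is to obtain the desired $H'$ by composing the two normalization steps already furnished by Propositions~\ref{get_rid_of_ntyp_roots} and~\ref{simple_intersection}, and then to verify that the three reduced conditions $(\mr A')$, $(\mr D')$, $(\mr E')$ are precisely what those propositions deliver. Since every connected solvable subgroup of $G$ is conjugate to one standardly embedded in $B$, and since both sphericity and solvability are invariant under conjugation, I may assume from the outset that $H$ is itself standardly embedded in~$B$.

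First I would apply Proposition~\ref{get_rid_of_ntyp_roots} to produce an element $w_1 \in W$ such that $H_1 = \overline{w_1} H \overline{w_1}^{-1}$ is standardly embedded in $B$ and all of its maximal active roots are typical, that is, satisfy $\al = \sum_{\de \in \Supp \al} \de$. Because $H_1$ now meets the hypothesis of Proposition~\ref{simple_intersection}, I would next apply that proposition to $H_1$, obtaining $w_2 \in W$ such that $H' = \overline{w_2} H_1 \overline{w_2}^{-1}$ is standardly embedded in $B$, all maximal active roots of $H'$ remain typical, and for every pair of distinct maximal active roots $\al, \be$ of $H'$ exactly one of $(\mr D0)$, $(\mr E1')$, $(\mr E2')$ is realized. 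The composite conjugation then yields a subgroup $H'$ conjugate to $H$, and $H'$ is the candidate.

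It remains to read off the reduced conditions for $\Upsilon(H')$. Condition $(\mr A')$ is immediate: typicality supplies the summation formula, while $\pi(\al) \in \Supp \al$ holds by condition $(\mr A)$ of Theorem~\ref{uniqueness_theorem}. For the split between $(\mr D')$ and $(\mr E')$, the key observation is that both $(\mr E1')$ and $(\mr E2')$ place $\pi(\al) = \pi(\be)$ inside $\Supp \al \cap \Supp \be$, whence $\tau(\al) = \tau(\be)$, i.e. $\al \sim \be$, by Corollary~\ref{max_assoc_int}. Consequently, when $\al \nsim \be$ the only surviving possibility in the trichotomy of Proposition~\ref{simple_intersection} is $(\mr D0)$, which is exactly $(\mr D')$; and when $\al \sim \be$ the trichotomy is precisely condition $(\mr E')$.

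I expect no genuine obstacle here, since the two propositions perform all the combinatorial work and the theorem is essentially their formal combination. The single point requiring care is the logical bookkeeping in the last paragraph: one must confirm that $(\mr E1')$ and $(\mr E2')$ force $\al \sim \be$, so that the common trichotomy supplied by Proposition~\ref{simple_intersection} correctly separates, according to whether $\al \sim \be$ or $\al \nsim \be$, into the two distinct requirements $(\mr E')$ and $(\mr D')$.
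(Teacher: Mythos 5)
Your proposal is correct and follows exactly the paper's route: the paper proves Theorem~\ref{simplified} simply by combining Propositions~\ref{get_rid_of_ntyp_roots} and~\ref{simple_intersection}, and your verification that the trichotomy $(\mr D0)$, $(\mr E1')$, $(\mr E2')$ splits into $(\mr D')$ and $(\mr E')$ via Corollary~\ref{max_assoc_int} is a sound (and slightly more explicit) rendering of the same argument.
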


\subsection{}

In this subsection we prove the following theorem.

\begin{theorem}\label{simplified_conjugate}
Let $H, H' \subset G$ be two connected solvable spherical subgroups
standardly embedded in $B$ such that the sets $\Upsilon(H)$ and
$\Upsilon(H')$ are reduced. Suppose that $H$ and $H'$ are conjugate
in~$G$. Then there is a sequence of elementary transformations
taking $H$ to $H'$ such that for every intermediate subgroup
$\widetilde H$ the set $\Upsilon(\widetilde H)$ is also reduced.
\end{theorem}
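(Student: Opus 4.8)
The plan is to combine Theorem~\ref{elementary_transformations}, which already guarantees \emph{some} sequence of elementary transformations taking $H$ to $H'$, with the reduction procedures from Propositions~\ref{get_rid_of_ntyp_roots} and~\ref{simple_intersection}, used not merely to produce a reduced subgroup but to \emph{correct} any intermediate subgroup in the sequence whose set of combinatorial data fails to be reduced. Concretely, I would first invoke Theorem~\ref{elementary_transformations} to obtain a chain $H = H_0 \mapsto H_1 \mapsto \ldots \mapsto H_n = H'$ of elementary transformations. The difficulty is that a generic link $H_i$ in this chain need not have $\Upsilon(H_i)$ reduced: applying a reflection $\si_\al$ can create non-typical maximal active roots (violating $(\mr A')$) or make supports of inequivalent maximal active roots intersect (violating $(\mr D')$/$(\mr E')$). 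So the core of the argument is a normalization step that repairs each $H_i$ while staying within elementary transformations.

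The key idea is that the reduction operations in Propositions~\ref{get_rid_of_ntyp_roots} and~\ref{simple_intersection} are themselves realized by \emph{elementary} transformations: each reflection $\overline r_\de$ used there has center a regular active simple root $\de$ (see Lemma~\ref{star_roots} and Lemma~\ref{regular_simple_root}), so each such $\overline r_\de$ is precisely an elementary transformation in the sense of the relevant definition. Thus from any intermediate $H_i$ there is a finite sequence of elementary transformations $H_i \mapsto \widetilde H_i$ with $\Upsilon(\widetilde H_i)$ reduced. The plan is therefore to interleave: given the raw chain, replace each step by the composite ``reduce $H_i$ to a reduced form, then perform the original elementary transformation, then reduce again,'' so that every subgroup appearing in the refined chain is reduced. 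Since both $H = H_0$ and $H' = H_n$ are already reduced by hypothesis, the endpoints need no correction.

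The main obstacle will be verifying that this interleaving genuinely connects the reduced form of one link to the reduced form of the next \emph{through reduced intermediate subgroups only}, rather than merely reducing each link independently (which would leave the unreduced originals $H_i$ in the chain). I would argue this by induction on the length $n$ of the raw chain, or alternatively by running the double induction on $m(H)$ and $f(H)$ (the height-sum and support-intersection quantities from the two propositions) \emph{simultaneously} along the whole path: at each elementary transformation one tracks how $m$ and $f$ change, and reduces back to $m=f=0$ before proceeding, checking at every reflection that its center remains a regular active simple root so that each corrective move stays elementary and the running subgroup stays standardly embedded in $B$. The bookkeeping lemmas needed are exactly Lemma~\ref{M_and_M'}, Lemma~\ref{Weyl_transformation}, Lemma~\ref{orthogonal}, and Lemma~\ref{auxiliary_simple_intersection}, which control how $\mr M$, $\pi$, typicality, and the quantities $m$ and $f$ transform under a single reflection.

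A final point to settle is the base of the induction and the well-definedness of the whole scheme: one must confirm that the corrective reflections do not undo progress made by the original chain, i.e. that the two reductions can be organized monotonically (decreasing $m$, then $f$, as in the proofs of the two propositions) without cycling. I expect this to reduce to the observation that the reduction of Proposition~\ref{get_rid_of_ntyp_roots} strictly decreases $m(H)$ and that of Proposition~\ref{simple_intersection} strictly decreases $f(H)$ while preserving typicality, so the composite ``elementary transformation followed by full reduction'' terminates and lands on a reduced subgroup; stringing these composites together yields the required sequence in which \emph{every} intermediate subgroup is reduced, which is exactly the assertion of Theorem~\ref{simplified_conjugate}.
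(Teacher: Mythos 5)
Your plan has a genuine gap at exactly the point you flag as ``the main obstacle,'' and the obstacle is not surmountable by the interleaving scheme you describe. If the raw chain $H = H_0 \mapsto H_1 \mapsto \ldots \mapsto H_n = H'$ contains an unreduced link $H_i$, then $H_i$ remains an intermediate subgroup of any sequence obtained by \emph{inserting} corrective elementary transformations after it: appending a reduction $H_i \mapsto \widetilde H_i$ does not remove $H_i$ from the path, and to continue you must then travel from $\widetilde H_i$ back towards $H_{i+1}$, typically re-entering unreduced territory. What the theorem requires is to \emph{replace} the offending steps by a different route from $H_0$ to $H_n$ that never visits an unreduced subgroup, and your proposal supplies no mechanism for constructing such a detour or for proving it exists. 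The monotonicity of $m$ and $f$ under the reductions of Propositions~\ref{get_rid_of_ntyp_roots} and~\ref{simple_intersection} guarantees that each individual reduction terminates, but says nothing about whether the reduced forms $\widetilde H_i$ and $\widetilde H_{i+1}$ of consecutive links are joined by a path through reduced subgroups --- which is precisely the content of the theorem, now for a pair of subgroups you know even less about than $H$ and $H'$.

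The paper's proof takes a different and essentially unavoidable route: it considers the \emph{shortest} sequence of elementary transformations from $H$ to $H'$ and proves that this sequence already has the required property. The argument is extremal: assuming some intermediate $\mr M_j$ contains a non-typical root (Step~1) or an active simple root lying in two supports (Step~2), with $j$ chosen maximal over all shortest sequences, one analyzes the possible root-system configurations ($\ms B_n$, $\ms C_n$, $\ms F_4$, and the $\ms D$-type diagram of Figure~\ref{diagram_difficult}) and shows that either consecutive centers must form a forced pattern leading to an impossible active root, or two commuting elementary transformations can be transposed to push the defect later, contradicting the maximality of $j$. None of this machinery appears in your proposal, and without it (or some substitute argument that actually produces a globally reduced path rather than locally reducing each link) the proof does not go through.
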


Before we prove this theorem, let us prove an auxiliary lemma.

\begin{lemma}\label{node_connected_with_active}
Suppose that $\al \in \Psi(H)$, $|\Supp \al| \ge 2$, and $\de \in
F(\al) \cap \Pi$. Regard \textup{(}the unique\textup{)} node $\de'$
in the diagram $\Sigma(\Supp \al)$ connected by an edge with~$\de$.
Then $\de' \notin \Psi(H)$.
\end{lemma}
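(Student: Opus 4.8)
The plan is to argue by contradiction: assume that $\de' \in \Psi(H)$, i.e. that $\de'$ is active, and derive a contradiction from the combinatorial rigidity of the family $F(\al)$ established earlier in this section.

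First I would record that the statement is well posed. Since $\de \in F(\al) \cap \Pi$ is simple with $\Supp \de = \{\de\} \subset \Supp \al$, we have $\de \in \Supp \al \cap F(\al)$, so the hypothesis $|\Supp \al| \ge 2$ together with Corollary~\ref{supp&active} shows that $\de$ is terminal with respect to $\Supp \al$; this is exactly what makes the neighbouring node $\de'$ in $\Sigma(\Supp \al)$ unique. Now suppose $\de'$ is active. Because $\de'$ is simple, $\Supp \de' = \{\de'\} \subset \Supp \al$, and so Corollary~\ref{subfamilies}(a) forces $\de' \in F(\al)$. Applying Corollary~\ref{supp&active} once more, this time to $\de' \in \Supp \al \cap F(\al)$, I conclude that $\de'$ is also terminal with respect to $\Supp \al$.

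The key step is then purely combinatorial. The nodes $\de$ and $\de'$ are joined by an edge in $\Sigma(\Supp \al)$ and both are terminal, i.e. each is joined by an edge with exactly one other node; hence $\de$ is joined only to $\de'$ and $\de'$ only to $\de$, so $\{\de, \de'\}$ is a connected component of $\Sigma(\Supp \al)$. Since $\De(\al)$ is indecomposable, the diagram $\Sigma(\Supp \al)$ is connected, whence $\Supp \al = \{\de, \de'\}$ and $|\Supp \al| = 2$. By Lemma~\ref{number_of_subordinates}(a) this yields $|F(\al)| = |\Supp \al| = 2$. On the other hand $F(\al)$ contains the three pairwise distinct roots $\al$, $\de$, $\de'$ — distinct because $\al$ is not simple while $\de, \de'$ are, and $\de \ne \de'$ — a contradiction. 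Therefore $\de'$ is not active, that is, $\de' \notin \Psi(H)$.

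I do not expect a genuine obstacle here: the argument is a short rigidity computation resting entirely on Corollaries~\ref{subfamilies} and~\ref{supp&active} and on the equality $|F(\al)| = |\Supp \al|$. The only point requiring a little care is to establish first that $\de$ is terminal (so that $\de'$ is well defined) before deducing the same property for $\de'$; once both terminalities are in hand, the connectedness of the Dynkin diagram collapses the support to a single edge, and the cardinality count of $F(\al)$ completes the proof.
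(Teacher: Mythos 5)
Your proof is correct and follows essentially the same route as the paper: both use Corollary~\ref{subfamilies}(a) to place $\de'$ in $F(\al)$, Corollary~\ref{supp&active} to make both $\de$ and $\de'$ terminal, and connectedness of $\Sigma(\Supp\al)$ to force $\Supp\al=\{\de,\de'\}$. The paper then simply asserts that $\de$ and $\de'$ cannot both be active, a step you make explicit via the count $|F(\al)|=|\Supp\al|=2$ against the three distinct elements $\al,\de,\de'$ of $F(\al)$ --- a welcome clarification rather than a different argument.
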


\begin{proof}
Assume that $\de' \in \Psi(H')$. Then in view of
Corollary~\ref{supp&active} both roots $\de, \de'$ are terminal with
respect to $\Supp \al$, whence $\Supp \al = \{\de, \de'\}$. In this
situation the roots $\de, \de'$ cannot be active for $H$
simultaneously, a contradiction.
\end{proof}

\begin{proof}[Proof of Theorem~\textup{\ref{simplified_conjugate}}]
Without loss of generality we may assume that the root system $\De$
is indecomposable. If $\De$ is of type~$\ms G_2$, then the assertion
is easily verified by a case-by-case consideration of all possible
sets $(\mr M, \pi, \sim)$ (which are 9 in total) and all sequences
of elementary transformations. Further we assume that $\De$ is of
type different from~$\ms G_2$.

Regard the shortest sequence of elementary transformations taking
$H$ to $H'$, and show that it satisfies the desired property. Let
$\de_1, \de_2, \ldots, \de_k$ be the successive centers of
elementary transformations of this sequence. In view of minimality
of the length of the sequence, for all $i = 1,\ldots, k-1$ we have
$\de_i \ne \de_{i-1}$. Put $H_0 = H$ and for all $i = 1, \ldots, k$
denote by $H_i$ the $i$th intermediate subgroup: $H_i =
\si_iH_{i-1}\si_i^{-1}$, where $\si_i \in N_G(T)$ is some
representative of the reflection $r_{\de_i}$ and $H_k = H'$. Denote
by $\mr M_i$ the set of maximal active roots of~$H_i$, $i = 0,
\ldots, k$. The further argument consists of two steps.

\emph{Step}~1. Let us prove that for all $i = 1, \ldots, k-1$ the
set $\mr M_i$ consists of typical roots. Assume the converse: for
some $j \in \{1, \ldots, k - 1\}$ the set $\mr M_j$ contains a
non-typical root~$\al$, whereas for all $i < j$ the set $\mr M_i$
consists of typical roots. Without loss of generality we may assume
that $j$ is maximal among all shortest sequences of elementary
transformations taking $H$ to~$H'$. It is not hard to see that in
the diagram $\Sigma(\Pi)$ the node $\de_j$ is incident to a double
edge with the arrow directed to~$\de_j$. Let us show that the nodes
$\de_j$ and $\de_{j+1}$ are joined by an edge. Indeed, otherwise the
elementary transformations with centers in $\de_j$ and $\de_{j+1}$
commute, therefore their interchange yields a new sequence such that
$\mr M_i$ can contain a non-typical root only for $i \ge j + 1$, a
contradiction with the choice of~$j$. Further we consider all
possibilities for~$\De$.

$1^\circ$. $\De$ is of type~$\ms B_n$. Then $\al = \al_l + \al_{l+1}
+ \ldots + 2\al_n$, where $1 \le l \le n-1$. We have $\de_{j+1} =
\al_{n-1}$, which is impossible since the root $\al_{n-1}$ cannot be
active.

$2^\circ$. $\De$ is of type~$\ms C_n$, $n \ge 3$. Then $\al =
2\al_{n-1} + \al_n$. The root $\al_n$ is not active, therefore
$\de_{j+1} = \al_{n-2}$. Let us prove by induction that $\de_{j+l} =
\al_{n-l-1}$ for all $l = 1, \ldots, n-2$. For $l = 1$ this is true.
The passage from $l-1$ to $l$ is performed as follows. Clearly, the
set $\mr M_{j + l - 1}$ contains the root $2\al_{n-l} + 2\al_{n-l+1}
+ \ldots + 2\al_{n-1} + \al_n$. Therefore none of the roots
$2\al_{n-l+1}, \ldots, \al_{n-1}, \al_n$ can be active (with respect
to $H_{j + l - 1}$). If $\de_{j+l} \ne \al_{n-l-1}$, then in view of
the condition $\de_{j+l} \ne \de_{j+l-l}$ we obtain that the root
$\de_{j+1}$ is orthogonal to each of the roots $\al_{n-l},
\al_{n-l+1}, \ldots, \al_{n-1}$. Hence in our sequence of elementary
transformations, interchanging successively neighboring elements, we
can move the $(j+l)$th elementary transformation to the $j$th place.
As a result we obtain a new sequence such that the set $\mr M_i$ can
contain a non-typical root only for $i \ge j + 1$, which contradicts
the maximality of~$j$. Thus we have $\de_{j+n-2} = \al_1$, $\mr
M_{j+n-2} = \{2\al_1 + \ldots + 2\al_{n-1} + \al_n\}$, whence the
root $\de_{j+n-1}$ can be none of simple roots, which is impossible.

$3^\circ$. $\De$ is of type~$\ms F_4$. Then $\al = 2\al_2 + \al_3$
or $\al = 2\al_2 + \al_3 + \al_4$. In both cases the root $\al_3$ is
not active, whence $\de_{j+1} = \al_1$. Then $\de_{j+2} = \al_4$ and
in our sequence of elementary transformations, interchanging
successively neighboring elements, we can move the ${(j+2)}$nd
elementary transformation to the $j$th place. The new sequence
possesses the property that the set $\mr M_i$ can contain
non-typical roots only for $i \ge j+1$, a contradiction with the
maximality of~$j$.

Thus, in each of the three cases we have obtained a contradiction.
Hence for all $i = 1, \ldots, k-1$ the set $\mr M_i$ consists of
typical roots.

\emph{Step}~2. Let us prove that for all $i = 1, \ldots, k-1$ for
any two different roots in $\mr M_i$ one of possibilities $(\mr
D0)$, $(\mr E1')$, or $(\mr E2')$ is realized. For this it suffices
to prove that for all $i = 1, \ldots, k-1$ every active simple root
of $H_i$ is contained in the support of exactly one root in~$\mr
M_i$. Assume the converse: for some $j \in \{1,\ldots, k-1\}$ there
is a root $\de \in \Psi(H_j) \cap \Pi$ contained in the supports of
two different roots $\al,\be \in \mr M_j$, whereas for all $i < j$
there is no root with an analogous property. Without loss of
generality we may assume that $j$ is maximal among all shortest
sequences of elementary transformations taking $H$ to $H'$.

Let us show that $\de_j = \de$. Assume that $\de_j = \nu \ne \de$.
Then $r_\nu(\al), r_\nu(\be) \in \mr M_{j-1}$ by
Lemma~\ref{M_and_M'}(c). We have $\nu \ne \de$ and $\nu \ne \al -
\de$, therefore both roots $r_\nu(\de)$ and $r_\nu(\al - \de)$ are
positive, whence $r_\nu(\de) \in F(r_\nu(\al))$. Similarly,
$r_\nu(\de) \in F(r_\nu(\be))$. Then any root in the (nonempty) set
$F(r_\nu(\de)) \cap \Pi$ is contained in supports of the maximal
active roots $r_\nu(\al), r_\nu(\be)$ of~$H_{j-1}$, and we have a
contradiction with the choice of~$j$.

Thus $\de_j = \de$. Repeating the argument analogous to that in
\textit{Step}~1, we obtain that in the diagram $\Sigma(\Pi)$ the
nodes $\de_j$ and $\de_{j+1}$ are connected by an edge. Further we
consider two cases.

\emph{Case}~1. Every node of the diagram $\Sigma(\Pi)$ connected by
an edge with $\de$ is contained in the support of a root $\al \in
\mr M_j$ such that $\de \in F(\al)$. In this case in view of
Lemma~\ref{node_connected_with_active} we obtain a contradiction.

\emph{Case}~2. The diagram $\Sigma(\Pi)$ has the form shown on
Figure~\ref{diagram_difficult} (for some $p, q, r \ge 1$), $\de =
\ga_{r'}$, where $0 \le r' \le r - 1$, and the set $\mr M_j$
contains the roots $\al = \al_1 + \ldots + \al_{p'} + \ga_0 + \ga_1
+ \ldots + \ga_{r'}$ and $\be = \be_1 + \ldots + \be_{q'} + \ga_0 +
\ga_1 + \ldots + \ga_{r'}$, where $1 \le p' \le p$, $1 \le q' \le
q$. In view of Lemma~\ref{node_connected_with_active} we obtain that
$\de_{j+1} = \ga_{r'+1}$. Applying inductive argument analogous to
that in case~$2^\circ$ of \textit{Step}~1 we obtain that $\de_{j+i}
= \ga_{r'+i}$ for all $i = 1, \ldots, r - r'$. Further, the root
$\de_{j+r-r'+1}$ cannot lie in the set $\Supp \al \cup \Supp \be$,
therefore it is orthogonal to each of the roots $\ga_{r'},
\ga_{r'+1}, \ldots, \ga_r$. Hence in our sequence of elementary
transformations, interchanging successively neighboring elements, we
can move the $(j+r-r'+1)$st elementary transformation to the $j$th
place. As a result we obtain a new sequence such that for all $i \le
j$ every active simple root of $H_i$ is contained in the support of
exactly one root in~$\mr M_i$. We have obtained a contradiction with
the maximality of~$j$.

Thus in both cases we have come to a contradiction, which completes
the proof.
\end{proof}

\subsection{}

Let $H \subset G$ be a connected solvable spherical subgroup
standardly embedded in $B$ such that the set $\Upsilon(H)$ is
reduced. Regard an elementary transformation $H \mapsto H' = \si_\de
H \si_\de^{-1}$, where $\de \in \Psi(H) \cap \Pi$ is a regular
active simple root and $\si_\de \in N_G(T)$ is a representative of
the reflection~$r_\de$. In this subsection we find out conditions on
$\de$ under which the set $\Upsilon_0(H')$ is reduced and different
from the set~$\Upsilon_0(H)$.

We note that the set $\Upsilon(H')$ is reduced if and only if the
following two conditions are fulfilled:

(1) every root in $\mr M(H')$ is typical;

(2) for every two different roots $\al, \be \in \mr M(H')$ the set
$\Supp \al \cap \Supp \be$ contains no active simple roots of~$H'$.

To perform further considerations, we need the following notation:

$\mr M_0(H, \de)$ is the set of roots $\al \in \mr M(H) \backslash
\{\de\}$ such that in the diagram $\Sigma(\Pi)$ the node $\de$ is
joined by an edge with none of nodes in~$\Supp \al$;

$\mr M_1(H, \de)$ is the set of roots $\al \in \mr M(H) \backslash
\{\de\}$ with the following property: $\de \notin \Supp \al$ and
there is a (unique) root $\ga \in \Supp \al$ such that in the
diagram $\Sigma(\Pi)$ the nodes $\de$ and $\ga$ are joined by an
edge;

$\mr M_{11}(H, \de)$ is the set of roots $\al \in \mr M_1(H, \de)$
such that in the diagram $\Sigma(\Pi)$ the nodes $\de$ and $\ga$ are
joined by a triple edge with the arrow directed to~$\de$;

$\mr M_{12}(H, \de)$ is the set of roots $\al \in \mr M_1(H, \de)$
such that in the diagram $\Sigma(\Pi)$ the nodes $\de$ and $\ga$ are
joined by a double edge with the arrow directed to~$\de$;

$\mr M_{13}(H, \de) = \mr M_1(H, \de) \backslash (\mr M_{11}(H,\de)
\cup \mr M_{12}(H, \de))$;

$\mr M_2(H, \de)$ is the set of roots $\al \in \mr M(H) \backslash
\{\de\}$ such that $\de \in \Supp \al$;

$\mr M_{21}(H, \de)$ is the set of roots $\al \in \mr M_2(H, \de)$
with the following property: in the diagram $\Sigma(\Supp \al)$ the
node $\de$ is incident to a triple edge with the arrow directed
to~$\de$;

$\mr M_{22}(H, \de)$ is the set of roots $\al \in \mr M_2(H, \de)$
such that in the diagram $\Sigma(\Supp \al)$ the node $\de$ is
incident to a double edge with the arrow directed to~$\de$;

$\mr M_{23}(H, \de) = \mr M_2(H, \de) \backslash (\mr M_{21}(H,\de)
\cup \mr M_{22}(H, \de))$.

We note that the following disjoint unions take place: $\mr M(H) =
\mr M_0(H, \de) \cup \mr M_1(H, \de) \cup \mr M_2(H, \de)$, $\mr
M_1(H,\de) = \mr M_{11}(H, \de) \cup \mr M_{12}(H, \de) \cup \mr
M_{13}(H, \de)$, $\mr M_2(H, \de) = \mr M_{21}(H, \de) \cup \mr
M_{22}(H, \de) \cup \mr M_{23}(H, \de)$.

\begin{proposition}
\textup{(a)} The set $\Upsilon_0(H')$ coincides with $\Upsilon_0(H)$
if and only if $|\mr M_1(H, \de)| + |\mr M_{21}(H, \de)| + |\mr
M_{23}(H, \de)| = 0$;

\textup{(b)} the set $\Upsilon_0(H')$ contains a non-typical root if
and only if $|\mr M_{11}(H, \de)| + |\mr M_{12}(H, \de)| + |\mr
M_{21}(H, \de)| \ge 1$;

\textup{(c)} there is a root in $\Psi(H')$ contained in the supports
of at least two roots in $\mr M(H')$ if and only if $|\mr M_1(H,
\de)| + |\mr M_{22}(H, \de)| \ge 2$.
\end{proposition}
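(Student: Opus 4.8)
The plan is to first compute the action of $r_\de$ on each maximal active root of $H$ and then read off all three equivalences. By Lemma~\ref{M_and_M'}(c) we have $\mr M(H') \backslash \{\de\} = r_\de(\mr M(H) \backslash \{\de\})$, so everything is controlled by the roots $r_\de(\al)$, $\al \in \mr M(H) \backslash \{\de\}$. Since $\Upsilon(H)$ is reduced, each such $\al$ is typical. For $\al \in \mr M_0(H, \de)$ the root $\al$ is orthogonal to $\de$ and $r_\de(\al) = \al$. For $\al \in \mr M_1(H, \de)$ the integer $-\langle \al, \de^\vee \rangle$ equals the contribution of the unique neighbour of $\de$ in $\Supp \al$, hence $1$, $2$, or $3$ according as that edge is simple, double with arrow to $\de$, or triple with arrow to $\de$; thus $r_\de(\al) = \al + \de$ on $\mr M_{13}$ (typical) and $r_\de(\al) = \al + 2\de$ or $\al + 3\de$ on $\mr M_{12} \cup \mr M_{11}$ (non-typical). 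For $\al \in \mr M_2(H, \de)$ Lemma~\ref{Weyl_transformation} gives $r_\de(\al) = \al$ on $\mr M_{22}$ and $r_\de(\al) = \al - \de$ on $\mr M_{23}$ (both typical), while on the $\ms G_2$ class $\mr M_{21}$ we may write $\al = \ga + \de$ with $\De(\al)$ of type $\ms G_2$ and compute $r_\de(\al) = \ga + 2\de$, which is non-typical. Finally I record the supports: $\Supp r_\de(\al) = \Supp \al$ on $\mr M_0 \cup \mr M_{21} \cup \mr M_{22}$, $\Supp r_\de(\al) = \Supp \al \cup \{\de\}$ on $\mr M_1$, and $\Supp r_\de(\al) = \Supp \al \backslash \{\de\}$ on $\mr M_{23}$.

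Given this table, part~(b) is immediate: every root of $\mr M(H')$ other than $\de$ is of the form $r_\de(\al)$, and $\de$ itself is simple hence typical, so $\mr M(H')$ contains a non-typical root exactly when some $r_\de(\al)$ is non-typical, that is, when $\mr M_{11} \cup \mr M_{12} \cup \mr M_{21} \ne \varnothing$. For part~(a), the table shows that $r_\de$ fixes $\al$ precisely on $\mr M_0 \cup \mr M_{22}$. If $|\mr M_1(H,\de)| + |\mr M_{21}(H,\de)| + |\mr M_{23}(H,\de)| = 0$, then $r_\de$ fixes $\mr M(H) \backslash \{\de\}$ pointwise, and the membership of $\de$ is preserved: if $\de \in \mr M(H)$ then $(\mr C)$ forces $\mr M_2(H,\de) = \varnothing$ and case~(2.2) of Proposition~\ref{comb_datas} applies, whereas if $\de \notin \mr M(H)$ then $\de$ lies in some support and case~(2.1) applies; together with Lemma~\ref{M_and_M'}(b) this yields $\Upsilon_0(H') = \Upsilon_0(H)$. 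Conversely, if some $\al$ lies in $\mr M_1 \cup \mr M_{21} \cup \mr M_{23}$, suppose $\Upsilon_0(H') = \Upsilon_0(H)$, so $\mr M(H') = \mr M(H)$; then $\al$ and $r_\de(\al)$ both lie in $\mr M(H)$, but for $\al \in \mr M_1 \cup \mr M_{23}$ their supports are strictly comparable, contradicting $(\mr C)$, and for $\al \in \mr M_{21}$ the root $r_\de(\al)$ is non-typical, contradicting that $\mr M(H)$ consists of typical roots.

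The heart of the matter is part~(c), and the claim I shall prove is that a shared active simple root may always be taken to be the centre $\de$. The implication ``$\Leftarrow$'' is direct: $\de \in \Psi(H')$ by Lemma~\ref{M_and_M'}(a), and any two distinct roots of $\mr M_1(H,\de) \cup \mr M_{22}(H,\de)$ have images whose supports both contain $\de$, so $\de$ is shared. For ``$\Rightarrow$'', suppose $\mu \in \Psi(H') \cap \Pi$ lies in $\Supp \al'_1 \cap \Supp \al'_2$ for distinct $\al'_1, \al'_2 \in \mr M(H')$, and write $\al'_i = r_\de(\al_i)$; condition~$(\mr C)$ for the valid datum $\mr M(H')$ rules out $\al'_i = \de$, so $\al_i \in \mr M(H) \backslash \{\de\}$. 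If $\mu = \de$, the support table shows $\al_i \in \mr M_1 \cup \mr M_{21} \cup \mr M_{22}$; but a root of $\mr M_{21}$ has full support in a $\ms G_2$ component, so by $(\mr C)$ no second maximal active root can share $\de$ with it, forcing both $\al_i \in \mr M_1 \cup \mr M_{22}$ and hence $|\mr M_1(H,\de)| + |\mr M_{22}(H,\de)| \ge 2$.

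The remaining case $\mu \ne \de$ is the main obstacle. Here $\Supp r_\de(\al_i)$ and $\Supp \al_i$ agree away from $\de$, so $\mu \in \Supp \al_1 \cap \Supp \al_2$ is inherited; since $\Upsilon(H)$ is reduced and this intersection is nonempty, $\al_1 \sim \al_2$ with one of $(\mr E1')$, $(\mr E2')$ holding, and $\mu$ must be \emph{inactive} in $H$. On the other hand $\mu$ is active in $H'$, which by Lemma~\ref{M_and_M'}(a) forces $\mu$ to be adjacent to $\de$ with $r_\de(\mu) = \mu + c\de \in \Psi(H)$. The plan is to show that this configuration cannot occur: using Corollaries~\ref{subfamilies} and~\ref{supp&active} to locate $\mu + c\de$ inside the family $F(\rho)$ of a maximal active root $\rho$ with $\{\mu,\de\} \subset \Supp \rho$, and the reduced forms $(\mr E1')$, $(\mr E2')$ together with Lemma~\ref{auxiliary_simple_intersection}, I expect to deduce that $r_\de$ sends one of $\al_1, \al_2$ to a root whose support is strictly contained in the support of the image of the other, contradicting $(\mr C)$ for $\mr M(H')$. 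Reconciling the change of activity of $\mu$ with the admissible intersection patterns of reduced data, so as to close off every branch of this last case, is where the real work lies.
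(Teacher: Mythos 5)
Your table of the values $r_\de(\al)$ on the classes $\mr M_0$, $\mr M_{11}$, $\mr M_{12}$, $\mr M_{13}$, $\mr M_{21}$, $\mr M_{22}$, $\mr M_{23}$ is correct (using that every root of $\mr M(H)$ is typical), and it does supply the ``direct check'' that the paper's proof leaves implicit. On that basis parts (a) and (b) are complete and sound — your converse argument for (a) via the support comparison and Corollary~\ref{subfamilies}(c) is exactly the paper's argument — and in part (c) the implication from $|\mr M_1(H,\de)|+|\mr M_{22}(H,\de)|\ge 2$ as well as the sub-case $\mu=\de$ of the reverse implication are correctly handled.

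There is, however, a genuine gap: in part (c), direction ``$\Rightarrow$'', the case of a shared active simple root $\mu\ne\de$ is never actually disposed of. Your last paragraph says ``the plan is to show'', ``I expect to deduce'' and ``is where the real work lies'' — that is a statement of intent, not a proof, and it concerns precisely the one nontrivial point of the whole proposition: one must rule out that $r_\de$ turns an \emph{inactive} node $\mu$ of $\Supp\al_1\cap\Supp\al_2$ into an active simple root of $H'$. Your setup of that case is right ($\mu+\de\in\Psi(H)$ with coefficient $1$ by typicality, and $\al_1\sim\al_2$ with $(\mr E1')$ or $(\mr E2')$), but the contradiction still has to be extracted, and the route you sketch (a violation of $(\mr C)$ for $\mr M(H')$) is not obviously the one that closes every branch. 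A cleaner lever is the \emph{regularity} of $\de$: since $\mu+\de\in\Psi(H)$ lies in $F(\rho)$ for a maximal active root $\rho$ with $\{\mu,\de\}\subset\Supp\rho$, and since reducedness forbids an active simple root from lying in the support intersection of two maximal active roots, $\rho$ is the unique root of $\mr M(H)$ with $\de\in\Supp\rho$; one then checks, using $\pi(\mu+\de)=\mu$ and Corollary~\ref{one_to_one}, that $\Supp\rho\setminus\{\de\}$ is contained in the support of another maximal active root (e.g.\ in the $(\mr E1')$ branch one gets $\rho=\mu+\de$ and $\Supp\rho\setminus\{\de\}=\{\mu\}\subset\Supp\al_2$), contradicting condition (2)(c) of Proposition~\ref{sim_act_reg}. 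Until this case analysis is carried through all branches of $(\mr E1')$ and $(\mr E2')$ (including $\de\notin\Supp\al_1\cup\Supp\al_2$), the equivalence in (c) is not established.
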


\begin{proof}
(a) The equality $|\mr M_1(H, \de)| + |\mr M_{21}(H, \de)| + |\mr
M_{23}(H, \de)| = 0$ is equivalent to the condition that the root
$\de$ is orthogonal to all roots in the set $\mr M(H) \backslash
\{\de\}$. If the latter condition is fulfilled, then, evidently,
$\Upsilon_0(H') = \Upsilon_0(H)$. Conversely, suppose that
$\Upsilon_0(H') = \Upsilon_0(H)$ and there exist a root $\al \in \mr
M(H) \backslash \{\de\}$ that is not orthogonal to~$\de$. Then we
have $\al, r_\de(\al) \in \mr M(H)$, whence the support of one of
these roots is contained in the support of the other, which is
impossible by Corollary~\ref{subfamilies}(c).

(b) The desired result is obtained by a direct check.

(c) In view of Lemma~\ref{orthogonal} the proof reduces to a
consideration of roots in the sets $\mr M_1(H, \de)$ and $\mr M_2(H,
\de)$. The remaining part of the proof is obtained by a direct
check.
\end{proof}

\begin{corollary} \label{reduced_set}
The set $\Upsilon_0(H')$ is reduced and different from the set
$\Upsilon_0(H)$ if and only if the following conditions are
satisfied:

\textup{(1)} $|\mr M_{13}(H, \de)| + |\mr M_{23}(H, \de)| \ge 1$;

\textup{(2)} $|\mr M_{11}(H, \de)| + |\mr M_{12}(H, \de)| + |\mr
M_{21}(H, \de)| = 0$;

\textup{(3)} $|\mr M_{13}(H, \de)| + |\mr M_{22}(H, \de)| \le 1$.
\end{corollary}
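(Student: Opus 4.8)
The plan is to obtain the corollary as a purely logical consequence of parts (a)--(c) of the preceding proposition together with the criterion for reducedness recalled just before it. Recall that $\Upsilon_0(H')$ is reduced exactly when (i) every root of $\mr M(H')$ is typical, and (ii) no active simple root of $H'$ lies in $\Supp \al \cap \Supp \be$ for two distinct roots $\al, \be \in \mr M(H')$. Hence the statement to prove is that the conjunction of ``$\Upsilon_0(H')$ is reduced'' and ``$\Upsilon_0(H') \ne \Upsilon_0(H)$'' is equivalent to the system (1)--(3). I would translate each of the three demands (i), (ii), and difference from $\Upsilon_0(H)$ into an inequality on the cardinalities $|\mr M_\ast(H, \de)|$ by invoking, respectively, parts (b), (c), and (a) of that proposition.

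First, part (b) says that $\mr M(H')$ contains a non-typical root iff $|\mr M_{11}(H, \de)| + |\mr M_{12}(H, \de)| + |\mr M_{21}(H, \de)| \ge 1$; since these are cardinalities, demand (i) is therefore equivalent to condition~(2) of the corollary. As condition~(2) is itself one of the conjuncts sought, I may assume it when handling the other two demands. Under condition~(2) the three sets $\mr M_{11}(H, \de)$, $\mr M_{12}(H, \de)$, $\mr M_{21}(H, \de)$ are empty, and because $\mr M_1(H, \de) = \mr M_{11}(H, \de) \cup \mr M_{12}(H, \de) \cup \mr M_{13}(H, \de)$ is a disjoint union, this yields the two identities $|\mr M_1(H, \de)| = |\mr M_{13}(H, \de)|$ and $|\mr M_{21}(H, \de)| = 0$. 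Now part (c) tells us that demand (ii) fails precisely when $|\mr M_1(H, \de)| + |\mr M_{22}(H, \de)| \ge 2$, so (ii) holds iff $|\mr M_1(H, \de)| + |\mr M_{22}(H, \de)| \le 1$, which the first identity turns into condition~(3); and part (a) says $\Upsilon_0(H') = \Upsilon_0(H)$ iff $|\mr M_1(H, \de)| + |\mr M_{21}(H, \de)| + |\mr M_{23}(H, \de)| = 0$, so $\Upsilon_0(H') \ne \Upsilon_0(H)$ iff this sum is $\ge 1$, which both identities turn into condition~(1). Assembling the three equivalences gives the corollary.

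The argument is essentially bookkeeping, and the only point needing care is this interlocking of the conditions: parts (a) and (c) are phrased through $|\mr M_1(H, \de)|$, whereas the corollary is phrased through $|\mr M_{13}(H, \de)|$, and the two agree only once the typicality condition~(2) has been imposed. I would therefore present the deduction as an equivalence of conjunctions, using the elementary fact that if a statement $A$ forces $B \Leftrightarrow B'$ and $C \Leftrightarrow C'$, then $A \wedge B \wedge C \Leftrightarrow A \wedge B' \wedge C'$, with $A$ taken to be condition~(2). One minor verification I would also record is that demand (ii), stated for active \emph{simple} roots, is genuinely the condition detected by part (c): a shared active simple root is itself a root of $\Psi(H')$ whose support lies in both $\Supp \al$ and $\Supp \be$, and conversely any such root $\mu \in \Psi(H')$ with $\Supp \mu \subseteq \Supp \al \cap \Supp \be$ satisfies $\mu \in F(\al) \cap F(\be)$ and hence contains in its family a terminal, thus simple and active, node lying in $\Supp \al \cap \Supp \be$, by Corollaries~\ref{subfamilies} and~\ref{extreme_root}. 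No input beyond parts (a)--(c) and the definition of reducedness is required.
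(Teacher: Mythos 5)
Your proposal is correct and follows the same (implicit) route as the paper, which states the corollary as an immediate consequence of the preceding proposition: part (b) gives condition (2), and under it $|\mr M_1(H,\de)|=|\mr M_{13}(H,\de)|$ and $|\mr M_{21}(H,\de)|=0$, so parts (a) and (c) turn into conditions (1) and (3). The bookkeeping, including the observation that the interchange of $|\mr M_1|$ and $|\mr M_{13}|$ is licensed only after imposing condition (2), is exactly what the paper's deduction requires.
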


%-------------------------------------------------------------------------
\section{Examples}\label{examples}
%-------------------------------------------------------------------------

In this section, as an illustration of the theory developed in the
present paper, for all simple groups $G$ of rank at most $4$ we
list, up to conjugation, all saturated solvable spherical subgroups
in~$G$, that is, solvable spherical subgroups $H \subset G$ such
that ${H = N_G(H)^0}$ (see \S\,\ref{sober_subgroups}). Thereby we
also list, up to conjugation, all unipotent subgroups in $G$ that
are unipotent radicals of connected solvable spherical subgroups
in~$G$ (see Corollary~\ref{sober_bij_ur} and
Proposition~\ref{sober}).

For a fixed group $G$ we proceed in two steps.

\emph{Step}~1. We list all reduced sets $(\mr M, \pi, \sim)$ (where
$\mr M \subset \De_+$ is a subset, $\pi: \mr M \to \Pi$ is a map,
$\sim$ is an equivalence relation on~$\mr M$). We recall that
reduced sets are characterized by satisfying conditions $(\mr A')$,
$(\mr D')$, $(\mr E')$, and $(\mr C)$ (see Proposition~\ref{sober}
and Theorem~\ref{simplified}).

\emph{Step}~2. For each set $(\mr M, \pi, \sim)$ in the list
obtained at \emph{Step}~1, we indicate all other sets in this list
that are obtained from the set $(\mr M, \pi, \sim)$ by applying
finite sequences of elementary transformations (see
Theorem~\ref{simplified_conjugate}, as well as
Propositions~\ref{sim_act_reg},~\ref{comb_datas} and
Corollary~\ref{reduced_set}).

We note that the procedure described at \emph{Step}~1 depends only
on the \emph{underlying graph} of the diagram $\Sigma(\Pi)$, that
is, the graph obtained from $\Sigma(\Pi)$ by replacing each multiple
edge by a non-oriented simple edge. In this connection, for all
groups $G$ with the same underlying graph of the diagram
$\Sigma(\Pi)$ it is convenient to perform \emph{Step}~1
simultaneously.

The procedure of enumeration of all reduced sets for a given group
$G$ can be shortened as follows. Let ${(\mr M, \pi, \sim)}$ be a
reduced set. Put $\Supp \mr M = \bigcup \limits_{\de \in \mr M}
\Supp \de$. Note that the set $\Supp \mr M$ is invariant under an
elementary transformation. Besides, the procedure of enumeration of
all reduced sets ${(\mr M, \pi, \sim)}$ with a given set $\Supp \mr
M$ depends only on the diagram $\Sigma(\Supp \mr M)$. In this
connection, for a given group $G$ the initial problem reduces to the
following one: for every subset $\Pi' \subset \Pi$ perform
\emph{Steps}~1,2, at that, at \emph{Step}~1 list only reduced sets
$(\mr M, \pi, \sim)$ with $\Supp \mr M = \Pi'$. The advantage of
this approach is that for any two subsets $\Pi', \Pi'' \subset \Pi$
with $\Sigma(\Pi') \simeq \Sigma(\Pi'')$ the procedure of
enumeration of all reduced sets $(\mr M, \pi, \sim)$ with $\Supp \mr
M = \Pi'$ and $\Supp \mr M = \Pi''$ can be performed simultaneously.

Let $(\mr M, \pi, \sim)$ be a reduced set. Regard the corresponding
saturated solvable spherical subgroup $H \subset G$ standardly
embedded in~$B$. Denote by $c(S)$ the codimension in $T$ of the
maximal torus $S = H \cap T$ of $H$ and by $c(N)$ the codimension in
$U$ of the unipotent radical $N = H \cap U$ of $H$. We have $c(S) =
|\mr M| - \mu$, where $\mu$ is the number of equivalence classes in
the set $\mr M$ and $c(N)$ equals the number of equivalence classes
in the set $\Psi$, see \S\,\ref{existence_proof_Psi}. Note that the
equality $c(S) + c(N) = |\Supp \mr M|$ takes place, whence in the
case $\Supp \mr M = \Pi$ we get $c(S) + c(N) = \rk G$. In
particular, in the latter case we have $\dim H = \dim U$, whence $H$
is a spherical subgroup in $G$ of minimal dimension.

We denote by $d_0(G)$ the number of conjugacy classes of saturated
solvable spherical subgroups in $G$ corresponding to reduced sets
${(\mr M, \pi, \sim)}$ with $\Supp \mr M = \Pi$.

Now, for all groups $G$ such that the diagrams $\Sigma(\Pi)$ are
subdiagrams of connected Dynkin diagrams of rank at most~$4$, we
list all reduced sets ${(\mr M, \pi, \sim)}$ with $\Supp \mr M =
\Pi$. At that, we formally include into consideration the case $\rk
G = 0$ corresponding to the situation when $\mr M = \varnothing$. In
each case we also indicate the values of $c(S)$ and~$c(N)$. Besides,
we indicate the values of $d_0(G)$ for all groups $G$ under
consideration.

$\rk G = 0$. We have $\mr M = \varnothing$, $c(S) = c(N) = 0$,
$d_0(\varnothing) = 1$.

$\rk G = 1$. We have $\mr M = \De_+$, $c(S) = 0$, $c(N) = 1$,
$d_0(\ms A_1) = 1$.

$\rk G = 2$, underlying graph of type $\ms A_1 \times \ms A_1$. We
have $\mr M = \Pi = \{\al_1, \al_2\}$ and either $\al_1 \not\sim
\al_2$ (then $c(S) = 0$, $c(N)=2 $), or $\al_1 \sim \al_2$ (then
$c(S) = c(N) = 1$); $d_0(\ms A_1 \times \ms A_1) = 2$.

$\rk G = 2$, underlying graph of type~$\ms A_2$. The results are
presented in Table~\ref{table_rank_2}.

$\rk G = 3$, underlying graph of type $\ms A_1 \times \ms A_1 \times
\ms A_1$. The results are presented in Table~\ref{table_A1A1A1}.

$\rk G = 3$, underlying graph of type $\ms A_1 \times \ms A_2$. The
results are presented in Table~\ref{table_A1A2}.

$\rk G = 3$, underlying graph of type~$\ms A_3$. The results are
presented in Table~\ref{table_rank_3}.

$\rk G = 4$, underlying graph of type~$\ms A_4$. The results are
presented in Table~\ref{table_rank_4A}.

$\rk G = 4$, underlying graph of type~$\ms D_4$. The results are
presented in Table~\ref{table_rank_4D}.

Having known the values $d_0(G)$ for all groups $G$ considered
above, for every simple group $G$ of rank at most~$4$ we now can
determine the value $d(G)$, which is equal to the number of
conjugacy classes in $G$ of saturated solvable spherical subgroups
as well as to the number of conjugacy classes in $G$ of unipotent
radicals of connected solvable spherical subgroups. The
corresponding computations and
Table~\ref{table_number_of_solvable_subgroups} summarizing them are
given below.

$d(\ms A_1) = d_0(\varnothing) + d_0(\ms A_1) = 1 + 1 = 2$.

$d(\ms A_2) = d_0(\varnothing) + 2d_0(\ms A_1) + d_0(\ms A_2) = 1 +
2 \cdot 1 + 2 = 5$.

$d(\ms B_2) =  d_0(\varnothing) + 2d_0(\ms A_1) + d_0(\ms B_2) = 1 +
2 \cdot 1 + 3 = 6$.

$d(\ms G_2) =  d_0(\varnothing) + 2d_0(\ms A_1) + d_0(\ms G_2) = 1 +
2 \cdot 1 + 3 = 6$.

$d(\ms A_3) = d_0(\varnothing) + 3d_0(\ms A_1) + d_0(\ms A_1 \times
\ms A_1) + 2d_0(\ms A_2) + d_0(\ms A_3) = 1 + 3\cdot1 + 2 + 2\cdot2
+8 = 18$.

$d(\ms B_3) = d_0(\varnothing) + 3d_0(\ms A_1) + d_0(\ms A_1 \times
\ms A_1) + d_0(\ms A_2) + d_0(\ms B_2) + d_0(\ms B_3) = 1 + 3\cdot1
+ 2 + 2 + 3 + 11 = 22$.

$d(\ms C_3) = d_0(\varnothing) + 3d_0(\ms A_1) + d_0(\ms A_1 \times
\ms A_1) + d_0(\ms A_2) + d_0(\ms B_2) + d_0(\ms C_3) = 1 + 3\cdot1
+ 2 + 2 + 3 + 10 = 21$.

$d(\ms A_4) = d_0(\varnothing) + 4d_0(\ms A_1) + 3d_0(\ms A_1 \times
\ms A_1) + 3d_0(\ms A_2) + 2d_0(\ms A_1 \times \ms A_2) + 2d_0(\ms
A_3) + d_0(\ms A_4) = 1 + 4\cdot1 + 3\cdot2 + 3\cdot2 + 2\cdot5 +
2\cdot8 + 31 = 74$.

$d(\ms B_4) = d_0(\varnothing) + 4d_0(\ms A_1) + 3d_0(\ms A_1 \times
\ms A_1) + 2d_0(\ms A_2) + d_0(\ms B_2) + d_0(\ms A_1 \times \ms
A_2) + d_0({\ms A_1 \times \ms B_2}) + d_0(\ms A_3) + d_0(\ms B_3) +
d_0(\ms B_4) = 1 + 4\cdot1 + 3\cdot2 + 2\cdot2 + 3 + 5 + 7 + 8 + 11
+ 42 = 91$.

$d(\ms C_4) = d_0(\varnothing) + 4d_0(\ms A_1) + 3d_0(\ms A_1 \times
\ms A_1) + 2d_0(\ms A_2) + d_0(\ms B_2) + d_0(\ms A_1 \times \ms
A_2) + d_0({\ms A_1 \times \ms B_2}) + d_0(\ms A_3) + d_0(\ms C_3) +
d_0(\ms C_4) = 1 + 4\cdot1 + 3\cdot2 + 2\cdot2 + 3 + 5 + 7 + 8 + 10
+ 38 = 86$.

$d(\ms D_4) = d_0(\varnothing) + 4d_0(\ms A_1) + 3d_0(\ms A_1 \times
\ms A_1) + 3d_0(\ms A_2) + d_0(\ms A_1 \times \ms A_1 \times \ms
A_1) + 3d_0(\ms A_3) + d_0(\ms D_4) = 1 + 4\cdot1 + 3\cdot2 +
3\cdot2 + 5 + 3\cdot8 + 40 = 86$.

$d(\ms F_4) = d_0(\varnothing) + 4d_0(\ms A_1) + 3d_0(\ms A_1 \times
\ms A_1) + 2d_0(\ms A_2) + d_0(\ms B_2) + 2d_0(\ms A_1 \times \ms
A_2) + d_0(\ms B_3) + d_0(\ms C_3) + d_0(\ms F_4) = 1 + 4\cdot1 +
3\cdot2 + 2\cdot2 + 3 + 2\cdot5 + 11 + 10 + 38 = 87$.

\begin{table}[h]

\caption{}\label{table_number_of_solvable_subgroups}

\begin{center}

\begin{tabular}{|c|c|c|c|c|c|c|c|c|c|c|c|c|}

\hline

Type of $G$ & $\ms A_1$ & $\ms A_2$ & $\ms B_2$ & $\ms G_2$ & $\ms
A_3$ & $\ms B_3$ & $\ms C_3$ & $\ms A_4$ & $\ms B_4$ & $\ms C_4$ &
$\ms D_4$ &
$\ms F_4$ \\

\hline

$d(G)$ & $2$ & $5$ & $6$ & $6$ & $18$ & $22$ & $21$ & $74$ & $91$ &
$86$ &
$86$ & $87$\\

\hline

\end{tabular}

\end{center}

\end{table}

We now describe the notation used in
Tables~\ref{table_rank_2}--\ref{table_rank_4D}. In case $\rk G = n$
($2 \le n \le 4$) we suppose that $\Pi = \{\al_1, \al_2, \ldots,
\al_n\}$. In the column `$(\mr M, \pi)$' we indicate all pairs
$(\al, \pi(\al))$, where $\al \in \mr M$. At that, $(i_1i_2\ldots
i_k, i_j)$ denotes the pair $(\al_{i_1} + \al_{i_2} + \ldots +
\al_{i_k}, \al_{i_j})$. In the column `$\sim$' we indicate
equivalence classes in $\mr M$ containing more than one element. At
that, $i_1\ldots i_k{\sim}j_1\ldots j_l$ denotes the relation
$\al_{i_1} + \ldots + \al_{i_k} \sim \al_{j_1} + \ldots +
\al_{j_l}$. In each of the columns headed by type of $G$, we
indicate all reduced sets $(\mr M, \pi, \sim)$ that are obtained
from a given set $(\mr M, \pi, \sim)$ by applying finite sequences
of elementary transformations in~$G$. Namely, in this column $m(i)$
(resp. $m$) denotes that the given set $(\mr M, \pi, \sim)$ is
transformed to the set $(\mr M, \pi, \sim)$ in row $m$ of the same
table under the elementary transformation with center~$\al_i$ (resp.
under an appropriate sequence of elementary transformations of
length more than~$1$). The columns `$c(S)$' and `$c(N)$' contain the
corresponding values. In the last row of each table we indicate the
values of $d_0(G)$ for each $G$ considered in this table.

\renewcommand{\arraystretch}{1.1}
\renewcommand{\tabcolsep}{0.5pt}

\begin{table}[b]

\begin{center}
\caption{} \label{table_rank_2}

\begin{tabular}{|c|c|c|c|c|c|c|c|}
\hline

No. & $(\mr M, \pi)$ & $\sim$ & $\ms A_2$ & $\ms B_2$ & $\ms G_2$ & $c(S)$ & $c(N)$ \\

\hline

1 & $(12,1)$ & & $2,3(2)$ & & $3(2)$ & $0$ & $2$ \\

\hline

2 & $(12,2)$ & & $1,3(1)$ & $3(1)$ & & $0$ & $2$ \\

\hline

3 & $(1,1), (2,2)$ & & $1(2),2(1)$ & $2(1)$ & $1(2)$ & $0$ & $2$  \\

\hline

4 & $(1,1),(2,2)$ & $1{\sim}2$ & & & & $1$ & $1$ \\

\hline

\multicolumn{3}{|c|}{$d_0(G)$} & 2 & 3 & 3 & \multicolumn{2}{|c}{}\\

\cline{1-6}

\end{tabular}

\end{center}

\end{table}

\begin{table}[p]

\begin{center}
\caption{} \label{table_A1A1A1}

{\small

\begin{tabular}{|c|c|c|c|c|c|}
\hline

No. & $(\mr M, \pi)$ & ${\sim}$ & $\ms A_1 \times \ms A_1 \times \ms A_1$ & $c(S)$ & $c(N)$ \\

\hline

1 & $(1,1),(2,2),(3,3)$ & & & $0$ & $3$ \\

\hline

2 & $(1,1),(2,2),(3,3)$ & $1{\sim}2$ & & $1$ & $2$ \\

\hline

3 & $(1,1),(2,2),(3,3)$ & $1{\sim}3$ & & $1$ & $2$ \\

\hline

4 & $(1,1),(2,2),(3,3)$ & $2{\sim}3$ & & $1$ & $2$ \\

\hline

5 & $(1,1),(2,2),(3,3)$ & $1{\sim}2{\sim}3$ & & $2$ & $1$ \\

\hline

\multicolumn{3}{|c|}{$d_0(G)$} & 5 & \multicolumn{2}{|c}{}\\

\cline{1-4}

\end{tabular}

}

\end{center}

\end{table}

\begin{table}[p]

\begin{center}
\caption{} \label{table_A1A2}

{\small

\begin{tabular}{|c|c|c|c|c|c|c|}
\hline

No. & $(\mr M, \pi)$ & ${\sim}$ & $\ms A_1 \times \ms A_2$ & $\ms A_1 \times \ms B_2$ & $c(S)$ & $c(N)$ \\

\hline

1 & $(1,1),(23,2)$ & & $3,5(3)$ &  & $0$ & $3$ \\

\hline

2 & $(1,1),(23,2)$ & $1{\sim}23$ & $6(3)$ & & $1$ & $2$ \\

\hline

3 & $(1,1),(23,3)$ & & $1,5(2)$ & $5(2)$ & $0$ & $3$ \\

\hline

4 & $(1,1),(23,3)$ & $1{\sim}23$ & $7(2)$ & $7(2)$ & $1$ & $2$ \\

\hline

5 & $(1,1),(2,2),(3,3)$ & & $1(3),3(2)$ & $3(2)$ & $0$ & $3$ \\

\hline

6 & $(1,1),(2,2),(3,3)$ & $1{\sim}2$ & $2(3)$ & & $1$ & $2$ \\

\hline

7 & $(1,1),(2,2),(3,3)$ & $1{\sim}3$ & $4(2)$ & $4(2)$ & $1$ & $2$ \\

\hline

8 & $(1,1),(2,2),(3,3)$ & $2{\sim}3$ & & & $1$ & $2$ \\

\hline

9 & $(1,1),(2,2),(3,3)$ & $1{\sim}2{\sim}3$ & & & $2$ & $1$ \\

\hline

\multicolumn{3}{|c|}{$d_0(G)$} & 5 & 7 & \multicolumn{2}{|c}{}\\

\cline{1-5}

\end{tabular}

}

\end{center}

\end{table}

\begin{table}[p]

\begin{center}
\caption{} \label{table_rank_3}

{\small

\begin{tabular}{|c|c|c|c|c|c|c|c|}
\hline

No. & $(\mr M, \pi)$ & ${\sim}$ & $\ms A_3$ & $\ms B_3$ & $\ms C_3$ & $c(S)$ & $c(N)$ \\

\hline

1 & $(123,1)$ & & $3,6,8(3)$ & & $8(3)$ & $0$ & $3$ \\

\hline

2 & $(123,2)$ & & $4(1),10(3),13$ & $4(1)$ & $4(1),10(3),13$ & $0$ & $3$ \\

\hline

3 & $(123,3)$ & & $1,6(1),8$ & $6(1),8$ & $6(1)$ & $0$ & $3$ \\

\hline

4 & $(1,1),(23,2)$ & & $2(1),10,13(3)$ & $2(1)$ & $2(1),10,13(3)$ & $0$ & $3$ \\

\hline

5 & $(1,1),(23,2)$ & $1{\sim}23$ & $14(3)$ & & $14(3)$ & $1$ & $2$ \\

\hline

6 & $(1,1),(23,3)$ & & $1,3(1),8(2)$ & $3(1),8(2)$ & $3(1)$ & $0$ & $3$ \\

\hline

7 & $(1,1),(23,3)$ & $1{\sim}23$ & $9(2)$ & $9(2)$ & & $1$ & $2$ \\

\hline

8 & $(12,1),(3,3)$ & & $1(3),3,6(2)$ & $3,6(2)$ & $1(3)$ & $0$ & $3$ \\

\hline

9 & $(12,1),(3,3)$ & $12{\sim}3$ & $7(2)$ & $7(2)$ & & $1$ & $2$ \\

\hline

10 & $(12,2),(3,3)$ & & $2(3),4,13(1)$ & $13(1)$ & $2(3),4,13(1)$ & $0$ & $3$ \\

\hline

11 & $(12,2),(3,3)$ & $12{\sim}3$ & $16(1)$ & $16(1)$ & $16(1)$ & $1$ & $2$ \\

\hline

12 & $(12,2),(23,2)$ & $12{\sim}23$ & & & & $1$ & $2$ \\

\hline

13 & $(1,1),(2,2),(3,3)$ & & $2,4(3),10(1)$ & $10(1)$ & $2,4(3),10(1)$ & $0$ & $3$ \\

\hline

14 & $(1,1),(2,2),(3,3)$ & $1{\sim}2$ & $5(3)$ & & $5(3)$ & $1$ & $2$ \\

\hline

15 & $(1,1),(2,2),(3,3)$ & $1{\sim}3$ & & & & $1$ & $2$ \\

\hline

16 & $(1,1),(2,2),(3,3)$ & $2{\sim}3$ & $11(1)$ & $11(1)$ & $11(1)$ & $1$ & $2$ \\

\hline

17 & $(1,1),(2,2),(3,3)$ & $1{\sim}2{\sim}3$ & & & & $2$ & $1$ \\

\hline

\multicolumn{3}{|c|}{$d_0(G)$} & 8 & 11 & 10 & \multicolumn{2}{|c}{}\\

\cline{1-6}

\end{tabular}

}

\end{center}

\end{table}

\newpage

{\footnotesize

\begin{longtable}{|c|c|c|c|c|c|c|c|c|}

\caption{} \label{table_rank_4A} \\

\hline

No. & $(\mr M, \pi)$ & ${\sim}$ & $\ms A_4$ & $\ms B_4$ & $\ms C_4$ & $\ms F_4$ & $c(S)$ & $c(N)$ \\

\hline \endfirsthead

\caption{(continuation)} \\

\hline

No. & $(\mr M, \pi)$ & ${\sim}$ & $\ms A_4$ & $\ms B_4$ & $\ms C_4$ & $\ms F_4$ & $c(S)$ & $c(N)$ \\

\hline \endhead

1 & $(1234,1)$ &  & $4,9,11(4),19$ &  & $11(4)$ & $11(4),19$ & $0$ & $4$ \\

\hline

2 & $(1234,2)$ &  & \begin{tabular}{c}$5(1),13(4),$\\$23,37,52$\end{tabular} & $5(1)$ & $5(1),13(4),37$ & \begin{tabular}{c}$5(1),13(4),$\\$23,37,52$\end{tabular} & $0$ & $4$ \\

\hline

3 & $(1234,3)$ &  & \begin{tabular}{c}$7(1),15(4),$\\$17,27,42$\end{tabular} & $7(1),17$ & \begin{tabular}{c}$7(1),15(4),$\\$17,27,42$\end{tabular} & $7(1),15(4),42$ & $0$ & $4$ \\

\hline

4 & $(1234,4)$ &  & $1,9(1),11,19$ & $9(1),11,19$ & $9(1),19$ & $9(1)$ & $0$ & $4$ \\

\hline

5 & $(1,1),(234,2)$ &  & \begin{tabular}{c}$2(1),13,23,$\\$37(4),52$\end{tabular} & $2(1)$ & $2(1),13,37(4)$ & \begin{tabular}{c}$2(1),13,23,$\\$37(4),42$\end{tabular} & $0$ & $4$ \\

\hline

6 & $(1,1),(234,2)$ & $1{\sim}234$ & $38(4),53$ &  & $38(4)$ & $38(4),53$ & $1$ & $3$ \\

\hline

7 & $(1,1),(234,3)$ &  & \begin{tabular}{c}$3(1),15,17(2),$\\$27,42(4)$\end{tabular} & $3(1),17(2)$ & \begin{tabular}{c}$3(1),15,17(2),$\\$27,42(4)$\end{tabular} & $3(1),15,42(4)$ & $0$ & $4$ \\

\hline

8 & $(1,1),(234,3)$ & $1{\sim}234$ & $18(2),28,43(4)$ & $18(2)$ & $18(2),28,43(4)$ & $43(4)$ & $1$ & $3$ \\

\hline

9 & $(1,1),(234,4)$ &  & $1,4(1),11,19(2)$ & $4(1),11,19(2)$ & $4(1),19(2)$ & $4(1)$ & $0$ & $4$ \\

\hline

10 & $(1,1),(234,4)$ & $1{\sim}234$ & $12,20(2)$ & $12,20(2)$ & $20(2)$ &  & $1$ & $3$ \\

\hline

11 & $(123,1),(4,4)$ &  & $1(4),4,9,19(3)$ & $4,9,19(3)$ & $1(4)$ & $1(4),19(3)$ & $0$ & $4$ \\

\hline

12 & $(123,1),(4,4)$ & $123{\sim}4$ & $10,20(3)$ & $10,20(3)$ &  & $20(3)$ & $1$ & $3$ \\

\hline

13 & $(123,2),(4,4)$ &  & \begin{tabular}{c}$2(4),5,23(3),$\\$37(1),52$\end{tabular} & $23(3),37(1),52$ & $2(4),5,37(1)$ & \begin{tabular}{c}$2(4),5,23(3),$\\$37(1),52$\end{tabular} & $0$ & $4$ \\

\hline

14 & $(123,2),(4,4)$ & $123{\sim}4$ & $24(3),40(1),55$ & $24(3),40(1),55$ & $40(1)$ & $24(3),40(1),55$ & $1$ & $3$ \\

\hline

15 & $(123,3),(4,4)$ &  & \begin{tabular}{c}$3(4),7,17,$\\$27,42(1)$\end{tabular} & $27,42(1)$ & \begin{tabular}{c}$3(4),7,17,$\\$27,42(1)$\end{tabular} & $3(4),7,42(1)$ & $0$ & $4$ \\

\hline

16 & $(123,3),(4,4)$ & $123{\sim}4$ & $30,45(1)$ & $30,45(1)$ & $30,45(1)$ & $45(1)$ & $1$ & $3$ \\

\hline

17 & $(12,1),(34,3)$ &  & \begin{tabular}{c}$3,7(2),15,$\\$27(4),42$\end{tabular} & $3,7(2)$ & \begin{tabular}{c}$3,7(2),15,$\\$27(4),42$\end{tabular} & $27(4)$ & $0$ & $4$ \\

\hline

18 & $(12,1),(34,3)$ & $12{\sim}34$ & $8(2),28(4),43$ & $8(2)$ & $8(2),28(4),43$ & $28(4)$ & $1$ & $3$ \\

\hline

19 & $(12,1),(34,4)$ &  & $1,4,9(2),11(3)$ & $4,9(2),11(3)$ & $4,9(2)$ & $1,11(3)$ & $0$ & $4$ \\

\hline

20 & $(12,1),(34,4)$ & $12{\sim}34$ & $10(2),12(3)$ & $10(2),12(3)$ & $10(2)$ & $12(3)$ & $1$ & $3$ \\

\hline

21 & $(12,2),(34,3)$ &  & $32(4),47(1),61$ & $47(1)$ & $32(4),47(1),61$ & $32(4),47(1),61$ & $0$ & $4$ \\

\hline

22 & $(12,2),(34,3)$ & $12{\sim}34$ & $33(4),50(1),65$ & $50(1)$ & $33(4),50(1),65$ & $33(4),50(1),65$ & $1$ & $3$ \\

\hline

23 & $(12,2),(34,4)$ &  & \begin{tabular}{c}$2,5,13(3),$\\$37,52(1)$\end{tabular} & $13(3),37,52(1)$ & $52(1)$ & \begin{tabular}{c}$2,5,13(3),$\\$37,52(1)$\end{tabular} & $0$ & $4$ \\

\hline

24 & $(12,2),(34,4)$ & $12{\sim}34$ & $14(3),40,55(1)$ & $14(3),40,55(1)$ & $55(1)$ & $14(3),40,55(1)$ & $1$ & $3$ \\

\hline

25 & $(12,2),(234,2)$ & $12{\sim}234$ & $57(4)$ &  & $57(4)$ & $57(4)$ & $1$ & $3$ \\

\hline

26 & $(123,3),(34,3)$ & $123{\sim}34$ & $59(1)$ & $59(1)$ & $59(1)$ & $59(1)$ & $1$ & $3$ \\

\hline

27 & $(12,1),(3,3),(4,4)$ &  & \begin{tabular}{c}$3,7,15,$\\$17(4),42(2)$\end{tabular} & $15,42(2)$ & \begin{tabular}{c}$3,7,15,$\\$17(4),42(2)$\end{tabular} & $17(4)$ & $0$ & $4$ \\

\hline

28 & $(12,1),(3,3),(4,4)$ & $12{\sim}3$ & $8,18(4),43(2)$ & $43(2)$ & $8,18(4),43(2)$ & $18(4)$ & $1$ & $3$ \\

\hline

29 & $(12,1),(3,3),(4,4)$ & $12{\sim}4$ & $44(2)$ & $44(2)$ & $44(2)$ &  & $1$ & $3$ \\

\hline

30 & $(12,1),(3,3),(4,4)$ & $3{\sim}4$ & $16,45(2)$ & $16,45(2)$ & $16,45(2)$ &  & $1$ & $3$ \\

\hline

31 & $(12,1),(3,3),(4,4)$ & $12{\sim}3{\sim}4$ & $46(2)$ & $46(2)$ & $46(2)$ &  & $2$ & $2$ \\

\hline

32 & $(12,2),(3,3),(4,4)$ &  & $21(4),47,61(1)$ & $61(1)$ & $21(4),47,61(1)$ & $21(4),47,61(1)$ & $0$ & $4$ \\

\hline

33 & $(12,2),(3,3),(4,4)$ & $12{\sim}3$ & $22(4),50,65(1)$ & $65(1)$ & $22(4),50,65(1)$ & $22(4),50,65(1)$ & $1$ & $3$ \\

\hline

34 & $(12,2),(3,3),(4,4)$ & $12{\sim}4$ & $66(1)$ & $66(1)$ & $66(1)$ & $66(1)$ & $1$ & $3$ \\

\hline

35 & $(12,2),(3,3),(4,4)$ & $3{\sim}4$ & $67(1)$ & $67(1)$ & $67(1)$ & $67(1)$ & $1$ & $3$ \\

\hline

36 & $(12,2),(3,3),(4,4)$ & $12{\sim}3{\sim}4$ & $71(1)$ & $71(1)$ & $71(1)$ & $71(1)$ & $2$ & $2$ \\

\hline

37 & $(1,1),(23,2),(4,4)$ &  & \begin{tabular}{c}$2,5(4),13(1),$\\$23,52(3)$\end{tabular} & $13(1),23,52(3)$ & $2,5(4),13(1)$ & \begin{tabular}{c}$2,5(4),13(1),$\\$23,52(3)$\end{tabular} & $0$ & $4$ \\

\hline

38 & $(1,1),(23,2),(4,4)$ & $1{\sim}23$ & $6(4),53(3)$ &  $53(3)$ &  $6(4)$ &  $6(4),53(3)$ & $1$ & $3$ \\

\hline

39 & $(1,1),(23,2),(4,4)$ & $1{\sim}4$ & $54(3)$ & $54(3)$ &  & $54(3)$ & $1$ & $3$ \\

\hline

40 & $(1,1),(23,2),(4,4)$ & $23{\sim}4$ & $14(1),24,55(3)$ & $14(1),24,55(3)$ & $14(1)$ & $14(1),24,55(3)$ & $1$ & $3$ \\

\hline

41 & $(1,1),(23,2),(4,4)$ & $1{\sim}23{\sim}4$ & $56(3)$ & $56(3)$ &  & $56(3)$ & $2$ & $2$ \\

\hline

42 & $(1,1),(23,3),(4,4)$ &  & \begin{tabular}{c}$3,7(4),15(1),$\\$17,27(2)$\end{tabular} & $15(1),27(2)$ & \begin{tabular}{c}$3,7(4),15(1),$\\$17,27(2)$\end{tabular} & $3,7(4),15(1)$ & $0$ & $4$ \\

\hline

43 & $(1,1),(23,3),(4,4)$ & $1{\sim}23$ & $8(4),18,28(2)$ & $28(2)$ & $8(4),18,28(2)$ & $8(4)$ & $1$ & $3$ \\

\hline

44 & $(1,1),(23,3),(4,4)$ & $1{\sim}4$ & $29(2)$ & $29(2)$ & $29(2)$ &  & $1$ & $3$ \\

\hline

45 & $(1,1),(23,3),(4,4)$ & $23{\sim}4$ & $16(1),30(2)$ & $16(1),30(2)$ & $16(1),30(2)$ & $16(1)$ & $1$ & $3$ \\

\hline

46 & $(1,1),(23,3),(4,4)$ & $1{\sim}23{\sim}4$ & $31(2)$ & $31(2)$ & $31(2)$ &  & $2$ & $2$ \\

\hline

47 & $(1,1),(2,2),(34,3)$ &  & $21(1),32,61(4)$ & $21(1)$ & $21(1),32,61(4)$ & $21(1),32,61(4)$ & $0$ & $4$ \\

\hline

48 & $(1,1),(2,2),(34,3)$ & $1{\sim}2$ & $62(4)$ &  & $62(4)$ & $62(4)$ & $1$ & $3$ \\

\hline

49 & $(1,1),(2,2),(34,3)$ & $1{\sim}34$ & $63(4)$ &  & $63(4)$ & $63(4)$ & $1$ & $3$ \\

\hline

50 & $(1,1),(2,2),(34,3)$ & $2{\sim}34$ & $22(1),33,65(4)$ & $22(1)$ & $22(1),33,65(4)$ & $22(1),33,65(4)$ & $1$ & $3$ \\

\hline

51 & $(1,1),(2,2),(34,3)$ & $1{\sim}2{\sim}34$ & $68(4)$ &  & $68(4)$ & $68(4)$ & $2$ & $2$ \\

\hline

52 & $(1,1),(2,2),(34,4)$ &  & \begin{tabular}{c}$2,5,13,$\\$23(1),37(3)$\end{tabular} & $13,23(1),37(3)$ & $23(1)$ & \begin{tabular}{c}$2,5,13,$\\$23(1),37(3)$\end{tabular} & $0$ & $4$ \\

\hline

53 & $(1,1),(2,2),(34,4)$ & $1{\sim}2$ & $6,38(3)$ & $38(3)$ &  & $6,38(3)$ & $1$ & $3$ \\

\hline

54 & $(1,1),(2,2),(34,4)$ & $1{\sim}34$ & $39(3)$ & $39(3)$ &  & $39(3)$ & $1$ & $3$ \\

\hline

55 & $(1,1),(2,2),(34,4)$ & $2{\sim}34$ & $14,24(1),40(3)$ & $14,24(1),40(3)$ & $24(1)$ & $14,24(1),40(3)$ & $1$ & $3$ \\

\hline

56 & $(1,1),(2,2),(34,4)$ & $1{\sim}2{\sim}34$ & $41(3)$ & $41(3)$ &  & $41(3)$ & $2$ & $2$ \\

\hline

57 & $(12,2),(23,2),(4,4)$ & $12{\sim}23$ & $25(4)$ &  & $25(4)$ & $25(4)$ & $1$ & $3$ \\

\hline

58 & $(12,2),(23,2),(4,4)$ & $12{\sim}23{\sim}4$ &  &  &  &  & $2$ & $2$ \\

\hline

59 & $(1,1),(23,3),(34,3)$ & $23{\sim}34$ & $26(1)$ & $26(1)$ & $26(1)$ & $26(1)$ & $1$ & $3$ \\

\hline

60 & $(1,1),(23,3),(34,3)$ & $1{\sim}23{\sim}34$ &  &  &  &  & $2$ & $2$ \\

\hline

61 & $(1,1),(2,2),(3,3),(4,4)$ & & $21,32(1),47(4)$ & $32(1)$ & $21,32(1),47(4)$ & $21,32(1),47(4)$ & $0$ & $4$ \\

\hline

62 & $(1,1),(2,2),(3,3),(4,4)$ & $1{\sim}2$ & $48(4)$ &  & $48(4)$ & $48(4)$ & $1$ & $3$ \\

\hline

63 & $(1,1),(2,2),(3,3),(4,4)$ & $1{\sim}3$ & $49(4)$ &  & $49(4)$ & $49(4)$ & $1$ & $3$ \\

\hline

64 & $(1,1),(2,2),(3,3),(4,4)$ & $1{\sim}4$ &  &  &  &  & $1$ & $3$ \\

\hline

65 & $(1,1),(2,2),(3,3),(4,4)$ & $2{\sim}3$ & $22,33(1),50(4)$ & $33(1)$ & $22,33(1),50(4)$ & $22,33(1),50(4)$ & $1$ & $3$ \\

\hline

66 & $(1,1),(2,2),(3,3),(4,4)$ & $2{\sim}4$ & $34(1)$ & $34(1)$ & $34(1)$ & $34(1)$ & $1$ & $3$ \\

\hline

67 & $(1,1),(2,2),(3,3),(4,4)$ & $3{\sim}4$ & $35(1)$ & $35(1)$ & $35(1)$ & $35(1)$ & $1$ & $3$ \\

\hline

68 & $(1,1),(2,2),(3,3),(4,4)$ & $1{\sim}2{\sim}3$ & $51(4)$ &  & $51(4)$ & $51(4)$ & $2$ & $2$ \\

\hline

69 & $(1,1),(2,2),(3,3),(4,4)$ & $1{\sim}2{\sim}4$ &  &  &  &  & $2$ & $2$ \\

\hline

70 & $(1,1),(2,2),(3,3),(4,4)$ & $1{\sim}3{\sim}4$ &  &  &  &  & $2$ & $2$ \\

\hline

71 & $(1,1),(2,2),(3,3),(4,4)$ & $2{\sim}3{\sim}4$ & $36(1)$ & $36(1)$ & $36(1)$ & $36(1)$ & $2$ & $2$ \\

\hline

72 & $(1,1),(2,2),(3,3),(4,4)$ & $1{\sim}2,3{\sim}4$ &  &  &  &  & $2$ & $2$ \\

\hline

73 & $(1,1),(2,2),(3,3),(4,4)$ & $1{\sim}3,2{\sim}4$ &  &  &  &  & $2$ & $2$ \\

\hline

74 & $(1,1),(2,2),(3,3),(4,4)$ & $1{\sim}4,2{\sim}3$ &  &  &  &  & $2$ & $2$ \\

\hline

75 & $(1,1),(2,2),(3,3),(4,4)$ & $1{\sim}2{\sim}3{\sim}4$ &  &  &  &  & $3$ & $1$ \\

\hline

\multicolumn{3}{|c|}{$d_0(G)$} & 31 & 42 & 38 & 38 & \multicolumn{2}{|c}{}\\

\cline{1-7}

\end{longtable}

}

{\footnotesize

\begin{longtable}{|c|c|c|c|c|c|}

\caption{} \label{table_rank_4D} \\

\hline

No. & $(\mr M, \pi)$ & ${{\sim}}$ & $\ms D_4$  & $c(S)$ & $c(N)$ \\

\hline \endfirsthead

\caption{(continuation)} \\

\hline

No. & $(\mr M, \pi)$ & ${{\sim}}$ & $\ms D_4$  & $c(S)$ & $c(N)$ \\

\hline \endhead

1 & $(1234,1)$ &  & $11(3),17(4),26$ & $0$ & $4$ \\

\hline

2 & $(1234,2)$ &  & $5(1),13(3),19(4),31,36,46,63$ & $0$ & $4$ \\

\hline

3 & $(1234,3)$ &  & $7(1),21(4),41$ & $0$ & $4$ \\

\hline

4 & $(1234,4)$ &  & $9(1),15(3),51$ & $0$ & $4$ \\

\hline

5 & $(1,1),(234,2)$ &  & $2(1),13,19,31,36(4),46(3),63$ & $0$ & $4$ \\

\hline

6 & $(1,1),(234,2)$ & $1{\sim}234$ & $37(4),47(3),64$ & $1$ & $3$ \\

\hline

7 & $(1,1),(234,3)$ &  & $3(1),21,41(4)$ & $0$ & $4$ \\

\hline

8 & $(1,1),(234,3)$ & $1{\sim}234$ & $42(4)$ & $1$ & $3$ \\

\hline

9 & $(1,1),(234,4)$ &  & $4(1),15,51(3)$ & $0$ & $4$ \\

\hline

10 & $(1,1),(234,4)$ & $1{\sim}234$ & $52(3)$ & $1$ & $3$ \\

\hline

11 & $(124,1),(3,3)$ &  & $1(3),17,26(4)$ & $0$ & $4$ \\

\hline

12 & $(124,1),(3,3)$ & $124{\sim}3$ & $27(4)$ & $1$ & $3$ \\

\hline

13 & $(124,2),(3,3)$ &  & $2(3),5,19,31(4),36,46(1),63$ & $0$ & $4$ \\

\hline

14 & $(124,2),(3,3)$ & $124{\sim}3$ & $32(4),49(1),67$ & $1$ & $3$ \\

\hline

15 & $(124,4),(3,3)$ &  & $4(3),9,51(1)$ & $0$ & $4$ \\

\hline

16 & $(124,4),(3,3)$ & $124{\sim}3$ & $54(1)$ & $1$ & $3$ \\

\hline

17 & $(123,1),(4,4)$ &  & $1(4),11,26(3)$ & $0$ & $4$ \\

\hline

18 & $(123,1),(4,4)$ & $123{\sim}4$ & $28(3)$ & $1$ & $3$ \\

\hline

19 & $(123,2),(4,4)$ &  & $2(4),5,13,31(3),36(1),46,63$ & $0$ & $4$ \\

\hline

20 & $(123,2),(4,4)$ & $123{\sim}4$ & $33(3),39(1),68$ & $1$ & $3$ \\

\hline

21 & $(123,3),(4,4)$ &  & $3(4),7,41(1)$ & $0$ & $4$ \\

\hline

22 & $(123,3),(4,4)$ & $123{\sim}4$ & $44(1)$ & $1$ & $3$ \\

\hline

23 & $(123,3),(234,3)$ & $123{\sim}234$ &  & $1$ & $3$ \\

\hline

24 & $(123,1),(124,1)$ & $123{\sim}124$ &  & $1$ & $3$ \\

\hline

25 & $(234,4),(124,4)$ & $234{\sim}124$ &  & $1$ & $3$ \\

\hline

26 & $(12,1),(3,3),(4,4)$ &  & $1,11(4),17(3)$ & $0$ & $4$ \\

\hline

27 & $(12,1),(3,3),(4,4)$ & $12{\sim}3$ & $12(4)$ & $1$ & $3$ \\

\hline

28 & $(12,1),(3,3),(4,4)$ & $12{\sim}4$ & $18(3)$ & $1$ & $3$ \\

\hline

29 & $(12,1),(3,3),(4,4)$ & $3{\sim}4$ &  & $1$ & $3$ \\

\hline

30 & $(12,1),(3,3),(4,4)$ & $12{\sim}3{\sim}4$ &  & $2$ & $2$ \\

\hline

31 & $(12,2),(3,3),(4,4)$ &  & $2,5,13(4),19(3),36,46,63(1)$ & $0$ & $4$ \\

\hline

32 & $(12,2),(3,3),(4,4)$ & $12{\sim}3$ & $14(4),49,67(1)$ & $1$ & $3$ \\

\hline

33 & $(12,2),(3,3),(4,4)$ & $12{\sim}4$ & $20(3),39,68(1)$ & $1$ & $3$ \\

\hline

34 & $(12,2),(3,3),(4,4)$ & $3{\sim}4$ & $69(1)$ & $1$ & $3$ \\

\hline

35 & $(12,2),(3,3),(4,4)$ & $12{\sim}3{\sim}4$ & $73(1)$ & $2$ & $2$ \\

\hline

36 & $(1,1),(23,2),(4,4)$ &  & $2,5(4),13,19(1),31,46,63(3)$ & $0$ & $4$ \\

\hline

37 & $(1,1),(23,2),(4,4)$ & $1{\sim}23$ & $6(4),47,64(3)$ & $1$ & $3$ \\

\hline

38 & $(1,1),(23,2),(4,4)$ & $1{\sim}4$ & $66(3)$ & $1$ & $3$ \\

\hline

39 & $(1,1),(23,2),(4,4)$ & $23{\sim}4$ & $20(1),33,68(3)$ & $1$ & $3$ \\

\hline

40 & $(1,1),(23,2),(4,4)$ & $1{\sim}23{\sim}4$ & $71(3)$ & $2$ & $2$ \\

\hline

41 & $(1,1),(23,3),(4,4)$ &  & $3,7(4),21(1)$ & $0$ & $4$ \\

\hline

42 & $(1,1),(23,3),(4,4)$ & $1{\sim}23$ & $8(4)$ & $1$ & $3$ \\

\hline

43 & $(1,1),(23,3),(4,4)$ & $1{\sim}4$ &  & $1$ & $3$ \\

\hline

44 & $(1,1),(23,3),(4,4)$ & $23{\sim}4$ & $22(1)$ & $1$ & $3$ \\

\hline

45 & $(1,1),(23,3),(4,4)$ & $1{\sim}23{\sim}4$ &  & $2$ & $2$ \\

\hline

46 & $(1,1),(24,2),(3,3)$ &  & $2,5(3),13(1),19,31,36,63(4)$ & $0$ & $4$ \\

\hline

47 & $(1,1),(24,2),(3,3)$ & $1{\sim}24$ & $6(3),37,64(4)$ & $1$ & $3$ \\

\hline

48 & $(1,1),(24,2),(3,3)$ & $1{\sim}3$ & $65(4)$ & $1$ & $3$ \\

\hline

49 & $(1,1),(24,2),(3,3)$ & $24{\sim}3$ & $14(1),32,67(4)$ & $1$ & $3$ \\

\hline

50 & $(1,1),(24,2),(3,3)$ & $1{\sim}24{\sim}3$ & $70(4)$ & $2$ & $2$ \\

\hline

51 & $(1,1),(24,4),(3,3)$ &  & $4,9(3),15(1)$ & $0$ & $4$ \\

\hline

52 & $(1,1),(24,4),(3,3)$ & $1{\sim}24$ & $10(3)$ & $1$ & $3$ \\

\hline

53 & $(1,1),(24,4),(3,3)$ & $1{\sim}3$ &  & $1$ & $3$ \\

\hline

54 & $(1,1),(24,4),(3,3)$ & $24{\sim}3$ & $16(1)$ & $1$ & $3$ \\

\hline

55 & $(1,1),(24,4),(3,3)$ & $1{\sim}24{\sim}3$ &  & $2$ & $2$ \\

\hline

56 & $(1,1),(23,2),(24,2)$ & $23{\sim}24$ &  & $1$ & $3$ \\

\hline

57 & $(1,1),(23,2),(24,2)$ & $1{\sim}23{\sim}24$ &  & $2$ & $2$ \\

\hline

58 & $(12,2),(24,2),(3,3)$ & $12{\sim}24$ &  & $1$ & $3$ \\

\hline

59 & $(12,2),(24,2),(3,3)$ & $12{\sim}24{\sim}3$ &  & $2$ & $2$ \\

\hline

60 & $(12,2),(23,2),(4,4)$ & $12{\sim}23$ &  & $1$ & $3$ \\

\hline

61 & $(12,2),(23,2),(4,4)$ & $12{\sim}23{\sim}4$ &  & $2$ & $2$ \\

\hline

62 & $(12,2),(23,2),(24,2)$ & $12{\sim}23{\sim}24$ &  & $2$ & $2$ \\

\hline

63 & $(1,1),(2,2),(3,3),(4,4)$ &  & $2,5,13,19,31(1),36(3),46(4)$ & $0$ & $4$ \\

\hline

64 & $(1,1),(2,2),(3,3),(4,4)$ & $1{\sim}2$ & $6,37(3),47(4)$ & $1$ & $3$ \\

\hline

65 & $(1,1),(2,2),(3,3),(4,4)$ & $1{\sim}3$ & $48(4)$ & $1$ & $3$ \\

\hline

66 & $(1,1),(2,2),(3,3),(4,4)$ & $1{\sim}4$ & $38(3)$ & $1$ & $3$ \\

\hline

67 & $(1,1),(2,2),(3,3),(4,4)$ & $2{\sim}3$ & $14,32(1),49(4)$ & $1$ & $3$ \\

\hline

68 & $(1,1),(2,2),(3,3),(4,4)$ & $2{\sim}4$ & $20,33(1),39(3)$ & $1$ & $3$ \\

\hline

69 & $(1,1),(2,2),(3,3),(4,4)$ & $3{\sim}4$ & $34(1)$ & $1$ & $3$ \\

\hline

70 & $(1,1),(2,2),(3,3),(4,4)$ & $1{\sim}2{\sim}3$ & $50(4)$ & $2$ & $2$ \\

\hline

71 & $(1,1),(2,2),(3,3),(4,4)$ & $1{\sim}2{\sim}4$ & $40(3)$ & $2$ & $2$ \\

\hline

72 & $(1,1),(2,2),(3,3),(4,4)$ & $1{\sim}3{\sim}4$ &  & $2$ & $2$ \\

\hline

73 & $(1,1),(2,2),(3,3),(4,4)$ & $2{\sim}3{\sim}4$ & $35(1)$ & $2$ & $2$ \\

\hline

74 & $(1,1),(2,2),(3,3),(4,4)$ & $1{\sim}2,3{\sim}4$ &  & $2$ & $2$ \\

\hline

75 & $(1,1),(2,2),(3,3),(4,4)$ & $1{\sim}3,2{\sim}4$ &  & $2$ & $2$ \\

\hline

76 & $(1,1),(2,2),(3,3),(4,4)$ & $1{\sim}4,2{\sim}3$ &  & $2$ & $2$ \\

\hline

77 & $(1,1),(2,2),(3,3),(4,4)$ & $1{\sim}2{\sim}3{\sim}4$ &  & $3$ & $1$ \\

\hline

\multicolumn{3}{|c|}{$d_0(G)$} & 40 & \multicolumn{2}{|c}{}\\

\cline{1-4}

\end{longtable}

}

\newpage

\section*{Acknowledgements}

The author expresses his gratitude to Natalia Gorfinkel for numerous
conversations that stimulated the development of the active root
theory, and also to Ernest~B.~Vinberg for fruitful discussions.


\begin{thebibliography}{1000}

\setlength{\itemsep}{-\parsep}

\bibitem[Avd]{Avd}
R.~Avdeev, \textit{On solvable spherical subgroups of semisimple
algebraic groups}, Oberwolfach Reports \textbf{7}~(2010), no.~2,
1105--1108; see also
\href{http://arxiv.org/abs/1006.4459}{\texttt{arXiv:math.GR/1006.4459}}.

\bibitem[BP]{BP}
P.~Bravi, G.~Pezzini, \textit{Wonderful varieties of type $B$ and
$C$}, preprint~(2009), see
\href{http://arxiv.org/abs/0909.3771}{\texttt{arXiv:math.AG/0909.3771}}.

\bibitem[Bri]{Br}
M.~Brion, \textit{Classification des espaces homog\`enes
sph\'eriques}, Compositio Math. \textbf{63}~(1987), no.~2, 189--208.

\bibitem[C]{CF}
S.~Cupit-Foutou, \textit{Wonderful varieties: a geometrical
realization}, preprint~(2009), see
\href{http://arxiv.org/abs/0907.2852}{\texttt{arXiv:math.AG/0907.2852}}.

\bibitem[Kr]{Kr} M.~Kr\"amer, \textit{Sph\"arische Untergruppen in kompakten
zusammenh\"angenden Liegruppen}, Compositio Math.
\textbf{38}~(1979), no.~2, 129--153.

\bibitem[Lu1]{Lu93}
D.~Luna, \textit{Sous-groupes sph{\'e}riques r{\'e}solubles},
Pr{\'e}publication de l'Institut Fourier no.~241, 1993.

\bibitem[Lu2]{Lu01}
D.~Luna, \textit{Vari\'et\'es sph\'eriques de type $A$}, IH\'ES
Publ. Math. \textbf{94}~(2001), 161--226.

\bibitem[Mik]{Mi}
I.\,V.~Mikityuk, \textit{On the integrability of invariant
Hamiltonian systems with homogeneous configuration spaces}, Math.
USSR-Sb. \textbf{57}~(1987), no.~2, 527--546; Russian original:
И.\,В.~Микитюк, \textit{Об интегрируемости инвариантных
гамильтоновых систем с однородными конфигурационными
пространствами}, Мат. сб. \textbf{129(171)}~(1986), №\,4, 514--534.

\bibitem[Mon]{Mon}
P.-L.~Montagard, \textit{Une nouvelle propri\'et\'e de stabilit\'e
du pl\'ethysme}, Comment. Math. Helvetici. \textbf{71}~(1996),
475--505.

\bibitem[OV]{VO}
A.\,L.~Onishchik and E.\,B.~Vinberg, \textit{Lie groups and
algebraic groups}, Springer Ser. Soviet Math., Springer-Verlag,
Berlin 1990; Russian original (2nd edition): Э.\,Б.~Винберг,
А.\,Л.~Онищик, \textit{Семинар по группам Ли и алгебраическим
группам}, Москва, УРСС, 1995.

\bibitem[Vin]{Vin}
E.\,B.~Vinberg, \textit{Commutative homogeneous spaces and
co-isotropic symplectic actions}, Russian Math. Surveys
\textbf{56}~(2001), no.~1, 1--60; Russian original: Э.\,Б.~Винберг,
\textit{Коммутативные однородные пространства и коизотропные
симплектические действия}, Успехи мат. наук \textbf{56}~(2001),
№\,1, 3--62.

\bibitem[Yak]{Yak}
O.\,S.~Yakimova, \textit{Weakly symmetric spaces of semisimple Lie
groups}, Moscow Univ. Math. Bull. \textbf{57}~(2002), no.~2, 37--40;
Russian original: О.\,С.~Якимова, \textit{Слабо симметрические
пространства полупростых групп Ли}, Вестник Моск. ун-та. Сер.~1,
Матем. Мех., 2002, №\,2, 57--60.

\end{thebibliography}
\end{document}